\tikzset{every picture/.style={line width=0.75pt}} 
\theoremstyle{definition}
\newtheorem{dfn}{Definition}[section] 
\newtheorem{prop}[dfn]{Proposition} 
\newtheorem{exm}[dfn]{Example}
\newtheorem{lem}[dfn]{Lemma}
\newtheorem{rem}[dfn]{Remark}
\newtheorem{cor}[dfn]{Corollary}
\newtheorem{con}[dfn]{Conjecture}
\newtheorem{prob}[dfn]{Problem}
\newcommand{\N}{\mathbb{N}}
\newcommand{\OO}{\mathcal{O}}
\newcommand{\abs}[1]{\left|#1\right|}
\newcommand{\floor}[1]{\lfloor#1\rfloor}
\newcommand{\ceil}[1]{\lceil#1\rceil}
\DeclareMathOperator{\diff}{diff}
\DeclareMathOperator{\spann}{span}
\DeclareMathOperator{\aut}{Aut}
\newcommand{\comment}[1]{}
\title{\vspace{-2.0cm} \bf $H$-Chromatic Symmetric Functions}
\author{Nancy Mae Eagles\footnote{University of Edinburgh, Edinburgh, Scotland, UK}, Ang\`ele M. Foley\footnote{Wilfrid Laurier University, Waterloo, Ontario, Canada}, Alice Huang\footnote{University of Toronto, Toronto, Ontario, Canada}, \\ Elene  Karangozishvili\footnote{Lafayette College, Easton, Pennsylvania, USA}, Annan Yu \footnote{Vanderbilt University, Nashville, Tennessee, USA}}
\date{\today}
\begin{document}

\maketitle

\begin{abstract}
We introduce $H$-chromatic symmetric functions, $X_{G}^{H}$, which use the $H$-coloring of a graph $G$ to define  a generalization of Stanley's chromatic symmetric functions.  
We say two graphs $G_1$ and $G_2$ are $H$-chromatically equivalent if $X_{G_1}^{H} = X_{G_2}^{H}$, and use this idea to study uniqueness results for $H$-chromatic symmetric functions, with a particular emphasis on the case $H$ is a complete bipartite graph.
We also show that several of the classical bases of the space of symmetric functions, i.e.\ the monomial symmetric functions, power sum symmetric functions, and elementary symmetric functions, can be realized as $H$-chromatic symmetric functions.
We end with some conjectures and open problems.

\end{abstract}


\section{Introduction}
\label{Introduction}

In this paper we introduce a new variation on chromatic symmetric functions: the $H$-chromatic symmetric functions, $X^H_G$. The original chromatic symmetric functions, $X_G$, were defined in 1995 by Stanley \cite{stanley1995symmetric} using the colorings of a graph $G$ to label the indeterminates in a symmetric function. 
The variation we  define employs the $H$-colorings of a graph $G$, using $H$ to restrict the colorings of $G$ and thus produce a different symmetric function.

Stanley's original paper produced two main conjectures, neither of which has been fully solved although significant progress has been made (e.g. \cite{GuayPaq}, \cite{HarPre}, \cite{MarMorWag}, \cite{OrScott}, \cite{SharWa}).  The first concerned $e$-positivity of clawfree, incomparability graphs. The second concerned the uniqueness of chromatic symmetric functions for trees.  We take inspiration from the second of these and look at  uniqueness results for $H$-chromatic symmetric functions.
 We can approach this from two angles and the two overarching questions are:

\begin{enumerate}
\item Given a fixed graph $H$, do  there exist two graphs, $G_1$ and $G_2$, such that $X^{H}_{G_{1}} = X^{H}_{G_{2}}$?
\item Given a fixed graph $G$, do there exist two graphs, $H_1$ and $H_2$, such that $X^{H_{1}}_{G} = X^{H_{2}}_{G}$?
\end{enumerate}
With respect to the first question we introduce the term {\em $H$-chromatically equivalent} and  say that two graphs $G_1$ and $G_2$ are $H$-chromatically equivalent if $X_{G_1}^{H} = X_{G_2}^{H}$. 
At the end of this section we detail the individual questions and results we explore. \\

The concept of a $H$-coloring has been well-studied in graph theory, particularly from a complexity angle (one of the classic papers in this area is \cite{Hell1990complexity}).
Informally, given a graph $H$ and a coloring of $H$, a $H$-coloring of a graph $G$ colors the vertices of $G$ using the colors on $H$ while respecting the adjacencies in $H$. 
Formally, let $\phi$ be a labeling (i.e. an injection on the vertex set) of $H$. A \textit{proper $(H, \phi)$-coloring} of $G$ is a map $\kappa: V(G) \rightarrow \phi(V(H))$ such that $v_1 \sim_G v_2$ implies $\phi^{-1}(\kappa(v_1)) \sim_H \phi^{-1}(\kappa(v_2))$. We say that $G$ is \textit{$\it{H}$-colorable} if there is a labeling of $H$, $\phi$, such that there is a proper $(\it{H},\phi)$-coloring of $G$.  We define our $H$-chromatic symmetric functions as follows:

\begin{dfn}\label{dfn.HCSF}
Let $G, H$ be two graphs and $n$ be the number of vertices in $H$. Let $V(G) = \{v_1, v_2, ..., v_k\}$. Then, $$w_G^H := \sum_{\phi} \sum_{\kappa} x_{\kappa(v_1)}...x_{\kappa(v_k)},$$ where $\phi$ ranges over all labelings $\phi: V(H) \rightarrow [n]$ of $H$ and $\kappa$ ranges over all proper $(H,\phi)$-colorings, is a symmetric polynomial in indeterminates $x_1, x_2, ..., x_n$. Therefore, $w_G^H$ extends naturally to a homogeneous symmetric function of degree $k$. We denote this symmetric function by $X_G^H$, and call it the \textit{$H$-chromatic symmetric function} of $G$.
\end{dfn}

\begin{figure}[ht]
\centering
\includegraphics[scale=0.2]{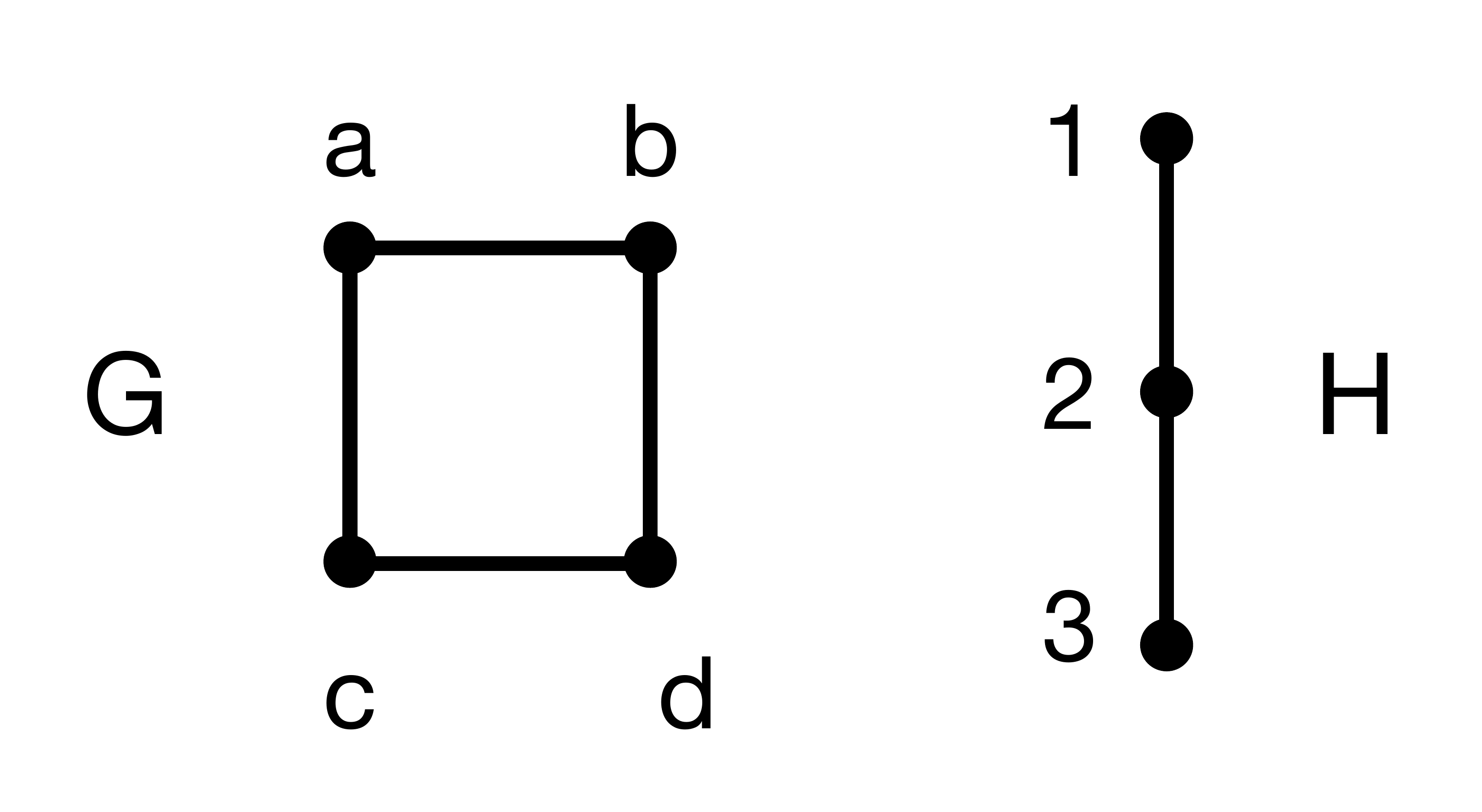}
\caption{Example of a $H$-chromatic symmetric function: $8x_1x_2^2x_3 + 8x_1^2x_2x_3 + $ $8x_1x_2x_3^2 + 8x^2_1x_2^2$ $ + 8 x_2^2x_3^2 + 8 x_1^2 x_3^2$.}
\end{figure}

Note that it might have been tempting to define the $H$-chromatic symmetric function as $\sum_\phi \sum_{\kappa} \prod_{j=1}^n x_{\kappa(v_j)}$, where $\phi$ ranges over all possible labelings $\phi: V(H) \rightarrow \N$, $\kappa$ ranges over all possible $(H,\phi)$-colorings, and $V(G) = \{v_1, v_2, ..., v_n\}$. However, unless we restrict the colours allowed for $H$, this definition will, in general, result in possibly infinite coefficients. A simple example that demonstrates this  is $G = K_1$ and $H = K_2$, in which case the term $x_1$  appears infinitely many times.

When $n \geq k$, $X_G^{K_n}$ is simply a scalar multiple of $X_G$. In contrast to the ordinary chromatic symmetric functions, the $H$-chromatic symmetric functions may sometimes vanish---namely, when $G$ is not $H$-colorable. It is natural to define $X_G^H = 0$ in that case. When $H$ has a loop, however, then $X_G^H$ never vanishes.   While we could confine ourselves to simple $H$'s, we will discuss a specific type of non-simple graph in Section \ref{augmentedstars}.\\

Throughout the paper we will address the following specific questions:  
\begin{itemize}
\item Given a pair of non-isomorphic graphs $G_1, G_2$, does there exist $H$ such that $X_{G_1}^H \neq X_{G_2}^H$?(Section \ref{auto})
\item Given a set of (mutually non-isomorphic) graphs $\mathcal{G}$, what is the smallest size of a set of graphs $\mathcal{H}$ such that for any $G_1, G_2 \in \mathcal{G}$, there exists an $H \in \mathcal{H}$ with $X_{G_1}^H \neq X_{G_2}^H$? (Section \ref{distinguishers})

\item Given a fixed complete bipartite graph $H$, and two non-isomorphic graphs $G_1, G_2$,  what is the easiest way to determine whether $G_1$ and $G_2$ are $K_{m,n}$-chromatically equivalent? (Section \ref{completebipartite})

\item Given a pair of non-isomorphic graphs, $G_1, G_2$, 
what are conditions that guarantee $X_{G_1}^{S_{n+1}^1} = X_{G_2}^{S_{n+1}^1}$,  where $S_{n+1}^{1}$ is the star graph on $n+1$ vertices  ($n \geq 2$) with an extra loop at its center? (Section \ref{augmentedstars})
\item What is the description of a set of $H$-chromatic symmetric functions that is a basis for $\Lambda^n$, the space of symmetric functions of degree $n$?  (Section \ref{completebasis})
\item If $G$ is the star graph $S_{n}$, are the $H$-chromatic symmetric functions unique? (Section \ref{degreeseq})
\item If $G$ is fixed, is it possible to find a set of $H$-chromatic functions that is a basis for $\Lambda^n$? (Section \ref{furtheravenues})

\end{itemize}

As we said at the beginning of this introduction, there are two overarching questions to consider, one where we fix $H$ and consider when
$X^{H}_{G_{1}} = X^{H}_{G_{2}}$,
and one where we fix $G$ and consider when
 $X^{H_{1}}_{G} = X^{H_{2}}_{G}$.  In fact, the majority of the problems we consider in this paper are in the first category.  Among our results, those that involve determining when $X^{H_{1}}_{G} = X^{H_{2}}_{G}$, or related problems, are  Results \ref{prop311},  \ref{lem.stardistinctH}--\ref{prop516},  \ref{prop.augstarCSF},  \ref{lem.Hcharac}--\ref{prop812}, and \ref{prop91}.
 A possible avenue for future work would be to consider more widely when $X^{H_{1}}_{G} = X^{H_{2}}_{G}$.

The paper is organized as follows. Section \ref{background} provides background definitions both from graph theory and symmetric function theory.  Section \ref{basics} introduces the fundamental concepts specific to $H$-chromatic symmetric functions and establishes a selection of general results.  Section \ref{distinguishers}  
explores constructing a single $H$ that distinguishes all graphs in a set $\mathcal{G}$.
Section \ref{completebipartite} considers questions about $H$-chromatic symmetric functions where $H$ is a complete bipartite graph. Section \ref{augmentedstars} looks at uniqueness questions when $H$ is an augmented star graph---a star graph with an extra loop at its center. Section \ref{completebasis} provides a basis for $\Lambda_n$ using $H$-chromatic symmetric functions defined by complete multibipartite graphs.  Section \ref{degreeseq} builds on Section \ref{completebasis} and  furthers our investigation into $H$-chromatic symmetric functions where $H$ is complete bipartite, in particular, the star graph. We close in Section \ref{furtheravenues} with possible future directions of research.


\section{Background}
\label{background}

We will first recall some standard notation and concepts that will be useful later. See \cite{BondyMurty} and \cite{Rosen} for background details on graphs. A (undirected) \textit{graph} consists of a vertex set $V(G)$, an edge set $E(G)$, and an incidence relation $\psi_G$ that maps each edge to an unordered pair of vertices. These two vertices are called the \textit{endpoints} of the edge. An edge is called a \textit{loop} if its two endpoints are the same. A graph has parallel edges, also known as multiple edges, if there are two or more edges between the same two vertices in the graph. A graph is called simple if it has no loops or parallel edges. When $G$ is a simple graph, we sometimes omit its incidence relation, and denote the edges $(u,v)$ directly by their endpoints.

The number of vertices in a graph $G$ is $|V| = \abs{V(G)}$, and the number of edges is $|E| = \abs{E(G)}$. Two vertices $v_1,v_2 \in V(G)$ are said to be \textit{adjacent}, written $v_1 \sim_G v_2$, if they are the endpoints of an edge in $E(G)$. The \textit{degree} of a vertex $v$, written $\deg(v)$, is the number of terminal points of edges incident with $v$. Therefore, a loop contributes 2 to the degree of a vertex. A graph $G' = (V(G'), E(G'), \psi_{G'})$ is a \textit{subgraph} of a graph $G = (V(G), E(G), \psi_{G})$ if $V(G') \subset V(G)$, $E(G') \subset E(G)$, and $\psi_{G'} = \psi_G|E(G')$. An {\em induced subgraph} $G'$ of $G$ is a subset of vertices in $G$ paired with any edges from $G$ whose endpoints are in this subset. More specifically, given a graph $G=(V,E)$, an induced subgraph, $H$, on the vertex set $V' \subset V$ has vertex set $V'$ and edge set 
$E'= \{(i,j) | (i, j) \in \binom{V'}{2} \cap E\}$. A graph $G$ is said to be $H$\textit{-free} if $H$ is not an induced subgraph of $G$. 

Given two graphs $G, H$, we say $G$ is \textit{isomorphic} to $H$ if there exists a pair of bijections $\phi: V(G) \rightarrow V(H), \theta: E(G) \rightarrow E(H)$, such that for every $e \in E(G)$, $\psi_G(e) = (u, v)$ if and only if $\psi_H(\theta(e)) = (\phi(u), \phi(v))$. We call such a pair of bijections an \textit{isomorphism} from $G$ to $H$. When $G, H$ are both simple, an isomorphism can equivalently be represented by a single bijection $\phi: V(G) \rightarrow V(H)$ such that $\phi(u) \sim_H \phi(v)$ if and only if $u \sim_G v$ for every $u, v \in V(G)$. We shall also call such $\phi$ an isomorphism. An \textit{automorphism} of $G$ is an isomorphism from $G$ to itself.

Let $G = (V, E, \psi_G)$ and $X \subset V$. The \textit{coboundary} of $X$, denoted by $\partial X$, is the set $\{e \in E \mid \text{exactly one endpoint of $e$ is in $X$}\}$. A graph $G$ is said to be \textit{disconnected} if there exists a non-empty proper subset of $V$ whose coboundary is empty. Otherwise, $G$ is said to be \textit{connected}. A \textit{connected component}, or sometimes simply \textit{component}, of $G$ is a maximal (under set inclusion) connected subgraph of $G$. Two graphs are \textit{disjoint} if their vertex sets (and edge sets) are disjoint. Let $G_1 = (V_1, E_1, \psi_{G_1}), G_2 = (V_2, E_2, \psi_{G_2})$ be two disjoint graphs, their disjoint union, denoted by $G_1 \uplus G_2$, is the graph $(V_1 \uplus V_2, E_1 \uplus E_2, \psi_{G_1 \uplus G_2})$, where $\psi_{G_1 \uplus G_2}$ is the map whose domain is $E_1 \uplus E_2$ and $\psi_{G_1 \uplus G_2}|E_1 = \psi_{G_1}$, $\psi_{G_1 \uplus G_2}|E_2 = \psi_{G_2}$.

Let $G = (V, E, \psi_G)$ and $X \subset V$. $X$ is called an \textit{independent set} of $G$ if the vertices in $X$ are mutually non-adjacent. $X$ is called a \textit{clique} if the vertices in $X$ are mutually adjacent. A \textit{complete graph} on $n$ vertices, denoted by $K_n$, is a simple graph on $n$ vertices whose entire vertex set is a clique. An \textit{edgeless graph} (or \textit{null graph}) on $n$ vertices, denoted by $N_n$, is the graph on $n$ vertices whose edge set is empty. A \textit{cycle} of size $n$, denoted by $C_n$, is a connected graph on $n$ vertices such that every vertex has a degree of $2$. (So $C_1$ is a single loop.) A \textit{path} of size $n$, denoted by $P_n$, is $C_n$ with an edge removed. 

We say that a graph is \textit{bipartite} if its vertices $V$ can be partitioned into two independent sets, $V_1$ and $V_2$. A simple graph  is \textit{complete bipartite} if it is bipartite such that every vertex in $V_1$ is adjacent to every vertex in $V_2$. We denote by $K_{a, b}$ the complete bipartite graphs whose two parts contain $a, b$ vertices, respectively. A \textit{star graph} on $(n+1)$ vertices, denoted by $S_{n+1}$, is the complete bipartite $K_{1,n}$. We call the vertex of degree $n$ the \textit{center} of the star, and the rest the \textit{leaves}. A \textit{complete n-partite} graph is a simple graph whose vertex set can be partitioned into $n$ independent sets, such that there is an edge between any two vertices that are not in the same independent set. A graph is \textit{cyclic} if it contains at least one subgraph that is a cycle. Otherwise, it is \textit{acyclic}. A \textit{forest} is an acyclic graph. A \textit{tree} is a connected forest.

Let $G = (V, E, \psi_G)$ be a simple graph. The \textit{complement} of $G$, denoted by $\overline{G}$, is $\overline{G} = (V, {V \choose 2} \setminus E)$. That is, given $u \neq v$, $(u, v) \in E(\overline{G})$ if and only if $(u, v) \notin E(G)$.

Let $G = (V, E, \psi_G)$ be a graph. A \textit{proper coloring}, or sometimes simply \textit{coloring}, of a graph is a function $\kappa: V \rightarrow \N$ that satisfies $\kappa(u) \neq \kappa(v)$ if $u \sim v$. In this definition, we do not require $u \neq v$. Hence, a graph with a loop can never have a proper coloring. The \textit{chromatic number} of $G$, denoted by $\chi(G)$, is $\min \{\abs{\kappa(V)} \mid \kappa \text{ is a proper coloring of } G\}$.

Let $G = (V(G), E(G), \psi_G), H = (V(H), E(H), \psi_H)$ be two graphs. A \textit{labeling} of $H$ is an injective function $\phi: V(H) \rightarrow \N$. Let $\phi$ be a labeling of $H$. Recall from Section \ref{Introduction} that a \textit{proper $(H, \phi)$-coloring} of $G$ is a map $\kappa: V(G) \rightarrow \phi(V(H))$ such that $v_1 \sim_G v_2$ implies $\phi^{-1}(\kappa(v_1)) \sim_H \phi^{-1}(\kappa(v_2))$. When we have a known fixed labeling $\phi$, we sometimes omit it and write just $H$-coloring. We also use $H$-coloring as a name of the subject, even if no labelings may have been presented. This will be the main subject of this paper. We shall remark that we do not require $G, H$ to be simple graphs. For example, when $H$ contains a loop, then a proper $H$-coloring may assign the same color to two adjacent vertices in $G$. In particular, a proper coloring is not always a proper $H$-coloring.

We say that $\lambda = (\lambda_1,\dots, \lambda_k)$ is a \textit{partition} of a positive integer $n$, written $\lambda \vdash n$, if $\lambda$ is a list of weakly decreasing positive integers whose sum is $n$. We refer to each $\lambda_i$ as a \textit{part}. The function $r_i(\lambda)$ gives the number of parts of $\lambda$ equal to $i$. The length of this partition is denoted by $l(\lambda) := k$. Let $\kappa: V(G) \rightarrow \N$ be a coloring. Let $k_i$ be the number of vertices that are colored by $i$. If we arrange the non-zero terms in $\{k_i\}_{i \in \N}$ in weakly decreasing order, we will obtain a partition of $n$, where $n$ is the number of vertices of $G$. We call this partition the \textit{type} of $\kappa$.

\textit{Symmetric functions}, $f(x)$, are formal power series in infinitely many variables, $x = (x_1,x_2,\ldots)$, that are invariant under permutations of the subscripts. See \cite{MacDon} or \cite{StanleyBook} for background on notation and terminology for symmetric functions. 
The terms of a symmetric function are referred to as \textit{monomials}.  A symmetric function is said to be a \textit{homogeneous symmetric function of degree $k$} if the sum of the powers of each monomial is equal to $k$. The space of homogeneous symmetric functions of degree $k$ is denoted $\Lambda^k$, and the graded vector space $\Lambda = \oplus_{k=0}^\infty \Lambda^k$ is the space of symmetric functions. Let $\lambda = (\lambda_1, \lambda_2, ..., \lambda_l)$ be a partition of $k$. The $\textit{monomial symmetric functions}$ are defined by $$m_\lambda = \sum_{\sigma} x_{\sigma(1)}^{\lambda_1} x_{\sigma(2)}^{\lambda_2} ... x_{\sigma(l)}^{\lambda_l},$$ where $\sigma$ varies over all $l$-tuples of positive integers. The set $\{m_\lambda \mid \lambda \vdash k\}$ is a basis for $\Lambda^k$.

Other classical symmetric functions are the {\em elementary} and {\em power sum symmetric functions}:
$$e_n= m_{(1^n)} = \sum_{i_1 < i_2 < \ldots < i_n} x_{i_1} x_{i_2} x_{i_3} \cdots x_{i_n}, $$ and
$$p_n = m_{(n)} = \sum_{i \geq 1} x_i^n $$ respectively.  For $k<0$, we define $p_k = e_k =0$, and $p_0 = e_0 = 1$.
Then we can define
$$e_\lambda = e_{\lambda_1}e_{\lambda_2}\cdots e_{\lambda_l}$$
and $$p_\lambda = p_{\lambda_1}p_{\lambda_2}\cdots p_{\lambda_l}$$
for partition $\lambda \vdash n$.
The {\em Schur functions} are defined as:
$$ s_{\lambda}=\mathrm{det}(e_{\lambda^\prime_i-i+j}), $$
where $i,j \in \{1,2,\ldots,l\}$
and $\lambda'$ is a conjugate partition of a given partition $\lambda=(\lambda_1,\ldots,\lambda_l)$.

In 1995, Stanley introduced a generalization of the chromatic symmetric polynomial of a graph---the \textit{chromatic symmetric function}. We will recall its basic definition. For a more in-depth cover refer to his original paper, \cite{stanley1995symmetric}. 

 Let $G = (V, E, \psi)$ be a (not necessarily simple) graph. Let $V = \{v_1,v_2,\ldots,v_n\}$, we define \textit{chromatic symmetric function} of $G$ by
\[
X_G = \sum_{\kappa} x_{\kappa(v_1)}x_{\kappa(v_2)}\ldots x_{\kappa(v_n)}
\]
where the sum ranges over all proper colorings of $G$.

Throughout this paper, we will refer to Stanley's chromatic symmetric functions as ``ordinary chromatic symmetric functions".

In this paper, we will mostly be concerned with simple graphs, although we will look at graphs with loops in some later sections. From now on, unless otherwise stated, whenever we say graphs, we \textit{always} mean simple graphs.

Throughout this paper, the multinomial coefficients $${n \choose n_1, n_2, ..., n_k} := \frac{n!}{n_1! n_2! ... n_k!}$$ will be used for several times, where $n = \sum_{j=1}^k n_j$. For notational cleanness, however, we require only that $n \geq \sum_{j=1}^k n_j$ and we interpret that $${n \choose n_1, n_2, ..., n_k} := \frac{n!}{n_1! n_2! ... n_k! (n - n_1 - n_2 - ... - n_k)!}.$$

\section{\textit{H}-chromatic symmetric functions}
\label{basics}

Before we explore the properties of $H$-chromatic symmetric functions we will introduce some additional concepts.
As in Definition \ref{dfn.HCSF}, computing the $H$-chromatic symmetric functions requires considering all labelings $\phi: V(H) \rightarrow [n]$. The total number of possible labelings is $n!$, which grows rapidly as $n \rightarrow \infty$. Nevertheless, the following Lemma allows us to compute $X_G^H$ using only one labeling.

\begin{lem}\label{lem.singlerep}
Let $H$ be a graph on $n$ vertices with an arbitrary fixed labeling $\phi$. Suppose $G$ is a graph on $k$ vertices, and $\lambda = (\lambda_1, \lambda_2, ..., \lambda_l) \vdash k$. Let $d_\lambda$ denote the number of $(H,\phi)$-colorings of $G$ of type $\lambda$. Then, $$X_G^H = \sum_{\lambda \vdash k, l(\lambda) \leq n} c_\lambda m_\lambda,$$ where $$c_\lambda = \frac{d_\lambda n!}{{n \choose {r_1(\lambda), r_2(\lambda), ..., r_k(\lambda)}}}.$$ Note that $n \geq \sum_{i=1}^k r_i(\lambda)$ and we recall the convention of writing multinomial coefficients introduced in Section \ref{background}.
\end{lem}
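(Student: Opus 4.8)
The plan is to exploit the symmetry of the double sum over labelings in the definition of $w_G^H$ (Definition \ref{dfn.HCSF}), using the fixed labeling $\phi$ of the statement as a reference point. Since $\abs{V(H)} = n$, every labeling is a bijection onto $[n]$, so an arbitrary labeling can be written uniquely as $\sigma \circ \phi$ for some permutation $\sigma \in S_n$, giving a bijection between the $n!$ labelings and $S_n$. The first step is to show that the proper $(H, \sigma \circ \phi)$-colorings of $G$ are in bijection with the proper $(H,\phi)$-colorings via $\kappa \mapsto \sigma^{-1} \circ \kappa$. This is a direct check: because $(\sigma \circ \phi)^{-1} = \phi^{-1} \circ \sigma^{-1}$, the quantity $(\sigma \circ \phi)^{-1}(\kappa(v)) $ equals $\phi^{-1}\bigl((\sigma^{-1} \circ \kappa)(v)\bigr)$, so the defining adjacency condition transports correctly under this relabeling. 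One must only be careful that $\kappa$ and $\phi$ have different domains and that the composition directions are not confused.

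Using this bijection, I would rewrite the defining double sum as a single sum over the proper $(H,\phi)$-colorings $\kappa$, with an inner sum over $\sigma \in S_n$ of the monomial $\prod_{j=1}^k x_{\sigma(\kappa(v_j))}$. Grouping the colorings by type, I fix a coloring $\kappa$ of type $\lambda = (\lambda_1, \dots, \lambda_l)$ and evaluate $\sum_{\sigma \in S_n} \prod_{j=1}^k x_{\sigma(\kappa(v_j))}$. Writing the distinct colors used by $\kappa$ as $c_1, \dots, c_l$ with multiplicities $\lambda_1, \dots, \lambda_l$, this monomial equals $\prod_{i=1}^l x_{\sigma(c_i)}^{\lambda_i}$, so as $\sigma$ ranges over $S_n$ the tuple $(\sigma(c_1), \dots, \sigma(c_l))$ runs over all ordered $l$-tuples of distinct indices in $[n]$, each attained exactly $(n-l)!$ times (the number of ways to extend the partial bijection $c_i \mapsto a_i$ to all of $[n]$).

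The key combinatorial step is to convert this sum over ordered tuples into a multiple of $m_\lambda$. Since $m_\lambda$ is indexed by \emph{distinct} monomials, each monomial of type $\lambda$ arises from exactly $\prod_i r_i(\lambda)!$ ordered tuples, because permuting the positions carrying equal exponents does not change the monomial. Combining the two overcounts yields the polynomial identity $\sum_{\sigma \in S_n} \prod_{j=1}^k x_{\sigma(\kappa(v_j))} = (n-l)! \left(\prod_i r_i(\lambda)!\right) m_\lambda$. Summing over the $d_\lambda$ colorings of type $\lambda$ and recalling that $l(\lambda) = \sum_i r_i(\lambda)$, the coefficient of $m_\lambda$ becomes $c_\lambda = d_\lambda \,(n-l)!\prod_i r_i(\lambda)!$, which equals $d_\lambda\, n! / \binom{n}{r_1(\lambda), \dots, r_k(\lambda)}$ under the multinomial convention of Section \ref{background}. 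The restriction $l(\lambda) \leq n$ appears automatically, since a coloring cannot use more than $n$ distinct colors.

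The main obstacle I anticipate is the bookkeeping in this last step: correctly isolating both the $(n-l)!$ overcount coming from the colors of $[n]$ unused by $\kappa$ and the $\prod_i r_i(\lambda)!$ overcount coming from repeated parts, then checking that their product is precisely the reciprocal multinomial coefficient in the claimed formula. The relabeling bijection of the first paragraph is routine but should be stated precisely, as it is the structural fact that collapses the sum over all $n!$ labelings down to a computation with the single fixed labeling $\phi$.
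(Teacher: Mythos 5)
Your proposal is correct and takes essentially the same approach as the paper: both exploit the relabeling symmetry to reduce the sum over all $n!$ labelings to the single fixed labeling $\phi$, and both identify the multiplicity $n!/\binom{n}{r_1(\lambda),\dots,r_k(\lambda)} = (n-l)!\,\prod_i r_i(\lambda)!$ with which each monomial of type $\lambda$ occurs. The paper organizes this as counting the $d_\lambda n!$ total terms of type $\lambda$ against the $\binom{n}{r_1(\lambda),\dots,r_k(\lambda)}$ distinct monomials of that type, whereas you evaluate the orbit sum over $S_n$ for each fixed coloring explicitly---the same computation with the implicit symmetry step spelled out.
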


\begin{proof}
Since each labeling $\phi$ of $H$ induces the same number of $(H,\phi)$-colorings of type $\lambda$, there will be $d_\lambda n!$ terms of type $\lambda$ in $w_G^H$, where $w_G^H$ is defined in Definition \ref{dfn.HCSF}. To compute the number of distinct monomials of type $\lambda$ in indeterminates $x_1, x_2, ..., x_n$, we choose $r_1$ from the $n$ indeterminates and assign them degree 1, and then $r_2$ from the rest of $(n-r_1)$ indeterminates and assign them degree 2. This process is done sequentially for $j = 1, 2, ..., k$, where in each iteration, $r_j$ indeterminates are chosen from the remaining $(n-\sum_{i=1}^{j-1} r_i)$ and assigned degree $j$. Therefore, there will be ${n \choose {r_1(\lambda), r_2(\lambda), ..., r_k(\lambda)}}$ distinct monomials of type $\lambda$. Hence, $c_\lambda$ is the coefficient of any particular monomial of type $\lambda$ in $w_G^H$. The conclusion follows from the definition of $X_G^H$.
\end{proof}

In light of Lemma \ref{lem.singlerep}, it is convenient to define the $n$-augmented monomials as follows. We include the $n$ in the name to distinguish from the augmented monomials used by Stanley \cite{stanley1995symmetric}.

\begin{dfn}\label{dfn.augmonomial}
Given a number $n \in \N$ and a partition $\lambda \vdash k$ with $l(\lambda) \leq n$, we define the \textit{n-augmented monomial} $m_\lambda^n$ by 
\begin{equation}\label{eqnmulti}
m_\lambda^n = \frac{n!}{{n \choose {r_1(\lambda), r_2(\lambda), ..., r_k(\lambda)}}}m_\lambda.
\end{equation}
\end{dfn}

Suppose $G, H$ are graphs on $k, n$ vertices, respectively. Clearly, when $k \leq n$, the set $\{m_\lambda^n\}_{\lambda \vdash k}$ is a basis for $\Lambda^k$. When $k>n$, the only non-vanishing monomials in $X_G^H$ must have type of length no larger than $n$. Therefore, $X_G^H \in \spann\{m_\lambda\}_{\lambda \vdash k, l(\lambda) \leq n} = \spann\{m^n_\lambda\}_{\lambda \vdash k, l(\lambda) \leq n}$. Therefore, we can always write $X_G^H$ uniquely as $\sum_{\lambda \vdash k, l(\lambda) \leq n} d_\lambda m_\lambda^n$. In the rest of this paper, when we write $X_G^H = \sum_{\lambda \vdash k} d_\lambda m_\lambda^n$, we always assume $d_\lambda = 0$ for every $l(\lambda) > k$, although $m_\lambda^n$ has not been formally defined. Lemma \ref{lem.singlerep} says $d_\lambda$ is the number of $(H,\phi)$-colorings of $G$ of type $\lambda$ for any fixed $\phi$.\\

It is worth mentioning the caveat that the transformation between $m_\lambda$ and $m_\lambda^n$ is not uniform over all $\lambda$. For example, if we write $X_G^H = \sum_\lambda c_\lambda m_\lambda = \sum_\lambda d_\lambda m_\lambda^n$, then it is \textit{not} true in general that $c_{\lambda_1}/d_{\lambda_1} = c_{\lambda_2}/d_{\lambda_2}$ for any pair of partitions $\lambda_1, \lambda_2 \vdash k$. Furthermore, if we have $n_1, n_2 \geq l(\lambda)$, then $$m_\lambda^{n_1} = \frac{(n_1 - \sum r_j(\lambda))!}{(n_2 - \sum r_j(\lambda))!}m_\lambda^{n_2} = \frac{(n_1 - l(\lambda))!}{(n_2 - l(\lambda))!}m_\lambda^{n_2}.$$ This relation, albeit elegant, is seldom used in the rest of this paper.\\

A natural question is whether we can actually obtain $H$-chromatic symmetric functions that are not ordinary chromatic symmetric functions?  Indeed, we can.   To see this, we observe that an ordinary chromatic symmetric function of degree $k$ always contains the monomial $m_{(1^k)}$. By containing the monomial $m_{(1^k)}$ we mean that if we write the symmetric function in terms of the monomial basis, the coefficient of $m_{(1^k)}$ is non-zero. On the other hand, every monomial $m_\lambda$ can be written as a multiple of a $H$-chromatic symmetric function, as the following proposition shows.

\begin{prop}\label{prop.monomial}
Given a partition $\lambda = (\lambda_1, \lambda_2, ..., \lambda_l) \vdash k$, denote by $K_\lambda$ the complete $l$-partite graph whose parts contain $\lambda_1, \lambda_2, ..., \lambda_l$ vertices, respectively. Then, $$X_{K_\lambda}^{K_l} = l! m_\lambda^l.$$

Note that in particular, if $\lambda = (n)$, then \begin{equation*}
    X_{\overline{K_{n}}}^{K_1} = m_{(n)}
\end{equation*}
\end{prop}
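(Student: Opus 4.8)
The plan is to apply Lemma~\ref{lem.singlerep}, which reduces the computation of $X_{K_\lambda}^{K_l}$ to counting, for a single fixed labeling $\phi$ of $K_l$, the proper $(K_l,\phi)$-colorings of $K_\lambda$ organized by type. Writing $X_{K_\lambda}^{K_l} = \sum_{\mu \vdash k,\, l(\mu)\leq l} d_\mu m_\mu^l$, it suffices to determine the coefficients $d_\mu$, where by the discussion following Lemma~\ref{lem.singlerep} each $d_\mu$ is the number of $(K_l,\phi)$-colorings of $K_\lambda$ of type $\mu$.

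First I would simplify the coloring condition. Fixing $\phi$ to be the identity so that the available colors are $[l] = \{1,\dots,l\}$, the adjacency $\phi^{-1}(\kappa(v_1)) \sim_{K_l} \phi^{-1}(\kappa(v_2))$ holds precisely when $\kappa(v_1)\neq\kappa(v_2)$, since any two distinct vertices of the complete graph $K_l$ are adjacent. Hence a proper $(K_l,\phi)$-coloring of $K_\lambda$ is exactly an ordinary proper $l$-coloring of $K_\lambda$.

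The heart of the argument is the combinatorial observation that every proper $l$-coloring of $K_\lambda$ assigns a single color to each part and uses all $l$ colors. Since distinct parts of $K_\lambda$ are completely joined, each color class is an independent set and therefore lies inside a single part; consequently the color sets used by different parts are pairwise disjoint subsets of $[l]$. As each of the $l$ nonempty parts uses at least one color, a counting (pigeonhole) argument forces each part to use exactly one color, and these $l$ colors are distinct. Thus proper $l$-colorings of $K_\lambda$ correspond bijectively to the $l!$ bijections from the set of parts to $[l]$. This pigeonhole step is the only real obstacle; everything else is bookkeeping.

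Finally, I would compute the type of each such coloring: if the part of size $\lambda_i$ receives a single color, that color is used exactly $\lambda_i$ times, so the multiset of color multiplicities is $\{\lambda_1,\dots,\lambda_l\}$, whose weakly decreasing rearrangement is $\lambda$ itself. Hence all $l!$ colorings have type $\lambda$, giving $d_\lambda = l!$ and $d_\mu = 0$ for $\mu \neq \lambda$, so $X_{K_\lambda}^{K_l} = l!\,m_\lambda^l$ by Lemma~\ref{lem.singlerep}. For the special case $\lambda = (n)$, I would note that $l = 1$, that $K_{(n)}$ is the edgeless graph $\overline{K_n}$, and that $m_{(n)}^1 = m_{(n)}$ follows directly from Definition~\ref{dfn.augmonomial} (the multinomial coefficient is $1$), yielding $X_{\overline{K_n}}^{K_1} = m_{(n)}$.
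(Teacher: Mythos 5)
Your proof is correct and follows exactly the route the paper intends: the paper omits the proof as ``obvious'' given Lemma~\ref{lem.singlerep}, and your argument supplies precisely the expected details---reducing to ordinary proper $l$-colorings of $K_\lambda$, using the independent-set/pigeonhole observation to show each part receives a single distinct color, and concluding $d_\lambda = l!$ with all other coefficients zero. The special case $\lambda = (n)$ is also handled correctly via Definition~\ref{dfn.augmonomial}.
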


Given Lemma \ref{lem.singlerep}, the proof of Proposition \ref{prop.monomial} is obvious, and thus omitted. We shall see a stronger version of the above proposition in Section \ref{completebasis}.

In the setting of ordinary chromatic symmetric functions, we have the property that $X_{G_1 \uplus G_2} = X_{G_1}X_{G_2}$ \cite{stanley1995symmetric}. This is, however, not true in general for $H$-chromatic symmetric functions when we fix a $H$. The easiest example to show this is  $G_1 = G_2 = H = K_1$, in which case $X_{G_1}^H X_{G_2}^H$ contains the monomial $m_{(1,1)}$, whereas $X_{G_1 \uplus G_2}^H$ does not. Another nice property that holds for ordinary chromatic symmetric functions but not $H$-chromatic symmetric functions is the weighted deletion-contraction result. To formulate the exact statement the notion of weighted chromatic symmetric functions will be useful, which we will not introduce here. Interested readers should refer to Crew and Spirkl's paper for a detailed discussion \cite{crew2020deletion}. The main difference in $H$-colorings is that a proper $H$-coloring of $G \setminus (u,v)$ that colors $u, v$ differently is not necessarily a proper $H$-coloring of $G$. On the other hand, if we replace $H$-coloring by ordinary coloring, then the implication in the previous statement will hold, and is a key observation used in the proof of the weighted deletion-contraction result for ordinary chromatic symmetric functions.\\

In the previous paragraph, we see that there is very little we can say when $G$ is written as a disjoint union of its components. When we consider the connected components of $H$, there is a simple but useful result. The restriction here is that $G$ has to be connected.

\begin{prop}\label{prop.disjointH}
Let $G, H_1, H_2, ..., H_l$ be connected graphs, and $H = \biguplus_{j=1}^l H_j$. Let $n_j = \abs{V(H_j)}$ and $n = \abs{V(H)}$. For each $1 \leq j \leq l$, we write $X_G^{H_j} = \sum_{\lambda \vdash k} c^j_\lambda m^{n_j}_\lambda$. Then, $$X_G^H = \sum_{\lambda \vdash k} c_\lambda m_\lambda^n,$$ where $c_\lambda = \sum_{j=1}^l c^j_\lambda$.
\end{prop}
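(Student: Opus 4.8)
The plan is to work entirely at the level of the combinatorial coefficients, using the interpretation established right after Lemma \ref{lem.singlerep}: when an $H$-chromatic symmetric function is expanded in the $n$-augmented monomial basis, the coefficient of $m_\lambda^n$ is exactly the number of $(H,\phi)$-colorings of $G$ of type $\lambda$, computed for any single fixed labeling $\phi$. Thus it suffices to fix one labeling $\phi$ of $H$ and show that, for each type $\lambda$, the number of $(H,\phi)$-colorings of $G$ of type $\lambda$ is the sum over $j$ of the numbers of $(H_j,\phi_j)$-colorings of $G$ of type $\lambda$, where $\phi_j := \phi|_{V(H_j)}$ is the labeling of the $j$th component induced by $\phi$. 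Since the count of colorings of a prescribed type depends only on the graph and not on the particular injective labeling chosen, $c^j_\lambda$ may be read off from $\phi_j$ just as well as from any other labeling of $H_j$.

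The heart of the argument is a structural claim that relies on $G$ being connected: every proper $(H,\phi)$-coloring $\kappa$ of $G$ has its image contained in $\phi(V(H_j))$ for exactly one index $j$. First I would observe that if $v_1 \sim_G v_2$, then $\phi^{-1}(\kappa(v_1)) \sim_H \phi^{-1}(\kappa(v_2))$, and since every edge of $H = \biguplus_{j=1}^l H_j$ lies within a single connected component, the two preimages belong to the same $H_j$; equivalently, adjacent vertices of $G$ receive colors from the same block $\phi(V(H_j))$. Because $G$ is connected, propagating this along paths shows that all of $V(G)$ is colored from one block $\phi(V(H_{j_0}))$, and the index $j_0$ is unique as the blocks $\phi(V(H_1)), \ldots, \phi(V(H_l))$ are pairwise disjoint.

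Next I would set up the type-preserving bijection. Having fixed $j$, I would argue that the proper $(H,\phi)$-colorings of $G$ whose image lies in $\phi(V(H_j))$ are exactly the proper $(H_j,\phi_j)$-colorings of $G$: for such a coloring both preimages $\phi^{-1}(\kappa(v_1))$ and $\phi^{-1}(\kappa(v_2))$ already lie in $V(H_j)$, and since $H_j$ is a connected component of $H$ the relations $\sim_H$ and $\sim_{H_j}$ agree on $V(H_j)$, so the $(H,\phi)$-coloring condition and the $(H_j,\phi_j)$-coloring condition coincide. This identification manifestly preserves the type of the coloring, as the type records only the multiset of sizes of the color classes and not which colors are used. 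Combining the structural claim with this bijection gives, for each $\lambda$, that the number $d_\lambda$ of $(H,\phi)$-colorings of $G$ of type $\lambda$ equals $\sum_{j=1}^l d^j_\lambda$, where $d^j_\lambda$ counts $(H_j,\phi_j)$-colorings of type $\lambda$. Reading these counts back as the coefficients in the respective $n$- and $n_j$-augmented monomial expansions via Lemma \ref{lem.singlerep} yields $c_\lambda = \sum_{j=1}^l c^j_\lambda$, which is the desired identity.

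The step I expect to be the main obstacle, or at least the one requiring the most care, is the structural claim: it is the only place where connectedness of $G$ is used, and one must verify that it genuinely forces the whole image into a single block rather than merely forcing adjacent pairs to share a block. The subsequent bijection and the passage between the two augmented-monomial bases are routine once one is comfortable with the convention that type-counts are labeling-independent; the only bookkeeping point worth flagging is that $\phi_j$ is an injection into $[n]$ rather than onto $[n_j]$, which is harmless since both Lemma \ref{lem.singlerep} and the type of a coloring are insensitive to the specific labels.
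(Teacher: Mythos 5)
Your proposal is correct and follows essentially the same route as the paper: fix a single labeling $\phi$, invoke Lemma \ref{lem.singlerep} to reduce to counting colorings of each type, and use connectedness of $G$ to show every proper $(H,\phi)$-coloring uses colors from exactly one component $H_j$, so the counts add. The paper states this in three sentences; you have simply made explicit the propagation-along-paths argument and the type-preserving identification of block-$j$ colorings with $(H_j,\phi|_{V(H_j)})$-colorings that the paper leaves implicit.
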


\begin{proof}
Fix a labeling $\phi$ on $H$. Since $G$ is connected, we can only use one component of $H$ to color $G$. Hence, the number of proper $(H,\phi)$-colorings of $G$ is the sum of the number of proper $(H_j,\phi_{|V(H_j)})$-colorings of $G$. The conclusion follows from Lemma \ref{lem.singlerep}.
\end{proof}

\subsection{Some fundamental $H$-chromatic equivalence and uniqueness results}
In this subsection, we deal with problems involving situations when $H$-chromatic symmetric functions of two graphs $G_1$ and $G_2$ for a fixed $H$ are not equal. We explore results pertaining to when $X_{G_1}^{H} \neq X_{G_2}^{H}$.

\begin{dfn} \label{def.generalequiv}
We say that two graphs $G_1$ and $G_2$ are \textit{$H$-chromatically equivalent} if and only if $X_{G_1}^{H} = X_{G_2}^{H}$. We note that this is an equivalence relation that partitions the set of graphs into equivalence classes. Elements in the same equivalence class are $H$-chromatically equivalent to each other.
\end{dfn}

\begin{dfn}
 We say that a graph $G$ is \textit{$H$-chromatically unique} in a set of graphs $S$ if there is no graph $G_0 \in S$ such that $X_{G}^{H} = X_{G_0}^{H}$.
\end{dfn}

\begin{prop}\label{prop.difnocol}
    If two $H$-colorable graphs $G_1$ and $G_2$ differ in the minimum number of colors needed to $H$-color them or the maximum number of colors that can possibly $H$-color them, they cannot have the same $H$-chromatic symmetric function. 
    \begin{proof}
       Suppose two $H$-colorable graphs $G_1$ and $G_2$ differ in the minimum (resp.\ maximum) number of colors needed to $H$-color them. 
       
       Suppose $$X_{G_1}^H = \sum_{\lambda_a \vdash k} c_{\lambda_a} m_{\lambda_a}.$$
       
       Let $h_{1, min}$ (resp. $h_{1, max}$) denote the minimum (resp.\ maximum) number of colors needed to $H$-color $G_1$.
       
       Then there exists some $m_{\lambda_{a_i}}$, $m_{\lambda_{a_j}}$ such that $\lambda_{a_i}, \lambda_{a_j}$ have length $h_{1, min}$ (resp. $h_{1, max}$) 
       
       Suppose $$X_{G_2}^H = \sum_{\lambda_b \vdash k} c_{\lambda_b} m_{\lambda_b}.$$
       
       Let $h_{2, min}$ (resp. $h_{2, max}$) denote the minimum (resp.\ maximum) number of colors needed to $H$-color $G_2$. 
       
       Then there exists some $m_{\lambda_{b_i}}$, $m_{\lambda_{b_j}}$ such that $\lambda_{b_i}, \lambda_{b_j}$ have length $h_{2, min}$ (resp. $h_{2, max}$) 
       
       Without loss of generality, suppose $h_{1, min} < h_{2, min}$ or $h_{1, max} < h_{2, max}$.
       
       If $h_{1, min} < h_{2, min}$ or $h_{1, max} < h_{2, max}$, then $m_{\lambda_{b_1}} , m_{\lambda_{b_2}} , m_{\lambda_{b_3}}, \dots$ will span monomial symmetric functions corresponding to partitions with length $h_{2, min}$ or $h_{2, max}$ but not those of length $h_{1, min}$ and $h_{1, max}$.
       
       If the two graphs differ in the minimum (resp.\ maximum) number of colors needed to $H$-color them, then when we write $X_{G_1}^H$ and $X_{G_2}^H$ as linear combinations of monomial symmetric functions whose partitions correspond to viable $H$-colorings, we would see terms corresponding to partitions of different lengths so $X_{G_1}^H$ and $X_{G_2}^H$ cannot possibly be the same.
       
    \end{proof}
\end{prop}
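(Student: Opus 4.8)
The plan is to reduce the whole statement to one observation together with Lemma~\ref{lem.singlerep}: for a fixed labeling $\phi$, an $(H,\phi)$-coloring of type $\lambda$ uses exactly $l(\lambda)$ distinct colors, since the parts of $\lambda$ record the sizes of the nonempty color classes. Hence the minimum (respectively maximum) number of colors needed to $H$-color a graph $G$ is precisely $\min\{l(\lambda) : d_\lambda > 0\}$ (respectively $\max\{l(\lambda) : d_\lambda > 0\}$), where $d_\lambda$ denotes the number of $(H,\phi)$-colorings of $G$ of type $\lambda$, as in Lemma~\ref{lem.singlerep}. I would prove the contrapositive: if $X_{G_1}^H = X_{G_2}^H$, then $G_1$ and $G_2$ agree in both the minimum and the maximum number of colors.

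First I would fix a labeling $\phi$ of $H$ and use Lemma~\ref{lem.singlerep} to write $X_{G_i}^H = \sum_\lambda c_\lambda^{(i)} m_\lambda$ for $i=1,2$, with $c_\lambda^{(i)} = d_\lambda^{(i)}\, n!/\binom{n}{r_1(\lambda),\ldots,r_k(\lambda)}$. The key point is that the factor $n!/\binom{n}{r_1(\lambda),\ldots,r_k(\lambda)} = r_1(\lambda)!\cdots r_k(\lambda)!\,(n-\sum_i r_i(\lambda))!$ is a strictly positive integer depending only on $\lambda$ and $n$, not on the graph. Consequently $c_\lambda^{(i)} \neq 0$ if and only if $d_\lambda^{(i)} \neq 0$, that is, if and only if $G_i$ admits an $(H,\phi)$-coloring of type $\lambda$.

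Next I would invoke the fact, recalled in Section~\ref{background}, that the monomial symmetric functions $\{m_\lambda\}_{\lambda \vdash k}$ form a basis of $\Lambda^k$. Thus the equality $X_{G_1}^H = X_{G_2}^H$ forces $c_\lambda^{(1)} = c_\lambda^{(2)}$ for every $\lambda \vdash k$, so the two support sets $\{\lambda : c_\lambda^{(1)} \neq 0\}$ and $\{\lambda : c_\lambda^{(2)} \neq 0\}$ coincide. By the previous paragraph these are exactly the sets of realizable types, and by the opening observation their sets of lengths $\{l(\lambda) : d_\lambda^{(i)} > 0\}$ coincide as well. Taking the minimum and maximum of these lengths shows that $G_1$ and $G_2$ share both the smallest and the largest number of colors in an $H$-coloring, which is the contrapositive of the proposition.

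I do not anticipate a genuine obstacle: the entire content is the identification of $l(\lambda)$ with the number of colors used by a coloring of type $\lambda$, combined with the observation that the monomial expansion of $X_G^H$ detects precisely which types occur. The one step worth stating carefully is that the scalar relating $c_\lambda$ to $d_\lambda$ is independent of the graph and strictly positive; this is exactly what permits the passage from ``equality of the symmetric functions'' to ``equality of the realizable color-counts,'' and it is where one might otherwise be tempted to argue only about individual coefficients rather than about the shared support.
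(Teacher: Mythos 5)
Your proposal is correct and takes essentially the same route as the paper: both arguments come down to the fact that, since the $m_\lambda$ form a basis of $\Lambda^k$, the support of the monomial expansion of $X_G^H$ records exactly which partition lengths (i.e.\ color counts) are realizable, so differing minima or maxima force differing supports. Your contrapositive phrasing, with the explicit remark via Lemma \ref{lem.singlerep} that the scalar relating $c_\lambda$ to $d_\lambda$ is positive and graph-independent, merely makes precise a step the paper's proof leaves implicit.
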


\begin{cor}\label{cor.difbi}
    Suppose $G_1$ and $G_2$ are both $H$-colorable, i.e. $X_{G_1}^{H} , X_{G_2}^{H}$ are non-zero. If $G_1$ is bipartite and $G_2$ is not bipartite then $G_1$ and $G_2$ cannot have the same $H$-chromatic symmetric functions.
    \begin{proof}
       $G_1$ is 2-colorable, whereas $G_2$ is not. Applying Proposition \ref{prop.difnocol} above gives us our result.
    \end{proof}
\end{cor}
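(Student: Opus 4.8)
The plan is to deduce this directly from Proposition~\ref{prop.difnocol} by showing that the minimum number of colors needed to $H$-color $G_1$ differs from the minimum number needed for $G_2$. The bridge between bipartiteness (an ordinary-coloring notion) and $H$-colorings rests on a single observation: since $H$ is simple (recall the standing convention that graphs are simple unless stated otherwise), every proper $(H,\phi)$-coloring $\kappa$ is automatically a proper coloring in the ordinary sense. Indeed, if $u \sim_G v$ then $\phi^{-1}(\kappa(u)) \sim_H \phi^{-1}(\kappa(v))$, and as $H$ has no loops these two vertices of $H$ are distinct, forcing $\kappa(u) \neq \kappa(v)$. Consequently, the number of distinct colors used by any $H$-coloring of a graph $G$ is at least $\chi(G)$.

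First I would handle $G_2$. Since $G_2$ is not bipartite, $\chi(G_2) \geq 3$, so by the observation above every $(H,\phi)$-coloring of $G_2$ uses at least three colors; hence the minimum number of colors needed to $H$-color $G_2$ is at least $3$. Next I would bound the minimum for $G_1$ from above by $2$. If $G_1$ is edgeless a single color suffices. Otherwise $G_1$ has an edge, and since $G_1$ is $H$-colorable this edge must be sent to an edge of $H$; in particular $H$ contains some edge, say between $a, b \in V(H)$. Fixing a bipartition $V(G_1) = A \uplus B$ into independent sets and coloring all of $A$ by $\phi(a)$ and all of $B$ by $\phi(b)$ yields a valid $(H,\phi)$-coloring using at most two colors, since every edge of $G_1$ runs between $A$ and $B$ and $a \sim_H b$. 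Therefore the minimum number of colors needed to $H$-color $G_1$ is at most $2$.

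Combining the two bounds, the minimum number of $H$-colors for $G_1$ is at most $2$, strictly less than the minimum for $G_2$, so the two minima differ; Proposition~\ref{prop.difnocol} then immediately yields $X_{G_1}^H \neq X_{G_2}^H$. The only genuinely delicate step is the upper bound for $G_1$: one must notice that $H$-colorability of a graph possessing an edge forces $H$ itself to contain an edge, and then exploit that edge to realize the bipartition of $G_1$ as a two-color $H$-coloring. The lower bound for $G_2$, by contrast, is a clean consequence of the fact that---for simple $H$---$H$-colorings refine ordinary proper colorings, so the non-bipartiteness of $G_2$ transfers verbatim into a lower bound on its $H$-chromatic number.
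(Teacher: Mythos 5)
Your proof is correct and takes essentially the same route as the paper: both arguments reduce the claim to Proposition~\ref{prop.difnocol} by comparing the minimum number of colors in an $H$-coloring, using that $G_1$ is $2$-colorable while $G_2$ is not. You merely spell out the bridge the paper leaves implicit---that for simple $H$ every $H$-coloring is an ordinary proper coloring (so the minimum for $G_2$ is at least $3$), and that an edge of $H$, which must exist once an $H$-colorable $G_1$ has an edge, realizes the bipartition of $G_1$ as a two-color $H$-coloring---so this is a careful elaboration of the paper's argument rather than a different one.
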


\begin{prop}\label{prop.unequalbipartitions} Let $H$ be a non-edgeless graph.
    Suppose $B_1$ is a connected bipartite graph such that $V(B_{1})$ is partitioned into two disjoint sets $V_{1} , V_{2}$. Suppose $B_2$ is a connected bipartite graph such that $V(B_{2})$ is partitioned into two disjoint sets $W_{1} , W_{2}$. Suppose $\{ |V_{1}|, |V_{2}|\} \neq \{ |W_{1}|, |W_{2}|\}$, i.e. the bipartitions of $B_1$ and $B_2$ are not the same. Then $B_1$ and $B_2$ are not $H$-chromatically equivalent, i.e. $X_{B_1}^{H} \neq X_{B_2}^{H}$.

\begin{proof}
   Consider the partition $\lambda_1 = (|V_{1}|, |V_{2}|)$ that corresponds to 2-coloring $B_1$. Consider the partition $\lambda_2 = (|W_{1}|, |W_{2}|)$ that corresponds to 2-coloring $B_2$. Now the bipartition of a connected bipartite graph is unique, so its 2-coloring is unique up to switching colors. It follows that these are the only partitions corresponding to 2-coloring $B_1$ and $B_2$. The partitions of length 2 that correspond to 2-coloring $B_1$ and $B_2$ are not the same, so the corresponding $m_{\lambda}$s in $X_{B_1}^{H}$ and $X_{B_2}^{H}$ are not the same. Hence $X_{B_1}^{H}$ and $X_{B_2}^{H}$ cannot possibly be the same.
\end{proof}
\end{prop}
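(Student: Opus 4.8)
The plan is to isolate the length-two part of each $H$-chromatic symmetric function and show that it alone forces $X_{B_1}^H \neq X_{B_2}^H$. First I would dispose of the degenerate case: if $\abs{V(B_1)} \neq \abs{V(B_2)}$, then $X_{B_1}^H$ and $X_{B_2}^H$ are homogeneous of different degrees, hence unequal (both are nonzero because $H$ has an edge, so any bipartite graph is $H$-colorable). So I may assume $\abs{V(B_1)} = \abs{V(B_2)} = k$. Since the bipartitions differ, we cannot have $k = 1$ (both would then be $K_1$ with bipartition $\{1,0\}$), so $k \geq 2$; being connected with $k \geq 2$ vertices, each $B_i$ has an edge and therefore both of its parts are nonempty.

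By Lemma \ref{lem.singlerep}, fixing a single labeling $\phi$ of $H$ I may write $X_{B_i}^H = \sum_{\lambda \vdash k} d^i_\lambda\, m_\lambda^n$ (equivalently $\sum_\lambda c^i_\lambda m_\lambda$, where each $c^i_\lambda$ is a positive multiple of $d^i_\lambda$), with $d^i_\lambda$ equal to the number of proper $(H,\phi)$-colorings of $B_i$ of type $\lambda$. Since $\{m_\lambda\}_{\lambda \vdash k}$ is a basis of $\Lambda^k$, it suffices to produce one partition on which the two coefficient sequences differ.

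The heart of the argument is to pin down the length-two types. I claim that for a connected bipartite graph $B$ whose parts have sizes $p \geq q > 0$, the \emph{only} length-two type occurring in $X_B^H$ is $\mu_B := (p,q)$, and that it occurs with positive coefficient. Positivity is easy: by the paper's standing convention $H$ is simple, and being non-edgeless it has an edge $\{a,b\}$ with $a \neq b$; coloring one part of $B$ by $\phi(a)$ and the other by $\phi(b)$ is a proper $(H,\phi)$-coloring of type $(p,q)$. For the uniqueness half, consider any $(H,\phi)$-coloring of $B$ using exactly two colors $\phi(c),\phi(d)$. Because $H$ is loopless, the relation $c \sim_H d$ forces $c \neq d$, so adjacent vertices of $B$ receive distinct colors and the coloring is an ordinary proper $2$-coloring of $B$. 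As $B$ is connected and bipartite, its proper $2$-coloring is unique up to interchanging the two colors, so the color classes are exactly the two parts and the type is forced to be $(p,q)$.

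Writing $\mu_{B_1}=(p_1,q_1)$ and $\mu_{B_2}=(p_2,q_2)$, the hypothesis $\{\abs{V_1},\abs{V_2}\} \neq \{\abs{W_1},\abs{W_2}\}$ says precisely that $\mu_{B_1} \neq \mu_{B_2}$ as partitions. Then the coefficient of $m_{\mu_{B_1}}$ is positive in $X_{B_1}^H$ but zero in $X_{B_2}^H$ (the only length-two type contributing to $X_{B_2}^H$ is $\mu_{B_2}\neq \mu_{B_1}$), so linear independence of the $m_\lambda$ gives $X_{B_1}^H \neq X_{B_2}^H$. The step I expect to be the main obstacle is the uniqueness half of the claim, i.e.\ ruling out every other length-two type; this is exactly where looplessness of $H$ is indispensable, since an $H$ consisting only of looped vertices would force every connected graph to be monochromatic and thereby make all graphs of the same order $H$-equivalent, so some such hypothesis on $H$ is genuinely needed.
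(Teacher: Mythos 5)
Your proposal is correct and follows essentially the same route as the paper's own proof: both identify $(p,q)$, the part sizes of the unique bipartition of a connected bipartite graph, as the only length-two type occurring in $X_B^H$, and conclude from $\mu_{B_1}\neq\mu_{B_2}$ via linear independence of the $m_\lambda$. You merely make explicit some steps the paper leaves implicit (equal vertex counts, positivity of the coefficient from a single edge of $H$, and the use of looplessness to turn a two-color $H$-coloring into an ordinary proper $2$-coloring), which is a faithful filling-in rather than a different argument.
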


\begin{rem}
We note that disconnected bipartite graphs can have non-unique bipartitions. So the analogue of Proposition \ref{prop.unequalbipartitions} for non-connected graphs $G_1 , G_2$ would involve comparing the set of possible bipartitions of graphs $G_1 , G_2$.
\end{rem}

\begin{prop}
Let $H_1$ and $H_2$ be two graphs that both have $k$ vertices, but different number of edges. Suppose the number of edges in $H_1$ is given by $|E(H_1 )|$ and the number of edges in $H_2$ is given by $|E(H_2 )|$. Let $G$ be a complete bipartite graph on at least two vertices. Suppose $G$ is $H_1$-colorable and $H_2$-colorable, i.e. $X_{G}^{H_1} , X_{G}^{H_2}$ are non-zero. Then $X_G^{H_1} \neq X_G^{H_2}$.
\label{prop311}
\end{prop}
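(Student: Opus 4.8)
The plan is to invoke Lemma \ref{lem.singlerep} to turn the inequality of symmetric functions into a comparison of a single coefficient, and then to exhibit one partition $\lambda$ whose coefficient in $X_G^{H}$ is exactly $2\abs{E(H)}$, so that the differing edge counts of $H_1$ and $H_2$ separate the two $H$-chromatic symmetric functions.

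First I would record that since $H_1$ and $H_2$ have the same number of vertices, say $n=k$, Lemma \ref{lem.singlerep} expresses \emph{both} $X_G^{H_1}$ and $X_G^{H_2}$ in the same family $\{m_\mu^n\}$. Writing $X_G^{H_i}=\sum_\mu d_\mu^{(i)} m_\mu^n$, the coefficient $d_\mu^{(i)}$ is the number of $(H_i,\phi)$-colorings of $G$ of type $\mu$ for an arbitrary fixed labeling $\phi$. Because distinct $m_\mu^n$ are linearly independent, it suffices to produce a single partition $\lambda$ with $d_\lambda^{(1)}\neq d_\lambda^{(2)}$. Note also that $G=K_{a,b}$ (with parts $A,B$ of sizes $a,b\geq 1$) has an edge, so $H$-colorability forces $H$ to have an edge and hence $n\geq 2$, which guarantees the length-$2$ partition chosen below is admissible.

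The core of the argument is the following count. Take $\lambda=(\max(a,b),\min(a,b))$, the type of a coloring that is constant on each part. I claim that for any simple graph $H$ the number of $(H,\phi)$-colorings of $K_{a,b}$ of type $\lambda$ equals $2\abs{E(H)}$. Since $\lambda$ has length $2$, such a coloring $\kappa$ uses exactly two colors. The step I expect to carry the real content is showing that no color can appear on a vertex of $A$ and a vertex of $B$ at once: any $u\in A$ and $w\in B$ are adjacent in $G$, so a shared color $c$ would require $\phi^{-1}(c)\sim_H \phi^{-1}(c)$, i.e.\ a loop at $\phi^{-1}(c)$, which is impossible as $H$ is simple. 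Hence the two colors must split as one color used throughout $A$ and the other throughout $B$ (both parts are nonempty), and propriety of $\kappa$ forces these two colors to be $H$-adjacent. Conversely, every ordered pair of $H$-adjacent (necessarily distinct) colors gives exactly one such coloring, so $d_\lambda=2\abs{E(H)}$, the number of ordered adjacent pairs of vertices of $H$.

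Finally I would apply this to both graphs: $d_\lambda^{(1)}=2\abs{E(H_1)}\neq 2\abs{E(H_2)}=d_\lambda^{(2)}$ precisely because $\abs{E(H_1)}\neq\abs{E(H_2)}$, whence $X_G^{H_1}\neq X_G^{H_2}$. The only delicate point to nail down is the loop argument, which rules out every type-$\lambda$ coloring other than the monochromatic-on-each-part ones; once that exhaustiveness is confirmed, the edge count falls out immediately and the rest is bookkeeping.
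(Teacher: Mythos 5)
Your proof is correct and follows essentially the same route as the paper's: both compare the coefficient of the length-two augmented monomial $m_{(m,n)}^k$ via Lemma \ref{lem.singlerep}, identifying type-$(m,n)$ colorings of $K_{m,n}$ with ordered pairs of $H$-adjacent colors, so that the coefficient is proportional to $\abs{E(H_i)}$ and the differing edge counts separate the two functions. You simply make explicit two details the paper leaves implicit --- the loop argument showing no color can straddle both parts, and the admissibility observation that $H$ must have an edge so $l(\lambda)=2\leq k$ --- which is a welcome tightening but not a different argument.
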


\begin{proof}
 Let $G = K_{m,n}$ for some $m,n$. Consider the coefficient of the $k$-augmented monomial $m_{(m,n)}^k$. The number of ways of coloring $G$ using two colors, which must by definition be adjacent in $H_1$ and $H_2$, is equivalent to counting the number of edges in each of $H_1$ and $H_2$. If $|E(H_1)| \neq |E(H_2)|$, then they will generate different coefficients of $m_{(m,n)}^k$. And since they are on the same number of vertices, then $X_G^{H_1}$ and $X_G^{H_2}$ will have different coefficients for $m_{(m,n)}^k$. 
\end{proof}

\begin{cor} \label{cor.staruniq} Suppose $H$ is a non-edgeless graph. The star graph is $H$-chromatically unique in the set of connected graphs.
\begin{proof}
   Suppose we partition the vertices of the star into two sets, $V_1$ and   $V_2$. The star $S_k$ is the only connected bipartite graph that corresponds to the case where $|V_{1}| = 1, |V_{2}| = k-1$. From Proposition \ref{prop.unequalbipartitions}, the star is not $H$-chromatically equivalent to other bipartite graphs. Similarly, Proposition \ref{prop.difnocol} implies the star is not $H$-chromatically equivalent to non-bipartite graphs.
\end{proof}
\end{cor}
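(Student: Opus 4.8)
The statement to prove is Corollary~\ref{cor.staruniq}: for a non-edgeless graph $H$, the star graph $S_k$ is $H$-chromatically unique in the set of connected graphs. The plan is to leverage the two already-established structural results, Proposition~\ref{prop.unequalbipartitions} and Proposition~\ref{prop.difnocol}, by splitting the comparison graphs into the bipartite and the non-bipartite cases. First I would fix the star $S_k$ and let $G_0$ be any connected graph on the same number of vertices with $G_0 \not\cong S_k$ (graphs of a different order are trivially distinguished, since $X^H_{G}$ is homogeneous of degree equal to $|V(G)|$). I must show $X^{H}_{S_k} \neq X^{H}_{G_0}$ whenever both are $H$-colorable; if either fails to be $H$-colorable the corresponding symmetric function is $0$ and the other is nonzero (the star is bipartite and $H$ has an edge, so it is $H$-colorable), so these degenerate cases are handled separately.

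The main dichotomy is whether $G_0$ is bipartite. If $G_0$ is bipartite, I would invoke Proposition~\ref{prop.unequalbipartitions}. The key observation is that the star $S_k = K_{1,k-1}$ is, among connected bipartite graphs, the \emph{unique} one whose bipartition has part sizes $\{1, k-1\}$: any connected bipartite graph with a singleton part must have that single vertex adjacent to enough vertices to keep the graph connected, forcing it to be the star. Hence for any connected bipartite $G_0 \not\cong S_k$ on $k$ vertices, its (unique, since it is connected) bipartition $\{|W_1|,|W_2|\}$ differs from $\{1,k-1\}$, and Proposition~\ref{prop.unequalbipartitions} immediately gives $X^H_{S_k} \neq X^H_{G_0}$. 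If instead $G_0$ is not bipartite, then $S_k$ is $2$-colorable while $G_0$ is not, so they differ in the minimum number of colors needed to $H$-color them, and Proposition~\ref{prop.difnocol} yields the conclusion. Combining the two cases proves the corollary.

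The step I expect to require the most care is the claim that the star is the only connected bipartite graph with a part of size one, together with making sure Proposition~\ref{prop.unequalbipartitions} is being applied only where its hypotheses hold. In particular, I should confirm that both $X^H_{S_k}$ and $X^H_{G_0}$ are genuinely nonzero before comparing $m_\lambda$-coefficients: Proposition~\ref{prop.unequalbipartitions} compares the length-two monomials arising from the \emph{unique} $2$-colorings, so I need the relevant $(H,\phi)$-colorings of type $(1,k-1)$ versus $\{|W_1|,|W_2|\}$ to actually occur, which is guaranteed by $H$-colorability. A secondary subtlety is the invocation of Proposition~\ref{prop.difnocol}: its hypothesis requires both graphs to be $H$-colorable, and since $G_0$ is assumed $H$-colorable in the nonbipartite case, while $S_k$ is bipartite hence $2$-colorable and $H$-colorable because $H$ has at least one edge, the hypothesis is met. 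Once these colorability checks are in place, the proof is a short case split with no heavy computation.
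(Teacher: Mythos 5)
Your proof is correct and takes essentially the same route as the paper: the same dichotomy between bipartite and non-bipartite $G_0$, with Proposition \ref{prop.unequalbipartitions} handling the former via the observation that $S_k$ is the unique connected bipartite graph with bipartition $\{1,k-1\}$, and Proposition \ref{prop.difnocol} handling the latter. Your additional care about $H$-colorability and the vanishing case $X_{G_0}^H = 0$ is a sound refinement of hypotheses the paper leaves implicit, but it does not change the argument.
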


\begin{cor} \label{cor.knuniq} Suppose a graph $H$ can be used to $H$-color the complete graph $K_n$, i.e. $X_{K_n}^{H} \neq 0$.
Then, $K_n$ is $H$-chromatically unique in the set of connected graphs.
\begin{proof}
$K_n$ is the only $n$-partite graph on $n$ vertices. All other graphs with $n$ vertices are colorable with $n-1$ colors. So applying Proposition \ref{prop.difnocol} gives us our result.
\end{proof}
\end{cor}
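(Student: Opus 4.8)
The plan is to compare $K_n$ against an arbitrary connected graph $G_0 \neq K_n$ and show $X_{G_0}^H \neq X_{K_n}^H$, first reducing to the essential case and then invoking Proposition \ref{prop.difnocol}. I would begin by disposing of the easy cases. If $G_0$ is not $H$-colorable then $X_{G_0}^H = 0 \neq X_{K_n}^H$ by hypothesis; and if $\abs{V(G_0)} \neq n$ then $X_{G_0}^H$ and $X_{K_n}^H$ are homogeneous of different degrees, hence unequal (both being nonzero). So it suffices to treat a connected, $H$-colorable graph $G_0$ on exactly $n$ vertices with $G_0 \neq K_n$.

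The heart of the argument is to pin down the minimum number of colors required to $H$-color each of the two graphs and show these numbers differ, so that Proposition \ref{prop.difnocol} applies. The key structural fact I would extract from the hypothesis $X_{K_n}^H \neq 0$ is that $H$ contains a clique $Q$ of size $n$: since $H$ is simple and every pair of vertices of $K_n$ is adjacent, any proper $(H,\phi)$-coloring sends the $n$ vertices to $n$ pairwise-adjacent, hence distinct, vertices of $H$. In particular every $H$-coloring of $K_n$ has type $(1^n)$, so the minimum (indeed the only) number of colors used is exactly $n$.

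Next I would show the minimum number of colors needed to $H$-color $G_0$ is at most $n-1$. Because $G_0$ is a graph on $n$ vertices other than $K_n$, it has two non-adjacent vertices, whence $\chi(G_0) \leq n-1$. I would fix an ordinary proper coloring of $G_0$ using $\chi(G_0) \leq n-1$ colors and realize it inside the clique $Q \subseteq H$: since any two distinct vertices of $Q$ are adjacent in $H$, adjacent vertices of $G_0$ (which receive distinct colors in a proper coloring) automatically map to adjacent vertices of $H$, so this is a genuine $H$-coloring using at most $n-1$ colors. Thus the minimum number of colors needed to $H$-color $G_0$ is strictly less than $n$, and Proposition \ref{prop.difnocol} immediately yields $X_{G_0}^H \neq X_{K_n}^H$.

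I expect the main subtlety to be precisely the step that produces an $H$-coloring of $G_0$ with fewer than $n$ colors: one cannot simply assert that $G_0 \neq K_n$ forces a ``smaller'' $H$-coloring to exist, since for a restrictive $H$ every $H$-coloring of a connected graph can be forced to be injective (for instance $H = G_0 = C_5$, where $C_5$ is a core so every $H$-coloring uses all five colors). The hypothesis $X_{K_n}^H \neq 0$ is exactly what rules this out, by guaranteeing the clique $Q$ into which an arbitrary low-color proper coloring of $G_0$ can be embedded; making this use of the hypothesis explicit is the crucial point of the proof.
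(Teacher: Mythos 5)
Your proposal is correct and takes essentially the same route as the paper's proof: both reduce to the observation that every $H$-coloring of $K_n$ uses exactly $n$ colors while any other connected graph on $n$ vertices can be $H$-colored with at most $n-1$ colors, and then apply Proposition \ref{prop.difnocol}. You additionally make explicit a step the paper leaves implicit---that the hypothesis $X_{K_n}^H \neq 0$ forces $H$ to contain an $n$-clique, into which an ordinary proper coloring of $G_0$ with at most $n-1$ colors can be embedded, so ordinary $(n-1)$-colorability genuinely yields an $H$-coloring---which is exactly the point needed to make the paper's terse argument rigorous.
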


In fact, this idea of $H$-chromatic uniqueness can be made even stronger.

\begin{prop}\label{prop.linind}
Consider a finite set of graphs $\{ G_{i} \}$ with corresponding $H$-chromatic symmetric functions $\{ X_{G_{i}}^{H} \}$. 

Suppose we write each $X_{G_{i}}^{H}$ in the form $$X_{G_i}^H = \sum_{\lambda_{i} \vdash k} c_{\lambda_{i}} m_{\lambda_i}$$

 If each $X_{G_{j}}^{H}$ written in this form contains a $m_{\lambda_{i}}$ term (with fixed partition and nonzero coefficient) that the other $H$-chromatic symmetric functions in the set $\{ X_{G_{i}}^{H} \} \setminus X_{G_{j}}^{H}$ do not have, then the functions in the set $\{ X_{G_{i}}^{H} \}$ are linearly independent. 

\begin{proof}
Since the monomial symmetric functions $\{ m_\lambda \}$ form a basis for $\Lambda$, the monomial symmetric function $m_{\lambda_i}$ cannot be written as a linear combination of other monomial symmetric functions $\{m_{\lambda_{k}}\}$ where $\lambda_k \neq \lambda_i$.
\end{proof}
\end{prop}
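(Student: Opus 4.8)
The plan is to prove the statement directly from the definition of linear independence: I would suppose a linear relation $\sum_i a_i X_{G_i}^H = 0$ with scalars $a_i$ and show that every $a_i$ must vanish. The hypothesis supplies, for each index $j$, a distinguishing partition---call it $\mu_j \vdash k$---whose coefficient in $X_{G_j}^H$ is nonzero but whose coefficient in every other $X_{G_i}^H$ with $i \neq j$ is zero. This single distinguishing monomial per function is exactly the leverage the argument needs.

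First I would fix $j$ and extract the coefficient of $m_{\mu_j}$ from both sides of $\sum_i a_i X_{G_i}^H = 0$. Writing each function in the monomial basis as $X_{G_i}^H = \sum_{\lambda \vdash k} c_\lambda^{(i)} m_\lambda$, the coefficient of $m_{\mu_j}$ in the combination $\sum_i a_i X_{G_i}^H$ is $\sum_i a_i c_{\mu_j}^{(i)}$. By the defining property of $\mu_j$ we have $c_{\mu_j}^{(i)} = 0$ for every $i \neq j$, so this sum collapses to the single term $a_j c_{\mu_j}^{(j)}$.

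Now, because $\{m_\lambda \mid \lambda \vdash k\}$ is a basis for $\Lambda^k$ (as recalled in Section \ref{background}), the zero symmetric function has all of its monomial coefficients equal to zero; in particular the coefficient of $m_{\mu_j}$ is zero, which forces $a_j c_{\mu_j}^{(j)} = 0$. Since $c_{\mu_j}^{(j)} \neq 0$ by hypothesis, I conclude $a_j = 0$. As $j$ was arbitrary, all scalars vanish, and hence the set $\{X_{G_i}^H\}$ is linearly independent.

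The argument has no genuine obstacle---it is precisely the standard criterion that a collection of vectors, each carrying a coordinate seen by no other, is linearly independent. The only point that needs care is notational: the statement reuses the symbol $\lambda_i$ both for the summation index inside each $X_{G_i}^H$ and for the distinguishing partition attached to $G_i$, so I would first separate these two roles (introducing $\mu_j$ for the distinguishing partition) to keep the coefficient extraction unambiguous. I would also note explicitly that all the $X_{G_i}^H$ lie in the same graded piece $\Lambda^k$, since the functions must share a common degree $k$ for the relation $\sum_i a_i X_{G_i}^H = 0$ to make sense term by term.
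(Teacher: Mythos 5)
Your proposal is correct and follows the same route as the paper: both arguments rest on the fact that $\{m_\lambda\}$ is a basis, so a function owning a monomial that no other function in the set contains cannot participate in a nontrivial vanishing linear combination. You simply make explicit the coefficient-extraction step (and the helpful notational separation of the distinguishing partition $\mu_j$ from the summation index) that the paper's one-line proof leaves implicit.
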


\begin{cor}\label{cor.monotermlinind}

Suppose we write some non-zero $X_{G}^{H}$ as follows: $$X_{G}^H = \sum_{\lambda \vdash k} c_{\lambda} m_{\lambda}$$

If $G$ is $H$-chromatically unique in a set $S$ due to having a component $m_{\lambda}$ (with nonzero coefficient) that no other $H$-chromatic symmetric function of a graph in $S$ has, then $X_{G}^{H}$ cannot be written as a linear combination of other $H$-chromatic symmetric functions of graphs in $S$. Note that this is a stronger condition than $H$-chromatic uniqueness. 
\end{cor}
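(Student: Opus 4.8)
The plan is to argue by contradiction using the uniqueness of coordinates in the monomial basis, exactly as in the proof of Proposition \ref{prop.linind}, but isolating a single distinguishing monomial rather than requiring one for every graph in the set. Let $\mu$ denote the partition such that $c_\mu \neq 0$ in the expansion $X_G^H = \sum_{\lambda \vdash k} c_\lambda m_\lambda$, while the coefficient of $m_\mu$ vanishes in $X_{G_0}^H$ for every $G_0 \in S \setminus \{G\}$; such a $\mu$ exists precisely by the hypothesis that $G$ is $H$-chromatically unique in $S$ owing to possessing the component $m_\mu$.

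First I would suppose, toward a contradiction, that $X_G^H$ does lie in the span of the remaining functions, so that $X_G^H = \sum_{G_0 \in S \setminus \{G\}} a_{G_0} X_{G_0}^H$ for some scalars $a_{G_0}$. Next I would extract the coefficient of $m_\mu$ from both sides. Since the monomial symmetric functions $\{m_\lambda\}$ form a basis for $\Lambda$, this coordinate is well defined and additive: on the left it equals $c_\mu \neq 0$, whereas on the right it equals $\sum_{G_0} a_{G_0} \cdot 0 = 0$, because each $X_{G_0}^H$ with $G_0 \neq G$ has vanishing coefficient on $m_\mu$ by the choice of $\mu$. This forces $c_\mu = 0$, contradicting $c_\mu \neq 0$, and the argument is complete.

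There is essentially no technical obstacle here: the entire content is the linear-algebraic fact that a vector with a nonzero coordinate in some basis direction cannot be a combination of vectors all of whose coordinates in that direction vanish. The one point I would state carefully is why the conclusion is strictly stronger than $H$-chromatic uniqueness. Uniqueness, in the sense of the earlier definitions, only forbids $X_G^H$ from equalling a single $X_{G_0}^H$, whereas the present statement forbids it from equalling \emph{any} linear combination of the others. I would close by noting that the distinguishing monomial $m_\mu$ is exactly the hypothesis needed to invoke a Proposition \ref{prop.linind}-style independence argument, so this corollary is the natural one-sided refinement of that proposition: a single witness suffices to remove $X_G^H$ from the span of its companions.
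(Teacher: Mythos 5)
Your proof is correct and matches the paper's approach: the paper simply cites Proposition \ref{prop.linind}, whose underlying content is exactly the coordinate-extraction argument in the monomial basis that you spell out explicitly. Your version is just a more detailed unwinding of the same single-witness linear-algebra fact, so there is nothing further to add.
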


\begin{proof}
This follows from Proposition \ref{prop.linind}.
\end{proof}

\begin{cor}\label{cor.bipartlinind} Suppose $H$ is a non-edgeless graph.
The set of $H$-chromatic symmetric functions for connected bipartite graphs on the same number of vertices with different bipartitions is linearly independent. Let $B_{i,j}$ denote a bipartite graph with bipartition $\{ i,j \}$. Suppose $i+j = n$. Then $\{ X_{B_{i,j}}^{H}\}_{i = 1, \dots, \lfloor \frac{n}{2} \rfloor, j = n-i}$ is linearly independent.
\begin{proof}
Apply Proposition \ref{prop.unequalbipartitions} and Proposition \ref{prop.linind}.
\end{proof}
\end{cor}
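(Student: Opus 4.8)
The plan is to exhibit, for each graph $B_{i,j}$ in the family, a single monomial symmetric function that appears in $X_{B_{i,j}}^H$ with nonzero coefficient and in none of the other functions of the family, and then to invoke Proposition \ref{prop.linind}. The natural candidate is the length-$2$ monomial that records the bipartition itself.

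First I would fix, for each $i \in \{1, \dots, \lfloor n/2 \rfloor\}$, the partition $\mu_{i,j} = (n-i, i)$, which is the weakly-decreasing arrangement of the bipartition $\{i, j\} = \{i, n-i\}$ (here $i \leq n-i$ since $i \leq \lfloor n/2 \rfloor$). Writing $X_{B_{i,j}}^H = \sum_{\lambda \vdash n} c_\lambda m_\lambda$, Lemma \ref{lem.singlerep} tells me that $c_{\mu_{i,j}}$ is, up to the fixed positive scalar coming from a single labeling $\phi$, the number of proper $(H,\phi)$-colorings of $B_{i,j}$ of type $\mu_{i,j}$, i.e.\ the $2$-colorings that split $V(B_{i,j})$ into the parts of sizes $i$ and $n-i$. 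Since $H$ is non-edgeless it contains an edge, and assigning the labels of that edge's two endpoints to the two parts of $B_{i,j}$ yields such a coloring; hence $c_{\mu_{i,j}} \neq 0$.

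Next I would argue that $\mu_{i,j}$ is a distinguishing partition for $B_{i,j}$ within the family. The length-$2$ monomials $m_\lambda$ appearing in $X_{B_{i',j'}}^H$ with nonzero coefficient correspond exactly to the $2$-coloring types of $B_{i',j'}$; but $B_{i',j'}$ is connected and bipartite, so by the uniqueness of the bipartition of a connected bipartite graph (as used in the proof of Proposition \ref{prop.unequalbipartitions}) its only $2$-coloring type is $\mu_{i',j'}$. Consequently $m_{\mu_{i,j}}$ has nonzero coefficient in $X_{B_{i',j'}}^H$ only when $\mu_{i',j'} = \mu_{i,j}$, i.e.\ when $\{i',j'\} = \{i,j\}$; and for distinct indices the partitions $\mu_{i,j}$ are themselves distinct.

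With these two facts the hypotheses of Proposition \ref{prop.linind} are satisfied: each $X_{B_{i,j}}^H$ carries the monomial $m_{\mu_{i,j}}$ with nonzero coefficient, and no other member of the family does, so $\{X_{B_{i,j}}^H\}$ is linearly independent. The only point requiring care --- and where the non-edgeless hypothesis on $H$ is essential --- is the nonvanishing of $c_{\mu_{i,j}}$: if $H$ were edgeless, then no $2$-coloring of a nontrivial bipartite graph would be proper, every length-$2$ coefficient would vanish, and the distinguishing argument would collapse. I do not expect any genuine obstacle beyond this verification, since the corollary is essentially a packaging of Propositions \ref{prop.unequalbipartitions} and \ref{prop.linind}.
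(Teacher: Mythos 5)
Your proposal is correct and follows essentially the same route as the paper, whose proof consists of citing Propositions \ref{prop.unequalbipartitions} and \ref{prop.linind}; your argument simply unpacks those citations, using the unique bipartition of a connected bipartite graph to identify $m_{(n-i,i)}$ as the distinguishing monomial and an edge of $H$ to verify its coefficient is nonzero. Your explicit check of nonvanishing (where the non-edgeless hypothesis enters) is a detail the paper leaves implicit, but it is not a different approach.
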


\begin{cor}
The $H$-chromatic symmetric functions for graphs with different number of vertices are linearly independent. Consider a finite set of graphs $\{ G_{i} \}$ with $G_i$ having $i$ vertices and corresponding $H$-chromatic symmetric function $ X_{G_{i}}^{H} $. Suppose each $G_i$ is $H$-colorable, i.e. each $X_{G_{i}}^{H}$ is non-zero. Then $\{ X_{G_{i}}^{H} \}_{i=1}^{n}$ is linearly independent.
\begin{proof}
Apply Proposition \ref{prop.linind}.
\end{proof}
\end{cor}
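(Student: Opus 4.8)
The plan is to reduce everything to Proposition \ref{prop.linind} by exploiting the grading of $\Lambda$ by degree. The crucial observation is that, by Definition \ref{dfn.HCSF}, every monomial occurring in $X_{G_i}^H$ is a product of exactly $i$ indeterminates, one factor per vertex of $G_i$; hence $X_{G_i}^H$ is a homogeneous symmetric function of degree $i$, and when written in the monomial basis it is a combination of $m_\lambda$ with $\lambda \vdash i$.

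First I would record that each $X_{G_i}^H$ is nonzero by hypothesis, so there is at least one partition $\lambda^{(i)} \vdash i$ whose coefficient in the monomial expansion $X_{G_i}^H = \sum_{\lambda \vdash i} c_\lambda m_\lambda$ is nonzero. Next I would observe that a partition of $i$ is never a partition of $j$ when $i \neq j$, simply because the parts of $\lambda^{(i)}$ sum to $i$. Consequently the monomial $m_{\lambda^{(i)}}$ appears with nonzero coefficient in $X_{G_i}^H$ but has coefficient zero in every $X_{G_j}^H$ with $j \neq i$, since each such $X_{G_j}^H$ involves only $m_\mu$ with $\mu \vdash j$.

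This is exactly the hypothesis of Proposition \ref{prop.linind}: each member of the family possesses a monomial, with fixed index and nonzero coefficient, that none of the others share. Applying that proposition yields the linear independence of $\{X_{G_i}^H\}_{i=1}^n$. I do not anticipate any genuine obstacle: the only point that warrants care is the degree count in the first paragraph, which is immediate from the definition. It is worth noting that the grading argument can also be made directly, bypassing Proposition \ref{prop.linind} entirely: since the $X_{G_i}^H$ lie in distinct graded pieces $\Lambda^i$, any vanishing combination $\sum_i a_i X_{G_i}^H = 0$ forces each homogeneous component $a_i X_{G_i}^H$ to vanish separately, and as $X_{G_i}^H \neq 0$ this gives $a_i = 0$ for all $i$.
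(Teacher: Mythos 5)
Your proof is correct and takes essentially the same route as the paper: the paper's proof is simply ``Apply Proposition \ref{prop.linind},'' and your degree argument (each $X_{G_i}^H$ is homogeneous of degree $i$, so its monomials $m_\lambda$ with $\lambda \vdash i$ cannot appear in any $X_{G_j}^H$ with $j \neq i$) supplies exactly the verification of that proposition's hypothesis that the paper leaves implicit. The direct grading argument you note at the end is a fine equivalent shortcut, but it is not a genuinely different method.
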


\begin{cor}\label{cor.chromnumlinind}
The set of $H$-chromatic symmetric functions for graphs with the same number of vertices but different chromatic numbers is linearly independent.
Consider a finite set of graphs $\{ G_{i} \}$ with $G_i$ having chromatic number $i$ and corresponding $H$-chromatic symmetric function $ X_{G_{i}}^{H} $. Suppose each $G_i$ is $H$-colorable, i.e. each $X_{G_{i}}^{H}$ is non-zero. Then $\{ X_{G_{i}}^{H} \}_{i=1}^{n}$ is linearly independent.
\begin{proof}
Apply Proposition \ref{prop.difnocol} and Proposition \ref{prop.linind}.
\end{proof}
\end{cor}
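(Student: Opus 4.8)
The plan is to read off, from the chromatic number of each $G_i$, the shortest partition that can occur in $X_{G_i}^H$, and then convert the resulting ``distinct shortest lengths'' into linear independence. I take $H$ simple and non-edgeless, as the argument needs. Since $H$ is loopless, every proper $(H,\phi)$-coloring of a graph is in particular a proper colouring (adjacent vertices receive $H$-adjacent, hence distinct, colours), so no $H$-coloring of $G_i$ can use fewer than $\chi(G_i)=i$ colours. Writing $X_{G_i}^H=\sum_\lambda c_\lambda^{(i)}m_\lambda$ in the monomial basis, this means $c_\lambda^{(i)}=0$ whenever $l(\lambda)<i$. First I would upgrade this to an equality: taking an optimal proper colouring of $G_i$ with its $\chi(G_i)=i$ colour classes and mapping those classes injectively into a size-$i$ clique of $H$ realizes it as a genuine $H$-coloring, of some type $\lambda$ with $l(\lambda)=i$. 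Hence the shortest length appearing in $X_{G_i}^H$ is exactly $i$, and these shortest lengths $1,2,\dots,n$ are pairwise distinct.

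With the shortest lengths identified, I would run the triangular (echelon) refinement of Proposition \ref{prop.linind}. Suppose $\sum_{i=1}^n a_iX_{G_i}^H=0$; comparing coefficients of each $m_\lambda$ gives $\sum_i a_ic_\lambda^{(i)}=0$. Choosing $\lambda$ of length $1$, only $G_1$ contributes because $c_\lambda^{(i)}=0$ for $l(\lambda)=1<i$, so $a_1=0$; choosing $\lambda$ of length $2$ then forces $a_2=0$, and inductively $a_i=0$ for all $i$. This is precisely the distinguishing-monomial mechanism of Propositions \ref{prop.difnocol} and \ref{prop.linind} applied in cascade, using the minimal-colour (shortest-length) terms as the pivots.

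The main obstacle is the equality asserted in the first step: that the minimum number of colours needed to $H$-color $G_i$ is exactly $\chi(G_i)$, not something strictly larger. The inequality $\geq\chi(G_i)$ is immediate, but realizing an optimal colouring as an $H$-coloring requires $H$ to contain a clique of size $\chi(G_i)$. For a triangle-free $H$ and a graph of chromatic number $3$, say, every $H$-coloring is forced to spend more than three colours, so the shortest lengths need not be $1,2,\dots,n$, and it is even conceivable that two graphs of different chromatic numbers share a shortest length, breaking the cascade. The clean hypothesis that removes this difficulty is that $H$ contains a clique of size $\max_i\chi(G_i)=n$ (for instance $H\supseteq K_n$), under which the argument goes through verbatim; for an arbitrary non-edgeless $H$ one must instead prove directly that distinct chromatic numbers force distinct minimum (or maximum) $H$-colouring numbers, and establishing this monotonicity is the genuinely delicate part.
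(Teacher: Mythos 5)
Your proposal reconstructs exactly the argument the paper intends: its entire proof of this corollary is ``Apply Proposition \ref{prop.difnocol} and Proposition \ref{prop.linind}'', i.e.\ use the minimum number of colours appearing in an $H$-colouring of $G_i$ as the length of a pivot monomial and cascade through a putative linear dependence in order of increasing pivot length --- precisely your echelon argument. Your pivots for $i\le 2$ are unconditionally correct ($G_1$ is edgeless, so a one-colour $H$-colouring exists; $G_2$ is bipartite with an edge, and $H$ has an edge since $G_2$ is $H$-colourable), so $a_1=a_2=0$ always.

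The obstacle you flag in your final paragraph is genuine, and it is a gap in the paper's own one-line proof just as much as in your write-up: Proposition \ref{prop.difnocol} applies to graphs that differ in the minimum (or maximum) number of colours needed to $H$-colour them, whereas the corollary's hypothesis is only that the chromatic numbers differ, and the inference ``$\chi(G_1)\neq\chi(G_2)$ implies the minimum $H$-colouring numbers differ'' is nowhere justified. Your ``upgrade to equality'' step indeed fails for general $H$: for $H=G=C_5$, the image of any homomorphism of $C_5$ into $C_5$ must contain an odd cycle, hence all five vertices, so every $H$-colouring uses five colours even though $\chi(C_5)=3$. Worse, the failure is not merely of the method but potentially of the statement: if $G_1$ is a rigid core with $\chi=3$ and odd girth $5$, and $G_2$ a rigid core with $\chi=4$ and odd girth at least $7$ on the same number $m$ of vertices (rigid cores with prescribed chromatic number and odd girth exist in abundance), then with $H=G_1\uplus G_2$ homomorphisms $G_1\to G_2$ are blocked by odd girth and $G_2\to G_1$ by chromatic number, every endomorphism of a core is an automorphism, and so $X_{G_1}^{H}=m_{(1^m)}^{2m}=X_{G_2}^{H}$, making the two functions equal despite distinct chromatic numbers. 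So your assessment is correct on both counts: under your added hypothesis that $H$ contains a clique of size $\max_i\chi(G_i)$ (which forces the minimum $H$-colouring number to equal $\chi(G_i)$ for each $i$) the cascade goes through and coincides with the paper's intended argument, while for arbitrary non-edgeless simple $H$ the needed monotonicity is not just delicate but false, and the corollary requires a hypothesis of the kind you propose.
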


\begin{cor}\label{cor.starlinind} Suppose $H$ is a graph such that an $n$-vertex star graph $S_n$ and an $n$-vertex complete graph $K_n$ are $H$-colorable, i.e. $X_{S_n}^H , X_{K_n}^H$ are non-zero. 
Then $X_{S_n}^H$ and $X_{K_n}^H$ cannot be written as a linear combination of $H$-chromatic symmetric functions of other connected graphs.
\begin{proof}
Apply Proposition \ref{cor.staruniq}, and Corollaries \ref{cor.knuniq}, \ref{cor.monotermlinind}. The $H$-chromatic symmetric function of the star $S_n$ is the only $H$-chromatic symmetric function of a $n$-vertex connected graph that contains the term $m_{(1,n-1)}$ when written as a linear combination of monomial symmetric functions $m_\lambda$. The $H$-chromatic symmetric function of a complete graph $K_k$ is the only $H$-chromatic symmetric function of a $k$-vertex connected graph that has no monomials with partition length less than $k$.
\end{proof}
\end{cor}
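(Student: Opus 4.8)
The plan is to deduce the result from the distinguishing-monomial criterion of Corollary~\ref{cor.monotermlinind}, which reduces ``cannot be written as a linear combination'' to exhibiting, for each of $S_n$ and $K_n$, a feature of its $H$-chromatic symmetric function shared by no other connected graph in the relevant set. Since $X_{S_n}^H$ and $X_{K_n}^H$ are homogeneous of degree $n$, while $X_G^H$ is homogeneous of degree $|V(G)|$, any linear combination equalling them can involve only connected graphs on exactly $n$ vertices; so throughout I take $S$ to be the set of connected $n$-vertex graphs and work in the monomial basis via Lemma~\ref{lem.singlerep}. Note the hypotheses force $H$ to be non-edgeless (an $H$-coloring of $K_n$ with $n\ge 2$ needs an edge of $H$) and, as $H$ is simple, loopless.

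For the star the distinguishing monomial is $m_{(n-1,1)}$, the type of the bipartition $\{1,n-1\}$ of $S_n$. First I would check its coefficient in $X_{S_n}^H$ is nonzero: fixing a labeling $\phi$ and an edge $u\sim_H v$ of $H$, coloring the center by $\phi(u)$ and every leaf by $\phi(v)$ is a proper $(H,\phi)$-coloring of type $(n-1,1)$, so $d_{(n-1,1)}\ge 1$. Next I would show no other connected $n$-vertex graph contributes this monomial: a type-$(n-1,1)$ $H$-coloring of a connected $G$ uses two colors, and since $H$ is loopless it is a genuine proper $2$-coloring, forcing $G$ to be bipartite with bipartition $\{1,n-1\}$; the only connected such graph is $S_n$ (for connectivity the single-vertex side must be joined to all of the independent $(n-1)$-side). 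This is precisely the uniqueness recorded in Proposition~\ref{prop.unequalbipartitions} and Corollary~\ref{cor.staruniq}, so Corollary~\ref{cor.monotermlinind} applies directly and settles the star.

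For the complete graph I would first show $X_{K_n}^H = c\,m_{(1^n)}$ with $c\neq 0$: since the $n$ vertices of $K_n$ are pairwise adjacent and $H$ is loopless, every proper $H$-coloring of $K_n$ uses $n$ distinct colors, i.e.\ has type $(1^n)$. Thus the support of $X_{K_n}^H$ is the single, maximal-length partition $(1^n)$. I would then argue that every other connected $n$-vertex graph $G$ has a monomial of length strictly below $n$: the clique $K_n\subseteq H$ (guaranteed by $X_{K_n}^H\neq 0$) lets us lift any proper coloring of $G$ using at most $n$ colors to an $H$-coloring, by sending its color classes injectively into that clique; as $G\neq K_n$ is connected we have $\chi(G)\le n-1$, producing an $H$-coloring whose type has length $\chi(G)<n$. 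Hence $K_n$ is the unique graph in $S$ whose function has no monomial of length below $n$, recovering Corollary~\ref{cor.knuniq}.

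The main obstacle is exactly this last step for $K_n$: unlike the star, $K_n$ has no single distinguishing monomial, so Corollary~\ref{cor.monotermlinind} cannot be applied verbatim. Indeed, because $K_n\subseteq H$, every connected $n$-vertex graph embeds in $H$ and hence also contains $m_{(1^n)}$, so the presence of $m_{(1^n)}$ distinguishes nothing; the genuinely distinguishing feature is the \emph{absence} of shorter monomials. Converting this into linear independence requires ruling out that a combination $\sum_i a_i X_{G_i}^H$ over other connected $n$-vertex graphs cancels all its length-$(<n)$ contributions while leaving a nonzero $m_{(1^n)}$ term. I would attack this by filtering $\Lambda^n$ by partition length and examining the smallest length $\ell^\ast<n$ occurring among the $G_i$ with $a_i\neq 0$: matching the (absent) length-$\ell^\ast$ part of $X_{K_n}^H$ forces a cancellation identity among the $G_i$ with $\chi(G_i)=\ell^\ast$, and the crux is to show no such cancellation can ultimately isolate $m_{(1^n)}$. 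This cancellation analysis, rather than the counting in the earlier corollaries, is the technical heart of the $K_n$ case and the step I expect to require the most care.
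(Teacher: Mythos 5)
Your star half is complete and coincides with the paper's own route: the distinguishing monomial $m_{(n-1,1)}$, its positive coefficient coming from a single edge of $H$, the identification of $S_n$ as the unique connected $n$-vertex graph admitting a type-$(n-1,1)$ coloring, and then Corollary \ref{cor.monotermlinind}; the homogeneity reduction to connected $n$-vertex graphs is also correct. The gap is exactly where you flagged it: you never prove the $K_n$ half, only announce a plan (``filtering by partition length \dots the step I expect to require the most care''), so as written your proposal establishes only the $S_n$ assertion. And your diagnosis is right that Corollary \ref{cor.monotermlinind} cannot be applied verbatim there: since $X_{K_n}^H \neq 0$ forces $K_n \subseteq H$, every connected $n$-vertex graph embeds into that clique, so $m_{(1^n)}$ occurs in every $X_{G_i}^H$, and the only special feature of $X_{K_n}^H$ is the \emph{absence} of shorter monomials --- a property not preserved under taking linear combinations.

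The deeper problem is that the cancellation you set out to rule out can actually occur, so no amount of care completes your plan: the $K_n$ half of the corollary is false as stated, and the paper's terse proof (which for $K_n$ cites only the uniqueness of Corollary \ref{cor.knuniq}) makes precisely the leap you were suspicious of. Take $n = 4$ and $H = K_4$, which satisfies the hypotheses. For any $4$-vertex graph $G$ one has $X_G^{K_4} = 24\,X_G$, and in the monomial basis $X_{K_4} = 24\,m_{(1^4)}$, $X_{K_4^-} = 24\,m_{(1^4)} + 2\,m_{(2,1,1)}$, and $X_{P} = 24\,m_{(1^4)} + 4\,m_{(2,1,1)}$, where $K_4^-$ is $K_4$ minus an edge and $P$ is the triangle with a pendant vertex: a type-$(2,1,1)$ coloring must place the repeated color on a non-edge, and $K_4^-$ has one non-edge while $P$ has two. (As a cross-check, the chromatic polynomials satisfy $2k(k-1)(k-2)^2 - k(k-1)^2(k-2) = k(k-1)(k-2)(k-3)$.) Hence $X_{K_4}^{K_4} = 2\,X_{K_4^-}^{K_4} - X_{P}^{K_4}$, a linear combination over connected $4$-vertex graphs other than $K_4$, in which the length-$3$ parts cancel exactly as you feared and isolate $m_{(1^4)}$. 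So your write-up is not merely incomplete at that step --- the step is impossible in general, and only the star assertion, resting on the presence of a monomial unique to $S_n$, survives.
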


Note that even though these results on $H$-chromatic equivalence and $H$-chromatic uniqueness are true for cases where $G, H$ are simple graphs, if we allow $H$ to have a loop, then Propositions \ref{prop.difnocol}, \ref{prop.unequalbipartitions}, and Corollaries \ref{cor.difbi}, \ref{cor.staruniq}, \ref{cor.knuniq},  \ref{cor.bipartlinind}, \ref{cor.chromnumlinind}, \ref{cor.starlinind} fail. If $H$ were a graph with just one vertex and a loop, then we could $H$-color all the vertices in some graph $G$ the same color. Thus $G$ would have the same $H$-chromatic symmetric function as other graphs with the same number of vertices as $G$. 


\subsection{Automorphisms of $G$ and $X_{G}^{H}$}
\label{auto}

We know that two non-isomorphic graphs can have the same chromatic symmetric functions \cite{crew2020vertex, stanley1995symmetric}. Given a $H$, can we always find two non-isomorphic graphs having the same $H$-chromatic symmetric function? The answer is affirmative: indeed, if $H$ has $n$ vertices, then any graph with a chromatic number larger than $n$ will have a $H$-chromatic symmetric function of $0$. A more interesting question is the following problem:

\begin{prob} Given a pair of non-isomorphic graphs $G_1, G_2$, can we always find a $H$ such that $X_{G_1}^H \neq X_{G_2}^H$?
\end{prob}

To answer this question, it will be instructive to note that the coefficient of $m_{(1^k)}$ in $X_G^H$ is closely related to the number of automorphisms of $G$.

\begin{prop}\label{prop.automorphism}
Let $G, H$ be two graphs and $X_G^H = \sum_{\lambda \vdash \abs{V(G)}} c_\lambda m_\lambda^{\abs{V(H)}}$. Then, $$c_{(1^{\abs{V(G)}})} = S(G, H) \abs{\aut(G)},$$ where $S(G, H)$ is the number of (not necessarily induced) subgraphs of $H$ that are isomorphic to $G$ and $\aut(G)$ is the set of automorphisms of $G$.
\end{prop}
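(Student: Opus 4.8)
The plan is to read $c_{(1^{k})}$ directly off Lemma~\ref{lem.singlerep}, where $k = \abs{V(G)}$, and then reinterpret the relevant colorings as injective graph homomorphisms. By Lemma~\ref{lem.singlerep}, once we fix any single labeling $\phi$ of $H$, the coefficient $c_{(1^k)}$ equals the number of proper $(H,\phi)$-colorings of $G$ of type $(1^k)$. A coloring has type $(1^k)$ precisely when every vertex of $G$ receives a distinct color, i.e.\ when $\kappa \colon V(G) \to \phi(V(H))$ is injective. So the first step is simply to observe that $c_{(1^k)}$ counts the injective proper $(H,\phi)$-colorings of $G$ for one fixed $\phi$.

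Next I would translate these injective colorings into maps $V(G) \to V(H)$. Given such a $\kappa$, set $f = \phi^{-1} \circ \kappa \colon V(G) \to V(H)$. Since $\phi$ is a labeling (hence injective on $V(H)$) and $\kappa$ is injective into $\phi(V(H))$, the composite $f$ is a well-defined injection. The defining property of a proper $(H,\phi)$-coloring, namely $v_1 \sim_G v_2 \implies \phi^{-1}(\kappa(v_1)) \sim_H \phi^{-1}(\kappa(v_2))$, says exactly that $f$ is edge-preserving: $v_1 \sim_G v_2 \implies f(v_1) \sim_H f(v_2)$. Conversely, any injective edge-preserving $f$ yields an injective proper $(H,\phi)$-coloring $\kappa = \phi \circ f$, and the two assignments are mutually inverse. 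Thus $c_{(1^k)}$ equals the number of injective graph homomorphisms $f \colon G \to H$.

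The heart of the argument is then the combinatorial identity
\[
\#\{\text{injective homomorphisms } f \colon G \to H\} = S(G,H)\,\abs{\aut(G)}.
\]
I would prove this by fibering the set of injective homomorphisms over the set of subgraphs of $H$ isomorphic to $G$. Each injective homomorphism $f$ determines the subgraph $G' = (f(V(G)), \{(f(u),f(v)) : u \sim_G v\})$ of $H$, and one checks (using injectivity of $f$ to reflect non-edges) that $f \colon G \to G'$ is an isomorphism; hence there are at most $S(G,H)$ possible targets $G'$, and each such $G'$ really does occur as an image since any isomorphism $g \colon G \to G'$ is itself an injective homomorphism with image-subgraph $G'$. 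For a fixed $G'$, the fiber is precisely the set of isomorphisms $G \to G'$, which (fixing one reference isomorphism) is a torsor under $\aut(G)$ and so has cardinality $\abs{\aut(G)}$. Summing over the $S(G,H)$ subgraphs gives the claimed count, and therefore $c_{(1^k)} = S(G,H)\,\abs{\aut(G)}$.

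The main obstacle, and the point requiring the most care, is this final counting step: one must be attentive to the distinction between \emph{subgraphs} and \emph{induced subgraphs} — the map $f$ only forces the edges of $G$ into $H$, so any extra edges of $H$ on $f(V(G))$ are irrelevant and the count genuinely involves not-necessarily-induced subgraphs — and to the bookkeeping that an injective homomorphism onto a fixed copy $G'$ is the same datum as an isomorphism $G \to G'$, of which there are $\abs{\aut(G)}$. Everything else is a direct translation through Lemma~\ref{lem.singlerep}.
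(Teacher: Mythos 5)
Your proposal is correct and follows essentially the same route as the paper: both identify $c_{(1^{k})}$ via Lemma~\ref{lem.singlerep} with the count of injective edge-preserving maps $G \to H$ (the paper's ``embeddings''), then fiber this set over the $S(G,H)$ subgraphs of $H$ isomorphic to $G$, with each fiber of size $\abs{\aut(G)}$. Your torsor phrasing and the injectivity check that $f$ is an isomorphism onto its image subgraph match the paper's argument in substance.
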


\begin{proof}
We define an embedding of $G$ into $H$ to be an injective map $\psi: V(G) \rightarrow V(H)$ such that $u \sim_G v$ implies $\psi(u) \sim_H \psi(v)$. Given each embedding $\psi$, we can obtain a subgraph of $H$ isomorphic to $G$ by letting the vertex set be $\psi(V(G))$ and the edge set be $\{(\psi(u), \psi(v)) \mid (u, v) \in E(G)\}$. We denote this graph by $\Psi(\psi)$. In particular, $\Psi(\psi)$ is isomorphic to $G$. Conversely, given a subgraph $H'$ of $H$ that is isomorphic to $G$, and let $\psi: V(G) \rightarrow V(H')$ be an isomorphism. Then, we claim that $\Phi := \{\psi \circ \phi \mid \phi \in \aut(G)\}$ is exactly the set of embeddings whose image under $\Psi$ is $H'$. It is clear that every element in $\Phi$ is an embedding whose image under $\Psi$ is $H'$. Conversely, let $\psi'$ be an embedding of $G$ into $H$ and that $\Phi(\psi') = H'$. Then, $\psi^{-1} \circ \psi'$ is an automorphism of $G$, so $\psi' \in \Phi$. Hence, the total number of embeddings of $G$ into $H$ is $S(G, H) \abs{\aut(G)}$.

Now, we count the number of embeddings of $G$ into $H$ in another way. If we fix a labeling $\phi$ of $H$, then given an $H$-coloring of $G$ of type $(1^{\abs{V(G)}})$, it corresponds naturally to an injection from $V(G)$ to $V(H)$, and it is an embedding by the definition of an $H$-coloring. 
Given an embedding $\psi$, if we use the $\phi(\psi(u))$ to color $u$, then we will get a proper $H$-coloring. Hence, $c_{(1^{\abs{V(G)}})}$ is the number of embeddings of $G$ into $H$, and $c_{(1^{\abs{V(G)}})} = S(G, H) \abs{\aut(G)}$.
\end{proof}

\begin{cor}\label{cor.autcor}
Let $G_1, G_2$ be two non-isomorphic graphs on $k_1, k_2$ vertices, respectively. Then,
\begin{enumerate}
    \item If we write $X_{G_1}^{G_1} = \sum_{\lambda \vdash k_1} c_\lambda m_\lambda^{k_1}$. Then, $c_{(1^{k_1})}$ is the number of automorphisms of $G_1$.
    \item Let $X_{G_1}^{G_2} = \sum_{\lambda \vdash k_1} c_\lambda m_\lambda$. Then, $c_{(1^{k_1})} = 0$ if either $k_1 > k_2$ or $k_1 = k_2$ and $\abs{E(G_1)} \geq \abs{E(G_2)}$.
    \item There exists a graph $H$ such that $X_{G_1}^H \neq X_{G_2}^H$.
\end{enumerate}
\end{cor}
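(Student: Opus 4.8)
The plan is to obtain all three parts as immediate consequences of Proposition \ref{prop.automorphism}, which identifies the coefficient of $m_{(1^{\abs{V(G)}})}$ in $X_G^H$ (in the augmented basis $m_\lambda^{\abs{V(H)}}$) with $S(G,H)\abs{\aut(G)}$. Since every $m_\lambda^n$ is a nonzero scalar multiple of $m_\lambda$, this coefficient is nonzero in either basis exactly when $S(G,H) \neq 0$. Thus the whole corollary reduces to evaluating $S(G,H)$ in the special cases $H=G_1$ and $H=G_2$, which is a purely combinatorial matter of counting copies of one graph inside another.

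For part (1) I would set $H=G_1$. A subgraph of $G_1$ isomorphic to $G_1$ must carry $k_1$ vertices and $\abs{E(G_1)}$ edges; as $G_1$ itself has exactly $k_1$ vertices and $\abs{E(G_1)}$ edges, such a subgraph must use every vertex and every edge, hence equals $G_1$. So $S(G_1,G_1)=1$ and Proposition \ref{prop.automorphism} gives $c_{(1^{k_1})}=\abs{\aut(G_1)}$. For part (2) I would set $H=G_2$ and show $S(G_1,G_2)=0$ in each stated case; since $\abs{\aut(G_1)}\geq 1$, vanishing of the coefficient is equivalent to $S(G_1,G_2)=0$. If $k_1>k_2$, then $G_2$ has too few vertices to contain a copy of $G_1$, so $S(G_1,G_2)=0$. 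If $k_1=k_2$ and $\abs{E(G_1)}\geq\abs{E(G_2)}$, any copy of $G_1$ inside $G_2$ would occupy all $k_2$ vertices and carry $\abs{E(G_1)}$ edges, forcing $\abs{E(G_1)}=\abs{E(G_2)}$ and the copy to be $G_2$ itself, contradicting $G_1\not\cong G_2$.

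For part (3) I would take $H=G_2$, after relabelling the two graphs if necessary, and split on the vertex counts. If $k_1\neq k_2$, then $X_{G_2}^{G_2}$ is a nonzero homogeneous function of degree $k_2$ (its $m_{(1^{k_2})}$-coefficient is $\abs{\aut(G_2)}>0$ by part (1)), while $X_{G_1}^{G_2}$ is homogeneous of degree $k_1$ or is zero; a nonzero homogeneous function cannot equal one of a different degree, so $X_{G_1}^{G_2}\neq X_{G_2}^{G_2}$. If $k_1=k_2$, relabel so that $\abs{E(G_1)}\geq\abs{E(G_2)}$; then part (2) gives coefficient $0$ for $m_{(1^{k_1})}$ in $X_{G_1}^{G_2}$, whereas part (1) gives coefficient $\abs{\aut(G_2)}>0$ for the same monomial in $X_{G_2}^{G_2}$, so the two functions differ.

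The calculations are routine, so the only point requiring genuine care is the bookkeeping in parts (2) and (3): choosing $H=G_2$ (or $G_1$ after relabelling) so that the edge-count inequality runs the right way, and invoking the non-isomorphism hypothesis precisely at the boundary case $\abs{E(G_1)}=\abs{E(G_2)}$, where equal edge counts would otherwise permit a copy of $G_1$ inside $G_2$ and leave $S(G_1,G_2)=1$. I expect this to be the main (though still mild) obstacle.
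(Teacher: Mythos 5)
Your proof is correct and follows essentially the same route as the paper: all three parts reduce to Proposition \ref{prop.automorphism} by computing $S(G_1,G_1)=1$ and $S(G_1,G_2)=0$, then taking $H=G_2$ after relabelling so the edge-count inequality points the right way. You are in fact slightly more explicit than the paper at the boundary case $\abs{E(G_1)}=\abs{E(G_2)}$ (where non-isomorphism must be invoked) and in handling $k_1\neq k_2$ in part (3) via homogeneity, but these are refinements of the same argument, not a different one.
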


\begin{proof}
The first claim is immediate from Proposition \ref{prop.automorphism}. The second claim is obvious when $k_1 > k_2$. When $k_1 = k_2$ and $\abs{E(G_1)} \geq \abs{E(G_2)}$, $G_2$ cannot possibly contain an subgraph isomorphic to $G_1$ and the conclusion follows from again from Proposition \ref{prop.automorphism}. To prove the third claim, without loss of generality, we may assume the assumptions in the second claim holds. (Otherwise, interchange $G_1$ and $G_2$.) Now let $H = G_2$ and use the first two statements.
\end{proof}

Corollary \ref{cor.autcor} answers the question that we posed before Proposition \ref{prop.automorphism}. In fact, we can say even more. In the next section, we will show that given finitely many graphs $G_1, G_2, ..., G_l$, there exists a $H$ such that $X_{G_i}^H \neq X_{G_j}^H$ for every $i \neq j$. The constructive proof will be heavily based on Proposition \ref{prop.disjointH} and Corollary \ref{cor.autcor}. We now formalize these discussions.

\section{Graph distinguishers and uniform distinguishability of graphs}
\label{distinguishers}

As shown in the previous section, given any two non-isomorphic graphs $G_1, G_2$, there exists an $H$ such that $X_{G_1}^H \neq X_{G_2}^H$. In this section, we look at a generalization of this problem.

\begin{prob}\label{prob.distinguisher}
Given a set of (mutually non-isomorphic) graphs $\mathcal{G}$, what is the smallest size of a set of graphs $\mathcal{H}$ such that for any $G_1, G_2 \in \mathcal{G}$, there exists an $H \in \mathcal{H}$ with $X_{G_1}^H \neq X_{G_2}^H$?
\end{prob}

Let us give such a $\mathcal{H}$ a name and introduce additional terminology.

\begin{dfn}\label{dfn.distinguisher}
Let $\mathcal{G}$ be a set of (mutually non-isomorphic) graphs. A set of graphs $\mathcal{H}$ is called a \textit{$\mathcal{G}$-distinguisher} if for any $G_1, G_2 \in \mathcal{G}$, there exists an $H \in \mathcal{H}$ such that $X_{G_1}^H \neq X_{G_2}^H$. $\mathcal{G}$ is said to be \textit{finitely distinguishable} if there exists a finite $\mathcal{G}$-distinguisher. $\mathcal{G}$ is said to be \textit{uniformly distinguishable} if there exists a $\mathcal{G}$-distinguisher that contains exactly one graph.
\end{dfn}

Clearly, uniform distinguishability implies finite distinguishability, and the results in the last section shows that $\mathcal{G}$ is uniformly distinguishable if $\abs{\mathcal{G}} = 2$ and, consequently, $\mathcal{G}$ is finitely distinguishable if $\mathcal{G}$ is finite. To extend these results, we will have to construct new $H$'s. When both $\mathcal{G}$ and $\mathcal{H}$ are sets of connected graphs, Proposition \ref{prop.disjointH} seems a useful tool.
We introduce a lemma which shows we can assume a finite distinguisher consists only of connected graphs.

\begin{lem}\label{lem.connecteddistinguisher}
A set of connected graphs $\mathcal{G}$ is finitely distinguishable if and only if there exists a finite $\mathcal{G}$-distinguisher that consists of connected graphs only.
\end{lem}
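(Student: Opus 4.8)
The plan is to prove the two directions separately, with essentially all the work in the ``only if'' direction. The ``if'' direction is immediate from Definition \ref{dfn.distinguisher}: a finite $\mathcal{G}$-distinguisher consisting of connected graphs is in particular a finite $\mathcal{G}$-distinguisher, so $\mathcal{G}$ is finitely distinguishable. For the converse, suppose $\mathcal{H} = \{H_1, \ldots, H_m\}$ is a finite $\mathcal{G}$-distinguisher whose members need not be connected. I would let $\mathcal{C}$ be the set of all connected components of all the $H_i$; by construction every element of $\mathcal{C}$ is connected, and $\mathcal{C}$ is finite since each $H_i$ has finitely many components and there are finitely many $H_i$. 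The entire task then reduces to showing that $\mathcal{C}$ is itself a $\mathcal{G}$-distinguisher.

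The key bookkeeping device is a coloring-count reformulation of equality. For a fixed labeling $\phi$, write $d_\lambda(G, H)$ for the number of $(H, \phi)$-colorings of $G$ of type $\lambda$; this count is independent of $\phi$ and vanishes unless $\lambda \vdash \abs{V(G)}$. By Lemma \ref{lem.singlerep} the coefficient of each monomial of type $\lambda$ in $X_G^H$ is a fixed positive multiple of $d_\lambda(G, H)$, so for any two graphs $G_1, G_2$ we have
$$X_{G_1}^{H} = X_{G_2}^{H} \iff d_\lambda(G_1, H) = d_\lambda(G_2, H) \text{ for all } \lambda.$$
Second, because every $G \in \mathcal{G}$ is connected, any proper $(H,\phi)$-coloring of $G$ takes values in a single connected component of $H$; writing $H = \biguplus_j C_j$ with the $C_j$ its components, this yields the additivity $d_\lambda(G, H) = \sum_j d_\lambda(G, C_j)$, which is exactly the content of Proposition \ref{prop.disjointH}.

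With these two facts the conclusion follows quickly. Given distinct $G_1, G_2 \in \mathcal{G}$, choose $H_i \in \mathcal{H}$ with $X_{G_1}^{H_i} \neq X_{G_2}^{H_i}$. By the reformulation there is a partition $\lambda$ with $d_\lambda(G_1, H_i) \neq d_\lambda(G_2, H_i)$, that is $\sum_j d_\lambda(G_1, C_j) \neq \sum_j d_\lambda(G_2, C_j)$ over the components $C_j$ of $H_i$. Two sums of nonnegative integers can differ only if some pair of summands differs, so there is a component $C_j$ with $d_\lambda(G_1, C_j) \neq d_\lambda(G_2, C_j)$, whence $X_{G_1}^{C_j} \neq X_{G_2}^{C_j}$ with $C_j \in \mathcal{C}$ connected. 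Thus $\mathcal{C}$ distinguishes every pair in $\mathcal{G}$, completing the ``only if'' direction.

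The step I expect to require the most care is the coloring-count reformulation of equality, and in particular confirming that it correctly handles pairs $G_1, G_2$ with different numbers of vertices (if $\abs{V(G_1)} \neq \abs{V(G_2)}$ then some $d_\lambda$ already disagrees unless both functions vanish, which the formulation absorbs since $d_\lambda$ is defined to vanish off the correct degree). Once that equivalence is in place, the additivity from Proposition \ref{prop.disjointH} and the elementary ``a sum differs implies a summand differs'' observation make the rest routine; it remains only to note explicitly that $\mathcal{C}$ is finite, so that it genuinely qualifies as a finite distinguisher.
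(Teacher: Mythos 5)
Your proof is correct and takes essentially the same approach as the paper's: pass to the set of all connected components of the given finite distinguisher and use Proposition \ref{prop.disjointH} to locate a component on which the two $H$-chromatic symmetric functions differ. Your explicit bookkeeping with the coloring counts $d_\lambda(G,H)$ (via Lemma \ref{lem.singlerep}) merely spells out what the paper's terser argument leaves implicit, including the degree-mismatch case.
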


\begin{proof}
 Suppose $\mathcal{H}$ is a finite distinguisher of $\mathcal{G}$. Let $\mathcal{H}'$ be the set of all connected components of graphs in $\mathcal{H}$. Let $G_1, G_2 \in \mathcal{G}$ be given, and let $H \in \mathcal{H}$ be such that $X_{G_1}^H \neq X_{G_2}^H$. Let $H'_1, H'_2, ..., H'_k$ be its connected components. By Proposition \ref{prop.disjointH}, there must be an $H'_j$ such that $X_{G_1}^{H'_j} \neq X_{G_2}^{H'_j}$. So $\mathcal{H}'$ is a finite $\mathcal{G}$-distinguisher. The other direction is obvious.
\end{proof}

Before we write down the main result in this section we will introduce a final piece of terminology.

\begin{dfn}\label{dfn.uniformboundedness}
Let $\mathcal{G}$ and $\mathcal{H}$ be two sets of graphs. We say that $\mathcal{G}$ is \textit{uniformly bounded with respect to $\mathcal{H}$} if the set $$\mathcal{C}_G^H := \{c_{G, \lambda}^H \mid G \in \mathcal{G}, H \in \mathcal{H}, X_G^H = \sum_{\lambda \vdash \abs{V(G)}} c_{G, \lambda}^H m_\lambda^{\abs{V(H)}}\}$$ is bounded.
\end{dfn}

In particular, when $\mathcal{G}, \mathcal{H}$ are finite, then $\mathcal{G}$ is uniformly bounded with respect to $\mathcal{H}$. We shall also remark that when $\mathcal{G}$ is uniformly bounded with respect to a finite set $\mathcal{H}$, then it is clear that $\mathcal{G}$ is also uniformly bounded with respect to $\mathcal{H}'$, which is defined in the proof of Lemma \ref{lem.connecteddistinguisher}.

\begin{prop}\label{prop.connecteddistinguisher}
Let $\mathcal{G}$ be a set of connected graphs. Suppose there exists a finite $\mathcal{G}$-distinguisher $\mathcal{H}$, and $\mathcal{G}$ is uniformly bounded by $\mathcal{H}$. Then, $\mathcal{G}$ is uniformly distinguishable.
\end{prop}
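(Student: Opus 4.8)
The plan is to manufacture a single graph $H^\ast$ out of the finite distinguisher by taking disjoint unions of suitably many copies of its members, and to use Proposition~\ref{prop.disjointH} to control the coefficients of $X_G^{H^\ast}$. First I would reduce to the connected case: by Lemma~\ref{lem.connecteddistinguisher} and the remark following Definition~\ref{dfn.uniformboundedness}, I may replace $\mathcal{H}$ by the set $\mathcal{H}' = \{H_1, \dots, H_m\}$ of its connected components, which is again a finite $\mathcal{G}$-distinguisher and with respect to which $\mathcal{G}$ is still uniformly bounded. Writing $c_{G,\lambda}^{H_j}$ for the coefficients as in Definition~\ref{dfn.uniformboundedness}, uniform boundedness supplies a constant $B$ with $0 \le c_{G,\lambda}^{H_j} \le B$ for all $G \in \mathcal{G}$, all $j$, and all $\lambda$ (these coefficients are nonnegative integers, being counts of colorings by Lemma~\ref{lem.singlerep}).

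The device is a place-value choice of multiplicities. I would set $a_j = (2B+1)^{j-1}$ and let $H^\ast = \biguplus_{j=1}^m a_j H_j$ denote the disjoint union of $a_j$ isomorphic copies of each $H_j$; this is a single graph. Since every $G \in \mathcal{G}$ is connected, Proposition~\ref{prop.disjointH} applies to the full list of (connected) components of $H^\ast$ and yields, after collecting the $a_j$ identical contributions from the copies of $H_j$,
\[
X_G^{H^\ast} = \sum_{\lambda} \Big( \sum_{j=1}^m a_j\, c_{G,\lambda}^{H_j} \Big) m_\lambda^{|V(H^\ast)|}.
\]
The core combinatorial fact I would isolate is a no-cancellation statement: if $(\delta_1, \dots, \delta_m)$ is any nonzero integer vector with $|\delta_j| \le B$, then $\sum_{j=1}^m a_j \delta_j \neq 0$. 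This is exactly uniqueness of the base-$(2B+1)$ representation: taking the largest index $j_0$ with $\delta_{j_0} \neq 0$, the top term has absolute value at least $(2B+1)^{j_0-1}$, while the tail is bounded by $B \sum_{j<j_0}(2B+1)^{j-1} < (2B+1)^{j_0-1}$, so the sum cannot vanish.

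It then remains to check that $H^\ast$ separates every pair $G_1 \neq G_2$ in $\mathcal{G}$. If $G_1$ and $G_2$ have different numbers of vertices, then $X_{G_1}^{H^\ast}$ and $X_{G_2}^{H^\ast}$ are homogeneous of different degrees, and since $\mathcal{H}'$ distinguishes the pair at least one of them is $H_j$-colorable (hence $H^\ast$-colorable, as $H_j$ is a component of $H^\ast$) and therefore nonzero, so the two cannot agree. If instead they share the same vertex count $k$, then because $\mathcal{H}'$ is a distinguisher there exist $j$ and $\lambda \vdash k$ with $\delta_\lambda^j := c_{G_1,\lambda}^{H_j} - c_{G_2,\lambda}^{H_j} \neq 0$; applying the no-cancellation fact to the vector $(\delta_\lambda^1, \dots, \delta_\lambda^m)$ shows that the coefficient of $m_\lambda^{|V(H^\ast)|}$ differs between $X_{G_1}^{H^\ast}$ and $X_{G_2}^{H^\ast}$, so $X_{G_1}^{H^\ast} \neq X_{G_2}^{H^\ast}$. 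Hence $\{H^\ast\}$ is a $\mathcal{G}$-distinguisher and $\mathcal{G}$ is uniformly distinguishable. The main obstacle is that $\mathcal{G}$ may be infinite, so there are infinitely many pairs to separate at once, and a naive disjoint union can let the distinguishing differences coming from different components cancel. Uniform boundedness is precisely what tames this: it bounds all the $\delta_\lambda^j$ by a single $B$, allowing one fixed base---and hence one fixed $H^\ast$---to prevent cancellation uniformly across every pair.
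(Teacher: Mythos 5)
Your proof is correct and follows essentially the same route as the paper's: reduce to connected components via Lemma~\ref{lem.connecteddistinguisher}, take a disjoint union with geometrically growing multiplicities in a base exceeding the uniform bound, and apply Proposition~\ref{prop.disjointH} together with a place-value no-cancellation argument. The only cosmetic difference is that you isolate the \emph{largest} index with a nonzero coefficient difference and bound the lower-order tail, while the paper works from the \emph{smallest} distinguishing index and notes the higher-order tail is divisible by a larger power --- the same base-representation idea in mirror image.
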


\begin{proof}
 By Lemma \ref{lem.connecteddistinguisher} and the remark before this proposition, it is safe to assume that $\mathcal{H} = \{H_1, H_2, ..., H_l\}$ is a finite $\mathcal{G}$-distinguisher that consists of connected graphs, and that $\mathcal{G}$ is uniformly bounded by $\mathcal{H}$. Let $N \in \N$ be an upperbound of $\mathcal{C}_G^H$ as in Definition \ref{dfn.uniformboundedness}. Now, set $$H := \biguplus_{j = 1}^l (2N)^j H_j,$$where $nG = \uplus_{j = 1}^n G$ is the graph obtained by taking the disjoint union of $n$ copies of $G$. We claim that $\{H\}$ is a $\mathcal{G}$-distinguisher.

Let $G_1, G_2 \in \mathcal{G}$ be given. We may assume $\abs{V(G_1)} = \abs{V(G_2)}$, for otherwise the degree of $X_{G_1}^H$ and that of $X_{G_2}^H$ are different. Note also, in particular, that in order for $\mathcal{H}$ to be a $\mathcal{G}$-distinguisher, it is clearly not the case that $X_{G_1}^H = X_{G_2}^H = 0$. We write $X_{G_1}^{H_{i}} = \sum_\lambda a_{i, \lambda} m^{\abs{V(H_i)}}_\lambda$ and $X_{G_2}^{H_{i}} = \sum_\lambda b_{i, \lambda} m^{\abs{V(H_i)}}_\lambda$. By Proposition \ref{prop.disjointH}, $X_{G_1}^H = \sum_{\lambda} a_\lambda m^{\abs{V(H)}}_\lambda$ and $X_{G_2}^H = \sum_{\lambda} b_\lambda m^{\abs{V(H)}}_\lambda$, where $a_\lambda = \sum_{j = 1}^l (2N)^j a_{j, \lambda}$ and $b_\lambda = \sum_{j = 1}^l (2N)^j b_{j, \lambda}$.

We let $m$ be the smallest number such that $\{H_m\}$ is a $\{G_1, G_2\}$-distinguisher, and let $\lambda$ be such that $a_{m, \lambda} \neq b_{m, \lambda}$. Then,
\begin{align*}
0 &< \abs{\sum_{j = 1}^m (2N)^j a_{j, \lambda} - \sum_{j = 1}^m (2N)^j b_{j, \lambda}}\\
&= \abs{(2N)^m a_{m, \lambda} - (2N)^m b_{m, \lambda}}\\
&= 2^m N^m \abs{a_{m, \lambda} - b_{m, \lambda}}\\
&\leq 2^m N^{m+1}.
\end{align*}
But $\sum_{j = m+1}^l (2N)^j a_{j, \lambda} - \sum_{j = m+1}^l (2N)^j b_{j, \lambda}$ is a multiple of $2^{m+1}N^{m+1}$. So $a_\lambda - b_\lambda$ cannot possibly vanish. Hence, $X_{G_1}^H \neq X_{G_2}^H$.
\end{proof}

As remarked before, when $\mathcal{G}, \mathcal{H}$ are both finite, $\mathcal{G}$ is uniformly bounded with respect to $\mathcal{H}$. Hence, we have the following corollary.

\begin{cor}\label{cor.connecteddistinguisher}
Any finite set of connected graphs $\mathcal{G}$ is uniformly distinguishable. Equivalently, for any $k \in \N$, the set $\{G \mid \abs{V(G)} \leq k, \textit{ $G$ is connected}\}$ is uniformly distinguishable.
\end{cor}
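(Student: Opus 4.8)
The plan is to derive this directly from Proposition \ref{prop.connecteddistinguisher} by verifying its two hypotheses for a finite set of connected graphs, and then to dispatch the ``equivalently'' clause by a finiteness observation.

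First I would produce a finite $\mathcal{G}$-distinguisher. Since $\mathcal{G}$ is finite, it contains only finitely many unordered pairs $\{G_1, G_2\}$ of distinct (hence non-isomorphic) graphs. For each such pair, Corollary \ref{cor.autcor}(3) supplies a graph $H_{G_1, G_2}$ with $X_{G_1}^{H_{G_1,G_2}} \neq X_{G_2}^{H_{G_1,G_2}}$. Collecting these over all pairs yields a finite set $\mathcal{H}$ which is, by construction, a $\mathcal{G}$-distinguisher. If one insists that $\mathcal{H}$ consist of connected graphs, Lemma \ref{lem.connecteddistinguisher} permits replacing $\mathcal{H}$ by the set of its connected components without sacrificing finiteness or the distinguishing property, though Proposition \ref{prop.connecteddistinguisher} already absorbs this step internally.

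Next I would check uniform boundedness. Since both $\mathcal{G}$ and $\mathcal{H}$ are finite, the coefficient set $\mathcal{C}_G^H$ of Definition \ref{dfn.uniformboundedness} is a finite set of numbers and is therefore bounded; this is precisely the remark preceding Proposition \ref{prop.connecteddistinguisher}. With a finite $\mathcal{G}$-distinguisher in hand and uniform boundedness established, Proposition \ref{prop.connecteddistinguisher} applies verbatim and yields that $\mathcal{G}$ is uniformly distinguishable.

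Finally I would address the equivalence. For fixed $k$, the collection of pairwise non-isomorphic connected graphs on at most $k$ vertices is finite, so the displayed set $\{G \mid \abs{V(G)} \leq k,\ G \text{ connected}\}$ is a finite set of connected graphs and its uniform distinguishability follows from the first assertion. Conversely, any finite set of connected graphs has some maximum vertex count $k$ and hence sits inside $\{G \mid \abs{V(G)} \leq k,\ G \text{ connected}\}$; since a single $H$ distinguishing all members of the larger set \emph{a fortiori} distinguishes all members of any subset, uniform distinguishability of the larger set descends to the finite set. I do not anticipate a genuine obstacle here: the substantive content—the base-$(2N)$ weighting argument that amalgamates a finite distinguisher into one graph—is already carried out in Proposition \ref{prop.connecteddistinguisher}, so the only care required is to confirm that finiteness of $\mathcal{G}$ simultaneously delivers both a finite distinguisher (via Corollary \ref{cor.autcor}) and the uniform bound, and to unwind the two-way implication in the ``equivalently'' clause.
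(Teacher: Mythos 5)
Your proposal is correct and follows essentially the same route as the paper: a finite distinguisher from Corollary \ref{cor.autcor}, uniform boundedness from finiteness of $\mathcal{G}$ and $\mathcal{H}$ (the remark preceding Proposition \ref{prop.connecteddistinguisher}), and then Proposition \ref{prop.connecteddistinguisher} itself, with the ``equivalently'' clause handled by the finiteness of connected graphs on at most $k$ vertices. Your only addition is spelling out the converse direction of the equivalence (restricting a single distinguishing $H$ to a subset), which the paper leaves implicit.
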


\begin{proof}
The second claim follows from the first one and the fact that there are finitely many (connected) graphs with no more than $k$-vertices up to isomorphism. To show the first claim, let $\mathcal{G}$ be a finite set of connected graphs. Then, there exists a finite distinguisher of $\mathcal{G}$ by Corollary \ref{cor.autcor}. Hence, $\mathcal{G}$ is uniformly distinguishable by Proposition \ref{prop.connecteddistinguisher} and the remark before this corollary.
\end{proof}

In the previous discussions, we always assume that $\mathcal{G}$ is a set of connected graphs. Of course, a result that generalizes to arbitrary graphs will be more desirable. It turns out that the assumption that $\mathcal{G}$ contains only connected graphs in Corollary \ref{cor.connecteddistinguisher} is unnecessary. To remove this assumption, we will need to use a somewhat different strategy. In particular, we will be looking at the coefficient of $m_{(1^k)}$. Hence, instead of Proposition \ref{prop.disjointH}, the key tool here is Corollary \ref{cor.autcor}.

\begin{prop}\label{prop.generaldistinguisher}
Any finite set of (not necessarily connected) graphs $\mathcal{G}$ is uniformly distinguishable.
\end{prop}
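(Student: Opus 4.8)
The plan is to reduce everything to the coefficient of $m_{(1^k)}$, which by Proposition \ref{prop.automorphism} equals the number of embeddings $\mathrm{emb}(G,H) = S(G,H)\abs{\aut(G)}$ of $G$ into $H$, and to realize the distinguisher as a single weighted disjoint union of the graphs of $\mathcal{G}$ themselves. The reason we cannot simply reuse the construction $\biguplus_j (2N)^j H_j$ from Proposition \ref{prop.connecteddistinguisher} is that its analysis rested on Proposition \ref{prop.disjointH}, whose additivity over the components of $H$ requires $G$ to be connected; for disconnected $G$ the components of $G$ may be distributed among several components of $H$, and there is no clean additive formula. The coefficient of $m_{(1^k)}$, however, behaves well regardless of connectivity, since it counts injective homomorphisms.

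Write $\mathcal{G} = \{G_1, \dots, G_m\}$ and set $H = \biguplus_{j=1}^m w_j G_j$ for positive integer weights $w_j$ to be chosen. First I would record the crucial structural fact, which is exactly what Corollary \ref{cor.autcor} supplies: the $m\times m$ matrix $M_{ij} = \mathrm{emb}(G_i, G_j) = S(G_i,G_j)\abs{\aut(G_i)}$ is, after ordering the graphs so that $\abs{V(G_i)}$ and then $\abs{E(G_i)}$ are nondecreasing, upper triangular with nonzero diagonal. Indeed an injective homomorphism forces $\abs{V(G_i)}\le\abs{V(G_j)}$ and $\abs{E(G_i)}\le\abs{E(G_j)}$, so $M_{ij}=0$ whenever $i>j$ (the equal-$(\abs{V},\abs{E})$ case being excluded by non-isomorphism), while $M_{ii}=\abs{\aut(G_i)}\neq 0$ by Proposition \ref{prop.automorphism}. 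Hence $M$ is invertible and its rows are pairwise distinct.

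Next I would view $p_i(w) := \mathrm{emb}\!\left(G_i, \biguplus_j w_j G_j\right)$ as a polynomial in $w = (w_1,\dots,w_m)$: an embedding either sends all of $G_i$ into a single copy of some $G_j$, contributing the degree-one term $\sum_j M_{ij} w_j$, or spreads the components of $G_i$ across at least two copies, contributing terms of total degree at least $2$ in the $w_j$. Since the degree-one parts are precisely the rows of $M$, which are pairwise distinct, the polynomials $p_1,\dots,p_m$ are pairwise distinct. Therefore each $p_i - p_{i'}$ is a nonzero polynomial; as a finite product of nonzero polynomials over $\Z$ is nonzero and cannot vanish identically on $\N^m$, we may choose positive integers $w_1,\dots,w_m$ for which the values $\mathrm{emb}(G_1,H),\dots,\mathrm{emb}(G_m,H)$ are pairwise distinct.

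Finally I would assemble the conclusion for this $H$. Each $G_i$ occurs as a component of $H$, so the inclusion of one copy is a homomorphism and $X_{G_i}^H \neq 0$; thus every $X_{G_i}^H$ is a nonzero homogeneous symmetric function of degree $\abs{V(G_i)}$. If $\abs{V(G_i)} \neq \abs{V(G_{i'})}$ the two functions have different degrees, hence differ. If $\abs{V(G_i)} = \abs{V(G_{i'})} = k$, then by Proposition \ref{prop.automorphism} the coefficients of $m_{(1^k)}$ in $X_{G_i}^H$ and in $X_{G_{i'}}^H$ are the distinct numbers $\mathrm{emb}(G_i,H)$ and $\mathrm{emb}(G_{i'},H)$ (up to the common positive factor relating $m_{(1^k)}$ to $m^{\abs{V(H)}}_{(1^k)}$), so again $X_{G_i}^H \neq X_{G_{i'}}^H$. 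Hence $\{H\}$ is a $\mathcal{G}$-distinguisher and $\mathcal{G}$ is uniformly distinguishable. The main obstacle is precisely the failure of the component-additivity of Proposition \ref{prop.disjointH} for disconnected $G$; the device that overcomes it is reading off the single coefficient of $m_{(1^k)}$ and controlling it through the triangularity of $M$ rather than through any dominant-term estimate on the higher-degree cross terms.
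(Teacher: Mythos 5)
Your proof is correct in substance, but it takes a genuinely different route from the paper's. Both arguments share the same skeleton: build $H$ as a weighted disjoint union, reduce everything to the coefficient of $m_{(1^k)}$ via Proposition \ref{prop.automorphism}, and exploit the vertex-count/edge-count triangularity of embedding counts from Corollary \ref{cor.autcor}. The divergence is in how the weights are chosen and justified. The paper decomposes the graphs of $\mathcal{G}$ into their connected components $H_1,\dots,H_l$, encodes each $G$ by its component-multiplicity vector $p(G)$, and then picks explicit, recursively defined weights $M_j$ (built from the constants $C_1,\dots,C_4$) so that a chain of inequalities in the reverse-dominance order $<_R$ forces the embedding counts apart; this is fully constructive and yields an explicit distinguisher, at the cost of a delicate estimate on the cross terms $\beta_v \prod_j \binom{M_j}{v_j}$. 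You instead keep the graphs $G_j$ whole, package the first-order data into the matrix $M_{ij}=\mathrm{emb}(G_i,G_j)$, and obtain good weights \emph{non-constructively}, from the fact that the nonzero polynomial $\prod_{i<i'}(p_i - p_{i'})$ cannot vanish on all of $\Z_{>0}^m$. This is shorter and entirely sidesteps the paper's dominant-term analysis, but produces no explicit $H$ and no bounds.

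One step needs repair, though the fix is a one-liner. Your claim that ``the degree-one parts are precisely the rows of $M$'' is literally false after monomial expansion: an embedding that spreads the components of $G_i$ over $t\geq 2$ copies of the \emph{same} $G_j$ contributes a term $\beta_{i,te_j}\binom{w_j}{t}$, and $\binom{w_j}{t}$ has the linear term $(-1)^{t-1}w_j/t$, so the honest linear part of $p_i$ is $\sum_j \bigl(M_{ij} + \sum_{t\geq 2}(-1)^{t-1}\beta_{i,te_j}/t\bigr)w_j$ (e.g.\ $G_i = 2K_1$, $G_j=K_1$ gives $p_i(w_j)=w_j^2$, whose linear part is $0$, not $M_{ij}w_j = w_j$). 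The conclusion you need --- that $p_i \neq p_{i'}$ as polynomials --- is still true, and follows either by expanding in the binomial basis $\prod_j\binom{w_j}{v_j}$, whose coefficients are exactly the counts $\beta_{i,v}$ with $\beta_{i,e_j}=M_{ij}$, or most simply by evaluating at the standard basis vectors: $p_i(e_j)=\mathrm{emb}(G_i,G_j)=M_{ij}$, so equal polynomials would force equal rows of $M$, contradicting triangularity with nonzero diagonal. With that substitution the remainder of your argument (nonvanishing of each $X_{G_i}^H$ since $w_i\geq 1$, degree comparison for unequal vertex counts, and comparison of the $m_{(1^k)}$-coefficients, which differ by a common positive factor from the $m^{\abs{V(H)}}_{(1^k)}$-coefficients, for equal vertex counts) goes through verbatim.
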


\begin{proof}
 Given a set of graphs $\mathcal{G}$, and let $\mathcal{H} = \{H_1, H_2, ..., H_l\}$ be the set of all connected components of graphs in $\mathcal{G}$ with isomorphic copies being identified. Without loss of generality, assume the graphs in $\mathcal{H}$ are indexed with respect to the following two rules:
\begin{enumerate}
\item If $i < j$, then $\abs{V(H_i)} \geq \abs{V(H_j)}$;
\item If $i < j$ and $\abs{V(H_i)} = \abs{V(H_j)}$, then $\abs{E(H_i)} \geq \abs{E(H_j)}$.
\end{enumerate}
Therefore, by Corollary \ref{cor.autcor}, a graph in $\mathcal{H}$ is not colorable by any other graph that comes after it in $\mathcal{H}$.

Now we construct an $H$ we claim is a distinguisher of  $\mathcal{G}$.
Let $C_1$ be the maximum number of connected components of a graph in $\mathcal{G}$, $C_2 := \max \{\abs{V(H_j)}\}_{j=1}^l$, and $C_3$ be the number of non-negative integer-valued vectors of length $l$ whose sum of the entries are no larger than $C_1$. Let $C_4$ be the maximum number of proper $H$-colorings of $G$ of type $(1^{\abs{V(G)}})$ one can obtain if $G \in \mathcal{G}$ and $H$ is a graph with at most $C_1C_2$ vertices. Clearly, $C_j$ ($1 \leq j \leq 4$) are all positive integers. Define $\{M_j\}_{j=1}^l \subset \N$ recursively as follows.
\begin{enumerate}
\item $M_1 = C_3C_4 + C_1 + 1$;
\item If $j \geq 2$, then $M_j = C_3C_4(\prod_{i = 1}^{j - 1} M_i)^{C_1} + C_1$.
\end{enumerate}
Hence, we have $C_1 = M_1 \leq M_2 \leq ... \leq M_l$. Now, we set $$H := \biguplus_{j=1}^l M_j H_j,$$ and we claim that $H$ is a distinguisher for $\mathcal{G}$.

Suppose $G_1, G_2 \in \mathcal{G}$. Since $H$ contains an isomorphic copy of each graph in $\mathcal{G}$, every graph in $\mathcal{G}$ is necessarily $H$-colorable. Thus, it is safe to assume that $k:=\abs{V(G_1)} = \abs{V(G_2)} = k$. If we write $X_{G_1}^H = \sum_{\lambda \vdash k} a_\lambda m_\lambda^{\abs{V(H)}}$ and $X_{G_2}^H = \sum_{\lambda \vdash k} b_\lambda m_\lambda^{\abs{V(H)}}$, then it suffices to show that $a_{(1^k)} \neq b_{(1^k)}$. We note that $a := a_{(1^k)}$ is the number of proper $H$-colorings of $G_1$ of type $(1^k)$. Suppose $G_1$ has $m$ components $H_{j_1}, H_{j_2}, ..., H_{j_m}$, where $j_1 \leq j_2 \leq ... \leq j_m$. Then, $G_1$ can be associated naturally with a vector of length $l$, $p(G_1) := (p_1, p_2, ..., p_l)$, where $p_j$ is the number of isomorphic copies of $H_j$ in $G_1$. Now, we can obtain a proper $H$-coloring of type $(1^k)$ if we choose $p_j$ copies of $H_j$ from $H$ to color \textit{the} $p_j$ copies of $H_j$ in $G_1$. This is always possible since $M_j \geq C_1$ for every $j$, and it will give us $\prod_{j=1}^l (\alpha_j)^{p_j} P^{M_j}_{p_j}$ colorings of type $(1^k)$, where $\alpha_j$ is the number of automorphisms of $H_j$ and $P$ is the standard notation for the number of permutations: $$P_{n_1}^{n_2} := \frac{n_2!}{(n_2-n_1)!}.$$ We can similarly define $p(G_2) = (q_1, q_2, ..., q_l)$ and obtain $\prod_{j=1}^l (\alpha_j)^{q_j} P^{M_j}_{q_j}$ such colorings of type $(1^k)$. We shall now study the two vectors $p(G_1)$ and $p(G_2)$.

Given two vectors $s = (s_1, s_2, ..., s_l), t = (t_1, t_2, ..., t_l)$, we say that $s$ is reverse dominated by $t$, and write $s <_R t$, if and only if $s \neq t$ and if $N$ is the largest number for which $s_N \neq t_N$, we have $s_N < t_N$. We shall also allow $>_R, \leq_R,$ and $ \geq_R$ to be defined naturally. Suppose $s <_R t$ and $\sum s_j, \sum t_j \leq C_1$. Let $N$ be the largest number for which $s_N < t_N$. We have $s_N < t_N \leq C_1$. Therefore,
\begin{align*}
\frac{P_{t_N}^{M_N}}{P_{s_N}^{M_N}} &= \frac{M_N!}{(M_N - t_N)!} \frac{(M_N - s_N)!}{M_N!}\\
&\geq M_N - s_N\\
&> M_N - C_1\\
&\geq C_3C_4(\prod_{i = 1}^{N - 1} M_i)^{C_1}.
\end{align*}
Hence, we have
\begin{align*}
\frac{\prod_{j=1}^l P_{t_j}^{M_j}}{\prod_{j=1}^l P_{s_j}^{M_j}} &= \frac{P_{t_N}^{M_N}}{P_{s_N}^{M_N}} \; \frac{\prod_{j=1}^{N-1} P_{t_j}^{M_j}}{\prod_{j=1}^{N-1} P_{s_j}^{M_j}} \\
&> C_3C_4(\prod_{i = 1}^{N - 1} M_i)^{C_1} \frac{\prod_{j=1}^{N-1} P_{t_j}^{M_j}}{\prod_{j=1}^{N-1} P_{s_j}^{M_j}} \\
&\geq \frac{C_3C_4(\prod_{i = 1}^{N - 1} M_i)^{C_1}}{\prod_{j=1}^{N-1} P_{s_j}^{M_j}} \\
&= C_3C_4 \prod_{j = 1}^{N - 1} \frac{M_j^{C_1}}{P_{s_j}^{M_j}}\\
&\geq C_3C_4.
\end{align*}
In particular, $\prod_{j=1}^l P_{t_j}^{M_j} > C_3C_4 \prod_{j=1}^l P_{s_j}^{M_j} \geq \prod_{j=1}^l P_{s_j}^{M_j}$.

Clearly, $p(G_1) \neq p(G_2)$, and without loss of generality, we assume $p(G_1) <_R p(G_2)$. We shall prove that $a_{(1^k)} < \prod_{j=1}^l P^{M_j}_{q_j} \leq \prod_{j=1}^l \alpha_j^{q_j} P^{M_j}_{q_j} \leq b_{(1^k)}$. The only non-trivial inequality is the first one. As a first step we will show the following:
\begin{align*}
a_{(1^k)} &= \sum_{v = (v_1, v_2, ..., v_l)} \beta_v \prod_{j = 1}^l {{M_j} \choose {v_j}}\\
&\leq \sum_{v = (v_1, v_2, ..., v_l)} \beta_v \prod_{j = 1}^l P_{v_j}^{M_j},
\end{align*}
where $v$ ranges over all non-negative integer-valued vectors whose sum is no more than $m$ and $\beta_v$ is the total number of $\biguplus_{j=1}^l v_j H_j$-colorings of type $(1^k)$ of $G_1$, and that all components in $H_v := \biguplus_{j=1}^l v_j H_j$ are used in the coloring. To see this, we write $$\sum_{v = (v_1, v_2, ..., v_l)} \beta_v \prod_{j = 1}^l {{M_j} \choose {v_j}} = \sum_{s = 1}^m \sum_{\substack{\abs{v} = s \\ v = (v_1, v_2, ..., v_l)}} \beta_v \prod_{j = 1}^l {{M_j} \choose {v_j}},$$ where $\abs{v} = \sum_{j=1}^l v_j$. On the right-hand side, the first sum controls the number of components in $H$ that we use to color $G$, which shall never be larger than $m$. The second sum controls which (up to isomorphism) component and how many of it we choose. There are $\prod_{j = 1}^l {{M_j} \choose {v_j}}$ ways we can obtain these components from $H$, and $\beta_v$ gives the number of colorings of type $(1^k)$ using these components.

As we have noted before, in order to have a proper $H_j$-coloring of type $(1^{\abs{V(H_{j_i})}})$ of $H_{j_i}$, we must have that $j \leq j_i$. Hence, suppose $v = (v_1, v_2, ..., v_l)$ $>_R p(G_1)$ and let $N$ be the largest number for which $v_N > p_N$. The $v_l$ components of $H_l$ in $H_v$ can only be used to color the $p_l$ components of $H_l$ in $G_1$. It then follows from induction that for any $j > N$, the $v_j$ components of $H_j$ in $H_v$ can only be used to color the $p_j$ components of $H_j$ in $G_1$. Now, at least one component of $H_N$ in $H_v$ cannot be used to color any component in $G_1$. Consequently, $\beta_v = 0$. Hence,
\begin{align*}
a_{(1^k)} &\leq  \sum_{v = (v_1, v_2, ..., v_l)} \beta_v \prod_{j = 1}^l P_{v_j}^{M_j}\\
&\leq C_4 \sum_{\substack{v = (v_1, v_2, ..., v_l) \\ v \; \leq_R \; p(G_1)}} \prod_{j = 1}^l P_{v_j}^{M_j}\\
&\leq C_4 C_3 \prod_{j = 1}^l P_{p_j}^{M_j}\\
&< \prod_{j = 1}^l P_{q_j}^{M_j}.
\end{align*}

Hence, $X_{G_1}^H \neq X_{G_2}^H$.
\end{proof}

Before we look into the next topic, we remark that, in general, one shall not expect that an infinite set of graphs is uniformly distinguishable, even if we assume that $\mathcal{G}$ only contains connected graphs. Indeed, if the chromatic numbers of graphs in $\mathcal{G}$ are not bounded from above, then for any $H$, there must be infinitely many graphs in $\mathcal{G}$ that are not $H$-colorable. This argument leads to the following result.

\begin{prop}\label{prop.notdistinguishable}
Let $\mathcal{G}$ be a set of graphs, and suppose $\{\chi(G) \mid G \in \mathcal{G}\}$ is not bounded. Then, $\mathcal{G}$ is not finitely distinguishable.
\end{prop}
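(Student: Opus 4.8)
The plan is to contrapositively establish that no finite set $\mathcal{H}$ can distinguish all pairs in $\mathcal{G}$, by exploiting the fact that each $H \in \mathcal{H}$ can only $H$-color graphs whose chromatic number is at most $\abs{V(H)}$. The key observation is that if $\chi(G) > \abs{V(H)}$, then $G$ is not $H$-colorable, so $X_G^H = 0$. First I would suppose for contradiction that $\mathcal{G}$ \emph{is} finitely distinguishable, and fix a finite $\mathcal{G}$-distinguisher $\mathcal{H} = \{H_1, H_2, \ldots, H_m\}$. Let $n := \max_{1 \leq i \leq m} \abs{V(H_i)}$, which is a finite number since $\mathcal{H}$ is finite.

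The crux is that since $\{\chi(G) \mid G \in \mathcal{G}\}$ is unbounded, there exist infinitely many graphs in $\mathcal{G}$ with chromatic number strictly greater than $n$. Call this infinite subcollection $\mathcal{G}_0 \subseteq \mathcal{G}$. For any $G \in \mathcal{G}_0$ and any $H_i \in \mathcal{H}$, we have $\chi(G) > n \geq \abs{V(H_i)}$, so $G$ is not $H_i$-colorable, whence $X_G^{H_i} = 0$. Now I would pick any two distinct graphs $G_1, G_2 \in \mathcal{G}_0$ (possible since $\mathcal{G}_0$ is infinite, hence certainly contains at least two elements). For every $H \in \mathcal{H}$ we then have
\begin{equation*}
X_{G_1}^{H} = 0 = X_{G_2}^{H},
\end{equation*}
so no $H \in \mathcal{H}$ satisfies $X_{G_1}^H \neq X_{G_2}^H$. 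This contradicts the assumption that $\mathcal{H}$ is a $\mathcal{G}$-distinguisher, completing the argument.

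I do not anticipate a serious obstacle here; the result is essentially a pigeonhole-style observation once the right bound is isolated. The one point that requires a moment of care is verifying that a graph with chromatic number exceeding the vertex count of $H$ is genuinely not $H$-colorable: a proper $(H,\phi)$-coloring of $G$ uses at most $\abs{V(H)}$ distinct colors, and any such coloring restricted to adjacent vertices of $G$ respects adjacency in $H$ (in particular maps adjacent vertices to distinct labels, since $H$ is simple), so it yields an ordinary proper coloring of $G$ with at most $\abs{V(H)}$ colors, forcing $\chi(G) \leq \abs{V(H)}$. Thus $\chi(G) > \abs{V(H)}$ indeed precludes $H$-colorability, which is the only structural fact the proof relies upon.
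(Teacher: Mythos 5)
Your proof is correct and takes essentially the same route as the paper, which proves the proposition via the remark immediately preceding it: unboundedness of the chromatic numbers forces, for any candidate finite distinguisher $\mathcal{H}$, infinitely many graphs in $\mathcal{G}$ that are not $H$-colorable for every $H \in \mathcal{H}$, so any two of them have $X_{G_1}^H = 0 = X_{G_2}^H$ throughout. Your write-up simply fills in the details the paper leaves as a sketch, including the (correct) verification that $\chi(G) > \abs{V(H)}$ precludes $H$-colorability when $H$ is simple.
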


\section{$K_{k_{1},k_{2}}$-chromatic symmetric functions}
\label{completebipartite}

It is clear that the benefit of discussing $H$-chromatic symmetric functions over ordinary chromatic symmetric functions is our ability to vary $H$. On the other hand, we have seen in Section \ref{basics} that, for a general $H$, the $H$-chromatic symmetric functions do not inherit many nice properties from the ordinary chromatic symmetric functions. Nevertheless, many interesting results can be found by considering specific choices of $H$. By doing so, we can actually find explicit expressions for the coefficients in the monomial basis for the $H$-chromatic symmetric functions---something we cannot necessarily do for an arbitrary $H$. As our focus is on uniqueness results, we will choose graphs for which we have interesting uniqueness results. One such graph is the complete bipartite graph. We begin by first constructing an explicit formula for $K_{m,n}$-chromatic symmetric functions. 

\begin{prop}
Let $H = K_{h_1,h_2}$ for some $h_1,h_2 \geq 1$. Then a connected graph $G$ is $H$-colorable if and only if $G$ is bipartite. 
\end{prop}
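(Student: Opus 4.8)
The plan is to recognize that the notion of $H$-colorability is, up to relabeling, exactly the existence of a graph homomorphism from $G$ to $H$, and then to exploit the special structure of the complete bipartite graph $H = K_{h_1,h_2}$. More precisely, a labeling $\phi$ is merely an injective relabeling of the colors, so a proper $(H,\phi)$-coloring $\kappa$ of $G$ is the same data as the map $f := \phi^{-1}\circ\kappa \colon V(G)\to V(H)$, and the defining condition ``$v_1\sim_G v_2 \Rightarrow \phi^{-1}(\kappa(v_1))\sim_H \phi^{-1}(\kappa(v_2))$'' says precisely that $f$ is a homomorphism, i.e.\ $u\sim_G v$ implies $f(u)\sim_H f(v)$. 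Conversely, given any such $f$ and any labeling $\phi$, the composite $\kappa=\phi\circ f$ is a proper $(H,\phi)$-coloring. Hence $G$ is $H$-colorable if and only if there exists a homomorphism $f\colon G\to K_{h_1,h_2}$, and I would phrase both directions in this language. I would record at the outset the two parts of $H$, writing $V(H)=A\uplus B$ with $\abs{A}=h_1$, $\abs{B}=h_2$, both independent sets in $H$.

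For the forward direction ($H$-colorable $\Rightarrow$ bipartite), I would take a homomorphism $f\colon G\to K_{h_1,h_2}$ and set $V_A := f^{-1}(A)$ and $V_B := f^{-1}(B)$. Since every vertex of $H$ lies in exactly one of $A,B$, these two sets partition $V(G)$. I would then check each is independent in $G$: if $u\sim_G v$ with $u,v\in V_A$, then $f(u),f(v)\in A$ and $f(u)\sim_H f(v)$, contradicting that $A$ is an independent set of $K_{h_1,h_2}$; the same argument rules out an edge inside $V_B$. Thus $V(G)=V_A\uplus V_B$ is a bipartition and $G$ is bipartite. Note that connectedness is not needed here.

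For the backward direction ($G$ connected bipartite $\Rightarrow$ $H$-colorable), I would use the bipartition $V(G)=V_1\uplus V_2$ and choose vertices $a\in A$, $b\in B$, which exist precisely because $h_1,h_2\geq 1$; since $H$ is complete bipartite, $a\sim_H b$. Defining $f(v)=a$ for $v\in V_1$ and $f(v)=b$ for $v\in V_2$ yields a homomorphism, because every edge of $G$ joins $V_1$ to $V_2$ and therefore maps to the edge $\{a,b\}$ of $H$; composing with any labeling $\phi$ gives an explicit proper $(H,\phi)$-coloring, so $G$ is $H$-colorable. This is the only place the hypothesis $h_1,h_2\geq 1$ enters, as it guarantees a crossing edge in $H$ to receive the images of the two parts of $G$.

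There is no serious obstacle here; the whole content is the homomorphism reformulation. The one point that requires care is the translation between the paper's definition (labelings $\phi$ together with colorings $\kappa$ into $\phi(V(H))$) and the homomorphism picture, so that colorability is seen to depend only on $G$ and $H$ and not on the chosen labeling. After that, the forward direction is a one-line independence check and the backward direction is an explicit two-color assignment. Connectedness of $G$ plays no role in either implication for this statement; it is relevant only insofar as it makes the bipartition unique, which matters for the finer $K_{m,n}$-results developed afterward rather than for colorability itself.
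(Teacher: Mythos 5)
Your proof is correct, and its forward direction takes a genuinely different route from the paper's. The paper proves that direction contrapositively: if $G$ is not bipartite it contains an odd cycle, and any attempted $K_{h_1,h_2}$-coloring must assign two consecutive vertices of that cycle colors from the same part $U$ or $V$ of $H$, contradicting independence. You instead argue directly, pulling the bipartition $A \uplus B$ of $H$ back along the homomorphism $f$ to obtain $V(G) = f^{-1}(A) \uplus f^{-1}(B)$ and checking each preimage is independent in $G$. The two arguments are logically equivalent, but yours buys two things: it avoids invoking the odd-cycle characterization of bipartiteness, and it makes transparent that connectedness of $G$ is irrelevant to both implications---a fact the paper's statement obscures (its proof never uses connectedness either) and which is exactly what the disconnected-$G$ extension in Proposition \ref{prop.kmnCSFgen} relies on. The backward directions essentially coincide: both collapse the bipartition of $G$ onto a single edge of $H$, with $h_1, h_2 \geq 1$ guaranteeing such an edge exists; the paper phrases this as ``$G$ can be colored by any $H$ as long as $H$ is not edgeless.'' Your upfront reformulation of $H$-colorability as the existence of a homomorphism $G \to H$, showing in particular that colorability does not depend on the choice of labeling $\phi$, is a worthwhile piece of bookkeeping that the paper leaves implicit throughout.
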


\begin{proof}
Let $G$ be connected and bipartite. In the case that $G$ has only one vertex, the result is trivial. Suppose $G$ has at least two vertices. Then $G$ can be colored by any $H$ as long as $H$ is not edgeless. On the other hand, suppose $G$ is not bipartite, then $G$ has an odd cycle. Let $H$ be a complete bipartite graph and let $U$ and $ V $ be the two independent sets of $H$. If we can use $H$ to color $G$, then at least two consecutive vertices in this odd cycle must be colored both by some $u\in U$ or both by some $v\in V$, which is a contradiction since $U$ and  $V$ are independent sets in $H$.
\end{proof}

We need some machinery for the next result.  Let $G$ be a graph with connected components $G_1,\dots G_l$, and label the maximally independent sets of each component $V_j^i$, where $1\leq i\leq l$ and $j \in \{1,2\}$. 
We adopt the following notation:
\begin{itemize}
\setlength\itemsep{0em}
    \item $p = (p_1,\dots p_l)$, $p \in \{1,2\}^l$,
    \item $g^i_j=|V^i_j|$, $k_1 = \sum_i g_{p_i}^i$ and $k_2 = \sum_i g_{3-p_i}^i = |G| -k_1$,
    \item $\lambda = (\lambda_1,\dots,\lambda_j) \vdash k_1$ and $\mu = (\mu_1,\dots,\mu_d) \vdash k_2$,
    \item $r_i = r_i(\lambda)$ and $s_i = r_i(\mu)$.
    \item $\lambda+\mu$ is the partition of $|G|$ formed by appending $\mu$ to the end of $\lambda$,
    \item and $t_i = r_i(\lambda+\mu)$.
\end{itemize}

We will also need to recall that in each of the multinomial coefficients, say ${{k} \choose {r_1(\lambda),\dots r_{j}(\lambda)}}$, the term $k-r_1(\lambda)-...-r_j(\lambda)$ is implicitly included if nonzero (so we mean ${{k} \choose {r_1(\lambda),\dots r_{j}(\lambda),k-r_1(\lambda)-...-r_j(\lambda)}}$).

\begin{prop}\label{prop.kmnCSFgen}

Let $G$ be a graph with connected components $G_1,\dots G_l$. Let $H = K_{h_1,h_2}$ for some $h_1,h_2 \geq 1$. Then $G$ is $H$-colorable if and only if each component $G_i$ is bipartite.
Furthermore, if $G$ is $H$-colorable, then using the notation defined above, the $H$-chromatic symmetric function of $G$ is given by
$$X_G^H = \sum\limits_p \sum\limits_{\lambda,\mu} a_{\lambda,\mu}^p m_{\lambda+\mu},$$

where 

\begin{align*}a_{\lambda,\mu}^p = \frac{\begin{pmatrix} h_1 \\ r_1,\dots, r_{k_1}\end{pmatrix}\begin{pmatrix} k_1 \\ \lambda_1,\dots, \lambda_{j}\end{pmatrix}\begin{pmatrix} h_2 \\s_1,\dots, s_{k_2}\end{pmatrix}\begin{pmatrix} k_2 \\ \mu_1,\dots, \mu_{d}\end{pmatrix} (h_1+h_2)!}{\begin{pmatrix} h_1+h_2 \\ t_1,\dots, t_{k_1+k_2}\end{pmatrix}},\\\end{align*}

\end{prop}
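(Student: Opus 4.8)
The plan is to fix an arbitrary labeling $\phi$ of $H = K_{h_1,h_2}$ and invoke Lemma \ref{lem.singlerep}, so that it suffices to count, for each type, the number of proper $(H,\phi)$-colorings of $G$. First I would dispose of the colorability criterion: since the components of $G$ carry no edges between them, they may be colored independently, so $G$ is $H$-colorable iff each $G_i$ is, and a connected graph is $K_{h_1,h_2}$-colorable iff it is bipartite by the preceding proposition. The heart of the argument is the observation that, because the two parts $U$ (of size $h_1$) and $W$ (of size $h_2$) of $H$ are independent sets, two colors are adjacent in $H$ precisely when they lie on opposite sides. Hence a proper $(H,\phi)$-coloring of $G$ is exactly the data of (i) a proper $2$-coloring of $G$ recording, for each vertex, whether its color lies in $U$ or in $W$, together with (ii) a completely unconstrained assignment of specific $U$-colors to the $U$-side vertices and specific $W$-colors to the $W$-side vertices (unconstrained precisely because vertices on a common side are pairwise non-adjacent).

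Next I would translate (i) into the index $p$. Since each component $G_i$ is connected and bipartite, its bipartition $\{V_1^i, V_2^i\}$ is unique up to swapping the two parts (as used in the proof of Proposition \ref{prop.unequalbipartitions}); thus the proper $2$-colorings of $G$ are in bijection with the vectors $p \in \{1,2\}^l$, where $p_i$ records which part of $G_i$ is sent to the $U$-side. For a fixed $p$ there are $k_1$ vertices on the $U$-side and $k_2$ on the $W$-side, and a $U$-side coloring of color-class-size type $\lambda \vdash k_1$ together with a $W$-side coloring of type $\mu \vdash k_2$ produces total type $\lambda + \mu$ (the $U$- and $W$-colors being disjoint, this split is well defined for every coloring). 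The two sides being independent, the count factors, and the number of $(H,\phi)$-colorings with data $(p,\lambda,\mu)$ is $\binom{h_1}{r_1,\dots,r_{k_1}}\binom{k_1}{\lambda_1,\dots,\lambda_j}\binom{h_2}{s_1,\dots,s_{k_2}}\binom{k_2}{\mu_1,\dots,\mu_d}$: on each side one first chooses which colors receive which multiplicity (the $\binom{h}{\cdots}$ factor, with the unused colors absorbed by the multinomial convention) and then distributes the labeled vertices into those color classes via the identity $k_1!/\prod_m \lambda_m! = \binom{k_1}{\lambda_1,\dots,\lambda_j}$. Terms with $l(\lambda) > h_1$ or $l(\mu) > h_2$ contribute $0$, matching the impossibility of using more colors than a side provides.

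Finally I would assemble the answer. For the fixed $\phi$, these counts are disjoint across distinct triples $(p,\lambda,\mu)$ and exhaust all colorings, so the number $d_\nu$ of $(H,\phi)$-colorings of type $\nu$ is the sum of the above products over all $(p,\lambda,\mu)$ with $\lambda + \mu = \nu$. Since the conversion factor $(h_1+h_2)!/\binom{h_1+h_2}{t_1,\dots,t_{k_1+k_2}}$ supplied by Lemma \ref{lem.singlerep} depends on $\nu$ only through $t_i = r_i(\nu)$, it may be pushed inside this sum, yielding exactly $X_G^H = \sum_p \sum_{\lambda,\mu} a_{\lambda,\mu}^p\, m_{\lambda+\mu}$ with the stated $a_{\lambda,\mu}^p$. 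I expect the main obstacle to be bookkeeping rather than depth: carefully verifying the per-side counting identity and checking that the decomposition by $(p,\lambda,\mu)$ genuinely partitions the set of colorings. In particular I would treat single-vertex components, where the two values of $p_i$ place the lone vertex on different sides and hence stay genuinely distinct, and the degenerate empty-part cases, so that no coloring is double-counted or omitted.
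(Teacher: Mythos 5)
Your proposal is correct and takes essentially the same approach as the paper's proof: fix an arbitrary labeling, reduce to counting $(H,\phi)$-colorings via Lemma \ref{lem.singlerep}, index the side-assignments by $p \in \{1,2\}^l$, and count each side by first choosing color multiplicities ($\binom{h_1}{r_1,\dots,r_{k_1}}$, $\binom{h_2}{s_1,\dots,s_{k_2}}$) and then distributing vertices ($\binom{k_1}{\lambda_1,\dots,\lambda_j}$, $\binom{k_2}{\mu_1,\dots,\mu_d}$). Your version is in fact slightly more careful than the paper's, since you explicitly verify that the triples $(p,\lambda,\mu)$ partition the set of colorings (via uniqueness of the bipartition of each connected component) and handle single-vertex components, points the paper leaves implicit.
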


\begin{proof}
   Give $H$ an arbitrary labeling $\phi$. By Lemma \ref{lem.singlerep}, it suffices to count the number of $(H,\phi)$-colorings of $G$. Let $p \in \{1,2\}^l$. Then we use $p$ to make the following assignment of colors to independent sets in $G$: $V_{p_i}^i$ is assigned to $V_1^H$ and $V_{3 -p_i}^i$ is assigned to $V_2^H$. From the $h_1$ colors that we have to color $k_1$ vertices partitioned by $\lambda$ there are ${{h_1}\choose {r_1,\dots , r_{k_1}}}$ ways of choosing a color for $\lambda^i$ vertices, and for each choice of colors, there are ${{k_1}\choose {\lambda_1,\dots,\lambda_j}}$ ways to color these vertices. To color the remaining $k_2$ vertices, we now have $h_2$ colors to pick from. Hence, there are ${{h_2}\choose {s_1,\dots,s_{k_2}}}$ ways of choosing a color or the $\mu_i$ vertices, and for each choice of colors there are ${{k_2}\choose {\mu_1,\dots,\mu_d}}$ ways to color the vertices. So the number of $(H,\phi)$-colorings of $G$ is given by
   
   $$\alpha_{\lambda,\mu}^p = 
   {{h_1}\choose {r_1,\dots, r_{k_1}}}
   {{k_1}\choose {\lambda_1,\dots,\lambda_j}}
   {{h_2}\choose {s_1,\dots,s_{k_2}}}
   {{k_2}\choose {\mu_1,\dots,\mu_d}}.
   $$
   
   Now by Lemma \ref{lem.singlerep}, the coefficient of $m_{\lambda+\mu}$ is given by
   
   $$a_{\lambda,\mu}^p = \frac{\alpha_{\lambda,\mu}^p (h_1+h_2)!}{{{h_1+h_2}\choose {t_1,\dots,t_{k_1+k_2}}}}.$$
   
   Hence, summing over all possible $\lambda, \mu$ and $p$, the $H$-chromatic symmetric function of $G$ is given by
   $$X_G^H = \sum\limits_p \sum\limits_{\lambda,\mu} a_{\lambda,\mu}^p m_{\lambda+\mu}.$$
\end{proof}

The following corollary is immediate by identifying $S_{n+1} = K_{1, n}$.

\begin{cor}\label{cor.starCSFgen}
Let $G$ be a bipartite graph and $G_1, G_2, ..., G_l$ be its connected components. Suppose $\abs{V(G)} = k$. Let $H = S_{n+1}$ be a star with $n \geq 1$. Let $V_1^i$ and $V_2^i$ be the two independent sets of $V(G_i)$. (In the case where $G_i$ contains only 1 vertex, we set $V_1^i$ to be the set of that vertex and $V_2^j = \emptyset$.) For each partition $\lambda \vdash k$, and each $p = (p_1, p_2, ..., p_l) \in \{1,2\}^l$, define 
\[
  a_\lambda^p =
  \begin{cases}
                                   0 & \text{, if $\sum_j g_{p_j}^j \notin \lambda$ or $l(\lambda) > n+1$} \\
                                   {n \choose {r'_1, r'_2, ..., r'_k}} {{k - \sum_j g_{p_j}^j} \choose {\lambda'_1, \lambda'_2, ..., \lambda'_{j-1}}}(n+1)! / {{n+1} \choose {r_1, r_2, ..., r_k}} & \text{, otherwise}
  \end{cases}
,
\]
where $\lambda'$ is the partition obtained from $\lambda$ by removing one part that is equal to $\sum_j g_{p_j}^j$, $r_j = r_j(\lambda)$, and $r'_j = r_j(\lambda')$. Then, we have $$X_G^H = \sum_\lambda \sum_p a_\lambda^p m_\lambda,$$ where $\lambda$ ranges over all possible partitions of $k$, and $p$ ranges over $\{1,2\}^l$.
\end{cor}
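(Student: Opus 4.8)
The plan is to derive Corollary~\ref{cor.starCSFgen} directly from Proposition~\ref{prop.kmnCSFgen} by specializing to $H = K_{1,n} = S_{n+1}$, so that $h_1 = 1$ and $h_2 = n$ (with the convention that the part of size $1$ corresponds to the center of the star). The key observation is that when $h_1 = 1$, the first independent set of $H$ can receive only a single color, which forces drastic simplifications in the general coefficient $a_{\lambda,\mu}^p$ from Proposition~\ref{prop.kmnCSFgen}.

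First I would unpack what $h_1 = 1$ does to the bipartite assignment. For each choice $p \in \{1,2\}^l$, the vertex sets $V_{p_i}^i$ across all components are all mapped to the single vertex of $V_1^H$, so \emph{all} of them must receive the \emph{same} color. This means the part of the resulting partition coming from the ``$\lambda$-side'' is a single part of size $k_1 = \sum_j g_{p_j}^j$, i.e.\ $\lambda = (k_1)$ is forced. Consequently the factor $\binom{h_1}{r_1,\dots,r_{k_1}} = \binom{1}{r_1,\dots}$ collapses to $1$ (there is exactly one color available and it is used once as a ``block''), and the factor $\binom{k_1}{\lambda_1,\dots,\lambda_j}$ becomes $\binom{k_1}{k_1} = 1$ as well. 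The remaining $k_2 = k - k_1$ vertices are colored using the $n$ colors of $V_2^H$, governed by a partition $\mu$, which after appending the single part $k_1$ yields the full partition $\lambda$ of the corollary (an unfortunate clash of notation: the corollary's $\lambda$ plays the role of $\lambda + \mu$ in the proposition). I would track this renaming carefully, setting $\lambda_{\mathrm{cor}} = (k_1) + \mu$, so that removing one part equal to $\sum_j g_{p_j}^j = k_1$ from $\lambda_{\mathrm{cor}}$ recovers $\mu$ — this is exactly the role of $\lambda'$ in the corollary's statement.

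Next I would translate the surviving factors. With $h_1 = 1, h_2 = n$, the proposition gives
\[
a_{\lambda,\mu}^p = \frac{\binom{n}{s_1,\dots,s_{k_2}}\binom{k_2}{\mu_1,\dots,\mu_d}(n+1)!}{\binom{n+1}{t_1,\dots,t_{k_1+k_2}}}.
\]
Here $s_i = r_i(\mu) = r_i(\lambda')$, which matches the corollary's $r'_i$, so $\binom{n}{s_1,\dots,s_{k_2}} = \binom{n}{r'_1,\dots,r'_k}$. The factor $\binom{k_2}{\mu_1,\dots,\mu_d}$ is exactly the corollary's $\binom{k - \sum_j g_{p_j}^j}{\lambda'_1,\dots,\lambda'_{j-1}}$. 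Finally $t_i = r_i(\lambda + \mu) = r_i(\lambda_{\mathrm{cor}}) = r_i$, giving the denominator $\binom{n+1}{r_1,\dots,r_k}$. Assembling these yields precisely the ``otherwise'' case of $a_\lambda^p$. The two conditions producing the value $0$ are also immediate: if $\sum_j g_{p_j}^j \notin \lambda_{\mathrm{cor}}$ then no valid $\mu$ exists for that $p$ (the forced single part is absent), and if $l(\lambda_{\mathrm{cor}}) > n+1$ then there are too many colors required for a star on $n+1$ vertices, so the type cannot occur — consistent with the $l(\lambda) \le n+1$ restriction inherited from Lemma~\ref{lem.singlerep}.

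I expect the only real subtlety to be the bookkeeping around the edge cases and the notation clash, rather than any genuine mathematical obstacle. In particular I would state explicitly the degenerate convention for singleton components ($V_1^i$ the lone vertex, $V_2^i = \emptyset$) to confirm that $g_2^i = 0$ there and that such a component simply contributes nothing to whichever side it is not assigned to, so the formula for $a_\lambda^p$ remains valid. I would also verify that summing $a_{\lambda,\mu}^p$ over $\mu$ (equivalently over $\lambda_{\mathrm{cor}}$ with the part $k_1$ fixed) and over $p$ reproduces $X_G^H = \sum_\lambda \sum_p a_\lambda^p m_\lambda$, which is a direct specialization of the double sum $\sum_p \sum_{\lambda,\mu} a_{\lambda,\mu}^p m_{\lambda+\mu}$ once $\lambda$ is pinned to $(k_1)$. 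Since Proposition~\ref{prop.kmnCSFgen} does all the heavy combinatorial lifting, the proof is essentially a careful substitution and renaming exercise.
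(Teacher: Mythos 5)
Your proposal is correct and is exactly the paper's route: the paper proves this corollary simply by noting it ``is immediate by identifying $S_{n+1} = K_{1,n}$'' in Proposition \ref{prop.kmnCSFgen}, and your write-up just carries out that specialization (with $h_1 = 1$ forcing $\lambda = (k_1)$ and the renaming $\lambda' = \mu$) in full detail. The extra care you take with the notation clash and the singleton-component convention is sound and goes slightly beyond what the paper records, but it is the same argument.
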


It is a good idea to make sure that these $H$-chromatic symmetric functions are indeed new---that they are not just ordinary chromatic symmetric functions. The following corollary demonstrates this:

\begin{cor}
Let $G$ be a bipartite graph (not necessarily connected) as defined in Proposition \ref{prop.kmnCSFgen}, and with no star components. Let $H = K_{m,n}$, with $m$ fixed and $m < \min\{g_j^i \ | \ 1\leq i \leq l, \ j=1 \text{ or }2\}$. Then for every $n\geq 1$, $X_G^H$ cannot be written as a chromatic symmetric function of any graph $G'$.
\end{cor}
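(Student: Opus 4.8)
The plan is to separate $X_G^H$ from every ordinary chromatic symmetric function by examining the coefficient of the squarefree monomial $m_{(1^k)}$, where $k = \abs{V(G)}$. As noted in Section~\ref{basics}, every ordinary chromatic symmetric function of degree $k$ contains $m_{(1^k)}$ with positive coefficient: for any loopless graph $G'$ on $k$ vertices every bijection from $V(G')$ onto a fixed $k$-element color set is a proper coloring, so the coefficient of $m_{(1^k)}$ in $X_{G'}$ equals $k!$. Hence it suffices to show that the coefficient of $m_{(1^k)}$ in $X_G^H$ is zero, after first clearing the routine bookkeeping: $X_G^H$ is nonzero (each component of $G$ is bipartite and $H$ is non-edgeless, so $G$ is $H$-colorable by Proposition~\ref{prop.kmnCSFgen}), it is homogeneous of degree $k$, and therefore any $G'$ with $X_{G'} = X_G^H$ must be loopless with exactly $k$ vertices.

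First I would invoke Proposition~\ref{prop.automorphism}, which states that the coefficient of $m_{(1^k)}^{\abs{V(H)}}$ in $X_G^H$ equals $S(G,H)\abs{\aut(G)}$, where $S(G,H)$ counts the subgraphs of $H$ isomorphic to $G$. Since $m_{(1^k)}^{\abs{V(H)}}$ is a positive scalar multiple of $m_{(1^k)}$ by Definition~\ref{dfn.augmonomial}, the coefficient of $m_{(1^k)}$ in the ordinary monomial basis vanishes if and only if $S(G,H) = 0$. Thus the entire statement reduces to showing that $G$ does not embed into $H = K_{m,n}$, i.e.\ that there is no injective edge-preserving map $V(G) \to V(K_{m,n})$.

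The heart of the argument is this non-embeddability, which I would prove componentwise, since an embedding of $G$ restricts to an embedding of each connected component $G_i$. Write $U, W$ for the two sides of $K_{m,n}$, with $\abs{U} = m$ and $\abs{W} = n$, and let $A_i, B_i$ be the bipartition classes of $G_i$ (unique up to swapping, as $G_i$ is connected), of sizes $g_1^i, g_2^i$. Because the hypothesis $m < \min\{g_j^i\}$ together with $m \geq 1$ forces every $g_j^i \geq 2$, each $G_i$ has at least one edge; fixing an edge and propagating along $G_i$ via connectivity shows that any embedding must send one class entirely into $U$ and the other entirely into $W$ (two vertices on the same side of $K_{m,n}$ are non-adjacent, so the two-coloring structure is rigid). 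Injectivity then forces one of $g_1^i, g_2^i$ to be at most $m$, contradicting $m < \min\{g_j^i\}$. Hence $S(G,H) = 0$, the coefficient of $m_{(1^k)}$ in $X_G^H$ is $0$, and $X_G^H$ cannot equal any $X_{G'}$.

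The main obstacle I anticipate is the rigidity claim that an embedding of a connected bipartite $G_i$ into $K_{m,n}$ must respect the bipartition by mapping each class into a single side; this rests on the uniqueness of the bipartition of a connected bipartite graph and a short induction along a spanning structure of $G_i$. I would also remark that the ``no star components'' hypothesis is compatible with, and in fact subsumed by, the size condition $m < \min\{g_j^i\}$ when $m \geq 1$: a star or isolated-vertex component would have a class of size at most $1 \leq m$, so once all classes exceed $m$ every component automatically has both classes of size at least two, which is precisely what makes the rigidity induction and the contradiction with $\abs{U}=m$ go through.
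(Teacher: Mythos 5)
Your proof is correct, and its skeleton is exactly the paper's: compare the coefficient of $m_{(1^k)}$, which is positive (indeed $k!$) in every nonzero ordinary chromatic symmetric function but zero in $X_G^H$. Where you diverge is in how the vanishing is established. The paper simply reads it off from the explicit formula of Proposition \ref{prop.kmnCSFgen}: a term of type $(1^k)$ would require choosing $k_1 = \sum_i g^i_{p_i}$ distinct colors from the side of $H$ with $m$ colors, and $k_1 > m$ kills the multinomial coefficient. You instead route through Proposition \ref{prop.automorphism}, reducing the vanishing to $S(G,H) = 0$, i.e.\ non-embeddability of $G$ into $K_{m,n}$, which you prove via the rigidity of the bipartition of each connected component: any embedding must send one class of $G_i$ entirely into the side of size $m$, impossible since both classes exceed $m$. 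Both arguments are sound; the paper's is a one-line computation once the formula is in hand, while yours is more structural and makes transparent \emph{why} the coefficient vanishes (no copy of $G$ sits inside $K_{m,n}$), at the cost of carrying out the propagation-along-edges rigidity argument. Your side remark that the ``no star components'' hypothesis is subsumed by $m < \min\{g_j^i\}$ once $m \geq 1$ is also correct --- a star or isolated-vertex component would have a class of size at most $1 \leq m$ --- and the paper's proof likewise never invokes that hypothesis.
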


\begin{proof}
   Suppose $G$ and $H$ are given as above. Then with the restriction that $m < \min\{g_j^i \ | \ 1\leq i \leq l, \ j=1 \text{ or }2\}$, the coefficient of $m_{(1^k)}$ in $X_G^H$ is $0$ for any $n\geq 1$ by Proposition \ref{prop.kmnCSFgen}. However, the coefficient of $m_{(1^k)}$ in the chromatic symmetric function of a graph on $k$ vertices is positive.
\end{proof}

Although we are able to use a counting argument to derive Proposition \ref{prop.kmnCSFgen}, a simple observation will allow us to find a much neater expression, as well as some interesting uniqueness results. 

\begin{cor}\label{cor.kmnCSFinkmn}
Let $G$ be a bipartite graph, and $H = K_{h_1,h_2}$ be a complete bipartite graph. Then, using the same notations as in Proposition \ref{prop.kmnCSFgen},

$$X_G^H = \frac{1}{2}\sum\limits_p X_{K_{k_1,k_2}}^H.$$

\end{cor}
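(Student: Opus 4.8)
The plan is to extract from the proof of Proposition~\ref{prop.kmnCSFgen} the single observation that the contribution of a fixed orientation $p$ to $X_G^H$ depends on $G$ only through the two totals $k_1(p)$ and $k_2(p)$, and then to exploit the flip symmetry $p \mapsto 3-p$ to produce the factor of $2$.

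First I would package the inner sum as a universal \emph{block contribution}. For nonnegative integers $a,b$ define
$$Y^H(a,b) := \sum_{\lambda \vdash a,\ \mu \vdash b} a_{\lambda,\mu}\,m_{\lambda+\mu},$$
where $a_{\lambda,\mu}$ is the coefficient of Proposition~\ref{prop.kmnCSFgen} taken with $k_1=a,\ k_2=b$. The key point is that this coefficient is built only from $h_1,h_2,\lambda,\mu$ and the sizes $a=|\lambda|,\ b=|\mu|$; it never refers to how the vertices assigned to $V_1^H$ and $V_2^H$ are distributed among the components $G_1,\dots,G_l$. Conceptually this is because, once an orientation $p$ is fixed, an $(H,\phi)$-coloring realizing $p$ is nothing more than an arbitrary assignment of the $k_1(p)$ side-one vertices to the $h_1$ colors of $V_1^H$ together with an arbitrary assignment of the $k_2(p)$ side-two vertices to the $h_2$ colors of $V_2^H$ (the cross-component adjacencies impose no extra condition, since opposite parts of $K_{h_1,h_2}$ are completely joined). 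Hence Proposition~\ref{prop.kmnCSFgen} may be rewritten compactly as
$$X_G^H = \sum_{p \in \{1,2\}^l} Y^H\big(k_1(p),k_2(p)\big).$$

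Next I would apply this same formula to the single connected graph $K_{a,b}$ (with $a,b \geq 1$), which has exactly one bipartition, of part-sizes $\{a,b\}$; here the orientation index set is $\{1,2\}$ and the two choices record the two ways of matching the parts to $V_1^H,V_2^H$, giving
$$X_{K_{a,b}}^H = Y^H(a,b) + Y^H(b,a).$$
I would then substitute $(a,b)=(k_1(p),k_2(p))$ and sum over $p$, using that whenever $G$ has at least one edge every such $K_{k_1(p),k_2(p)}$ is genuinely connected, so this identity applies. Finally I would close with the coordinatewise involution $\iota\colon \{1,2\}^l \to \{1,2\}^l$, $\iota(p)_i = 3-p_i$, which is a bijection with $k_1(\iota(p))=k_2(p)$ and $k_2(\iota(p))=k_1(p)$; re-indexing the reversed sum $\sum_p Y^H(k_2(p),k_1(p))$ by $p \mapsto \iota(p)$ turns it back into $\sum_p Y^H(k_1(p),k_2(p)) = X_G^H$. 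Adding the two sums gives $\sum_p X_{K_{k_1(p),k_2(p)}}^H = 2X_G^H$, and dividing by $2$ yields the claim.

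I expect the one delicate point to be the purely edgeless case $G=N_m$, where some orientations force $k_2(p)=0$ and $K_{k_1(p),0}=N_{k_1(p)}$ is itself disconnected, so the clean identity $X_{K_{a,b}}^H=Y^H(a,b)+Y^H(b,a)$ no longer holds; I would either assume $G$ has an edge (so that every $K_{k_1(p),k_2(p)}$ is connected and the argument above is uniform) or treat $N_m$ as a separate, direct computation. Apart from this bookkeeping, the whole proof is the observation that $Y^H$ is blind to the component structure, combined with the flip symmetry.
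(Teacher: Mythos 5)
Your proof is correct and is essentially the paper's own argument: the paper likewise observes that the orientation-$p$ contribution $\sum_{\lambda,\mu} a_{\lambda,\mu}^p\, m_{\lambda+\mu}$ depends on $G$ only through $(k_1,k_2)$, pairs each $p$ with $\overline{p}=(3-p_1,\dots,3-p_l)$ so that $\sum_{\lambda,\mu}\bigl(a_{\lambda,\mu}^p+a_{\lambda,\mu}^{\overline{p}}\bigr)m_{\lambda+\mu}=X_{K_{k_1,k_2}}^H$, and sums over $p$ to obtain $2X_G^H$. Your extra caveat about the edgeless case (where $k_2(p)=0$ can occur and $K_{k_1(p),k_2(p)}$ degenerates to a disconnected graph) is a boundary point the paper's one-line proof silently glosses over, but otherwise the two arguments coincide.
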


\begin{proof}
   Define $\overline{p} = (3-p_1, 3-p_2, ..., 3-p_l)$. Observe that $\sum\limits_{\lambda,\mu} (a_{\lambda,\mu}^p+a_{\lambda,\mu}^{\overline{p}}) m_{\lambda+\mu}$ is simply just $X_{K_{k_1,k_2}}^H$. Summing over all possible $p$,
   \begin{align*}
       \sum\limits_p \sum\limits_{\lambda,\mu} (a_{\lambda,\mu}^p+a_{\lambda,\mu}^{\overline{p}} ) m_{\lambda+\mu} &= \sum\limits_p X_{K_{k_1,k_2}}^H
   \end{align*}
   The left hand side is equivalent to $2X_G^H$, and so the claim follows. 
\end{proof}

Next, we will discuss following general problem in the setting of $H = K_{m, n}$.

\begin{prob}\label{prob.equiv}
Given a fixed graph $H$, and two non-isomorphic graphs $G_1, G_2$. Find conditions on $G_1, G_2$ that tell whether $X_{G_1}^H = X_{G_2}^H$.
\end{prob}

In light of Definition \ref{def.generalequiv}, Problem \ref{prob.equiv} is simply asking whether $G_1$ and $G_2$ are $H$-chromatically equivalent.
Of course, one way to provide the answer is to simply compute the two chromatic symmetric functions. The computation, however, is $\mathcal{NP}$-hard in general \cite{Hell1990complexity}. Therefore, any polynomial algorithms that answer this question are advantageous.

Corollary \ref{cor.kmnCSFinkmn} hints towards a close relationship between $G$s based on their $K_{m,n}$-chromatic symmetric functions. Suppose we define an equivalence relation $\sim_{m, n}$ on all bipartite graphs as follows: $$G_1 \sim_{m, n} G_2 \Leftrightarrow X_{G_1}^{K_{m, n}} = X_{G_2}^{K_{m, n}}.$$ Then, it is natural to ask whether $\sim_{m_1, n_1} = \sim_{m_2, n_2}$ for each $m_1, m_2, n_1, n_2 \in \N$. With these two questions in mind, we will first consider the more intuitive case of connected $G$, and then progress to include general $G$.

\begin{prop}\label{prop.equivconnected}
Let $H = K_{m,n}$ be a complete bipartite graph, and $G_1, G_2$ be two connected bipartite graphs on $k$ vertices. Then, $G_1$ is $H$-chromatically equivalent to $G_2$ if and only if the sizes of the two parts of $G_1$ are the same as the sizes of the two parts of $G_2$. Moreover, the symmetric functions $\{ X_{K_{j,k-j}}^H \ | \ 1\leq j \leq \floor{\frac{k}{2}}\}$ are linearly independent as elements in $\Lambda^k$. 
\end{prop}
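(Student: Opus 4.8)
The plan is to split the biconditional into its two directions and then treat the independence claim separately, in each case drawing on results already established for complete bipartite $H$. Given the machinery in Proposition~\ref{prop.unequalbipartitions}, Corollary~\ref{cor.kmnCSFinkmn}, and Corollary~\ref{cor.bipartlinind}, the proposition is essentially a repackaging of these, so the write-up should be short.

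For the forward direction (equivalence implies equal part sizes), I would simply invoke Proposition~\ref{prop.unequalbipartitions}. Since $H = K_{m,n}$ with $m,n \geq 1$ is non-edgeless, that proposition applies verbatim: if the bipartitions of $G_1$ and $G_2$ differed, then $X_{G_1}^H \neq X_{G_2}^H$. Taking the contrapositive yields exactly that $H$-chromatic equivalence forces $\{|V_1|,|V_2|\} = \{|W_1|,|W_2|\}$. This uses the fact, built into the cited proposition, that a connected bipartite graph has a unique bipartition.

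For the reverse direction (equal part sizes implies equivalence), the key tool is Corollary~\ref{cor.kmnCSFinkmn}. Because $G_1$ and $G_2$ are connected, the number of components $l$ equals $1$, so the vector $p$ ranges only over $\{1,2\}$. Writing $\{k_1,k_2\}$ for the common pair of part sizes, the two terms of the sum $\tfrac12\sum_p X_{K_{k_1,k_2}}^H$ are $X_{K_{k_1,k_2}}^H$ and $X_{K_{k_2,k_1}}^H$, which coincide since $K_{k_1,k_2}$ and $K_{k_2,k_1}$ denote the same graph. Hence the corollary collapses to $X_G^H = X_{K_{k_1,k_2}}^H$ for any connected bipartite $G$ with part sizes $\{k_1,k_2\}$; applying this to both $G_1$ and $G_2$ gives $X_{G_1}^H = X_{G_2}^H$. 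The only point requiring explicit attention is this collapse of the two-term sum, which is precisely where connectedness enters.

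For the final independence claim, the graphs $K_{j,k-j}$ for $1 \leq j \leq \floor{\frac{k}{2}}$ are connected bipartite graphs on $k$ vertices with pairwise distinct bipartitions $\{j,k-j\}$. Since $H$ is non-edgeless, Corollary~\ref{cor.bipartlinind} applies directly and yields their linear independence in $\Lambda^k$; concretely, $X_{K_{j,k-j}}^H$ contains the monomial $m_{(k-j,j)}$ arising from its unique $2$-coloring, a term absent from $X_{K_{j',k-j'}}^H$ for $j' \neq j$, so Proposition~\ref{prop.linind} forces independence. There is no serious obstacle here: the substantive work was done in the earlier results, and the reduction in the reverse direction (that connectedness makes $X_G^H$ depend only on the unordered pair of part sizes) is the one step deserving care.
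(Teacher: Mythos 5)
Your proposal is correct and follows essentially the same route as the paper: the paper's (very terse) proof is the single observation that $G$ is $H$-chromatically equivalent to $K_{j,k-j}$ if and only if $X_G^H$ contains the term $m_{(j,k-j)}$, which is exactly the combination of Proposition~\ref{prop.unequalbipartitions}, the collapse of Corollary~\ref{cor.kmnCSFinkmn} to $X_G^H = X_{K_{k_1,k_2}}^H$ in the connected case, and the monomial argument via Proposition~\ref{prop.linind} that you spell out. Your write-up just makes explicit the details the paper leaves implicit, including the correct point that connectedness is what makes the two-term sum collapse.
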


\begin{proof}
   Observe that for any $G \in \mathcal{G}$, $G$ is $H$-chromatically equivalent to $K_{j,k-j}$ if and only if $X_G^H$ contains the term $m_{(j,k-j)}$.
\end{proof}

Our intuition would lead us to believe that in the case of general graphs, $G_1$ and $G_2$ are $K_{m,n}$-equivalent if they have the same number of vertices, same number of components, and same vertex partitions on said components. This seems obvious from the proof of Proposition \ref{prop.kmnCSFgen}. However, the latter condition is sufficient, but not necessary. For a more careful study, we will introduce the following definitions.

\begin{dfn}\label{dfn.Sanddiffbipartite}
Let $G$ be a bipartite graph. Following the notation of Proposition \ref{prop.kmnCSFgen}, we define $$S(G) := \{\{\sum_i g_{p_i}^i \ | \ p \in \{1,2\}^l\}\}$$ and $$\text{diff}(G) := (\abs{g_1^1 - g_2^1}, \abs{g_1^2 - g_2^2}, ..., \abs{g_1^l - g_2^l}).$$
\end{dfn}

\begin{prop}\label{prop.kmncsfequivgenG}
    Let $G_1, G_2$ be (not necessarily connected) bipartite graphs on $k$ vertices, and $H = K_{m, n}$ be a complete bipartite graph. Then $G_1$ is $H$-chromatically equivalent to $G_2$ if and only if $S(G_1) = S(G_2)$.
\end{prop}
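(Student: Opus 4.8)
The plan is to read off everything from Corollary \ref{cor.kmnCSFinkmn}, which already expresses $X_G^H$ as a sum of complete-bipartite $H$-chromatic symmetric functions indexed by the orientation vectors $p \in \{1,2\}^l$. First I would observe that the quantity $k_1 = k_1(p) = \sum_i g_{p_i}^i$ appearing there is exactly the generic element of the multiset $S(G)$ from Definition \ref{dfn.Sanddiffbipartite}, so Corollary \ref{cor.kmnCSFinkmn} can be rewritten as
$$X_G^H = \frac{1}{2}\sum_{a \in S(G)} X_{K_{a,\,k-a}}^H,$$
the sum ranging over the multiset $S(G)$ with multiplicity. For the easy ($\Leftarrow$) direction this is all that is needed: if $S(G_1) = S(G_2)$ as multisets, the two sums are identical term-by-term, so $X_{G_1}^H = X_{G_2}^H$.

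For the ($\Rightarrow$) direction the idea is that the multiset $S(G)$ is the unique piece of data surviving in $X_G^H$, which I would justify by linear independence. Since $K_{a,k-a} = K_{k-a,a}$ we have $X_{K_{a,k-a}}^H = X_{K_{k-a,a}}^H$, and the involution $p \mapsto \overline{p}$ (complementing every coordinate, as in the proof of Corollary \ref{cor.kmnCSFinkmn}) shows $S(G)$ is symmetric under $a \mapsto k-a$ with equal multiplicities. Writing $\nu_j(G)$ for the multiplicity of $j$ in $S(G)$, I can collapse the display into
$$X_G^H = \sum_{j=0}^{\floor{k/2}} \gamma_j(G)\, X_{K_{j,\,k-j}}^H,$$
where $\gamma_j(G) = \nu_j(G)$ for $j < k/2$ and $\gamma_{k/2}(G) = \tfrac12 \nu_{k/2}(G)$ when $k$ is even. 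Proposition \ref{prop.equivconnected} gives that $\{X_{K_{j,k-j}}^H : 1 \le j \le \floor{k/2}\}$ are linearly independent, each carrying the private length-two monomial $m_{(j,k-j)}$; the remaining function $X_{K_{0,k}}^H = X_{N_k}^H$ is independent of these because it is the only one containing $m_{(k)}$ (a monochromatic coloring is impossible once $G$ has an edge and $H$ is simple). Hence the whole family $\{X_{K_{j,k-j}}^H : 0 \le j \le \floor{k/2}\}$ is linearly independent, so $X_{G_1}^H = X_{G_2}^H$ forces $\gamma_j(G_1) = \gamma_j(G_2)$, hence $\nu_j(G_1) = \nu_j(G_2)$, for every $j$; by the $a \mapsto k-a$ symmetry all multiplicities agree and $S(G_1) = S(G_2)$.

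The bulk of the real work will be the careful bookkeeping in the middle step — the substitution $a \leftrightarrow k-a$, the factor $\tfrac12$, and the boundary indices $j=0$ and $j=k/2$ — rather than any deep new idea, since linear independence is essentially inherited from Proposition \ref{prop.equivconnected}. The one genuine subtlety to watch is the index $j=0$: it occurs in $S(G)$ only when $G = N_k$ is edgeless, so I would either restrict to graphs with at least one edge (both $G_1,G_2$ are bipartite and $H$-colorable, hence $X_{G_i}^H \neq 0$) or explicitly record that $X_{N_k}^H$ is separated from the rest by its $m_{(k)}$-coefficient. Emphasizing that $S(G)$ is a multiset, not a set, is essential: the multiplicities are exactly the coefficients recovered by linear independence, so discarding them would break the forward implication.
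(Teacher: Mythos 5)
Your proposal is correct and follows essentially the same route as the paper, whose entire proof is the one-liner ``Combine Corollary \ref{cor.kmnCSFinkmn} and the linear independence result in Proposition \ref{prop.equivconnected}''; you have simply made explicit the bookkeeping the paper leaves implicit (the multiset reindexing of the sum over $p$ by $S(G)$, the $a \mapsto k-a$ symmetry and factor $\tfrac12$, and the $j=0$ term being pinned down by its $m_{(k)}$-coefficient). Nothing further is needed.
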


\begin{proof}
Combine Corollary \ref{cor.kmnCSFinkmn} and the linear independence result in Proposition \ref{prop.equivconnected}.
\end{proof}

Proposition \ref{prop.kmncsfequivgenG} answers our second question: $\sim_{m, n}$ is the same for all $m, n \in \N$. Let us pause and ask whether Proposition \ref{prop.kmncsfequivgenG} provides an efficient method for determining $K_{m,n}$ equivalence.  Not necessarily. 
Although the structure of $S(G)$ appears to be simpler than that of $X_G^{K_{m, n}}$, the size of $S(G)$ is still in $\OO(2^k)$, where $k$ is the number of vertices of $G$.
 The following algorithm does even better, giving a linear algorithm:

\begin{prop}\label{prop.diffvector}
Let $G_1, G_2$ be two bipartite graphs on $k$ vertices with the same number of connected components. Then, $G_1$ is $K_{m,n}$-chromatically equivalent to $G_2$ if and only if $\diff(G_1)$ is a permutation of $\diff(G_2)$.
\end{prop}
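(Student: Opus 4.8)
The plan is to reduce the statement to a purely combinatorial fact about multisets and then establish that fact with a generating-function argument. By Proposition \ref{prop.kmncsfequivgenG}, $G_1$ and $G_2$ are $K_{m,n}$-chromatically equivalent if and only if $S(G_1) = S(G_2)$, so it suffices to show that, for two bipartite graphs on $k$ vertices with the same number $l$ of components, $S(G_1) = S(G_2)$ holds if and only if $\diff(G_1)$ is a permutation of $\diff(G_2)$. Following the notation of Definition \ref{dfn.Sanddiffbipartite}, I would encode the multiset $S(G)$ by its generating polynomial
$$F_G(x) = \sum_{p \in \{1,2\}^l} x^{\sum_i g_{p_i}^i} = \prod_{i=1}^l \left(x^{g_1^i} + x^{g_2^i}\right),$$
so that $S(G_1) = S(G_2)$ is equivalent to $F_{G_1} = F_{G_2}$. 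Writing $c = \sum_i \min(g_1^i, g_2^i)$ for the smallest exponent appearing in $F_G$ and $d_i = \abs{g_1^i - g_2^i}$ for the entries of $\diff(G)$, factoring $x^{\min(g_1^i,g_2^i)}$ out of each factor gives the key normalization
$$F_G(x) = x^{c}\prod_{i=1}^l \left(1 + x^{d_i}\right),$$
with the convention that a balanced component ($d_i = 0$) contributes the constant factor $1 + x^0 = 2$.

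For the ``if'' direction, suppose $\diff(G_1)$ is a permutation of $\diff(G_2)$. Then the multisets $\{d_i\}$ agree, so $\prod_i (1 + x^{d_i})$ is the same for both graphs; moreover $c = \tfrac{1}{2}\bigl(k - \sum_i d_i\bigr)$ is determined by $k$ (common to $G_1, G_2$) and by $\sum_i d_i$ (unchanged under permutation), so the exponent shifts agree as well. Hence $F_{G_1} = F_{G_2}$, giving $S(G_1) = S(G_2)$ and the desired equivalence.

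For the ``only if'' direction, assume $F_{G_1} = F_{G_2}$. The shift $c$ is recovered as the least exponent in the support of $F_G$, so dividing by $x^{c}$ recovers the polynomial $P_G(x) := \prod_{i=1}^l (1 + x^{d_i})$ for both graphs, and $P_{G_1} = P_{G_2}$. It remains to argue that $P_G$ determines the multiset $\{d_i\}$. The positive entries are handled by peeling: the least positive exponent occurring in $P_G$ equals $\min\{d_i : d_i > 0\}$ and its coefficient is exactly the multiplicity of that value, after which one divides by the corresponding power of $(1 + x^{d})$ and induces on the number of nonconstant factors (equivalently, one recovers the multiplicities $c_d = \#\{i : d_i = d\}$ for $d \ge 1$ from the power series $\log P_G$, whose coefficient of $x^m$ equals $L_m = \sum_{d \mid m} c_d \tfrac{(-1)^{m/d+1}}{m/d} = c_m + \sum_{d\mid m,\, d<m} c_d \tfrac{(-1)^{m/d+1}}{m/d}$, a triangular system in the $c_d$). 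This fixes all positive $d_i$ together with their multiplicities; the number of zero entries is then forced to agree because it equals $l$ minus the number of positive entries, and $l$ is common to $G_1$ and $G_2$ by hypothesis. Therefore $\diff(G_1)$ and $\diff(G_2)$ have the same multiset of entries, i.e.\ one is a permutation of the other.

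The main obstacle is precisely the injectivity claim that the product $\prod_{i=1}^l (1 + x^{d_i})$ recovers the multiset $\{d_i\}$; this is where the shared hypothesis that $G_1$ and $G_2$ have the same number of components is essential, since without it the constant factors contributed by balanced components ($d_i = 0$) could not be disentangled from the product. Everything else is bookkeeping: the generating-function encoding of $S(G)$ and the reduction via Proposition \ref{prop.kmncsfequivgenG}.
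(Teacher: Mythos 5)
Your proof is correct, and while it opens with the same reduction as the paper---invoking Proposition \ref{prop.kmncsfequivgenG} to replace $K_{m,n}$-equivalence by the multiset equality $S(G_1)=S(G_2)$---it settles the resulting combinatorial claim by a genuinely different device. The paper argues order-theoretically: after sorting both difference vectors it notes $\min S(G_i) = \frac{1}{2}(k-\sum_j d_j)$, takes the first index $j$ where the sorted vectors disagree, and compares $\min(S(G_1)\setminus S')$ with $\min(S(G_2)\setminus S')$, where $S'$ is the common multiset of partial subset-sums built from the first $j-1$ entries---a lexicographic peeling of minima. You instead encode $S(G)$ multiplicatively as $F_G(x)=x^{c}\prod_i\bigl(1+x^{d_i}\bigr)$ and show this polynomial determines the multiset $\{d_i\}$, via the triangular system from $\log P_G$ (for $m\geq 1$ the balanced factors $1+x^0=2$ contribute nothing to the coefficient of $x^m$, so the multiplicities $c_d$, $d\geq 1$, are forced). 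Your route is somewhat more machinery-laden but arguably easier to verify, since the uniqueness claim is isolated as a standalone fact about polynomials, whereas the paper's minimum-comparison needs a little care with zero entries and multiset differences; the paper's argument buys elementarity and brevity. Two small repairs to your write-up, neither affecting correctness: (i) in the peeling variant, the coefficient of the least positive exponent of $P_G$ is $2^{z}$ times the multiplicity, where $z$ is the number of balanced components ($d_i=0$), so you must first divide by the constant term $P_G(0)=2^{z}$---or simply rely on your log argument, which is airtight; (ii) your closing claim that the equal-components hypothesis is \emph{essential} to disentangle the constant factors is overstated: $z$ is recovered directly from $P_G(0)=2^{z}$ (equivalently $l$ from $F_G(1)=2^{l}$, since $S(G)$ has $2^l$ elements as a multiset), so the hypothesis really serves only to make ``$\diff(G_1)$ is a permutation of $\diff(G_2)$'' a statement about vectors of equal length.
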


\begin{proof} In this proof, we denote the $g_i^j$'s associated with $G_1$ by $a$ and those associated with $G_2$ by $b$. Without loss of generality, we also assume that $a_1^j \leq a_2^j$ and $b_1^j \leq b_2^j$ for every $1 \leq j \leq l$, so that the absolute value signs in the definitions of $\diff(G_1) = (d_1, d_2, ..., d_l)$ and $\diff(G_2) = (f_1, f_2, ..., f_l)$ can be omitted. Since permutation of the difference vectors are allowed, we also assume that the connected components are labeled so that $\diff(G_1)$ and $\diff(G_2)$ are in (weakly) increasing order.

Next, we observe that $s_1 := \min S(G_1) = \sum a_1^j = \frac{k - \sum d_j}{2}$ and $s_2 := \min S(G_2) = \sum b_1^j = \frac{k - \sum f_j}{2}$. Moreover, $S(G_1) = \{\{s_1 + \sum_{d \in D} d \mid D \in \mathcal{P}(\diff(G_1))\}\}$ and $S(G_2) = \{\{s_2 + \sum_{f \in F} f \mid F \in \mathcal{P}(\diff(G_2))\}\}$. Here, $\mathcal{P}$ is the power set if we identify $\text{diff}(G)$ with a multiset naturally. If $d_j = f_j$ for every $1 \leq j \leq l$, then $s_1 = s_2$ and we clearly have $S(G_1) = S(G_2)$.

Conversely, assume $\diff(G_1) \neq \diff(G_2)$. If $\sum d_j \neq \sum f_j$, then $s_1 \neq s_2$ so $S(G_1) \neq S(G_2)$. So assume otherwise. Let $j$ be the smallest number such that $d_j \neq f_j$. We have $\{\{s_1 + \sum_{d \in D} d \mid D \in \mathcal{P}((d_1, d_2, ..., d_{j-1}))\}\} = \{\{s_2 + \sum_{f \in F} f \mid F \in \mathcal{P}((f_1, f_2, ..., f_{j-1}))\}\}$, where the quantity in the equality is denoted by $S'$. Now, it is clear that $\min (S(G_1) \setminus S') = s_1 + d_j \neq s_2 + f_j = \min (S(G_2) \setminus S')$. Hence, $S(G_1) \neq S(G_2)$ and the claim is proved.
\end{proof}

The beauty of Proposition \ref{prop.diffvector} is that the size of $\diff(G)$ is in $\OO(k)$, and is fairly easy to compute. In fact, the computation is not just in polynomial time, but also in linear time.

\begin{cor}\label{cor.lineartimeKmn}
Let $G_1, G_2$ be two graphs on $N$ vertices and $M_1, M_2$ edges, respectively. Then, for any complete bipartite graph $H = K_{m, n}$, it takes $\OO(N+M_1+M_2)$ time to decide if $G_1, G_2$ are $H$-equivalent.
\end{cor}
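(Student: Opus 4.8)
The plan is to reduce the decision problem to the permutation test supplied by Proposition \ref{prop.diffvector}, and to argue that every ingredient of that test can be extracted from the two input graphs in time linear in their size. The key observation is that Proposition \ref{prop.diffvector} requires exactly three pieces of data: that $G_1$ and $G_2$ have the same number of vertices, that they have the same number of connected components, and that $\diff(G_1)$ is a permutation of $\diff(G_2)$. So the algorithm naturally splits into a preprocessing phase (compute the difference vectors) and a comparison phase (test whether one is a permutation of the other), and I would bound each separately.

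First I would handle the degenerate cases up front: if $N$ differs between the two graphs the degrees of $X_{G_1}^H$ and $X_{G_2}^H$ differ, so they are trivially inequivalent; likewise one can reject immediately if either graph contains an odd cycle (is non-bipartite), since then the relevant $H$-chromatic symmetric function vanishes and Proposition \ref{prop.diffvector} does not directly apply. Both the bipartiteness check and the component decomposition come for free from a single breadth-first or depth-first search: running BFS/DFS from each unvisited vertex discovers one connected component, simultaneously two-colors it (detecting any odd cycle as a monochromatic edge), and records the two part-sizes $g_1^i$ and $g_2^i$ of that component. This traversal visits each vertex and each edge a constant number of times, so it costs $\OO(N + M_1)$ on $G_1$ and $\OO(N + M_2)$ on $G_2$. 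From the recorded part-sizes, forming the entry $\abs{g_1^i - g_2^i}$ of $\diff(G_i)$ is $\OO(1)$ per component, and there are at most $N$ components, so assembling both difference vectors stays within $\OO(N + M_1 + M_2)$. A side benefit is that the common-number-of-components hypothesis of Proposition \ref{prop.diffvector} is checked automatically during this phase.

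For the comparison phase I must decide whether $\diff(G_1)$ is a permutation of $\diff(G_2)$; the subtle point is to do this without incurring a $\Theta(L \log L)$ sorting cost, where $L$ is the common length of the vectors. The entries of each difference vector are integers in the range $[0, N]$, so counting sort (equivalently, tallying multiplicities in an array indexed by $0, 1, \dots, N$) compares the two multisets in $\OO(N + L) = \OO(N)$ time: one pass increments counts for the entries of $\diff(G_1)$, a second pass decrements for $\diff(G_2)$, and the vectors are permutations of each other exactly when all counts return to zero. This is the one place where a naive implementation would break the linear bound, so the integer-range structure of the entries is what I would lean on. Combining the two phases, the total running time is $\OO(N + M_1 + M_2)$, and by Proposition \ref{prop.diffvector} the output correctly decides $H$-equivalence, which is exactly the claim.

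The main obstacle is not conceptual but a matter of care in the accounting: one must verify that nothing hidden in the reduction to Proposition \ref{prop.diffvector}---in particular the bipartiteness test, the component count, and the multiset-equality test---secretly costs more than linear. The bipartiteness and component computations fold cleanly into the graph traversal, so the only genuine risk is the permutation test, which is dispatched by exploiting that all entries are bounded integers and using counting sort rather than comparison sort.
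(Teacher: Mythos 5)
Your proof is essentially the paper's own argument: a single BFS/DFS pass to test bipartiteness, count components, and record the part sizes $g_1^i, g_2^i$ of each component, followed by a bucket/counting sort of the difference vectors (whose entries are bounded integers) to test multiset equality, all within $\OO(N+M_1+M_2)$; your write-up just makes explicit the accounting that the paper leaves implicit. The one genuine slip is your handling of non-bipartite inputs: you propose to reject whenever \emph{either} graph contains an odd cycle, but if both $G_1$ and $G_2$ are non-bipartite then $X_{G_1}^H = X_{G_2}^H = 0$, so by Definition \ref{def.generalequiv} they \emph{are} $H$-chromatically equivalent and the algorithm should accept; the correct rule is to reject only when exactly one of the two is non-bipartite, which costs nothing extra. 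Your rejection when the component counts differ is sound and worth the sentence you could add in justification: $S(G)$ is a multiset with $2^l$ entries when $G$ has $l$ components, so differing component counts force $S(G_1) \neq S(G_2)$ and hence inequivalence by Proposition \ref{prop.kmncsfequivgenG}.
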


\begin{proof}
Use breadth first search or depth first search to determine if $G_1, G_2$ are both bipartite graphs, and to compute their difference vectors. It suffices to note that the entries in the difference vectors are integers in the interval $[1, k-1]$, so we may bucket sort the two difference vectors.
\end{proof}

So far our discussions have centered around a fixed $H$ and varied $G$. There are no reasons to not ask if we can find any results for a fixed $G$ and varied $H$. The problem of determining if $X_G^{H_1}$ and $X_G^{H_2}$ may not appear as interesting in the setting of $H = K_{m, n}$. Indeed, when $G$ is a non-trivial bipartite and $H_1, H_2$ have distinct number of edges, then we always have $X_G^{H_1} \neq X_G^{H_2}$. A more reasonable question to ask here concerns $X_G^H$ as vectors in the linear space $\Lambda^k$. We will first look at special case of $K_{m, n}$---namely, the star graphs $S_{n+1}$. Then, we shall briefly discuss the difficulties of dealing with general complete bipartite graphs. The following lemma is obvious from Proposition \ref{prop.kmnCSFgen}.

\begin{lem}\label{lem.stardistinctH}
Let $G$ be a connected bipartite graph and let $k_1, k_2$ be the sizes of its two maximal independent sets. Suppose $n > \max \{k_1, k_2\}$. Then, $$X_G^{S_{n+1}} = nX_G^{S_n}.$$ Moreover, when $n \leq \max\{k_1, k_2\}$, $X_G^{S_{n+1}} - n X_G^{S_n}$ contains monomials whose indexing partitions have length $n+1$ only.
\end{lem}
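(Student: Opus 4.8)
The plan is to read off both $X_G^{S_{n+1}}$ and $X_G^{S_n}$ from the explicit star formula in Corollary \ref{cor.starCSFgen} (itself the specialization of Proposition \ref{prop.kmnCSFgen} to $K_{1,N}$) and to compare the coefficient of each monomial $m_\lambda$ directly. Since $G$ is connected we have $l=1$, so the index $p$ ranges only over $\{1,2\}$ and the quantity $\sum_j g_{p_j}^j$ equals $k_1$ or $k_2$. Write $X_G^{S_{N+1}} = \sum_\lambda \sum_p a_\lambda^p(N)\, m_\lambda$ for the output of Corollary \ref{cor.starCSFgen} with leaf-count $N$; then $S_{n+1}$ corresponds to $N=n$ and $S_n$ to $N=n-1$, so the whole lemma reduces to comparing $a_\lambda^p(n)$ with $a_\lambda^p(n-1)$.

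The key step is the ratio computation. Fix $\lambda$ and $p$ for which both coefficients are defined, and let $\lambda'$ be $\lambda$ with one part equal to $\sum_j g_{p_j}^j$ removed, so that $l(\lambda') = l(\lambda)-1$. With $r_i = r_i(\lambda)$ and $r_i' = r_i(\lambda')$, set $R := \sum_i r_i = l(\lambda)$ and $R' := \sum_i r_i' = R-1$. In the ratio $a_\lambda^p(n)/a_\lambda^p(n-1)$ the middle factor $\binom{k - \sum_j g_{p_j}^j}{\lambda_1', \dots}$ is identical in numerator and denominator and cancels, leaving
\[
\frac{a_\lambda^p(n)}{a_\lambda^p(n-1)} = \frac{\binom{n}{r_1', \dots, r_k'}}{\binom{n-1}{r_1', \dots, r_k'}} \cdot \frac{(n+1)!}{n!} \cdot \frac{\binom{n}{r_1, \dots, r_k}}{\binom{n+1}{r_1, \dots, r_k}}.
\]
Using the multinomial convention of Section \ref{background}, the three factors simplify to $\tfrac{n}{n-R'}$, $(n+1)$, and $\tfrac{n+1-R}{n+1}$ respectively; since $R = R'+1$ gives $n+1-R = n-R'$, the product collapses to exactly $n$. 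Thus $a_\lambda^p(n) = n\, a_\lambda^p(n-1)$ for every admissible pair $(\lambda, p)$, which is the heart of the argument.

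It remains to control which $\lambda$ are admissible. A proper $S_{n+1}$-coloring of connected bipartite $G$ forces one part to take the single center color and the other part to receive leaf colors, so every partition occurring in $X_G^{S_{n+1}}$ has length at most $1 + \min\{\max\{k_1,k_2\},\, n\} \le n+1$. When $n > \max\{k_1,k_2\}$ this bound equals $1 + \max\{k_1,k_2\} \le n$, so for every $\lambda$ with a nonzero coefficient the quantities $a_\lambda^p(n-1)$ are also well defined (the multinomial coefficients for $S_n$ in Corollary \ref{cor.starCSFgen} require only $l(\lambda) \le n$). The factor-$n$ identity then applies termwise and yields $X_G^{S_{n+1}} = n\, X_G^{S_n}$.

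For the final claim we run the same comparison without the hypothesis on $n$. The identity $a_\lambda^p(n) = n\, a_\lambda^p(n-1)$ is valid for every $\lambda$ with $l(\lambda) \le n$, so all such terms cancel in $X_G^{S_{n+1}} - n\, X_G^{S_n}$. Partitions of length exactly $n+1$ can still occur in $X_G^{S_{n+1}}$, but contribute $0$ to $X_G^{S_n}$ because $S_n$ has only $n$ vertices and hence admits no coloring of type of length $n+1$; and no partition of length exceeding $n+1$ ever occurs. Hence $X_G^{S_{n+1}} - n\, X_G^{S_n}$ is supported entirely on partitions of length $n+1$, as claimed. The only real care needed anywhere is matching the leaf-count parameters ($N=n$ versus $N=n-1$) and the length bookkeeping; once $R = R'+1$ is in hand the algebra is routine, so I expect no genuine obstacle beyond this bookkeeping.
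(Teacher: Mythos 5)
Your proposal is correct, and it is essentially the paper's own route: the paper declares the lemma ``obvious from Proposition \ref{prop.kmnCSFgen},'' and your argument simply carries out the implicit coefficient comparison from the specialization in Corollary \ref{cor.starCSFgen}, where the ratio $\frac{n}{n-R'}\cdot(n+1)\cdot\frac{n+1-R}{n+1}=n$ (using $R=R'+1$) is exactly the computation being waved at. Your length bookkeeping for the two cases (all occurring partitions have length at most $1+\max\{k_1,k_2\}\le n$ when $n>\max\{k_1,k_2\}$, and only length-$(n+1)$ terms survive in $X_G^{S_{n+1}}-nX_G^{S_n}$ otherwise) is also the intended content, just made explicit.
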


\begin{prop}\label{prop.KmnfixedH}
Let $G$ be a bipartite graph on $k$ vertices. Then, $\spann\{X_G^{S_{n+1}} \mid n \geq 1\} = \spann\{X_G^{S_{n+1}} \mid 1 \leq n \leq \max S(G) \}$ is a subspace of $\Lambda^{k}$ of dimension $\max S(G)$. In particular, the set $\{X_G^{S_{n+1}} \mid 1 \leq n \leq \max S(G) \}$ is linearly independent.
\end{prop}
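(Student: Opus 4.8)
The plan is to reduce the entire statement to an explicit closed form for the monomial coefficients of $X_G^{S_{n+1}}$, and then read off both the linear independence and the stabilization of the span directly from that form. Fix a labeling $\phi$ of $S_{n+1}$ and write $X_G^{S_{n+1}} = \sum_\lambda c_\lambda^{(n)} m_\lambda$. By Lemma \ref{lem.singlerep}, $c_\lambda^{(n)} = d_\lambda^{(n)}\prod_i r_i(\lambda)!\,(n+1-l(\lambda))!$, where $d_\lambda^{(n)}$ is the number of $(S_{n+1},\phi)$-colorings of $G$ of type $\lambda$. Since $S_{n+1}=K_{1,n}$ has a single center, every such coloring is built, as in Corollary \ref{cor.starCSFgen}, by choosing for each component $G_i$ which of its two parts is monochromatically colored by the center (the vector $p$), and then coloring the remaining $k_2(p)=\sum_i g^i_{3-p_i}$ leaf vertices freely with the $n$ leaf colors.

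The technical heart is the following closed form, which I would establish first: if $G$ is \emph{not} edgeless, then $c_\lambda^{(n)} = B_\lambda\, n!$ for $n \ge l(\lambda)-1$ (and $c_\lambda^{(n)}=0$ otherwise), where $B_\lambda \ge 0$ depends only on $G$ and $\lambda$. For each admissible $p$ the center class has size $k_1(p)$ and the leaf coloring must realize $\mu_p := \lambda \setminus \{k_1(p)\}$; the number of type-$\mu_p$ colorings of a fixed $k_2(p)$-set with $n$ leaf colors is $\frac{n!}{(n-l(\mu_p))!}\cdot\frac{k_2(p)!}{\prod_i r_i(\mu_p)!\prod_j (\mu_p)_j!}$. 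Because $G$ has an edge, some component has both parts nonempty, so $k_1(p)\ge 1$ for every $p$; hence the center class is never empty and $l(\mu_p)=l(\lambda)-1$ for \emph{every} contributing $p$. Thus $d_\lambda^{(n)} = A_\lambda\,\frac{n!}{(n-l(\lambda)+1)!}$ with $A_\lambda$ independent of $n$, and the normalization factor $(n+1-l(\lambda))! = (n-l(\lambda)+1)!$ cancels exactly, leaving $c_\lambda^{(n)} = B_\lambda n!$ with $B_\lambda = A_\lambda\prod_i r_i(\lambda)!$.

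Granting this, the proposition follows. Using the symmetry $p \leftrightarrow \overline p$ one checks $\max_p k_2(p) = \max S(G)$, so $B_\lambda \ne 0$ forces $l(\lambda)\le \max S(G)+1$, and for each $j$ with $2 \le j \le \max S(G)+1$ there is a $\lambda$ of length $j$ with $B_\lambda>0$ (pick $p$ with $k_2(p)\ge j-1$ and spread the leaf part over exactly $j-1$ colors). Setting $F = \sum_\lambda B_\lambda m_\lambda$, the formula gives $X_G^{S_{n+1}} = n!\sum_{l(\lambda)\le n+1} B_\lambda m_\lambda$, a truncation of $n!\,F$ by length. For $n \ge \max S(G)$ the truncation is all of $F$, so $X_G^{S_{n+1}} = n!\,F$; these are mutual scalar multiples, which yields $X_G^{S_{n+1}} = n\,X_G^{S_n}$ for $n > \max S(G)$ (the exact analogue of Lemma \ref{lem.stardistinctH}) and shows the span stabilizes at $n=\max S(G)$. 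For $1 \le n \le \max S(G)$ the same formula gives $X_G^{S_{n+1}} - n\,X_G^{S_n} = n!\sum_{l(\lambda)=n+1} B_\lambda m_\lambda \ne 0$, supported purely on partitions of length $n+1$; since $X_G^{S_{n'+1}}$ with $n'<n$ uses only partitions of length $\le n'+1 \le n$, each $X_G^{S_{n+1}}$ introduces a monomial absent from all earlier ones, so $\{X_G^{S_{n+1}} : 1 \le n \le \max S(G)\}$ is linearly independent by the argument of Proposition \ref{prop.linind}. Hence the span has dimension exactly $\max S(G)$.

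The main obstacle is the cancellation yielding $c_\lambda^{(n)} = B_\lambda n!$, and it is precisely here that the hypothesis bites: the step ``$l(\mu_p)=l(\lambda)-1$ for all contributing $p$'' fails exactly when some $p$ empties the center class, i.e. when $G$ is edgeless. In that case an extra term of length $l(\lambda)$ appears, the normalization produces $(n+1)!$ in place of $n!$, and the two longest functions become proportional (for instance every $X_{N_2}^{S_{n+1}}$ is a multiple of $m_{(2)}+2m_{(1,1)}$, so the span is one-dimensional while $\max S(N_2)=2$). The edgeless graphs are therefore a genuine exception, and I would state and prove the result for non-edgeless $G$, treating edgeless graphs separately.
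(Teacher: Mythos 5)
Your proof is correct where it applies, and it both parallels and sharpens the paper's argument. The linear-independence mechanism is the same in both proofs: for each $n \leq \max S(G)$, the function $X_G^{S_{n+1}}$ contains a monomial indexed by a partition of length $n+1$, which cannot occur in any $X_G^{S_{n'+1}}$ with $n' < n$, and triangularity finishes. Where you diverge is the stabilization of the span: the paper invokes Lemma \ref{lem.stardistinctH} (stated for connected $G$) together with the decomposition of Corollary \ref{cor.kmnCSFinkmn}, whereas you derive the closed form $c_\lambda^{(n)} = B_\lambda\, n!$ directly for all bipartite $G$ with at least one edge, so that $X_G^{S_{n+1}}$ is $n!$ times a length-$(n+1)$ truncation of a single fixed symmetric function $F$; both the stabilization $X_G^{S_{n+1}} = nX_G^{S_n}$ for $n > \max S(G)$ and the new length-$(n+1)$ monomial for $n \leq \max S(G)$ then fall out of one formula. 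Your counting is right: since $\frac{(n+1)!}{\binom{n+1}{r_1(\lambda),\dots,r_k(\lambda)}} = \prod_i r_i(\lambda)!\,(n+1-l(\lambda))!$ and the leaf-coloring count contributes $\frac{n!}{(n-l(\lambda)+1)!}$ for every contributing $p$ (because $l(\mu_p) = l(\lambda)-1$ uniformly), the factorials cancel exactly as you claim. This buys a uniform treatment of disconnected $G$ in place of threading disconnectedness through Corollary \ref{cor.kmnCSFinkmn}, at the cost of carrying the explicit coefficient computation.

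More importantly, your caveat about edgeless graphs is a genuine catch, not pedantry: the proposition as stated is false for edgeless $G$. Your example checks out. With the convention of Corollary \ref{cor.starCSFgen} that a singleton component has $g_1^i = 1$, $g_2^i = 0$, one gets $\max S(N_2) = 2$, yet a direct computation via Lemma \ref{lem.singlerep} gives $X_{N_2}^{S_{n+1}} = (n+1)!\,\bigl(m_{(2)} + 2m_{(1,1)}\bigr)$ for every $n \geq 1$, so the span is one-dimensional and in particular $X_{N_2}^{S_3} = 3X_{N_2}^{S_2}$, contradicting the claimed independence. The same degeneracy already afflicts Lemma \ref{lem.stardistinctH} when one part is empty: $X_{K_1}^{S_{n+1}} = (n+1)!\,m_{(1)} = (n+1)X_{K_1}^{S_n}$, with scalar $n+1$ rather than $n$, precisely because the center color need not be used. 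You correctly isolate the point where the hypothesis enters — the cancellation $c_\lambda^{(n)} = B_\lambda n!$ requires the center class to be nonempty for every $p$, which holds exactly when some component of $G$ has both parts nonempty, i.e.\ when $G$ has an edge — and this hypothesis still allows $G$ to have isolated vertices, so your version is the correct maximal form of the statement. (In the edgeless case the scaling becomes $(n+1)!$ with truncation still at length $n+1$, and the span has dimension $\max S(G) - 1$ instead.)
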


\begin{proof} By Lemma \ref{lem.stardistinctH} and Corollary \ref{cor.kmnCSFinkmn},
$\spann\{X_G^{S_{n+1}} \mid n \geq \max S(G) \} = \spann\{X_G^{S_{S(G) + 1}}\}$. To show $X_G^{S_2}, X_G^{S_3}, ..., X_G^{S_{\max S(G) + 1}}$ are linearly independent, we note that whenever $n \leq \max S(G)$, $X_G^{S_{n+1}}$ contains a monomial whose corresponding partition has length $n+1$, whereas this is clearly impossible for a smaller star.
\end{proof}

\begin{cor}\label{cor.dimensionKmn}
For a fixed $k \geq 2$, the subspace of $\Lambda^k$ spanned by $\{X_G^{S_{n+1}} \mid \abs{V(G)} = k, n \geq 1\}$ has dimension of at most $\frac{(\ceil{k/2} + k - 1)\floor{k/2}}{2}$.
\end{cor}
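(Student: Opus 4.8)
The plan is to collapse the infinite spanning set down to the $\floor{k/2}$ connected ``bipartition representatives'' $K_{j,k-j}$, and then bound the total dimension by the subadditivity of dimension over a sum of subspaces. (I take $G$ connected here: among bipartite graphs this is the case that produces the stated bound, since, e.g., a purely edgeless graph on its own already spans a subspace of dimension $\max S = k$ by Proposition~\ref{prop.KmnfixedH}.)

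First I would discard the graphs that contribute nothing. By Proposition~\ref{prop.kmnCSFgen}, applied with $S_{n+1}=K_{1,n}$, we have $X_G^{S_{n+1}}=0$ unless $G$ is bipartite, so only the connected bipartite $G$ on $k$ vertices matter. Such a $G$ has a unique bipartition, into parts of sizes $j$ and $k-j$ with $1\le j\le\floor{k/2}$, and by Proposition~\ref{prop.equivconnected} its star-chromatic symmetric functions depend only on that bipartition; hence $X_G^{S_{n+1}}=X_{K_{j,k-j}}^{S_{n+1}}$ for every $n\ge 1$. Therefore the subspace in question equals
\[
W:=\sum_{j=1}^{\floor{k/2}} W_j,\qquad W_j:=\spann\{X_{K_{j,k-j}}^{S_{n+1}}\mid n\ge 1\},
\]
because every generator lies in some $W_j$ and each $K_{j,k-j}$ is itself an admissible $G$.

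Next I would pin down $\dim W_j$. Since $K_{j,k-j}$ is connected with the single bipartition $\{j,k-j\}$, we have $S(K_{j,k-j})=\{\{j,\,k-j\}\}$ and so $\max S(K_{j,k-j})=k-j$ (using $j\le k/2$). Proposition~\ref{prop.KmnfixedH} then gives $\dim W_j=k-j$ exactly. Applying $\dim(\sum_j W_j)\le\sum_j\dim W_j$ — we make no attempt to control how the different $W_j$ intersect, which is precisely why the conclusion is an upper bound rather than an equality — yields
\[
\dim W\ \le\ \sum_{j=1}^{\floor{k/2}}(k-j).
\]

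Finally I would evaluate the sum. Writing $m=\floor{k/2}$,
\[
\sum_{j=1}^{m}(k-j)=mk-\frac{m(m+1)}{2}=\frac{m(2k-m-1)}{2},
\]
and the identity $k-\floor{k/2}=\ceil{k/2}$ rewrites $2k-m-1$ as $k+\ceil{k/2}-1$, giving exactly $\frac{(\ceil{k/2}+k-1)\floor{k/2}}{2}$. There is no deep obstacle: the argument is an assembly of Proposition~\ref{prop.equivconnected} and Proposition~\ref{prop.KmnfixedH} with the subadditivity of dimension. The only genuine point of care is the first step — verifying that the reduction to the representatives $K_{j,k-j}$ really captures the whole span, which rests on the uniqueness of the bipartition of a connected bipartite graph.
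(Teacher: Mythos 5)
There is a genuine gap: the corollary bounds the span of $\{X_G^{S_{n+1}} \mid \abs{V(G)} = k,\ n \geq 1\}$ over \emph{all} graphs $G$ on $k$ vertices, and you restrict to connected $G$ by fiat in your opening parenthetical. Disconnected bipartite graphs have nonzero star-chromatic symmetric functions and are legitimate generators, and nothing in your argument places them in $\sum_j W_j$; Proposition \ref{prop.equivconnected}, which you invoke for the reduction to the representatives $K_{j,k-j}$, is only available for connected graphs (connected bipartite graphs have a unique bipartition, disconnected ones do not). The paper closes exactly this case with Corollary \ref{cor.kmnCSFinkmn}, which writes $X_G^{S_{n+1}} = \frac{1}{2}\sum_p X_{K_{k_1,k_2}}^{S_{n+1}}$ for an arbitrary bipartite $G$, so the whole span collapses onto $\spann\{X_{K_{j,k-j}}^{S_{n+1}} \mid 1 \leq j \leq \floor{k/2},\ n \geq 1\}$ in one step, with no connectivity hypothesis and no appeal to Proposition \ref{prop.equivconnected}. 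From that point on your argument coincides with the paper's and is correct: $\dim W_j = k-j$ by Proposition \ref{prop.KmnfixedH} (sound for $K_{j,k-j}$, since $\max S(K_{j,k-j}) = k-j \leq k-1$), subadditivity of dimension over the sum of subspaces, and the arithmetic $\sum_{j=1}^{\floor{k/2}}(k-j) = \frac{(\ceil{k/2}+k-1)\floor{k/2}}{2}$.

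Your stated reason for the restriction is also a misapplication. You claim the edgeless graph ``already spans a subspace of dimension $\max S = k$ by Proposition \ref{prop.KmnfixedH}'', but that proposition cannot be applied blindly to $N_k$: its proof requires, for each $n \leq \max S(G)$, a monomial of $X_G^{S_{n+1}}$ indexed by a partition of $k$ of length $n+1$, which does not exist once $n+1 > k$. A direct computation gives, for instance, $X_{N_2}^{S_{n+1}} = (n+1)!\,\bigl(m_{(2)} + 2m_{(1,1)}\bigr)$ for every $n \geq 1$ --- a one-dimensional span, not two-dimensional. That said, your unease does track a real boundary issue: graphs with isolated vertices are degenerate for Corollary \ref{cor.kmnCSFinkmn} as well (components with an empty part), and already for $k=2$ the generator $X_{N_2}^{S_{n+1}}$, which contains $m_{(2)}$, does not lie in $\spann\{X_{K_{1,1}}^{S_{n+1}}\} = \spann\{m_{(1,1)}\}$, so the disconnected case genuinely needs attention. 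But the remedy is to treat it --- via Corollary \ref{cor.kmnCSFinkmn}, with the isolated-vertex degeneracy flagged --- not to excise it: as written, your proposal proves a restricted statement rather than the corollary as stated.
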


\begin{proof}
By Corollary \ref{cor.kmnCSFinkmn}, $\spann \{X_G^{S_{n+1}} \mid \abs{V(G)} = k, n \geq 1\} = \spann \{X_{K_{j, k-j}}^{S_{n+1}} \mid 1 \leq j \leq \floor{\frac{k}{2}}, n \geq 1\}$. For a fixed $j$, by Proposition \ref{prop.KmnfixedH}, $\dim (\spann \{X_{K_{j, k-j}}^{S_{n+1}} \mid n \geq 1\}) = k - j$. It suffices to note that $(k-1) + (k-2) + ... + (k - \floor{\frac{k}{2}}) = \frac{(\ceil{k/2} + k - 1)\floor{k/2}}{2}$.
\end{proof}

We know that the dimension of $\Lambda^k$ is asymptotically equivalent to $\frac{1}{4k\sqrt{3}} e^{\pi \sqrt{2k/3}}$ as $k \rightarrow \infty$ \cite{Erdos1970star}. Hence, the implication  of the fact that $\dim \spann \{X_G^{S_{n+1}} \mid \abs{V(G)} = k, n \geq 1\}$ grows at a rate no faster than quadratic
is that only very few symmetric functions can be written as a linear combination of star-chromatic symmetric functions.

Where does the argument from Lemma \ref{lem.stardistinctH} to Corollary \ref{cor.dimensionKmn} start to collapse for general complete bipartite graphs? Unfortunately, it is Lemma \ref{lem.stardistinctH} that fails in the general case. In fact, we have the following result which shows it is completely hopeless to obtain a similar result using the same logic.

\begin{prop}
Suppose $G$ is a connected bipartite graph, and let $g_1, g_2$ be the sizes of its two maximal independent sets. Assume $\min \{g_1, g_2\} > 2$. Then, $X_{G}^{K_{m_1,n}}$ is not a (rational) scalar multiple of $X_{G}^{K_{m_2,n}}$ whenever $m_1 \neq m_2$, $ 2 < m_1, m_2$.\\
\label{prop515}
\end{prop}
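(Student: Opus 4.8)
The plan is to reduce to a complete bipartite $G$ and then read off enough monomial coefficients to obstruct proportionality. Since $G$ is connected and bipartite it has a \emph{unique} bipartition, into parts of sizes $g_1,g_2$, so Proposition \ref{prop.equivconnected} (equivalently Corollary \ref{cor.kmnCSFinkmn} applied to a single component) gives $X_G^{K_{m,n}}=X_{K_{g_1,g_2}}^{K_{m,n}}$ for all $m,n$; thus I may assume $G=K_{a,b}$ with $a=g_1,\ b=g_2$ and $a,b>2$. Fixing a labeling of $K_{m,n}$ with colour classes $U$ (size $m$) and $W$ (size $n$), a proper colouring of $K_{a,b}$ must put one part entirely in $U$ and the other entirely in $W$ (a vertex of the $a$-part in $U$ and another in $W$ would leave the adjacent full $b$-part uncolourable), giving two ``orientations''. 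Writing a type $\lambda$ as a disjoint union $\lambda=\alpha\cup\beta$ with $\alpha\vdash a$ recording the colours on the $a$-part and $\beta\vdash b$ those on the $b$-part, Lemma \ref{lem.singlerep} yields the closed form
$$c_\lambda^{(m)}=R(\lambda)\,(m+n-l(\lambda))!\sum_{\alpha\cup\beta=\lambda}\bigl(c(a,\alpha,m)\,c(b,\beta,n)+c(a,\alpha,n)\,c(b,\beta,m)\bigr),$$
where $R(\lambda)=\prod_i r_i(\lambda)!$ and $c(s,\gamma,p)$ is a fixed constant times the falling factorial $p(p-1)\cdots(p-l(\gamma)+1)$. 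This is just Proposition \ref{prop.kmnCSFgen} repackaged so the dependence on $m$ is transparent.

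The second step is to compare two coefficients whose ratio detects $m$. A scalar relation $X_{K_{a,b}}^{K_{m_1,n}}=t\,X_{K_{a,b}}^{K_{m_2,n}}$ forces $c_\lambda^{(m_1)}/c_\lambda^{(m_2)}$ to be independent of $\lambda$. Two types are always available: the length-two type $\lambda_0$ recording the bipartition (one monochromatic block on each part), with $c_{\lambda_0}^{(m)}=\text{const}\cdot mn\,(m+n-2)!$; and, using $a>2$ and $m>2$, a length-four type $\mu$ given by a size-$b$ monochromatic block on the $b$-part together with a rainbow of three colours on the $a$-part, whose unique admissible split makes $c_{\mu}^{(m)}=\text{const}\cdot mn\bigl[(m-1)(m-2)+(n-1)(n-2)\bigr](m+n-4)!$. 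In the ratio $c_\mu^{(m)}/c_{\lambda_0}^{(m)}$ all $\lambda$-independent constants and the symmetric factor $mn$ cancel, leaving $\dfrac{(m-1)(m-2)+(n-1)(n-2)}{(m+n-2)(m+n-3)}$. Setting the values at $m_1$ and $m_2$ equal and using the identity $(m+n-2)(m+n-3)-\bigl[(m-1)(m-2)+(n-1)(n-2)\bigr]=2(n-1)(m-1)$ collapses the condition (for $n\ge 2$) to $(m_1-1)(m_2-1)=(n-1)(n-2)$. Hence whenever $n\ge 2$ and $(m_1-1)(m_2-1)\neq(n-1)(n-2)$, these two coefficients already have different ratios and the functions are not scalar multiples.

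The hard part is exactly the exceptional locus $(m_1-1)(m_2-1)=(n-1)(n-2)$, and on it I do not expect the stated conclusion to survive unchanged. There one finds that \emph{every} monomial coefficient scales by the same factor: e.g.\ for $a=b=3$, $n=4$, $m_1=3$, $m_2=4$ one computes directly (from the formula above, or by counting labelled colourings from first principles) that each coefficient of $X_{K_{3,3}}^{K_{3,4}}$ is $\tfrac18$ of the corresponding coefficient of $X_{K_{3,3}}^{K_{4,4}}$, so $X_{K_{3,3}}^{K_{3,4}}=\tfrac18\,X_{K_{3,3}}^{K_{4,4}}$ is a genuine rational scalar multiple. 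Thus no finite list of fixed partitions can obstruct proportionality on this locus, and a clean statement appears to require excluding the set $(m_1-1)(m_2-1)=(n-1)(n-2)$ (the degenerate case $n=1$, where $K_{m,n}$ is a star, must also be handled separately, since there the chosen pair of types carries no information and Lemma \ref{lem.stardistinctH} in fact produces scalar relations between distinct stars). The real content of a correct argument is therefore the book-keeping that pins this exceptional locus down precisely; off it, the two-coefficient comparison above closes the proof.
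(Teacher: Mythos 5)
Your proposal follows the same basic strategy as the paper's proof --- compare, as a function of $m$, the ratio between the coefficient of a length-two type and that of a length-four type --- but you have executed it correctly, and in doing so you have exposed a genuine error in the paper. The paper computes $c_{m+1}/c_m=(m+1)(m+n-1)/m$ and $d_{m+1}/d_m=(m+1)(m+n-3)/(m-1)$ for the types $(g_1,g_2)$ and $(g_1-1,g_2-1,1,1)$ and then asserts $(m+n-1)(m-1)>(m+n-3)m$; but the difference of the two sides is $m-n+1$, so the asserted strict inequality holds only for $m\geq n$, becomes an equality at $m=n-1$, and reverses for $m<n-1$. Its induction therefore collapses precisely on your exceptional locus: your identity reducing equal ratios to $(m_1-1)(m_2-1)=(n-1)(n-2)$ contains the paper's equality case $m=n-1$, e.g.\ $(m_1,m_2,n)=(3,4,4)$. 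I verified your counterexample independently: writing the coefficient of $m_\lambda$ in $X_{K_{3,3}}^{K_{m,n}}$ as $d_\lambda\prod_i r_i(\lambda)!\,(m+n-l(\lambda))!$ with $d_\lambda$ a symmetrized product of falling factorials, all six occurring types $(3,3)$, $(3,2,1)$, $(3,1,1,1)$, $(2,2,1,1)$, $(2,1,1,1,1)$, $(1^6)$ scale by exactly $\frac18$ between $K_{3,4}$ and $K_{4,4}$, so indeed $X_{K_{3,3}}^{K_{3,4}}=\frac18 X_{K_{3,3}}^{K_{4,4}}$ and Proposition \ref{prop515} is false as stated. Your remark about $n=1$ is also correct and is even contradicted by the paper's own Lemma \ref{lem.stardistinctH}, which gives scalar relations $X_G^{S_{n+1}}=nX_G^{S_n}$ among the $X_G^{K_{m,1}}$ for large $m$.

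Two bookkeeping caveats, neither of which damages your refutation. First, your closed form for $c_\mu^{(m)}$ assumes the split of $\mu$ into an $a$-side and a $b$-side partition is unique, which can fail when $b\in\{a-2,a-1,a\}$; notably, when $g_1=g_2+1$ the paper's own type $(g_1-1,g_2-1,1,1)$ admits a second split of profile $(3,1)$, so the paper's count $2m(m-1)n(n-1)g_1g_2$ is itself incomplete there (compare the hypothesis $\abs{g_1-g_2}\neq 1$ that the authors do impose in Proposition \ref{prop516}); choosing the three-block shape on the $a$-part to dodge the coincidence, or absorbing the extra symmetric term, repairs this. Second, your claim that on the whole locus \emph{every} coefficient scales uniformly is only verified by example: it holds for $K_{3,3}$ (I also checked $(m_1,m_2,n)=(4,5,5)$, where all ratios equal $\frac1{10}$), but for larger parts it would require proof. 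Happily your argument does not need it: a single proportional pair falsifies the proposition, and your two-coefficient comparison proves the corrected statement off the locus for $n\geq 2$.
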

\begin{proof}
We will compare ratios of coefficients of different monomials. Given an $m > 2$, we first compute the coefficient of $m_{(g_1,g_2)}$ in $X_G^{K_{m,n}}$. There are $2mn$ proper $K_{m,n}$-colorings of $G$ that have type $(g_1,g_2)$. So by Lemma \ref{lem.singlerep}, the coefficient of $m_{(g_1,g_2)}$ in $X_G^{K_{m,n}}$ is $c_m = (2mn)\alpha(m+n-2)!$, where $\alpha = 1$ if $g_1 \neq g_2$ and $\alpha = 2$ otherwise. In particular,

$$\frac{c_{m+1}}{c_m} = \frac{(m+1)(m+n-1)}{m}.$$

Similarly, we can compute the coefficient of $m_{(g_1-1, g_2-1,1,1)}$ in $X_G^{K_{m, n}}$, which we denote by $d_m$. To count the number of proper colorings of this type, we need to first choose two colors from each independent set of $K_{m, n}$ to use, and then choose one vertex from each independent set of $G$ to be colored differently. Hence, the total number of proper $K_{m, n}$-colorings of $G$ that have type $(g_1-1, g_2-1, 1, 1)$ is $2m(m-1)n(n-1)g_1g_2$. By Lemma \ref{lem.singlerep} again, we have $d_m = (2m(m-1)n(n-1)g_1g_2) \beta (m+n-4)!$, where $\beta = 4!$, $3!$, or $2!$, depending on the values of $g_1$ and $g_2$. In particular, $$\frac{d_{m+1}}{d_m} = \frac{(m+1)(m+n-3)}{m-1}.$$

Now, $(m+n-1)(m-1) = m^2 + mn - 2m - n + 1 > m^2 + mn - 3m = (m+n-3)m$. So $c_{m+1}/c_m > d_{m+1}/d_m$, and the result follows by induction.
\end{proof}

A similar argument to above leads to the following proposition.

\begin{prop}
Suppose $G$ is a connected bipartite graph, and let $g_1, g_2$ be the sizes of its two maximal independent sets. Assume $\abs{g_1 - g_2} \neq 1$, $\min \{g_1, g_2\} \geq 2$, and $n \geq 2$. Then, $X_{G}^{K_{m_1,n}}$ is not a (rational) scalar multiple of $X_{G}^{K_{m_2,n}}$ whenever $m_1 \neq m_2$, $m_1, m_2 \geq n$.
\label{prop516}
\end{prop}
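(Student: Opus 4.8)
The plan is to mirror the strategy of Proposition \ref{prop515}, namely to exhibit two monomials whose coefficient ratios behave differently as $m$ varies, so that no single scalar can reconcile $X_G^{K_{m_1,n}}$ with $X_G^{K_{m_2,n}}$. The first step is to compute, via Lemma \ref{lem.singlerep}, the coefficient $c_m$ of $m_{(g_1,g_2)}$ in $X_G^{K_{m,n}}$: this is the ``base'' two-color monomial, and the count of proper colorings of type $(g_1,g_2)$ is $2mn$ (each ordered choice of one color from each side, times the two ways to assign the two parts of $G$, collapsed appropriately by the symmetry factor $\alpha$). As in the previous proof, $c_m = (2mn)\,\alpha\,(m+n-2)!$, giving the ratio
\begin{equation*}
\frac{c_{m+1}}{c_m} = \frac{(m+1)(m+n-1)}{m}.
\end{equation*}

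The second step is to pick a \emph{second} monomial whose coefficient ratio I can compare against this one. The hypothesis $\abs{g_1-g_2}\neq 1$ is the tell that the intended second monomial is \emph{not} $m_{(g_1-1,g_2-1,1,1)}$ as in Proposition \ref{prop515}, but rather something like $m_{(g_1,g_2-1,1)}$ (or its reflection) — a type obtained by recoloring a single vertex of one part. The condition $\abs{g_1-g_2}\neq 1$ guarantees this partition is genuinely distinct from $(g_1,g_2)$ and well-defined (so that no accidental merging of parts inflates the multiplicity factor in an $m$-independent way), while $\min\{g_1,g_2\}\geq 2$ ensures there are enough vertices on each side for the recoloring to be legitimate. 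I would count the proper $K_{m,n}$-colorings of this type: choose two colors from one side of $K_{m,n}$ and one from the other, choose which vertex of $G$ gets the exceptional color, and account for the two global orientations of the bipartition. This yields a coefficient $d_m$ of the form (constant depending on $g_1,g_2$) $\cdot\, m(m-1)n \cdot \gamma \cdot (m+n-3)!$, whence a ratio $d_{m+1}/d_m$ that is a rational function of $m$ with denominator $m-1$ and numerator involving $(m+1)(m+n-2)$.

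The third step is the purely algebraic comparison: I would show $c_{m+1}/c_m \neq d_{m+1}/d_m$ for all admissible $m$, exactly as in the cross-multiplication $(m+n-1)(m-1) > (m+n-3)m$ at the end of Proposition \ref{prop515}, and then conclude by induction that the sequence of ratios $(c_m/d_m)$ is strictly monotonic, so that $c_{m_1}/d_{m_1}\neq c_{m_2}/d_{m_2}$ whenever $m_1\neq m_2$. If $X_G^{K_{m_1,n}}$ were a rational scalar multiple of $X_G^{K_{m_2,n}}$, that single scalar would have to equal both $c_{m_1}/c_{m_2}$ and $d_{m_1}/d_{m_2}$, i.e.\ $c_{m_1}/d_{m_1}=c_{m_2}/d_{m_2}$, contradicting the strict monotonicity. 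The restriction $m_1,m_2\geq n$ is what I would use to ensure both chosen monomials actually appear with nonzero coefficient in \emph{both} functions (for small $m$ relative to $n$ the roles of the two parts of $K_{m,n}$ are asymmetric and some counts can degenerate), so that the ratios $c_m/d_m$ are genuinely defined over the relevant range.

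The main obstacle I anticipate is bookkeeping the multiplicity constants $\alpha,\gamma$ correctly — these are the symmetry factors coming from whether $g_1=g_2$ and from the collapsing of the two bipartition orientations — and verifying that the hypotheses $\abs{g_1-g_2}\neq 1$ and $n\geq 2$ are precisely what rule out the degenerate cases where the two candidate monomials coincide or where one of the counts vanishes. Once the correct monomial pair is fixed and the two coefficient formulas are in hand, the final inequality is a routine polynomial comparison of the same flavor as in Proposition \ref{prop515}; the delicate part is choosing the second monomial so that the comparison actually separates the two ratios under exactly the stated hypotheses rather than the stronger hypotheses of the previous proposition.
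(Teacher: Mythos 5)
Your overall plan---compare how the ratios of two monomial coefficients evolve in $m$, exactly as in Proposition \ref{prop515}---is the right one (the paper itself only says ``a similar argument''), but your concrete choice of second monomial is fatally flawed. For the type $(g_1,g_2-1,1)$ you must count \emph{both} orientations of the coloring: either the split part of $G$ takes its two colors from the $m$-side of $K_{m,n}$ (giving $g_2\, n\, m(m-1)$ proper colorings), or---since $n \geq 2$---from the $n$-side (giving $g_2\, m\, n(n-1)$ colorings). The total is $g_2\, mn(m+n-2)$, and after the augmented-monomial factor $(m+n-3)!$ from Lemma \ref{lem.singlerep} the coefficient is $\gamma\, g_2\, mn\,(m+n-2)!$, i.e.\ an $m$-independent constant times $c_m = 2\alpha\, mn\,(m+n-2)!$. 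Hence $d_{m+1}/d_m = (m+1)(m+n-1)/m = c_{m+1}/c_m$ identically, the cross-ratio comparison collapses to an equality, and no contradiction can be extracted from this pair of monomials. Your claimed ratio with denominator $m-1$ and numerator $(m+1)(m+n-2)$ arises precisely from dropping the orientation in which the split part is colored from the $n$-side, an orientation that the hypothesis $n \geq 2$ guarantees is present.

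The repair is to keep the \emph{same} second monomial as in Proposition \ref{prop515}, namely $m_{(g_1-1,\,g_2-1,\,1,\,1)}$; this is where the stated hypotheses actually do their work. The condition $\min\{g_1,g_2\} \geq 2$ makes the type realizable (possibly changing the constant $\beta$ by a bounded symmetry factor when $g_i = 2$, which is harmless since it is $m$-independent). The condition $\abs{g_1 - g_2} \neq 1$ is \emph{not}, as you suggest, about distinctness from $(g_1,g_2)$---that is automatic since the partitions have different lengths---but rules out contaminating colorings of the same type in which one part of $G$ is monochromatic of size $g_i = g_j - 1$ while the other part is split into three color classes; such colorings contribute counts of the shape $n\,m(m-1)(m-2)$ with a different dependence on $m$ and would destroy the clean formula $d_m = \text{const}\cdot m(m-1)n(n-1)(m+n-4)!$. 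With both formulas in hand one gets $\frac{c_{m+1}/c_m}{d_{m+1}/d_m} = \frac{(m+n-1)(m-1)}{(m+n-3)m}$, which exceeds $1$ if and only if $m > n-1$: this, and not the nonvanishing of the coefficients (which holds for all $m, n \geq 2$), is the true role of the hypothesis $m_1, m_2 \geq n$---it places both parameters in the regime where $c_m/d_m$ is strictly increasing, hence injective, which completes the argument.
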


\section{$S_{k}^{1}$-chromatic symmetric functions}
\label{augmentedstars}

We have mentioned that the definition of $H$-chromatic symmetric functions does not require  $H$ to be simple. In this section, we work with a class of non-simple $H$. As in the previous question, the major question we are trying to answer is regarding the equivalence of different $G$'s. We first define the principal subject of our discussion, which we will call {\em  augmented stars}. We remark that the stars with loops have been studied in the literature of both mathematics and physics. One of the examples is the so-called Widom-Rowlinson model \cite{Widom1970star}.

\begin{dfn}\label{dfn.augmentedstars}
An augmented star on $(n+1)$ vertices, denoted by $S_{n+1}^1$, is the star $S_{n+1}$ with an extra loop on its center.
\end{dfn}

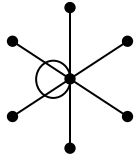
\begin{figure}[ht]
    \centering

\tikzset{every picture/.style={line width=0.75pt}} 

\begin{tikzpicture}[x=0.75pt,y=0.75pt,yscale=-1,xscale=1]

\draw  [fill={rgb, 255:red, 0; green, 0; blue, 0 }  ,fill opacity=1 ] (109.3,111.85) .. controls (109.3,110.55) and (110.35,109.5) .. (111.65,109.5) .. controls (112.95,109.5) and (114,110.55) .. (114,111.85) .. controls (114,113.15) and (112.95,114.2) .. (111.65,114.2) .. controls (110.35,114.2) and (109.3,113.15) .. (109.3,111.85) -- cycle ;
\draw  [fill={rgb, 255:red, 0; green, 0; blue, 0 }  ,fill opacity=1 ] (109.3,149.85) .. controls (109.3,148.55) and (110.35,147.5) .. (111.65,147.5) .. controls (112.95,147.5) and (114,148.55) .. (114,149.85) .. controls (114,151.15) and (112.95,152.2) .. (111.65,152.2) .. controls (110.35,152.2) and (109.3,151.15) .. (109.3,149.85) -- cycle ;
\draw  [fill={rgb, 255:red, 0; green, 0; blue, 0 }  ,fill opacity=1 ] (138.3,130.85) .. controls (138.3,129.55) and (139.35,128.5) .. (140.65,128.5) .. controls (141.95,128.5) and (143,129.55) .. (143,130.85) .. controls (143,132.15) and (141.95,133.2) .. (140.65,133.2) .. controls (139.35,133.2) and (138.3,132.15) .. (138.3,130.85) -- cycle ;
\draw  [fill={rgb, 255:red, 0; green, 0; blue, 0 }  ,fill opacity=1 ] (167.3,111.85) .. controls (167.3,110.55) and (168.35,109.5) .. (169.65,109.5) .. controls (170.95,109.5) and (172,110.55) .. (172,111.85) .. controls (172,113.15) and (170.95,114.2) .. (169.65,114.2) .. controls (168.35,114.2) and (167.3,113.15) .. (167.3,111.85) -- cycle ;
\draw  [fill={rgb, 255:red, 0; green, 0; blue, 0 }  ,fill opacity=1 ] (167.3,149.85) .. controls (167.3,148.55) and (168.35,147.5) .. (169.65,147.5) .. controls (170.95,147.5) and (172,148.55) .. (172,149.85) .. controls (172,151.15) and (170.95,152.2) .. (169.65,152.2) .. controls (168.35,152.2) and (167.3,151.15) .. (167.3,149.85) -- cycle ;
\draw    (140.65,130.85) -- (169.65,111.85) ;
\draw    (111.65,149.85) -- (140.65,130.85) ;
\draw    (140.65,130.85) -- (111.65,111.85) ;
\draw    (169.65,149.85) -- (140.65,130.85) ;
\draw  [fill={rgb, 255:red, 0; green, 0; blue, 0 }  ,fill opacity=1 ] (138.3,165.85) .. controls (138.3,164.55) and (139.35,163.5) .. (140.65,163.5) .. controls (141.95,163.5) and (143,164.55) .. (143,165.85) .. controls (143,167.15) and (141.95,168.2) .. (140.65,168.2) .. controls (139.35,168.2) and (138.3,167.15) .. (138.3,165.85) -- cycle ;
\draw  [fill={rgb, 255:red, 0; green, 0; blue, 0 }  ,fill opacity=1 ] (138.3,94.85) .. controls (138.3,93.55) and (139.35,92.5) .. (140.65,92.5) .. controls (141.95,92.5) and (143,93.55) .. (143,94.85) .. controls (143,96.15) and (141.95,97.2) .. (140.65,97.2) .. controls (139.35,97.2) and (138.3,96.15) .. (138.3,94.85) -- cycle ;
\draw    (140.65,94.85) -- (140.65,130.85) ;
\draw    (140.65,130.85) -- (140.65,165.85) ;
\draw  [draw opacity=0] (140.65,133.2) .. controls (139.76,137.37) and (136.33,140.47) .. (132.22,140.47) .. controls (127.45,140.47) and (123.57,136.26) .. (123.57,131.07) .. controls (123.57,125.88) and (127.45,121.67) .. (132.22,121.67) .. controls (136.44,121.67) and (139.95,124.95) .. (140.72,129.29) -- (132.22,131.07) -- cycle ; \draw   (140.65,133.2) .. controls (139.76,137.37) and (136.33,140.47) .. (132.22,140.47) .. controls (127.45,140.47) and (123.57,136.26) .. (123.57,131.07) .. controls (123.57,125.88) and (127.45,121.67) .. (132.22,121.67) .. controls (136.44,121.67) and (139.95,124.95) .. (140.72,129.29) ;

\end{tikzpicture}

    \caption{The augmented star $S_7^1$}
    \label{fig:my_label}
\end{figure}

We have seen that the only $K_{m,n}$-colorable graphs are the bipartite graphs. This is clearly not the case for augmented stars. Indeed, as long as $H$ contains a loop, then there is always a proper $H$-coloring of $G$ of the type $(k)$, where $k$ is the number of vertices in $G$. Nevertheless, some results in $S_{n+1}^1$-colorings can be derived from those in $S_{n+1}$-colorings. To do so, we will need to define a new class of symmetric functions that appear in Section \ref{completebipartite}, but whose formal definition has not been written down.

\begin{dfn}\label{dfn.partialstar}
Given three integers $k_1, k_2, n \geq 1$. For each $\lambda \vdash k_1+k_2$, we set 
\begin{equation}
\label{multi}
  a_\lambda =
  \begin{cases}
                                   0 & \text{, if $l(\lambda) > n+1$ or $k_1 \notin \lambda$} \\
                                   {n \choose {r'_1, r'_2, ..., r'_k}} {k_2 \choose {\lambda'_1, \lambda'_2, ..., \lambda'_{j-1}}} (n+1)! / {{n+1} \choose {r_1, r_2, ..., r_k}} & \text{, otherwise}
  \end{cases}
,
\end{equation}
where $\lambda' = (\lambda'_1, \lambda'_2, ..., \lambda'_{j-1}) \vdash k_2$ is obtained from $\lambda$ by deleting one part of $k_1$, and $r_i = r_i(\lambda)$, $r'_i = r_i(\lambda')$. We define $X_{k_1, k_2}^{S_{n+1}} = \sum_\lambda a_\lambda m_\lambda$.
\end{dfn}

So, for example, by Corollary \ref{cor.starCSFgen}, we have $X_{K_{k_1, k_2}}^{S_{n+1}} = X_{k_1, k_2}^{S_{n+1}} + X_{k_2, k_1}^{S_{n+1}}$. We emphasize that $k_1$ and $k_2$ are not symmetric in the definition above. Informally speaking, $X_{k_1, k_2}^{S_{n+1}}$ is the symmetric function obtained by forcing $k_1$ vertices to be colored by the label on a fixed vertex in $H$ and the remaining $k_2$ vertices to be colored arbitrarily by the labels on the other $n$ vertices in $H$.

Before we use $X_{k_1, k_2}^{S_{n+1}}$ to compute the $S_{n+1}^1$-chromatic symmetric functions, we introduce a lemma on the linear independence of these symmetric functions. As we have seen in Section \ref{completebipartite}, such results can be very useful in the discussion of $H$-equivalence relations.

\begin{lem}\label{lem.independenceXk1k2}
If $n \geq 2$, then $\{X_{1, k}^{S_{n+1}}, X_{2, k-1}^{S_{n+1}}, ..., X_{k, 1}^{S_{n+1}}\}$ are linearly independent in $\Lambda^{k+1}$.
\end{lem}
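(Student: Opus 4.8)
The plan is to evaluate everything on finitely many variables and reduce to a one–variable statement. First I would record the explicit shape of $X^{S_{n+1}}_{k_1,k_2}$. By Lemma \ref{lem.singlerep} and the description following Definition \ref{dfn.partialstar}, a coloring contributing to $X^{S_{n+1}}_{k_1,k_2}$ assigns the centre label to $k_1$ vertices and the $n$ leaf labels freely to the remaining $k_2$ vertices; summing the resulting monomials over the $n+1$ choices of centre label gives, as a polynomial in the $n+1$ label-variables $x_1,\dots,x_{n+1}$, the identity $X^{S_{n+1}}_{k_1,k_2}=C\sum_{c=1}^{n+1}x_c^{\,k_1}\,(p_1-x_c)^{k_2}$ for a positive constant $C$ (here $p_1=x_1+\dots+x_{n+1}$, so $p_1-x_c$ is the sum of the leaf variables). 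Since every term uses at most $n+1$ distinct variables, each $X^{S_{n+1}}_{k_1,k_2}$ is supported on partitions of length $\le n+1$, and on such partitions the $m_\lambda$ remain linearly independent already in $n+1$ variables; hence linear independence in $\Lambda^{k+1}$ is equivalent to linear independence of these $(n+1)$-variable polynomials.

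Next I would collapse to one variable by setting $x_1=z$ and $x_2=\dots=x_{n+1}=1$. Writing $b=k+1-a$, this sends $X^{S_{n+1}}_{a,b}$ to a positive multiple of $f_a(z):=n^{\,k+1-a}z^{a}+n\,(z+n-1)^{\,k+1-a}$, because the centre term contributes $z^{a}n^{b}$ while each of the $n$ unit variables contributes $(z+n-1)^{b}$. Any linear relation $\sum_{a=1}^{k}c_a X^{S_{n+1}}_{a,\,k+1-a}=0$ then restricts to $\sum_{a=1}^{k}c_a f_a\equiv0$, so it suffices to prove that $f_1,\dots,f_k$ are linearly independent as polynomials in $z$.

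This last statement is the crux, and here the hypothesis $n\ge2$ becomes essential. With $q=n$ and $w=n-1$ we have $f_a(z)=q^{\,k+1-a}z^{a}+q\,(z+w)^{\,k+1-a}$, and the obstruction is the symmetry $a\leftrightarrow k+1-a$: the pure power $q^{\,k+1-a}z^{a}$ and the shifted power $q(z+w)^{\,k+1-a}$ swap roles under $a\mapsto k+1-a$. Indeed, when $n=1$ (so $q=1,\ w=0$) one literally gets $f_a=z^{a}+z^{\,k+1-a}=f_{k+1-a}$, and the family is dependent; this is exactly why the lemma excludes $n=1$. For $n\ge2$ we have $q\ge2$ and $w\ge1$, which breaks the symmetry. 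Concretely I would compare the coefficients of $\sum_a c_a f_a$: shifting $z\mapsto z-w$ turns the shifted powers into pure monomials, so the coefficient of $z^{\,k+1-a}$ essentially reads off $c_a$, and one is left with a $k\times k$ linear system in $(c_1,\dots,c_k)$ whose matrix, after reversing the column order, is upper triangular with nonzero diagonal \emph{apart from} one extra antidiagonal entry in each row of the lower half—precisely the entries coupling the two members $a$ and $k+1-a$ of each pair.

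The hard part will be showing that this ``triangular plus pairing'' matrix is nonsingular for $q\ge2,\ w\ge1$. I would attack it in one of two ways. One option is induction on $k$, peeling off the extreme pair $\{1,k\}$: the top coefficient forces $c_1+c_k=0$, and one further coefficient comparison closes the pair before passing to the inner $(k-2)$-variable system. The other is a direct determinant computation: at the degenerate value $w=0$ the matrix decouples into $2\times2$ blocks pairing $a$ with $k+1-a$, each of determinant a nonzero multiple of $q^{\,k+1}-q^{2}$ (which is nonzero since $q\ge2$ and $k\ge2$), after which I would argue that switching on $w\ge1$ cannot create a zero—e.g.\ by controlling signs in the permutation expansion, or by showing the determinant, viewed as a polynomial in $w$, keeps a constant sign on $w\ge0$. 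Either route isolates the genuine content of the lemma in this single nonvanishing determinant, with the reductions of the first two paragraphs being routine.
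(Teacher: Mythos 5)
Your two reductions are correct, and they take a genuinely different route from the paper (which never leaves the monomial basis). The closed form $X^{S_{n+1}}_{k_1,k_2}=n!\sum_{c=1}^{n+1}x_c^{k_1}(p_1-x_c)^{k_2}$ in the variables $x_1,\dots,x_{n+1}$ does match Definition \ref{dfn.partialstar}; restriction to $n+1$ variables is injective on $\spann\{m_\lambda \mid l(\lambda)\le n+1\}$; and since specialization is linear, independence of the images $f_a(z)=n^{k+1-a}z^a+n(z+n-1)^{k+1-a}$ would indeed suffice. You also correctly locate where $n\ge 2$ enters ($n=1$ gives $f_a=f_{k+1-a}$, mirroring the paper's remark that $X^{S_2}_{j,k+1-j}=X^{S_2}_{k+1-j,j}$). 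But the proposal stops exactly where the lemma's content lies: the linear independence of $f_1,\dots,f_k$ is asserted, not proved, and neither sketched closing works as stated. The determinant route does not even typecheck: for $w=n-1\ge 1$ the $f_a$ acquire constant terms and span inside the $(k+1)$-dimensional space of polynomials of degree $\le k$, so the coefficient matrix is $k\times(k+1)$ and there is no single determinant; one must control a chosen $k\times k$ minor, and continuity from the decoupled point $w=0$ only gives nonvanishing for \emph{small} $w>0$, whereas you need $w\ge 1$ --- the claimed sign-constancy on all of $w\ge 0$ is precisely the unproven crux. The pair-peeling induction is likewise unsubstantiated: beyond $c_1+c_k=0$ (coefficient of $z^k$), the coefficient equations couple all pairs simultaneously (already at $k=5$, $n=2$ the system closes only after solving genuinely mixed equations), so ``one further coefficient comparison closes the pair'' does not hold.

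The good news is that your endgame is completable, and more cleanly than by determinants. Set $K=k+1$, $q=n$, $w=n-1$, $C(t)=\sum_a c_a t^a$, $D(z)=C(z/q)$. Since $\sum_a c_a q^{K-a}z^a=q^K C(z/q)$ and $\sum_a c_a(z+w)^{K-a}=(z+w)^K D\bigl(q/(z+w)\bigr)$, a relation $\sum_a c_a f_a=0$ becomes the functional equation $q^K D(z)=-q(z+w)^K D(\sigma(z))$ with $\sigma(z)=q/(z+w)$, a M\"obius map fixing $z=-q$, where $z+w=-1$. If $D\not\equiv 0$ vanishes at $-q$ to order $m$, factoring $D=(z+q)^m E$ and using $\sigma(z)+q=q(z+q)/(z+w)$ yields $q^{K}=(-1)^{K-m+1}q^{m+1}$, which forces $m=K-1$ (the case $m=0$ gives $q^{K-1}=\pm 1$, impossible for $q\ge 2$ --- exactly where $n\ge 2$ is used); then $D=\beta(z+q)^{K-1}$ contradicts $D(0)=C(0)=0$, so $D\equiv 0$ and all $c_a=0$. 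For comparison, the paper's proof extracts $c_j+c_{k+1-j}=0$ from the coefficient of $m_{(j,k+1-j)}$, forms $X_j=X^{S_{n+1}}_{j,k+1-j}-X^{S_{n+1}}_{k+1-j,j}$, and runs a sign-propagation induction on coefficients of monomials $m_{(k+1-j,\,j-1,\,1)}$, with $n\ge 2$ entering through the length-three monomial $m_{(\floor{(k+1)/2},\,\floor{k/2},\,1)}$. Your specialization route, once closed as above, is shorter and more transparent; as submitted, however, it is a plan whose decisive step is missing.
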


\begin{proof}
When $n \geq k$, the statement is obvious since $m_{(j, 1^{k-j})}$ exists in $X_{j, k-j}^{S_{n+1}}$ but vanishes in $X_{i, k-i}^{S_{n+1}}$ for every $i > j$. So assume $2 \leq n < k$. We set $$c_1 X_{1, k}^{S_{n+1}} + c_2 X_{2, k-1}^{S_{n+1}} + ... + c_k X_{k, 1}^{S_{n+1}} = 0.$$ We first observe that $m_{(j, k+1-j)}$ exists only in $X_{j, k+1-j}^{S_{n+1}}$ and $X_{k+1-j, j}^{S_{n+1}}$, and that its coefficients in the two symmetric functions are the same. Therefore, $c_j + c_{k+1-j} = 0$. When $k$ is even, we also have $c_{\frac{k}{2}} = 0$. If we set $X_j^{S_{n+1}} := X_{j, k+1-j}^{S_{n+1}} - X_{k+1-j, j}^{S_{n+1}}$ for $1 \leq j \leq \floor{\frac{k}{2}}$, then the equation can be rewritten as $$c_1 X_1^{S_{n+1}} + c_2 X_2^{S_{n+1}} + ... + c_{\floor{\frac{k}{2}}} X_{\floor{\frac{k}{2}}}^{S_{n+1}} = 0.$$

Next, we make some observations of $X_j^{S_{n+1}}$. First, for each $1 \leq j \leq \floor{\frac{k}{2}}$, the only monomial $m_\lambda$ exists in $X_{j, k+1-j}^{S_{n+1}}$ where $k+1-j$ is a part of $\lambda$ is $m_{(k+1-j, j)}$, whereas $X_{k+1-j, j}^{S_{n+1}}$ is a linear combination of monomials $m_\lambda$ such that $k+1-j$ is a part of $\lambda$. Therefore, in $X_j^{S_{n+1}}$, all monomials $m_\lambda$ with $k+1-j \in \lambda$ have non-positive coefficients, and all monomials $m_\lambda$ with $k+1-j \notin \lambda$ have non-negative coefficients. Given this observation, we shall note that for $2 \leq j \leq \floor{\frac{k}{2}}$, the monomial $m_{(k+1-j, j-1, 1)}$ appears in $X_1^{S_{n+1}}$ with a positive coefficient, in $X_{k+2-j}^{S_{n+1}}$ with a positive coefficient, in $X_{k+1-j}^{S_{n+1}}$ with a negative coefficient, and in nowhere else among $ X_1^{S_{n+1}}, X_2^{S_{n+1}}, ..., X_{\floor{\frac{k}{2}}}^{S_{n+1}}$.

Now, we claim that $c_1c_j \geq 0$ for every $2 \leq j \leq \floor{\frac{k}{2}}$. We prove it by induction. When $j = 2$, the result follows from the fact that $m_{(k-1, 1, 1)}$ appears in $X_1^{S_{n+1}}$ with a positive coefficient and in $X_2^{S_{n+1}}$ with a negative coefficient, and nowhere else, as noted above. Assume the claim holds for $j$. The monomials $m_{(k-j, j, 1)}$ appears in $X_1^{S_{n+1}}$ with a positive coefficient, in $X_j^{S_{n+1}}$ with a positive coefficient, in $X_{j+1}^{S_{n+1}}$ with a negative coefficient, and nowhere else. But $c_1c_j \geq 0$, so $c_1$ and $c_{j+1}$ cannot have different signs. The claim is proved.

Finally, consider the monomial $m_{(\floor{\frac{k+1}{2}}, \floor{\frac{k}{2}}, 1)}$. This monomial exists in $X_1^{S_{n+1}}$ since $n \geq 2$, and its coefficient is non-negative in all of $X_2^{S_{n+1}}, X_3^{S_{n+1}}, ...,$ $X_{\floor{\frac{k}{2}}}^{S_{n+1}}$, as noted at the beginning. Since $c_1c_j \geq 0$, we must have $c_1 = 0$. Now, by an induction argument, suppose $c_i = 0$ for every $i \leq j$, then $X_{j+1}^{S_{n+1}}$ is the only possibly non-vanishing term among $X_1^{S_{n+1}}, X_2^{S_{n+1}}, ..., X_{\floor{\frac{k}{2}}}^{S_{n+1}}$ that contains $m_{(k+1-j, j, 1)}$, so $c_{j+1} = 0$. Hence, $c_j = 0$ for every $1 \leq j \leq k$, and we are done.
\end{proof}

Now, we are ready to compute the $S_{n+1}^1$ chromatic symmetric functions and discuss its consequences. To do so, we will have to first introduce some notation. Given any graph $G$, denote by $\mathcal{I}(G)$ the set of all independent sets of $G$. Let $I(G)$ be the multiset $I(G) = \{\abs{S} \mid S \in \mathcal{I}(G)\}$. $\tilde{\mathcal{I}}(G)$ and $\tilde{I}(G)$ are defined similarly by requiring all independent sets to be maximal.

\begin{prop}\label{prop.augstarCSF}
Given a graph $G$ on $k$ vertices that is not edgeless, and a natural number $n \geq 1$, we have $$X_G^{S_{n+1}^1} = n! m_{(k)} + \sum_{k_1 \in I(G)} X_{k-k_1, k_1}^{S_{n+1}}.$$
\end{prop}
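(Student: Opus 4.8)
The plan is to evaluate $X_G^{S_{n+1}^1}$ by fixing a single labeling $\phi$ of $S_{n+1}^1$ (valid by Lemma \ref{lem.singlerep}, since $\abs{V(S_{n+1}^1)} = n+1$) and then sorting the proper $(S_{n+1}^1,\phi)$-colorings of $G$ according to which vertices receive a \emph{leaf} label rather than the unique \emph{center} label. The starting point is a structural reading of the adjacencies of $S_{n+1}^1$: the center carries a loop and is adjacent to every leaf, whereas distinct leaves are mutually non-adjacent and carry no loops. Feeding this into the defining implication $v_1 \sim_G v_2 \Rightarrow \phi^{-1}(\kappa(v_1)) \sim_H \phi^{-1}(\kappa(v_2))$, I would show that an assignment is a proper $(S_{n+1}^1,\phi)$-coloring if and only if no edge of $G$ has both endpoints leaf-colored; equivalently, the set $I$ of leaf-colored vertices is an independent set of $G$, while the center-colored vertices $V(G)\setminus I$ are entirely unconstrained (this is precisely where the loop is used).

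Because the leaf-set $I$ is determined by the coloring, sorting by $I \in \mathcal{I}(G)$ gives an honest partition of all proper colorings, so $X_G^{S_{n+1}^1}$ is the sum of the contributions of the individual independent sets. The empty independent set contributes only the all-center coloring, which has type $(k)$; here $G$ being non-edgeless is essential, as it forbids any all-leaf coloring and so guarantees that type $(k)$ arises in no other way. For this single coloring the type count is $1$, and the multinomial coefficient $\binom{n+1}{r_1(\lambda),\dots}$ attached to $\lambda=(k)$ in Lemma \ref{lem.singlerep} equals $n+1$, so the contribution is $(n+1)!/(n+1)\, m_{(k)} = n!\,m_{(k)}$. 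For a nonempty independent set $I$ with $\abs{I} = k_1$ (necessarily $k_1 < k$, again since $V(G)$ is not independent), every coloring with leaf-set exactly $I$ forces the center label on all $k-k_1$ vertices of $V(G)\setminus I$ and distributes arbitrary leaf labels over the $k_1$ vertices of $I$ with no interaction, since $I$ is independent.

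The crux is to recognise the contribution of a single such $I$ as $X_{k-k_1,k_1}^{S_{n+1}}$. This is exactly the object Definition \ref{dfn.partialstar} encodes: a distinguished block of $k-k_1$ vertices all carrying the fixed center label, together with $k_1$ vertices colored freely by the $n$ leaf labels, the whole recorded by type. The delicate point, and the step I expect to require the most care, is that $X_{k-k_1,k_1}^{S_{n+1}}$ places \emph{no} independence demand on its center block, which matches the augmented star exactly because the loop legalizes adjacencies among center-colored vertices; one must therefore verify that, for the fixed $\phi$, the number of type-$\lambda$ colorings with leaf-set $I$ literally equals the count underlying the coefficient $a_\lambda$ of Definition \ref{dfn.partialstar} (both count the ways to spread $k_1$ vertices over the $n$ leaf labels with prescribed multiplicities, the center block being forced), rather than merely a proportional quantity. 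Since $S_{n+1}$ and $S_{n+1}^1$ share the same $n+1$ vertices, both carry the identical $(n+1)!$ normalization of Lemma \ref{lem.singlerep}, so the two symmetric functions coincide in $\Lambda^k$. Summing the contributions, grouping the nonempty independent sets by their common size $k_1$ (which turns the sum over sets into the sum over the multiset $I(G)$, as each term depends only on $\abs{I}$), and adding the empty-set term $n!\,m_{(k)}$ then yields the claimed identity.
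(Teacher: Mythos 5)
Your proposal is correct and takes essentially the same route as the paper's proof: both partition the proper $(S_{n+1}^1,\phi)$-colorings by the (necessarily independent) set of leaf-colored vertices, identify the contribution of each independent set of size $k_1$ with $X_{k-k_1,k_1}^{S_{n+1}}$, and account separately for the unique all-center coloring yielding $n!\,m_{(k)}$. Your write-up simply makes explicit the coefficient verification against Definition \ref{dfn.partialstar} and Lemma \ref{lem.singlerep} that the paper leaves as obvious.
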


\begin{proof}
A proper $S_{n+1}^1$-coloring of $G$ that is not of the type $(k)$ is obtained by first picking an independent set $S$ of $G$, coloring all vertices in $S$ by the labels on the leaves, and coloring all vertices in $V(G) \setminus S$ by the label on the center of $S_{n+1}^1$. Once we find an independent set $S$, the monomials obtained by this manner sum to $X_{k-\abs{S}, \abs{S}}^{S_{n+1}}$. The values of $c(G)$ are obvious.
\end{proof}

Given Lemma \ref{lem.independenceXk1k2} and Proposition \ref{prop.augstarCSF}, the following result is immediate.

\begin{cor}\label{cor.equivalenceaugstar}
Suppose $G_1, G_2$ are two graphs on $k$ vertices, and $n \geq 2$. Then, $X_{G_1}^{S_{n+1}^1} = X_{G_2}^{S_{n+1}^1}$ if and only if $I(G_1) = I(G_2)$, if and only if $\tilde{I}(G_1) = \tilde{I}(G_2)$.
\end{cor}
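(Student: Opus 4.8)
The plan is to combine Proposition~\ref{prop.augstarCSF} with the linear independence result of Lemma~\ref{lem.independenceXk1k2}. First I would recall from Proposition~\ref{prop.augstarCSF} that for any non-edgeless graph $G$ on $k$ vertices,
\begin{equation*}
X_G^{S_{n+1}^1} = n!\,m_{(k)} + \sum_{k_1 \in I(G)} X_{k-k_1,k_1}^{S_{n+1}}.
\end{equation*}
The leading term $n!\,m_{(k)}$ is identical for $G_1$ and $G_2$ since they share the same number of vertices, so $X_{G_1}^{S_{n+1}^1} = X_{G_2}^{S_{n+1}^1}$ holds if and only if the two sums $\sum_{k_1 \in I(G)} X_{k-k_1,k_1}^{S_{n+1}}$ agree. (I should be mildly careful about the edgeless case and about whether $G_i$ has isolated vertices, but the symmetric function $X_{k-k_1,k_1}^{S_{n+1}}$ is defined uniformly, so grouping by multiplicity is safe.)

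The key observation is that the multiset $I(G)$ records, for each possible independent-set size $s$, how many independent sets of $G$ have that size. Rewriting the sum by collecting equal sizes, $\sum_{k_1 \in I(G)} X_{k-k_1,k_1}^{S_{n+1}} = \sum_{s=1}^{k} N_s\, X_{k-s,\,s}^{S_{n+1}}$, where $N_s$ is the number of independent sets of $G$ of size $s$. Reindexing $j = k+1-s$ so that the second subscript ranges over $1,\dots,k$ as in the lemma, each $X_{k-s,s}^{S_{n+1}}$ is one of the functions $X_{1,k}^{S_{n+1}}, X_{2,k-1}^{S_{n+1}}, \dots, X_{k,1}^{S_{n+1}}$ appearing in Lemma~\ref{lem.independenceXk1k2}. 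Since $n \geq 2$, that lemma guarantees these are linearly independent in $\Lambda^{k+1}$. Hence the expansion of $X_G^{S_{n+1}^1} - n!\,m_{(k)}$ in the basis $\{X_{j,k+1-j}^{S_{n+1}}\}$ has coefficients that are exactly the counts $N_s$, which are precisely the data recorded by the multiset $I(G)$. Equality of the symmetric functions therefore forces $N_s^{(1)} = N_s^{(2)}$ for all $s$, i.e.\ $I(G_1) = I(G_2)$, and conversely $I(G_1) = I(G_2)$ makes the sums literally identical.

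For the equivalence with $\tilde{I}$, I would argue that the multiset of all independent-set sizes and the multiset of maximal independent-set sizes determine each other in a way that is invariant across graphs with the same $I$. The cleanest route is to observe that every independent set of size $s$ is contained in some maximal independent set, and more usefully that the count of independent sets of each size is determined by the maximal ones: each independent set is a subset of at least one maximal independent set, so the full profile $\{N_s\}$ is a function of the family $\tilde{\mathcal{I}}(G)$. The main obstacle, and the step deserving the most care, is establishing that $I(G)$ and $\tilde{I}(G)$ carry equivalent information, since $\tilde{I}$ records only the \emph{sizes} of maximal sets, not the sets themselves, and two different maximal independent sets can overlap. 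The honest way to close this gap is to show that $I(G)$ determines $\tilde{I}(G)$ and vice versa using the fact that a set is a maximal independent set precisely when it is independent but none of its supersets is; I would make this precise by noting that the number of independent sets of size equal to the maximum independent-set size is exactly the number of maximum maximal independent sets, and then peel off sizes inductively. Granting this combinatorial equivalence of $I$ and $\tilde{I}$, the three conditions in the statement are all equivalent, completing the proof.
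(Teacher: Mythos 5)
Your proof of the first equivalence is correct and is precisely the paper's argument: expand each side via Proposition~\ref{prop.augstarCSF}, group the sum as $\sum_{s} N_s\, X_{k-s,\,s}^{S_{n+1}}$, note that the $n!\,m_{(k)}$ terms agree and that no $X_{k-s,s}^{S_{n+1}}$ with $1 \le s \le k-1$ contributes to $m_{(k)}$, and then read off the counts $N_s$ by linear independence. One indexing correction: the functions occurring here have subscripts summing to $k$, so they are $X_{1,k-1}^{S_{n+1}}, \dots, X_{k-1,1}^{S_{n+1}}$, living in $\Lambda^{k}$, and you must invoke Lemma~\ref{lem.independenceXk1k2} with $k-1$ in place of $k$; your list $X_{1,k}^{S_{n+1}}, \dots, X_{k,1}^{S_{n+1}} \subset \Lambda^{k+1}$ is off by one, though the lemma applies verbatim after the shift.

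The genuine gap is the second equivalence, and your own phrase ``Granting this combinatorial equivalence'' concedes it: nothing in the proposal actually proves that $I(G)$ and $\tilde{I}(G)$ determine one another. The paper closes this step in one line with the identity $N_j = \sum_{i \geq j} \binom{i}{j} \tilde{N}_i$, and supplying that identity (or a valid substitute) was the substantive content you owed here. Your inductive ``peeling'' cannot supply it: it is sound only at the top size, where an independent set of maximum size is automatically maximal, so $N_\alpha = \tilde{N}_\alpha$ for $\alpha$ the independence number; below the top, a single independent set may lie inside several maximal independent sets, and how the counts split depends on the overlap pattern, which $\tilde{I}(G)$, being a multiset of sizes only, does not record. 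Concretely, in $K_3 \uplus K_1$ the isolated vertex lies in all three maximal independent sets, so incidence counting gives $\sum_i \binom{i}{1}\tilde{N}_i = 6$ while $N_1 = 4$; the uniqueness of containment implicit in any such count fails (this same objection, incidentally, applies to the paper's one-line identity, which your worry about overlaps correctly identifies as the crux). Worse for the peeling strategy: on $10$ vertices, the complements of $K_4^{-} \uplus K_3 \uplus K_3$ and of two disjoint bowties (two triangles sharing a vertex) both have $\tilde{I} = \{3,3,3,3\}$, yet they have $11$ and $12$ independent $2$-sets respectively, so the size data $\tilde{I}$ alone does not pin down $I$, and no size-only induction of the kind you sketch can close the gap. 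As written, the second equivalence in your proposal is therefore not proved, and the route you indicate for proving it would fail.
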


\begin{proof}
Combine Lemma \ref{lem.independenceXk1k2} and Proposition \ref{prop.augstarCSF} to get the first equivalence. For the second one, it suffices to note that $N_j = \sum_{i \geq j} {i \choose j} \tilde{N}_j$, where $N_j$ is the number of independent sets of size $j$ and $\tilde{N}_i$ is the number of maximal independent sets  of size $i$.
\end{proof}

We remark that Lemma \ref{lem.independenceXk1k2} fails for $n = 1$. Indeed, $X_{j, k+1-j}^{S_2} = m_{(j, k+1-j)} = X_{k+1-j, j}^{S_2}$. Moreover, Corollary \ref{cor.equivalenceaugstar} does not hold for $n = 1$ either. The easiest counter-examples one can construct are $G_1$ and $G_2$ below in Figure \ref{fig:IG1equalsIG2ex}. Note that $I(G_1) = \{1, 1, 1, 1, 1, 2, 2, 2, 2, 2\}$, whereas $I(G_2) = \{1, 1, 1, 1, 1, 2, 2, 2, 2, 3\}$. However, we have $X_{G_1}^{S_2} = X_{G_2}^{S_2} = 5m_{(3,2)} + 5m_{(4,1)} + m_{(5)}$. Another way to see that $X_{G_1}^{S_2} = X_{G_2}^{S_2}$ without computing is by observing $X_{2,3}^{S_2} = X_{3,2}^{S_2}$.

\bigskip

\begin{figure}[ht] 
\centering
\begin{tikzpicture}[x=0.75pt,y=0.75pt,yscale=-1,xscale=1]

\draw  [fill={rgb, 255:red, 0; green, 0; blue, 0 }  ,fill opacity=1 ] (119,160.75) .. controls (119,159.78) and (119.78,159) .. (120.75,159) .. controls (121.72,159) and (122.5,159.78) .. (122.5,160.75) .. controls (122.5,161.72) and (121.72,162.5) .. (120.75,162.5) .. controls (119.78,162.5) and (119,161.72) .. (119,160.75) -- cycle ;
\draw  [fill={rgb, 255:red, 0; green, 0; blue, 0 }  ,fill opacity=1 ] (299,110.75) .. controls (299,109.78) and (299.78,109) .. (300.75,109) .. controls (301.72,109) and (302.5,109.78) .. (302.5,110.75) .. controls (302.5,111.72) and (301.72,112.5) .. (300.75,112.5) .. controls (299.78,112.5) and (299,111.72) .. (299,110.75) -- cycle ;
\draw  [fill={rgb, 255:red, 0; green, 0; blue, 0 }  ,fill opacity=1 ] (119,110.75) .. controls (119,109.78) and (119.78,109) .. (120.75,109) .. controls (121.72,109) and (122.5,109.78) .. (122.5,110.75) .. controls (122.5,111.72) and (121.72,112.5) .. (120.75,112.5) .. controls (119.78,112.5) and (119,111.72) .. (119,110.75) -- cycle ;
\draw  [fill={rgb, 255:red, 0; green, 0; blue, 0 }  ,fill opacity=1 ] (169,160.75) .. controls (169,159.78) and (169.78,159) .. (170.75,159) .. controls (171.72,159) and (172.5,159.78) .. (172.5,160.75) .. controls (172.5,161.72) and (171.72,162.5) .. (170.75,162.5) .. controls (169.78,162.5) and (169,161.72) .. (169,160.75) -- cycle ;
\draw  [fill={rgb, 255:red, 0; green, 0; blue, 0 }  ,fill opacity=1 ] (219,160.75) .. controls (219,159.78) and (219.78,159) .. (220.75,159) .. controls (221.72,159) and (222.5,159.78) .. (222.5,160.75) .. controls (222.5,161.72) and (221.72,162.5) .. (220.75,162.5) .. controls (219.78,162.5) and (219,161.72) .. (219,160.75) -- cycle ;
\draw  [fill={rgb, 255:red, 0; green, 0; blue, 0 }  ,fill opacity=1 ] (169,110.75) .. controls (169,109.78) and (169.78,109) .. (170.75,109) .. controls (171.72,109) and (172.5,109.78) .. (172.5,110.75) .. controls (172.5,111.72) and (171.72,112.5) .. (170.75,112.5) .. controls (169.78,112.5) and (169,111.72) .. (169,110.75) -- cycle ;
\draw  [fill={rgb, 255:red, 0; green, 0; blue, 0 }  ,fill opacity=1 ] (348,110.75) .. controls (348,109.78) and (348.78,109) .. (349.75,109) .. controls (350.72,109) and (351.5,109.78) .. (351.5,110.75) .. controls (351.5,111.72) and (350.72,112.5) .. (349.75,112.5) .. controls (348.78,112.5) and (348,111.72) .. (348,110.75) -- cycle ;
\draw  [fill={rgb, 255:red, 0; green, 0; blue, 0 }  ,fill opacity=1 ] (299,160.75) .. controls (299,159.78) and (299.78,159) .. (300.75,159) .. controls (301.72,159) and (302.5,159.78) .. (302.5,160.75) .. controls (302.5,161.72) and (301.72,162.5) .. (300.75,162.5) .. controls (299.78,162.5) and (299,161.72) .. (299,160.75) -- cycle ;
\draw  [fill={rgb, 255:red, 0; green, 0; blue, 0 }  ,fill opacity=1 ] (348,160.75) .. controls (348,159.78) and (348.78,159) .. (349.75,159) .. controls (350.72,159) and (351.5,159.78) .. (351.5,160.75) .. controls (351.5,161.72) and (350.72,162.5) .. (349.75,162.5) .. controls (348.78,162.5) and (348,161.72) .. (348,160.75) -- cycle ;
\draw  [fill={rgb, 255:red, 0; green, 0; blue, 0 }  ,fill opacity=1 ] (399,160.75) .. controls (399,159.78) and (399.78,159) .. (400.75,159) .. controls (401.72,159) and (402.5,159.78) .. (402.5,160.75) .. controls (402.5,161.72) and (401.72,162.5) .. (400.75,162.5) .. controls (399.78,162.5) and (399,161.72) .. (399,160.75) -- cycle ;
\draw    (119,110.75) -- (170.75,110.75) ;
\draw    (120.75,112.5) -- (120.75,162.5) ;
\draw    (120.75,160.75) -- (170.75,160.75) ;
\draw    (170.75,160.75) -- (222.5,160.75) ;
\draw    (170.75,110.75) -- (220.75,160.75) ;
\draw    (349.75,160.75) -- (302.5,160.75) ;
\draw    (300.75,112.5) -- (300.75,162.5) ;
\draw    (349.75,109) -- (349.75,162.5) ;
\draw    (400.75,160.75) -- (351.5,160.75) ;
\draw    (300.75,110.75) -- (349.75,160.75) ;
\draw    (349.75,110.75) -- (300.75,160.75) ;

\draw (161,173.4) node [anchor=north west][inner sep=0.75pt]    {$G_{1}$};
\draw (340,174.4) node [anchor=north west][inner sep=0.75pt]    {$G_{2}$};

\end{tikzpicture}
\caption{Example of $G_1$ and $G_2$ such that $I(G_1)=I(G_2).$}
\label{fig:IG1equalsIG2ex}
\end{figure}
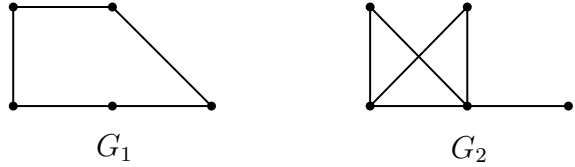
\bigskip

When we have two graphs $G_1$ and $G_2$ and would like to know if they have the same chromatic symmetric functions with respect to an augmented star $S^1_{n+1}$, is Corollary \ref{cor.equivalenceaugstar} an easier characterization (i.e. it suggests a polynomial algorithm to the problem) of this equivalence relation? It is not quite clear yet. It seems hard to show if the problem is $\mathcal{NP}$-hard: even though computing $I(G)$ is $\mathcal{NP}$-hard, it is not clear yet whether comparing $I(G_1)$ to $I(G_2)$ still is. Nevertheless, we shall see that Corollary \ref{cor.equivalenceaugstar} does lead to some necessary \textit{or} sufficient conditions of $X_{G_1}^{S_{n+1}^1} = X_{G_2}^{S_{n+1}^1}$ that will be hard to see directly from Proposition \ref{prop.augstarCSF}.

\begin{cor}\label{cor.augstarpartialcors}
Suppose $n \geq 2$. Then, 
\begin{enumerate}
\item If $G_1, G_2$ are two (simple) graphs on $k$ vertices with $X_{G_1}^{S_{n+1}^1} = X_{G_2}^{S_{n+1}^1}$, then $G_1, G_2$ have the same number of edges.
\item All (simple) graphs in $\mathcal{G}_m := \{\overline{G} \mid \abs{V(G)} = k, \abs{E(G)} = m \textit{, $G$ is $C_3$-free}\}$ have the same $S_{n+1}^1$-chromatic symmetric function, which is not equal to the $S_{n+1}^1$-chromatic symmetric function of any graph not in $\mathcal{G}_m$.
\item For each $1 \leq j \leq k$, all graphs in $\mathcal{T}_j := \{\overline{T} \mid T$  is a forest on $k$ vertices with $j$ connected components $\}$ have the same $S_{n+1}^1$-chromatic symmetric function. In particular, all tree-complements (on the same number of vertices) have the same $S_{n+1}^1$-chromatic symmetric function.
\end{enumerate}
\end{cor}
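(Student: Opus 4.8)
The plan is to derive all three parts from Corollary \ref{cor.equivalenceaugstar}, which (for $n \geq 2$) converts the identity $X_{G_1}^{S_{n+1}^1} = X_{G_2}^{S_{n+1}^1}$ into the purely combinatorial statement $I(G_1) = I(G_2)$, i.e.\ $G_1$ and $G_2$ have equally many independent sets of each size. The two facts I would lean on throughout are elementary: for any simple graph $G$ the number of size-$1$ independent sets is $\abs{V(G)}$ and the number of size-$2$ independent sets is ${\abs{V(G)} \choose 2} - \abs{E(G)}$; and an independent set of $\overline{G}$ is exactly a clique of $G$, so $I(\overline{G})$ records the multiset of clique sizes of $G$. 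Part (1) then follows at once: if $X_{G_1}^{S_{n+1}^1} = X_{G_2}^{S_{n+1}^1}$ then $I(G_1) = I(G_2)$, and comparing the number of size-$2$ independent sets gives ${k \choose 2} - \abs{E(G_1)} = {k \choose 2} - \abs{E(G_2)}$, whence $\abs{E(G_1)} = \abs{E(G_2)}$.

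For the common-value half of part (2) I would use the clique translation. If $G$ is $C_3$-free on $k$ vertices with $m$ edges, then $G$ has no clique of size $\geq 3$, since any such clique would contain a triangle. Hence the nonempty independent sets of $\overline{G}$ consist precisely of $k$ singletons and $m$ pairs, and nothing larger; in particular $I(\overline{G})$ depends only on $k$ and $m$. By Corollary \ref{cor.equivalenceaugstar} every member of $\mathcal{G}_m$ therefore shares one and the same $S_{n+1}^1$-chromatic symmetric function.

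The uniqueness half of part (2) is the step requiring care. Suppose $H$ is any graph whose $S_{n+1}^1$-chromatic symmetric function equals this common value. Since $X_H^{S_{n+1}^1}$ is homogeneous of degree $\abs{V(H)}$ (equivalently, since the size-$1$ independent sets must match in number), we get $\abs{V(H)} = k$, so Corollary \ref{cor.equivalenceaugstar} applies and yields $I(H) = I(\overline{G})$. Writing $H = \overline{H'}$ with $H' = \overline{H}$, the equality of size-$2$ counts forces $H'$ to have $m$ edges, while the absence of independent sets of size $\geq 3$ in $H$ forces $H'$ to have no clique of size $\geq 3$, i.e.\ $H'$ is $C_3$-free. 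Thus $H = \overline{H'} \in \mathcal{G}_m$, as required. Part (3) is then an immediate corollary: a forest $T$ on $k$ vertices with $j$ components has exactly $k - j$ edges and, being acyclic, is $C_3$-free, so $\overline{T} \in \mathcal{G}_{k-j}$; part (2) gives that all graphs in $\mathcal{T}_j$ share the same $S_{n+1}^1$-chromatic symmetric function, with the tree case being $j = 1$.

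I expect the only genuine obstacle to be the uniqueness direction of part (2): one must verify that matching the multiset $I(\cdot)$ really does pin $H$ down as the complement of a triangle-free graph on $k$ vertices with $m$ edges, rather than merely constraining its independent-set counts. This is exactly where the clique interpretation of independent sets of $\overline{H}$ and the vanishing of all size-$\geq 3$ independent sets do the essential work; every remaining step is direct counting.
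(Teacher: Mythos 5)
Your proof is correct and takes essentially the same route as the paper's: all three parts are reduced via Corollary \ref{cor.equivalenceaugstar} to the equality of the multisets $I(\cdot)$, with the complementation between independent sets of $\overline{G}$ and cliques of $G$ doing the work, size-$2$ counts giving the edge statement, and part (3) read off as the special case $m = k-j$ of part (2). If anything, your handling of the uniqueness half of part (2) is slightly more careful than the paper's terse version, since you make explicit that degree-homogeneity pins down $\abs{V(H)} = k$ before Corollary \ref{cor.equivalenceaugstar} can be invoked.
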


\begin{proof}
Suppose $X_{G_1}^{S_{n+1}^1} = X_{G_2}^{S_{n+1}^1}$, then $G_1$ and $G_2$ have the same number of non-edges (i.e. independent sets of size 2), and therefore the same number of edges. If $G_1, G_2$ are  $C_3$-free, then $G_1, G_2$ contain no cliques of size larger than $2$, so $\mathcal{I}(\overline{G_1}) = \mathcal{I}(\overline{G_2})$ if and only if $G_1, G_2$ have the same number of edges. If $\overline{G} \notin \mathcal{G}_m$, then either $\overline{G}$ has different number of edges, or $G$ contains a $C_3$ subgraph. Either way, $G$ cannot have the same $S_{n+1}^1$-chromatic symmetric function as those of graphs in $\mathcal{G}_m$. 3 is a special case of 2.
\end{proof}

\section{Classical bases of $\Lambda$ as $H$-chromatic symmetric functions of complete multipartite graphs}
\label{completebasis}

In this section, we will show that classical bases of the space of symmetric functions, that is, the monomial symmetric functions, power sum symmetric functions, and elementary symmetric functions, can be realized as $H$-chromatic symmetric functions.

As promised in Section \ref{basics}, we show a stronger result of Proposition \ref{prop.monomial}.
We denote by $K_\lambda$ the complete $l(\lambda)$-partite graph whose parts contain $\lambda_1, \lambda_2, ..., \lambda_l$ vertices, respectively. We denote by $K_n^-$ the graph obtained by deleting an edge from the complete graph on $n$ vertices.

\begin{prop}\label{prop.mulpartmonomials}
\textit{Let $\lambda \vdash k$, $n = l(\lambda)$, and $H$ be a graph that does not contain $K_{n+1}^-$ as a subgraph. Let $m$ be the number of subgraphs of $H$ that are isomorphic to $K_n$. Then, $$X_{K_\lambda}^H = m \cdot n! m_\lambda^{\abs{V(H)}} = m \cdot n! (\prod_{j = 1}^{k} r_j(\lambda)!) (\abs{V(H)} - \sum_{j=1}^k r_j(\lambda))! m_\lambda.$$}
\begin{proof}
We denote the vertices in $K_\lambda$ by $v_i^j, 1 \leq i \leq n, 1 \leq j \leq \lambda_i$, where $\{v_i^j\}_{j=1}^{\lambda_i}$ is an independent set. Fixing a labeling on $H$, we first show that $\lambda$ is the only type of proper $H$-coloring possible. Let $u_i$ be the vertex in $H$ whose label is used to color $v_i^1$. Then, $\{u_i\}_{i=1}^n$ induces a clique $K_n$ in $H$. Assume, by way of contradiction, that $v_l^j$ is colored by the label on $u_l' \neq u_l$. Then, $u_l' \sim_H u_i$ for every $i \neq l$. But then $\{u_i\}_{i=1}^n \cup \{u_l'\}$ induces $K_{n+1}^-$ or $K_{n+1}$, depending on whether $u_l \sim_H u_l'$, which is a contradiction either way. For each subgraph of $H$ that is isomorphic to $K_n$, we can obtain $n!$ distinct proper $H$-colorings of $G$ of type $\lambda$. Hence, under the fixed labeling of $H$, we can get $m \cdot n!$ such colorings in total. The first equality is proved. The second equality follows from the definition of $n$-augmented monomials. 
\end{proof}
\end{prop}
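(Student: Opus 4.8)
The plan is to invoke Lemma~\ref{lem.singlerep}, which reduces everything to counting, for a single fixed labeling $\phi$ of $H$, the number $d_\mu$ of proper $(H,\phi)$-colorings of $K_\lambda$ of each type $\mu$; then $X_{K_\lambda}^H = \sum_\mu d_\mu m_\mu^{\abs{V(H)}}$. Identifying colors with vertices of $H$ via $\phi^{-1}$, a proper coloring is a map $c:V(K_\lambda)\to V(H)$ sending adjacent vertices of $K_\lambda$ to adjacent vertices of $H$. Writing $P_1,\dots,P_n$ for the parts with $\abs{P_i}=\lambda_i$ (so $n=l(\lambda)$), the goal is to show the only achievable type is $\lambda$ and that $d_\lambda=m\cdot n!$. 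The first displayed equality then follows, and the second is purely the definition of the augmented monomial (Definition~\ref{dfn.augmonomial}): under the multinomial convention, $\abs{V(H)}!/\binom{\abs{V(H)}}{r_1,\dots,r_k} = (\prod_{j} r_j(\lambda)!)\,(\abs{V(H)}-\sum_j r_j(\lambda))!$, so no real work remains there.

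The heart of the argument is a rigidity claim: every proper coloring $c$ is constant on each part $P_i$, and the resulting colors form an ordered $n$-clique of $H$. First I would pick one representative $v_i\in P_i$ per part; these lie in pairwise distinct parts, hence are pairwise adjacent in $K_\lambda$, so $u_i:=c(v_i)$ are pairwise adjacent in $H$, and---since $H$ is simple and therefore loopless---pairwise distinct, forming a copy of $K_n$. Then for an arbitrary $w\in P_l$ I would show $c(w)=u_l$: as $w$ is adjacent to every $v_i$ with $i\neq l$, the vertex $c(w)$ is adjacent to each $u_i$ with $i\neq l$; if $c(w)\neq u_l$, then $\{u_1,\dots,u_n,c(w)\}$ consists of $n+1$ distinct vertices all of whose pairs are edges except possibly $\{u_l,c(w)\}$, giving a subgraph isomorphic to $K_{n+1}$ or $K_{n+1}^-$, either of which contains $K_{n+1}^-$ and contradicts the hypothesis on $H$. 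This forbidden-subgraph step is exactly where the assumption is consumed, and it is the main obstacle; everything surrounding it is bookkeeping. (A pleasant sanity check is the degenerate case $n=1$, where the hypothesis ``no $K_2^-$ subgraph'' forces $\abs{V(H)}\leq 1$, recovering $X_{\overline{K_k}}^{K_1}=m_{(k)}$ and explaining why the rigidity can fail for edgeless $K_\lambda$ over larger $H$.)

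With rigidity established, proper colorings correspond bijectively to ordered $n$-cliques $(u_1,\dots,u_n)$ of $H$: the forward map reads off the constant colors of the parts, and the converse verifies that setting $c\equiv u_i$ on $P_i$ respects all adjacencies, since adjacencies in $K_\lambda$ occur only between distinct parts. Every such coloring uses color $u_i$ exactly $\lambda_i$ times with the $u_i$ distinct, so its type is precisely $\lambda$; hence $d_\mu=0$ for $\mu\neq\lambda$. Finally I would count ordered $n$-cliques: each of the $m$ subgraphs of $H$ isomorphic to $K_n$ is determined by its vertex set (an $n$-clique must carry all $\binom{n}{2}$ edges), and each admits $n!$ orderings, whence $d_\lambda=m\cdot n!$. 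Substituting into Lemma~\ref{lem.singlerep} gives $X_{K_\lambda}^H=m\cdot n!\,m_\lambda^{\abs{V(H)}}$, and the conversion above yields the final form.
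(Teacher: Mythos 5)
Your proposal is correct and follows essentially the same route as the paper's proof: the same rigidity argument (one representative per part forms a $K_n$, and any deviating color would create a $K_{n+1}$ or $K_{n+1}^-$ subgraph, contradicting the hypothesis), followed by counting ordered $n$-cliques as $m\cdot n!$ and applying Lemma \ref{lem.singlerep}. Your explicit remark that $K_{n+1}$ itself contains $K_{n+1}^-$ as a subgraph, and your spelled-out bijection between colorings and ordered cliques, make slightly more careful what the paper leaves implicit, but the argument is the same.
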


\begin{exm}
We give an example illustrating the concept in the proof for Proposition \ref{prop.mulpartmonomials}. 
Consider a partition $\lambda = (4,3,2,2) \vdash 11$. We'll show that if $G = K_{4,3,2,2}$ and $H = K_4$, then $X_{G}^{H} = c_{(4,3,2,2)} m_{(4,3,2,2)}$. 

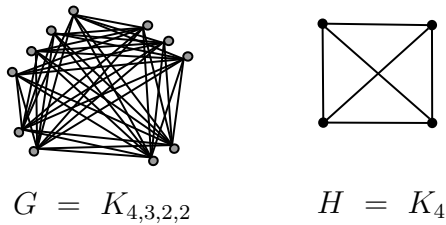
\begin{figure}[ht]
    \centering
    \begin{tikzpicture}[x=0.75pt,y=0.75pt,yscale=-1,xscale=1]

\draw  [fill={rgb, 255:red, 0; green, 0; blue, 0 }  ,fill opacity=1 ] (179.6,71.64) .. controls (179.6,70.46) and (180.53,69.51) .. (181.67,69.51) .. controls (182.81,69.51) and (183.73,70.46) .. (183.73,71.64) .. controls (183.73,72.81) and (182.81,73.77) .. (181.67,73.77) .. controls (180.53,73.77) and (179.6,72.81) .. (179.6,71.64) -- cycle ;
\draw  [fill={rgb, 255:red, 0; green, 0; blue, 0 }  ,fill opacity=1 ] (234.43,22.43) .. controls (234.43,21.26) and (235.36,20.31) .. (236.5,20.31) .. controls (237.64,20.31) and (238.56,21.26) .. (238.56,22.43) .. controls (238.56,23.61) and (237.64,24.56) .. (236.5,24.56) .. controls (235.36,24.56) and (234.43,23.61) .. (234.43,22.43) -- cycle ;
\draw  [fill={rgb, 255:red, 0; green, 0; blue, 0 }  ,fill opacity=1 ] (179.27,21.72) .. controls (179.27,20.54) and (180.2,19.59) .. (181.34,19.59) .. controls (182.48,19.59) and (183.4,20.54) .. (183.4,21.72) .. controls (183.4,22.89) and (182.48,23.85) .. (181.34,23.85) .. controls (180.2,23.85) and (179.27,22.89) .. (179.27,21.72) -- cycle ;
\draw  [fill={rgb, 255:red, 0; green, 0; blue, 0 }  ,fill opacity=1 ] (234.54,71.82) .. controls (234.54,70.64) and (235.46,69.69) .. (236.6,69.69) .. controls (237.74,69.69) and (238.67,70.64) .. (238.67,71.82) .. controls (238.67,72.99) and (237.74,73.94) .. (236.6,73.94) .. controls (235.46,73.94) and (234.54,72.99) .. (234.54,71.82) -- cycle ;
\draw [fill={rgb, 255:red, 0; green, 0; blue, 0 }  ,fill opacity=1 ]   (181.34,23.85) -- (181.67,69.51) ;
\draw    (183.4,21.72) -- (234.43,22.43) ;
\draw    (236.5,24.56) -- (236.6,69.69) ;
\draw    (183.73,71.64) -- (234.54,71.82) ;
\draw    (183.33,70.33) -- (234.67,24.11) ;
\draw    (182.67,23.44) -- (235.33,69.67) ;
\draw  [fill={rgb, 255:red, 155; green, 155; blue, 155 }  ,fill opacity=1 ] (22.6,46.12) .. controls (22.6,44.95) and (23.6,44) .. (24.82,44) .. controls (26.05,44) and (27.04,44.95) .. (27.04,46.12) .. controls (27.04,47.29) and (26.05,48.24) .. (24.82,48.24) .. controls (23.6,48.24) and (22.6,47.29) .. (22.6,46.12) -- cycle ;
\draw  [fill={rgb, 255:red, 155; green, 155; blue, 155 }  ,fill opacity=1 ] (42.28,25.2) .. controls (42.28,24.03) and (43.28,23.08) .. (44.5,23.08) .. controls (45.73,23.08) and (46.72,24.03) .. (46.72,25.2) .. controls (46.72,26.37) and (45.73,27.32) .. (44.5,27.32) .. controls (43.28,27.32) and (42.28,26.37) .. (42.28,25.2) -- cycle ;
\draw  [fill={rgb, 255:red, 155; green, 155; blue, 155 }  ,fill opacity=1 ] (32.28,35.6) .. controls (32.28,34.43) and (33.28,33.48) .. (34.5,33.48) .. controls (35.73,33.48) and (36.72,34.43) .. (36.72,35.6) .. controls (36.72,36.77) and (35.73,37.71) .. (34.5,37.71) .. controls (33.28,37.71) and (32.28,36.77) .. (32.28,35.6) -- cycle ;
\draw  [fill={rgb, 255:red, 155; green, 155; blue, 155 }  ,fill opacity=1 ] (53.5,15.19) .. controls (53.5,14.02) and (54.5,13.07) .. (55.72,13.07) .. controls (56.95,13.07) and (57.94,14.02) .. (57.94,15.19) .. controls (57.94,16.36) and (56.95,17.31) .. (55.72,17.31) .. controls (54.5,17.31) and (53.5,16.36) .. (53.5,15.19) -- cycle ;
\draw  [fill={rgb, 255:red, 155; green, 155; blue, 155 }  ,fill opacity=1 ] (26.73,74.81) .. controls (27.62,74.06) and (28.99,74.21) .. (29.78,75.15) .. controls (30.56,76.09) and (30.48,77.46) .. (29.58,78.21) .. controls (28.69,78.96) and (27.32,78.81) .. (26.53,77.87) .. controls (25.74,76.94) and (25.83,75.57) .. (26.73,74.81) -- cycle ;
\draw  [fill={rgb, 255:red, 155; green, 155; blue, 155 }  ,fill opacity=1 ] (101.4,30.42) .. controls (101.4,29.25) and (102.43,28.3) .. (103.7,28.3) .. controls (104.97,28.3) and (106,29.25) .. (106,30.42) .. controls (106,31.59) and (104.97,32.54) .. (103.7,32.54) .. controls (102.43,32.54) and (101.4,31.59) .. (101.4,30.42) -- cycle ;
\draw  [fill={rgb, 255:red, 155; green, 155; blue, 155 }  ,fill opacity=1 ] (90.82,22.6) .. controls (90.82,21.43) and (91.85,20.48) .. (93.12,20.48) .. controls (94.39,20.48) and (95.41,21.43) .. (95.41,22.6) .. controls (95.41,23.77) and (94.39,24.71) .. (93.12,24.71) .. controls (91.85,24.71) and (90.82,23.77) .. (90.82,22.6) -- cycle ;
\draw  [fill={rgb, 255:red, 155; green, 155; blue, 155 }  ,fill opacity=1 ] (34.3,84.43) .. controls (35.19,83.68) and (36.56,83.83) .. (37.35,84.77) .. controls (38.14,85.71) and (38.05,87.08) .. (37.15,87.83) .. controls (36.26,88.58) and (34.89,88.43) .. (34.1,87.49) .. controls (33.32,86.56) and (33.4,85.19) .. (34.3,84.43) -- cycle ;
\draw  [fill={rgb, 255:red, 155; green, 155; blue, 155 }  ,fill opacity=1 ] (94.12,92.16) .. controls (93.49,91.17) and (93.83,89.84) .. (94.86,89.18) .. controls (95.9,88.53) and (97.25,88.8) .. (97.87,89.79) .. controls (98.5,90.78) and (98.16,92.11) .. (97.12,92.76) .. controls (96.09,93.42) and (94.74,93.15) .. (94.12,92.16) -- cycle ;
\draw  [fill={rgb, 255:red, 155; green, 155; blue, 155 }  ,fill opacity=1 ] (111.07,38.26) .. controls (111.07,37.09) and (112.1,36.14) .. (113.37,36.14) .. controls (114.64,36.14) and (115.67,37.09) .. (115.67,38.26) .. controls (115.67,39.43) and (114.64,40.38) .. (113.37,40.38) .. controls (112.1,40.38) and (111.07,39.43) .. (111.07,38.26) -- cycle ;
\draw  [fill={rgb, 255:red, 155; green, 155; blue, 155 }  ,fill opacity=1 ] (104.68,85.96) .. controls (104.05,84.98) and (104.39,83.64) .. (105.42,82.99) .. controls (106.46,82.33) and (107.81,82.61) .. (108.43,83.6) .. controls (109.05,84.58) and (108.72,85.92) .. (107.68,86.57) .. controls (106.65,87.23) and (105.3,86.95) .. (104.68,85.96) -- cycle ;
\draw [color={rgb, 255:red, 0; green, 0; blue, 0 }  ,draw opacity=1 ][fill={rgb, 255:red, 155; green, 155; blue, 155 }  ,fill opacity=1 ]   (26.78,47.56) -- (29.78,75.15) ;
\draw [color={rgb, 255:red, 0; green, 0; blue, 0 }  ,draw opacity=1 ][fill={rgb, 255:red, 155; green, 155; blue, 155 }  ,fill opacity=1 ]   (26.78,47.56) -- (37.35,84.77) ;
\draw [fill={rgb, 255:red, 155; green, 155; blue, 155 }  ,fill opacity=1 ]   (26.78,47.56) -- (94.86,89.18) ;
\draw [fill={rgb, 255:red, 155; green, 155; blue, 155 }  ,fill opacity=1 ]   (26.78,47.56) -- (105.42,82.99) ;
\draw [fill={rgb, 255:red, 155; green, 155; blue, 155 }  ,fill opacity=1 ]   (26.78,47.56) -- (111.24,39.56) ;
\draw [fill={rgb, 255:red, 155; green, 155; blue, 155 }  ,fill opacity=1 ]   (26.78,47.56) -- (100.65,31.33) ;
\draw [fill={rgb, 255:red, 155; green, 155; blue, 155 }  ,fill opacity=1 ]   (26.78,47.56) -- (90.76,23.78) ;
\draw [fill={rgb, 255:red, 155; green, 155; blue, 155 }  ,fill opacity=1 ]   (37.07,35.6) -- (90.76,23.78) ;
\draw [fill={rgb, 255:red, 155; green, 155; blue, 155 }  ,fill opacity=1 ]   (37.07,35.6) -- (100.65,31.33) ;
\draw [fill={rgb, 255:red, 155; green, 155; blue, 155 }  ,fill opacity=1 ]   (37.07,35.6) -- (111.24,39.56) ;
\draw [fill={rgb, 255:red, 155; green, 155; blue, 155 }  ,fill opacity=1 ]   (36.72,35.6) -- (105.42,82.99) ;
\draw [fill={rgb, 255:red, 155; green, 155; blue, 155 }  ,fill opacity=1 ]   (37.67,36.67) -- (94.86,89.18) ;
\draw [fill={rgb, 255:red, 155; green, 155; blue, 155 }  ,fill opacity=1 ]   (36.72,35.6) -- (37.35,84.77) ;
\draw [fill={rgb, 255:red, 155; green, 155; blue, 155 }  ,fill opacity=1 ]   (37.67,36.67) -- (29.78,75.15) ;
\draw [fill={rgb, 255:red, 155; green, 155; blue, 155 }  ,fill opacity=1 ]   (45.89,26.67) -- (29.78,75.15) ;
\draw [fill={rgb, 255:red, 155; green, 155; blue, 155 }  ,fill opacity=1 ]   (45.89,26.67) -- (37.35,84.77) ;
\draw [fill={rgb, 255:red, 155; green, 155; blue, 155 }  ,fill opacity=1 ]   (45.13,27.32) -- (90.76,23.78) ;
\draw [fill={rgb, 255:red, 155; green, 155; blue, 155 }  ,fill opacity=1 ]   (46.57,26.67) -- (100.65,31.33) ;
\draw [fill={rgb, 255:red, 155; green, 155; blue, 155 }  ,fill opacity=1 ]   (46.57,26.67) -- (111.24,39.56) ;
\draw [fill={rgb, 255:red, 155; green, 155; blue, 155 }  ,fill opacity=1 ]   (45.89,26.67) -- (94.86,89.18) ;
\draw [fill={rgb, 255:red, 155; green, 155; blue, 155 }  ,fill opacity=1 ]   (45.89,26.67) -- (105.42,82.99) ;
\draw [fill={rgb, 255:red, 155; green, 155; blue, 155 }  ,fill opacity=1 ]   (55.72,17.31) -- (29.78,75.15) ;
\draw [fill={rgb, 255:red, 155; green, 155; blue, 155 }  ,fill opacity=1 ]   (55.72,17.31) -- (37.35,84.77) ;
\draw [fill={rgb, 255:red, 155; green, 155; blue, 155 }  ,fill opacity=1 ]   (55.72,17.31) -- (94.86,89.18) ;
\draw [fill={rgb, 255:red, 155; green, 155; blue, 155 }  ,fill opacity=1 ]   (55.72,17.31) -- (105.42,82.99) ;
\draw [fill={rgb, 255:red, 155; green, 155; blue, 155 }  ,fill opacity=1 ]   (56.76,17.31) -- (90.76,23.78) ;
\draw [fill={rgb, 255:red, 155; green, 155; blue, 155 }  ,fill opacity=1 ]   (57.85,18.22) -- (100.65,31.33) ;
\draw [fill={rgb, 255:red, 155; green, 155; blue, 155 }  ,fill opacity=1 ]   (55.72,17.31) -- (111.24,39.56) ;
\draw [fill={rgb, 255:red, 155; green, 155; blue, 155 }  ,fill opacity=1 ]   (37.35,84.77) -- (94.86,89.18) ;
\draw [fill={rgb, 255:red, 155; green, 155; blue, 155 }  ,fill opacity=1 ]   (37.35,84.77) -- (105.42,82.99) ;
\draw [fill={rgb, 255:red, 155; green, 155; blue, 155 }  ,fill opacity=1 ]   (37.35,84.77) -- (111.24,39.56) ;
\draw [fill={rgb, 255:red, 155; green, 155; blue, 155 }  ,fill opacity=1 ]   (37.35,84.77) -- (100.65,31.33) ;
\draw [fill={rgb, 255:red, 155; green, 155; blue, 155 }  ,fill opacity=1 ]   (37.35,84.77) -- (90.76,23.78) ;
\draw [fill={rgb, 255:red, 155; green, 155; blue, 155 }  ,fill opacity=1 ]   (29.78,75.15) -- (94.86,89.18) ;
\draw [fill={rgb, 255:red, 155; green, 155; blue, 155 }  ,fill opacity=1 ]   (29.78,75.15) -- (105.42,82.99) ;
\draw [fill={rgb, 255:red, 155; green, 155; blue, 155 }  ,fill opacity=1 ]   (29.78,75.15) -- (111.24,39.56) ;
\draw [fill={rgb, 255:red, 155; green, 155; blue, 155 }  ,fill opacity=1 ]   (29.78,75.15) -- (100.65,31.33) ;
\draw [fill={rgb, 255:red, 155; green, 155; blue, 155 }  ,fill opacity=1 ]   (29.78,75.15) -- (90.76,23.78) ;
\draw [fill={rgb, 255:red, 155; green, 155; blue, 155 }  ,fill opacity=1 ]   (90.76,23.78) -- (94.86,89.18) ;
\draw [fill={rgb, 255:red, 155; green, 155; blue, 155 }  ,fill opacity=1 ]   (94.86,89.18) -- (101.4,30.42) ;
\draw [fill={rgb, 255:red, 155; green, 155; blue, 155 }  ,fill opacity=1 ]   (94.86,89.18) -- (111.24,39.56) ;
\draw [fill={rgb, 255:red, 155; green, 155; blue, 155 }  ,fill opacity=1 ]   (105.42,82.99) -- (90.76,23.78) ;
\draw [fill={rgb, 255:red, 155; green, 155; blue, 155 }  ,fill opacity=1 ]   (105.42,82.99) -- (101.4,30.42) ;
\draw [fill={rgb, 255:red, 155; green, 155; blue, 155 }  ,fill opacity=1 ]   (105.42,82.99) -- (111.24,39.56) ;

\draw (23.78,105.29) node [anchor=north west][inner sep=0.75pt]    {$G\ =\ K_{4,3,2,2}$};
\draw (176.89,105.07) node [anchor=north west][inner sep=0.75pt]    {$H\ =\ K_{4} \ $};

\end{tikzpicture}
    \caption{The $G$ and $H$ whose $X_{G}^{H}$ corresponds to $m_{4,3,2,2}$}
    \label{fig:k4322andk4}
\end{figure}

Let's denote the maximally independent sets of $G$ by $V_1 , V_2 , V_3 , V_4$. Suppose $|V_{1}| = 4, |V_{2}| = 3, |V_{3}| = 2, |V_{4}| = 2$. Denote the vertices in $V_i$ by $\{v_{ij}\}_{j=1}^{|V_{i}|}$.

Now let's suppose we choose one vertex from each maximally independent set $V_i$. Suppose we choose $v_{11} \in V_{1} , v_{23} \in V_{2}, v_{31} \in V_{3}, v_{42} \in V_{4}$. If we look at the induced subgraph created by these vertices, we see that it is $K_4$ since $v_{ij} \in V_{i}$ must be connected to every $v_{ab} \in V_{a}$ (assuming $a \neq i$).

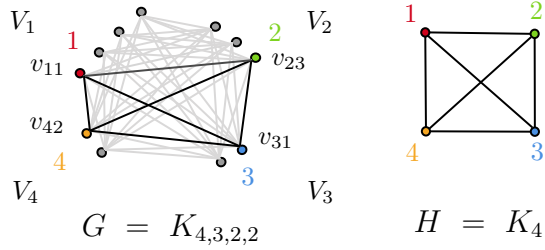
\begin{figure}[ht]
    \centering
    \begin{tikzpicture}[x=0.75pt,y=0.75pt,yscale=-1,xscale=1]

\draw  [fill={rgb, 255:red, 245; green, 166; blue, 35 }  ,fill opacity=1 ] (220.6,74.64) .. controls (220.6,73.46) and (221.53,72.51) .. (222.67,72.51) .. controls (223.81,72.51) and (224.73,73.46) .. (224.73,74.64) .. controls (224.73,75.81) and (223.81,76.77) .. (222.67,76.77) .. controls (221.53,76.77) and (220.6,75.81) .. (220.6,74.64) -- cycle ;
\draw  [fill={rgb, 255:red, 126; green, 211; blue, 33 }  ,fill opacity=1 ] (275.43,25.43) .. controls (275.43,24.26) and (276.36,23.31) .. (277.5,23.31) .. controls (278.64,23.31) and (279.56,24.26) .. (279.56,25.43) .. controls (279.56,26.61) and (278.64,27.56) .. (277.5,27.56) .. controls (276.36,27.56) and (275.43,26.61) .. (275.43,25.43) -- cycle ;
\draw  [fill={rgb, 255:red, 208; green, 2; blue, 27 }  ,fill opacity=1 ] (220.27,24.72) .. controls (220.27,23.54) and (221.2,22.59) .. (222.34,22.59) .. controls (223.48,22.59) and (224.4,23.54) .. (224.4,24.72) .. controls (224.4,25.89) and (223.48,26.85) .. (222.34,26.85) .. controls (221.2,26.85) and (220.27,25.89) .. (220.27,24.72) -- cycle ;
\draw  [fill={rgb, 255:red, 74; green, 144; blue, 226 }  ,fill opacity=1 ] (275.54,74.82) .. controls (275.54,73.64) and (276.46,72.69) .. (277.6,72.69) .. controls (278.74,72.69) and (279.67,73.64) .. (279.67,74.82) .. controls (279.67,75.99) and (278.74,76.94) .. (277.6,76.94) .. controls (276.46,76.94) and (275.54,75.99) .. (275.54,74.82) -- cycle ;
\draw [fill={rgb, 255:red, 0; green, 0; blue, 0 }  ,fill opacity=1 ]   (222.34,26.85) -- (222.67,72.51) ;
\draw    (224.4,24.72) -- (275.43,25.43) ;
\draw    (277.5,27.56) -- (277.6,72.69) ;
\draw    (224.73,74.64) -- (275.54,74.82) ;
\draw    (224.33,73.33) -- (275.67,27.11) ;
\draw    (223.67,26.44) -- (276.33,72.67) ;
\draw  [color={rgb, 255:red, 0; green, 0; blue, 0 }  ,draw opacity=1 ][fill={rgb, 255:red, 208; green, 2; blue, 27 }  ,fill opacity=1 ] (46,45.32) .. controls (46,44.15) and (47,43.2) .. (48.22,43.2) .. controls (49.45,43.2) and (50.44,44.15) .. (50.44,45.32) .. controls (50.44,46.49) and (49.45,47.44) .. (48.22,47.44) .. controls (47,47.44) and (46,46.49) .. (46,45.32) -- cycle ;
\draw  [color={rgb, 255:red, 0; green, 0; blue, 0 }  ,draw opacity=1 ][fill={rgb, 255:red, 155; green, 155; blue, 155 }  ,fill opacity=1 ] (65.68,24.4) .. controls (65.68,23.23) and (66.68,22.28) .. (67.9,22.28) .. controls (69.13,22.28) and (70.12,23.23) .. (70.12,24.4) .. controls (70.12,25.57) and (69.13,26.52) .. (67.9,26.52) .. controls (66.68,26.52) and (65.68,25.57) .. (65.68,24.4) -- cycle ;
\draw  [color={rgb, 255:red, 0; green, 0; blue, 0 }  ,draw opacity=1 ][fill={rgb, 255:red, 155; green, 155; blue, 155 }  ,fill opacity=1 ] (55.68,34.8) .. controls (55.68,33.63) and (56.68,32.68) .. (57.9,32.68) .. controls (59.13,32.68) and (60.12,33.63) .. (60.12,34.8) .. controls (60.12,35.97) and (59.13,36.91) .. (57.9,36.91) .. controls (56.68,36.91) and (55.68,35.97) .. (55.68,34.8) -- cycle ;
\draw  [color={rgb, 255:red, 0; green, 0; blue, 0 }  ,draw opacity=1 ][fill={rgb, 255:red, 155; green, 155; blue, 155 }  ,fill opacity=1 ] (76.9,14.39) .. controls (76.9,13.22) and (77.9,12.27) .. (79.12,12.27) .. controls (80.35,12.27) and (81.34,13.22) .. (81.34,14.39) .. controls (81.34,15.56) and (80.35,16.51) .. (79.12,16.51) .. controls (77.9,16.51) and (76.9,15.56) .. (76.9,14.39) -- cycle ;
\draw  [color={rgb, 255:red, 0; green, 0; blue, 0 }  ,draw opacity=1 ][fill={rgb, 255:red, 245; green, 166; blue, 35 }  ,fill opacity=1 ] (50.13,74.01) .. controls (51.02,73.26) and (52.39,73.41) .. (53.18,74.35) .. controls (53.96,75.29) and (53.88,76.66) .. (52.98,77.41) .. controls (52.09,78.16) and (50.72,78.01) .. (49.93,77.07) .. controls (49.14,76.14) and (49.23,74.77) .. (50.13,74.01) -- cycle ;
\draw  [color={rgb, 255:red, 0; green, 0; blue, 0 }  ,draw opacity=1 ][fill={rgb, 255:red, 155; green, 155; blue, 155 }  ,fill opacity=1 ] (124.8,29.62) .. controls (124.8,28.45) and (125.83,27.5) .. (127.1,27.5) .. controls (128.37,27.5) and (129.4,28.45) .. (129.4,29.62) .. controls (129.4,30.79) and (128.37,31.74) .. (127.1,31.74) .. controls (125.83,31.74) and (124.8,30.79) .. (124.8,29.62) -- cycle ;
\draw  [color={rgb, 255:red, 0; green, 0; blue, 0 }  ,draw opacity=1 ][fill={rgb, 255:red, 155; green, 155; blue, 155 }  ,fill opacity=1 ] (114.22,21.8) .. controls (114.22,20.63) and (115.25,19.68) .. (116.52,19.68) .. controls (117.79,19.68) and (118.81,20.63) .. (118.81,21.8) .. controls (118.81,22.97) and (117.79,23.91) .. (116.52,23.91) .. controls (115.25,23.91) and (114.22,22.97) .. (114.22,21.8) -- cycle ;
\draw  [color={rgb, 255:red, 0; green, 0; blue, 0 }  ,draw opacity=1 ][fill={rgb, 255:red, 155; green, 155; blue, 155 }  ,fill opacity=1 ] (57.7,83.63) .. controls (58.59,82.88) and (59.96,83.03) .. (60.75,83.97) .. controls (61.54,84.91) and (61.45,86.28) .. (60.55,87.03) .. controls (59.66,87.78) and (58.29,87.63) .. (57.5,86.69) .. controls (56.72,85.76) and (56.8,84.39) .. (57.7,83.63) -- cycle ;
\draw  [color={rgb, 255:red, 0; green, 0; blue, 0 }  ,draw opacity=1 ][fill={rgb, 255:red, 155; green, 155; blue, 155 }  ,fill opacity=1 ] (117.52,91.36) .. controls (116.89,90.37) and (117.23,89.04) .. (118.26,88.38) .. controls (119.3,87.73) and (120.65,88) .. (121.27,88.99) .. controls (121.9,89.98) and (121.56,91.31) .. (120.52,91.96) .. controls (119.49,92.62) and (118.14,92.35) .. (117.52,91.36) -- cycle ;
\draw  [color={rgb, 255:red, 0; green, 0; blue, 0 }  ,draw opacity=1 ][fill={rgb, 255:red, 126; green, 211; blue, 33 }  ,fill opacity=1 ] (134.47,37.46) .. controls (134.47,36.29) and (135.5,35.34) .. (136.77,35.34) .. controls (138.04,35.34) and (139.07,36.29) .. (139.07,37.46) .. controls (139.07,38.63) and (138.04,39.58) .. (136.77,39.58) .. controls (135.5,39.58) and (134.47,38.63) .. (134.47,37.46) -- cycle ;
\draw  [color={rgb, 255:red, 0; green, 0; blue, 0 }  ,draw opacity=1 ][fill={rgb, 255:red, 74; green, 144; blue, 226 }  ,fill opacity=1 ] (128.08,85.16) .. controls (127.45,84.18) and (127.79,82.84) .. (128.82,82.19) .. controls (129.86,81.53) and (131.21,81.81) .. (131.83,82.8) .. controls (132.45,83.78) and (132.12,85.12) .. (131.08,85.77) .. controls (130.05,86.43) and (128.7,86.15) .. (128.08,85.16) -- cycle ;
\draw [color={rgb, 255:red, 217; green, 216; blue, 216 }  ,draw opacity=1 ]   (50.18,46.76) -- (60.75,83.97) ;
\draw [color={rgb, 255:red, 217; green, 216; blue, 216 }  ,draw opacity=1 ]   (50.18,46.76) -- (118.26,88.38) ;
\draw [color={rgb, 255:red, 213; green, 213; blue, 213 }  ,draw opacity=1 ]   (50.18,46.76) -- (124.05,30.53) ;
\draw [color={rgb, 255:red, 217; green, 216; blue, 216 }  ,draw opacity=1 ]   (50.18,46.76) -- (114.16,22.98) ;
\draw [color={rgb, 255:red, 217; green, 216; blue, 216 }  ,draw opacity=1 ]   (60.47,34.8) -- (114.16,22.98) ;
\draw [color={rgb, 255:red, 217; green, 216; blue, 216 }  ,draw opacity=1 ]   (60.47,34.8) -- (124.05,30.53) ;
\draw [color={rgb, 255:red, 217; green, 216; blue, 216 }  ,draw opacity=1 ]   (60.47,34.8) -- (134.64,38.76) ;
\draw [color={rgb, 255:red, 217; green, 216; blue, 216 }  ,draw opacity=1 ]   (60.12,34.8) -- (128.82,82.19) ;
\draw [color={rgb, 255:red, 217; green, 216; blue, 216 }  ,draw opacity=1 ]   (61.07,35.87) -- (118.26,88.38) ;
\draw [color={rgb, 255:red, 217; green, 216; blue, 216 }  ,draw opacity=1 ]   (60.12,34.8) -- (60.75,83.97) ;
\draw [color={rgb, 255:red, 217; green, 216; blue, 216 }  ,draw opacity=1 ]   (61.07,35.87) -- (53.18,74.35) ;
\draw [color={rgb, 255:red, 217; green, 216; blue, 216 }  ,draw opacity=1 ]   (69.29,25.87) -- (53.18,74.35) ;
\draw [color={rgb, 255:red, 217; green, 216; blue, 216 }  ,draw opacity=1 ]   (69.29,25.87) -- (60.75,83.97) ;
\draw [color={rgb, 255:red, 217; green, 216; blue, 216 }  ,draw opacity=1 ]   (68.53,26.52) -- (114.16,22.98) ;
\draw [color={rgb, 255:red, 217; green, 216; blue, 216 }  ,draw opacity=1 ]   (69.97,25.87) -- (124.05,30.53) ;
\draw [color={rgb, 255:red, 217; green, 216; blue, 216 }  ,draw opacity=1 ]   (69.97,25.87) -- (134.64,38.76) ;
\draw [color={rgb, 255:red, 217; green, 216; blue, 216 }  ,draw opacity=1 ]   (69.98,26.23) -- (118.96,88.75) ;
\draw [color={rgb, 255:red, 217; green, 216; blue, 216 }  ,draw opacity=1 ]   (69.29,25.87) -- (128.82,82.19) ;
\draw [color={rgb, 255:red, 217; green, 216; blue, 216 }  ,draw opacity=1 ]   (79.12,16.51) -- (53.18,74.35) ;
\draw [color={rgb, 255:red, 217; green, 216; blue, 216 }  ,draw opacity=1 ]   (79.12,16.51) -- (60.75,83.97) ;
\draw [color={rgb, 255:red, 217; green, 216; blue, 216 }  ,draw opacity=1 ]   (79.12,16.51) -- (118.26,88.38) ;
\draw [color={rgb, 255:red, 217; green, 216; blue, 216 }  ,draw opacity=1 ]   (79.12,16.51) -- (128.82,82.19) ;
\draw [color={rgb, 255:red, 217; green, 216; blue, 216 }  ,draw opacity=1 ]   (80.16,16.51) -- (114.16,22.98) ;
\draw [color={rgb, 255:red, 217; green, 216; blue, 216 }  ,draw opacity=1 ]   (81.25,17.42) -- (124.05,30.53) ;
\draw [color={rgb, 255:red, 217; green, 216; blue, 216 }  ,draw opacity=1 ]   (79.12,16.51) -- (134.64,38.76) ;
\draw [color={rgb, 255:red, 217; green, 216; blue, 216 }  ,draw opacity=1 ]   (60.75,83.97) -- (118.26,88.38) ;
\draw [color={rgb, 255:red, 217; green, 216; blue, 216 }  ,draw opacity=1 ]   (60.75,83.97) -- (128.82,82.19) ;
\draw [color={rgb, 255:red, 217; green, 216; blue, 216 }  ,draw opacity=1 ]   (60.75,83.97) -- (134.64,38.76) ;
\draw [color={rgb, 255:red, 217; green, 216; blue, 216 }  ,draw opacity=1 ]   (60.75,83.97) -- (124.05,30.53) ;
\draw [color={rgb, 255:red, 217; green, 216; blue, 216 }  ,draw opacity=1 ]   (60.75,83.97) -- (114.16,22.98) ;
\draw [color={rgb, 255:red, 217; green, 216; blue, 216 }  ,draw opacity=1 ]   (53.18,74.35) -- (118.26,88.38) ;
\draw [color={rgb, 255:red, 217; green, 216; blue, 216 }  ,draw opacity=1 ]   (53.18,74.35) -- (124.05,30.53) ;
\draw [color={rgb, 255:red, 217; green, 216; blue, 216 }  ,draw opacity=1 ]   (53.18,74.35) -- (114.16,22.98) ;
\draw [color={rgb, 255:red, 217; green, 216; blue, 216 }  ,draw opacity=1 ]   (114.16,22.98) -- (118.26,88.38) ;
\draw [color={rgb, 255:red, 217; green, 216; blue, 216 }  ,draw opacity=1 ]   (118.26,88.38) -- (124.8,29.62) ;
\draw [color={rgb, 255:red, 217; green, 216; blue, 216 }  ,draw opacity=1 ]   (118.26,88.38) -- (134.64,38.76) ;
\draw [color={rgb, 255:red, 217; green, 216; blue, 216 }  ,draw opacity=1 ]   (128.82,82.19) -- (114.16,22.98) ;
\draw [color={rgb, 255:red, 217; green, 216; blue, 216 }  ,draw opacity=1 ]   (128.82,82.19) -- (124.8,29.62) ;
\draw [color={rgb, 255:red, 0; green, 0; blue, 0 }  ,draw opacity=1 ]   (128.82,82.19) -- (134.64,38.76) ;
\draw [color={rgb, 255:red, 0; green, 0; blue, 0 }  ,draw opacity=1 ]   (53.18,74.35) -- (128.82,82.19) ;
\draw [color={rgb, 255:red, 0; green, 0; blue, 0 }  ,draw opacity=1 ]   (53.18,74.35) -- (134.64,38.76) ;
\draw [color={rgb, 255:red, 0; green, 0; blue, 0 }  ,draw opacity=1 ]   (50.18,46.76) -- (128.82,82.19) ;
\draw [color={rgb, 255:red, 74; green, 74; blue, 74 }  ,draw opacity=1 ]   (50.18,46.76) -- (134.64,38.76) ;
\draw [color={rgb, 255:red, 0; green, 0; blue, 0 }  ,draw opacity=1 ]   (50.18,46.76) -- (53.18,74.35) ;

\draw (47.58,115.29) node [anchor=north west][inner sep=0.75pt]    {$G\ =\ K_{4,3,2,2}$};
\draw (215.14,113.07) node [anchor=north west][inner sep=0.75pt]    {$H\ =\ K_{4} \ $};
\draw (21.6,36.8) node [anchor=north west][inner sep=0.75pt]  [font=\footnotesize]  {$v_{11}$};
\draw (143.2,34) node [anchor=north west][inner sep=0.75pt]  [font=\footnotesize]  {$v_{23}$};
\draw (136.6,73.6) node [anchor=north west][inner sep=0.75pt]  [font=\footnotesize]  {$v_{31}$};
\draw (21.2,66) node [anchor=north west][inner sep=0.75pt]  [font=\footnotesize]  {$v_{42}$};
\draw (39.85,22.95) node [anchor=north west][inner sep=0.75pt]  [font=\footnotesize,color={rgb, 255:red, 208; green, 2; blue, 27 }  ,opacity=1 ]  {$1$};
\draw (141.83,18.75) node [anchor=north west][inner sep=0.75pt]  [font=\footnotesize,color={rgb, 255:red, 126; green, 211; blue, 33 }  ,opacity=1 ]  {$2$};
\draw (128.15,91.65) node [anchor=north west][inner sep=0.75pt]  [font=\footnotesize,color={rgb, 255:red, 74; green, 144; blue, 226 }  ,opacity=1 ]  {$3$};
\draw (33.2,83.88) node [anchor=north west][inner sep=0.75pt]  [font=\footnotesize,color={rgb, 255:red, 245; green, 166; blue, 35 }  ,opacity=1 ]  {$4$};
\draw (210.2,9.2) node [anchor=north west][inner sep=0.75pt]  [font=\footnotesize,color={rgb, 255:red, 208; green, 2; blue, 27 }  ,opacity=1 ]  {$1$};
\draw (273.6,7.8) node [anchor=north west][inner sep=0.75pt]  [font=\footnotesize,color={rgb, 255:red, 126; green, 211; blue, 33 }  ,opacity=1 ]  {$2$};
\draw (274.14,79.02) node [anchor=north west][inner sep=0.75pt]  [font=\footnotesize,color={rgb, 255:red, 74; green, 144; blue, 226 }  ,opacity=1 ]  {$3$};
\draw (211,79.4) node [anchor=north west][inner sep=0.75pt]  [font=\footnotesize,color={rgb, 255:red, 245; green, 166; blue, 35 }  ,opacity=1 ]  {$4$};
\draw (11.2,12.2) node [anchor=north west][inner sep=0.75pt]  [font=\footnotesize]  {$V_{1}$};
\draw (160.8,12.4) node [anchor=north west][inner sep=0.75pt]  [font=\footnotesize]  {$V_{2}$};
\draw (12.4,98.4) node [anchor=north west][inner sep=0.75pt]  [font=\footnotesize]  {$V_{4}$};
\draw (161.2,98) node [anchor=north west][inner sep=0.75pt]  [font=\footnotesize]  {$V_{3}$};

\end{tikzpicture}
    \caption{A Colored $K_4$ subgraph in $K_{4,3,2,2}$}
    \label{fig:k4subgraphk4322}
\end{figure}

Now suppose we keep $v_{11} \in V_{1} , v_{23} \in V_{2}, v_{42} \in V_{4}$ but replace $v_{31} \in V_{3}$ with $v_{32} \in V_{3}$. Now suppose without loss of generality that $v_{11}$ is $H$-colored 1, $v_{23}$ is $H$-colored 2, and $v_{42}$ is $H$-colored 4, then the only option for $H$-coloring $v_{32}$ is the color 3 because on the graph $H$, the only vertex adjacent to the vertices colored 1,2,4 is the vertex colored 3.

\begin{figure}[ht]
    \centering
    \begin{tikzpicture}[x=0.75pt,y=0.75pt,yscale=-1,xscale=1]
\draw  [fill={rgb, 255:red, 245; green, 166; blue, 35 }  ,fill opacity=1 ] (218.6,89.64) .. controls (218.6,88.46) and (219.53,87.51) .. (220.67,87.51) .. controls (221.81,87.51) and (222.73,88.46) .. (222.73,89.64) .. controls (222.73,90.81) and (221.81,91.77) .. (220.67,91.77) .. controls (219.53,91.77) and (218.6,90.81) .. (218.6,89.64) -- cycle ;
\draw  [fill={rgb, 255:red, 126; green, 211; blue, 33 }  ,fill opacity=1 ] (273.43,40.43) .. controls (273.43,39.26) and (274.36,38.31) .. (275.5,38.31) .. controls (276.64,38.31) and (277.56,39.26) .. (277.56,40.43) .. controls (277.56,41.61) and (276.64,42.56) .. (275.5,42.56) .. controls (274.36,42.56) and (273.43,41.61) .. (273.43,40.43) -- cycle ;
\draw  [fill={rgb, 255:red, 208; green, 2; blue, 27 }  ,fill opacity=1 ] (218.27,39.72) .. controls (218.27,38.54) and (219.2,37.59) .. (220.34,37.59) .. controls (221.48,37.59) and (222.4,38.54) .. (222.4,39.72) .. controls (222.4,40.89) and (221.48,41.85) .. (220.34,41.85) .. controls (219.2,41.85) and (218.27,40.89) .. (218.27,39.72) -- cycle ;
\draw  [fill={rgb, 255:red, 74; green, 144; blue, 226 }  ,fill opacity=1 ] (273.54,89.82) .. controls (273.54,88.64) and (274.46,87.69) .. (275.6,87.69) .. controls (276.74,87.69) and (277.67,88.64) .. (277.67,89.82) .. controls (277.67,90.99) and (276.74,91.94) .. (275.6,91.94) .. controls (274.46,91.94) and (273.54,90.99) .. (273.54,89.82) -- cycle ;
\draw [fill={rgb, 255:red, 0; green, 0; blue, 0 }  ,fill opacity=1 ]   (220.34,41.85) -- (220.67,87.51) ;
\draw    (222.4,39.72) -- (273.43,40.43) ;
\draw    (275.5,42.56) -- (275.6,87.69) ;
\draw    (222.73,89.64) -- (273.54,89.82) ;
\draw    (222.33,88.33) -- (273.67,42.11) ;
\draw    (221.67,41.44) -- (274.33,87.67) ;
\draw  [color={rgb, 255:red, 0; green, 0; blue, 0 }  ,draw opacity=1 ][fill={rgb, 255:red, 208; green, 2; blue, 27 }  ,fill opacity=1 ] (44,60.32) .. controls (44,59.15) and (45,58.2) .. (46.22,58.2) .. controls (47.45,58.2) and (48.44,59.15) .. (48.44,60.32) .. controls (48.44,61.49) and (47.45,62.44) .. (46.22,62.44) .. controls (45,62.44) and (44,61.49) .. (44,60.32) -- cycle ;
\draw  [color={rgb, 255:red, 0; green, 0; blue, 0 }  ,draw opacity=1 ][fill={rgb, 255:red, 155; green, 155; blue, 155 }  ,fill opacity=1 ] (63.68,39.4) .. controls (63.68,38.23) and (64.68,37.28) .. (65.9,37.28) .. controls (67.13,37.28) and (68.12,38.23) .. (68.12,39.4) .. controls (68.12,40.57) and (67.13,41.52) .. (65.9,41.52) .. controls (64.68,41.52) and (63.68,40.57) .. (63.68,39.4) -- cycle ;
\draw  [color={rgb, 255:red, 0; green, 0; blue, 0 }  ,draw opacity=1 ][fill={rgb, 255:red, 155; green, 155; blue, 155 }  ,fill opacity=1 ] (53.68,49.8) .. controls (53.68,48.63) and (54.68,47.68) .. (55.9,47.68) .. controls (57.13,47.68) and (58.12,48.63) .. (58.12,49.8) .. controls (58.12,50.97) and (57.13,51.91) .. (55.9,51.91) .. controls (54.68,51.91) and (53.68,50.97) .. (53.68,49.8) -- cycle ;
\draw  [color={rgb, 255:red, 0; green, 0; blue, 0 }  ,draw opacity=1 ][fill={rgb, 255:red, 155; green, 155; blue, 155 }  ,fill opacity=1 ] (74.9,29.39) .. controls (74.9,28.22) and (75.9,27.27) .. (77.12,27.27) .. controls (78.35,27.27) and (79.34,28.22) .. (79.34,29.39) .. controls (79.34,30.56) and (78.35,31.51) .. (77.12,31.51) .. controls (75.9,31.51) and (74.9,30.56) .. (74.9,29.39) -- cycle ;
\draw  [color={rgb, 255:red, 0; green, 0; blue, 0 }  ,draw opacity=1 ][fill={rgb, 255:red, 245; green, 166; blue, 35 }  ,fill opacity=1 ] (48.13,89.01) .. controls (49.02,88.26) and (50.39,88.41) .. (51.18,89.35) .. controls (51.96,90.29) and (51.88,91.66) .. (50.98,92.41) .. controls (50.09,93.16) and (48.72,93.01) .. (47.93,92.07) .. controls (47.14,91.14) and (47.23,89.77) .. (48.13,89.01) -- cycle ;
\draw  [color={rgb, 255:red, 0; green, 0; blue, 0 }  ,draw opacity=1 ][fill={rgb, 255:red, 155; green, 155; blue, 155 }  ,fill opacity=1 ] (122.8,44.62) .. controls (122.8,43.45) and (123.83,42.5) .. (125.1,42.5) .. controls (126.37,42.5) and (127.4,43.45) .. (127.4,44.62) .. controls (127.4,45.79) and (126.37,46.74) .. (125.1,46.74) .. controls (123.83,46.74) and (122.8,45.79) .. (122.8,44.62) -- cycle ;
\draw  [color={rgb, 255:red, 0; green, 0; blue, 0 }  ,draw opacity=1 ][fill={rgb, 255:red, 155; green, 155; blue, 155 }  ,fill opacity=1 ] (112.22,36.8) .. controls (112.22,35.63) and (113.25,34.68) .. (114.52,34.68) .. controls (115.79,34.68) and (116.81,35.63) .. (116.81,36.8) .. controls (116.81,37.97) and (115.79,38.91) .. (114.52,38.91) .. controls (113.25,38.91) and (112.22,37.97) .. (112.22,36.8) -- cycle ;
\draw  [color={rgb, 255:red, 0; green, 0; blue, 0 }  ,draw opacity=1 ][fill={rgb, 255:red, 155; green, 155; blue, 155 }  ,fill opacity=1 ] (55.7,98.63) .. controls (56.59,97.88) and (57.96,98.03) .. (58.75,98.97) .. controls (59.54,99.91) and (59.45,101.28) .. (58.55,102.03) .. controls (57.66,102.78) and (56.29,102.63) .. (55.5,101.69) .. controls (54.72,100.76) and (54.8,99.39) .. (55.7,98.63) -- cycle ;
\draw  [color={rgb, 255:red, 0; green, 0; blue, 0 }  ,draw opacity=1 ][fill={rgb, 255:red, 74; green, 144; blue, 226 }  ,fill opacity=1 ] (115.52,106.36) .. controls (114.89,105.37) and (115.23,104.04) .. (116.26,103.38) .. controls (117.3,102.73) and (118.65,103) .. (119.27,103.99) .. controls (119.9,104.98) and (119.56,106.31) .. (118.52,106.96) .. controls (117.49,107.62) and (116.14,107.35) .. (115.52,106.36) -- cycle ;
\draw  [color={rgb, 255:red, 0; green, 0; blue, 0 }  ,draw opacity=1 ][fill={rgb, 255:red, 126; green, 211; blue, 33 }  ,fill opacity=1 ] (132.47,52.46) .. controls (132.47,51.29) and (133.5,50.34) .. (134.77,50.34) .. controls (136.04,50.34) and (137.07,51.29) .. (137.07,52.46) .. controls (137.07,53.63) and (136.04,54.58) .. (134.77,54.58) .. controls (133.5,54.58) and (132.47,53.63) .. (132.47,52.46) -- cycle ;
\draw  [color={rgb, 255:red, 0; green, 0; blue, 0 }  ,draw opacity=1 ][fill={rgb, 255:red, 74; green, 144; blue, 226 }  ,fill opacity=1 ] (126.08,100.16) .. controls (125.45,99.18) and (125.79,97.84) .. (126.82,97.19) .. controls (127.86,96.53) and (129.21,96.81) .. (129.83,97.8) .. controls (130.45,98.78) and (130.12,100.12) .. (129.08,100.77) .. controls (128.05,101.43) and (126.7,101.15) .. (126.08,100.16) -- cycle ;
\draw [color={rgb, 255:red, 217; green, 216; blue, 216 }  ,draw opacity=1 ]   (48.18,61.76) -- (58.75,98.97) ;
\draw [color={rgb, 255:red, 213; green, 213; blue, 213 }  ,draw opacity=1 ]   (48.18,61.76) -- (122.05,45.53) ;
\draw [color={rgb, 255:red, 217; green, 216; blue, 216 }  ,draw opacity=1 ]   (48.18,61.76) -- (112.16,37.98) ;
\draw [color={rgb, 255:red, 217; green, 216; blue, 216 }  ,draw opacity=1 ]   (58.47,49.8) -- (112.16,37.98) ;
\draw [color={rgb, 255:red, 217; green, 216; blue, 216 }  ,draw opacity=1 ]   (58.47,49.8) -- (122.05,45.53) ;
\draw [color={rgb, 255:red, 217; green, 216; blue, 216 }  ,draw opacity=1 ]   (58.47,49.8) -- (132.64,53.76) ;
\draw [color={rgb, 255:red, 217; green, 216; blue, 216 }  ,draw opacity=1 ]   (58.12,49.8) -- (126.82,97.19) ;
\draw [color={rgb, 255:red, 217; green, 216; blue, 216 }  ,draw opacity=1 ]   (59.07,50.87) -- (116.26,103.38) ;
\draw [color={rgb, 255:red, 217; green, 216; blue, 216 }  ,draw opacity=1 ][fill={rgb, 255:red, 155; green, 155; blue, 155 }  ,fill opacity=1 ]   (58.12,49.8) -- (58.75,98.97) ;
\draw [color={rgb, 255:red, 217; green, 216; blue, 216 }  ,draw opacity=1 ]   (59.07,50.87) -- (51.18,89.35) ;
\draw [color={rgb, 255:red, 217; green, 216; blue, 216 }  ,draw opacity=1 ]   (67.29,40.87) -- (51.18,89.35) ;
\draw [color={rgb, 255:red, 217; green, 216; blue, 216 }  ,draw opacity=1 ]   (67.29,40.87) -- (58.75,98.97) ;
\draw [color={rgb, 255:red, 217; green, 216; blue, 216 }  ,draw opacity=1 ]   (66.53,41.52) -- (112.16,37.98) ;
\draw [color={rgb, 255:red, 217; green, 216; blue, 216 }  ,draw opacity=1 ]   (67.97,40.87) -- (122.05,45.53) ;
\draw [color={rgb, 255:red, 217; green, 216; blue, 216 }  ,draw opacity=1 ]   (67.97,40.87) -- (132.64,53.76) ;
\draw [color={rgb, 255:red, 217; green, 216; blue, 216 }  ,draw opacity=1 ]   (67.98,41.23) -- (116.96,103.75) ;
\draw [color={rgb, 255:red, 217; green, 216; blue, 216 }  ,draw opacity=1 ]   (67.29,40.87) -- (126.82,97.19) ;
\draw [color={rgb, 255:red, 217; green, 216; blue, 216 }  ,draw opacity=1 ]   (77.12,31.51) -- (51.18,89.35) ;
\draw [color={rgb, 255:red, 217; green, 216; blue, 216 }  ,draw opacity=1 ]   (77.12,31.51) -- (58.75,98.97) ;
\draw [color={rgb, 255:red, 217; green, 216; blue, 216 }  ,draw opacity=1 ]   (77.12,31.51) -- (116.26,103.38) ;
\draw [color={rgb, 255:red, 217; green, 216; blue, 216 }  ,draw opacity=1 ]   (77.12,31.51) -- (126.82,97.19) ;
\draw [color={rgb, 255:red, 217; green, 216; blue, 216 }  ,draw opacity=1 ]   (78.16,31.51) -- (112.16,37.98) ;
\draw [color={rgb, 255:red, 217; green, 216; blue, 216 }  ,draw opacity=1 ]   (79.25,32.42) -- (122.05,45.53) ;
\draw [color={rgb, 255:red, 217; green, 216; blue, 216 }  ,draw opacity=1 ]   (77.12,31.51) -- (132.64,53.76) ;
\draw [color={rgb, 255:red, 217; green, 216; blue, 216 }  ,draw opacity=1 ]   (58.75,98.97) -- (116.26,103.38) ;
\draw [color={rgb, 255:red, 217; green, 216; blue, 216 }  ,draw opacity=1 ]   (58.75,98.97) -- (126.82,97.19) ;
\draw [color={rgb, 255:red, 217; green, 216; blue, 216 }  ,draw opacity=1 ]   (58.75,98.97) -- (132.64,53.76) ;
\draw [color={rgb, 255:red, 217; green, 216; blue, 216 }  ,draw opacity=1 ]   (58.75,98.97) -- (122.05,45.53) ;
\draw [color={rgb, 255:red, 217; green, 216; blue, 216 }  ,draw opacity=1 ]   (58.75,98.97) -- (112.16,37.98) ;
\draw [color={rgb, 255:red, 0; green, 0; blue, 0 }  ,draw opacity=1 ]   (51.18,89.35) -- (116.26,103.38) ;
\draw [color={rgb, 255:red, 217; green, 216; blue, 216 }  ,draw opacity=1 ]   (51.18,89.35) -- (122.05,45.53) ;
\draw [color={rgb, 255:red, 217; green, 216; blue, 216 }  ,draw opacity=1 ]   (51.18,89.35) -- (112.16,37.98) ;
\draw [color={rgb, 255:red, 217; green, 216; blue, 216 }  ,draw opacity=1 ]   (112.16,37.98) -- (116.26,103.38) ;
\draw [color={rgb, 255:red, 217; green, 216; blue, 216 }  ,draw opacity=1 ]   (116.26,103.38) -- (122.8,44.62) ;
\draw [color={rgb, 255:red, 217; green, 216; blue, 216 }  ,draw opacity=1 ]   (126.82,97.19) -- (112.16,37.98) ;
\draw [color={rgb, 255:red, 217; green, 216; blue, 216 }  ,draw opacity=1 ]   (126.82,97.19) -- (122.8,44.62) ;
\draw [color={rgb, 255:red, 0; green, 0; blue, 0 }  ,draw opacity=1 ]   (51.18,89.35) -- (126.82,97.19) ;
\draw [color={rgb, 255:red, 0; green, 0; blue, 0 }  ,draw opacity=1 ]   (51.18,89.35) -- (132.64,53.76) ;
\draw [color={rgb, 255:red, 0; green, 0; blue, 0 }  ,draw opacity=1 ]   (48.18,61.76) -- (126.82,97.19) ;
\draw [color={rgb, 255:red, 74; green, 74; blue, 74 }  ,draw opacity=1 ]   (48.18,61.76) -- (132.64,53.76) ;
\draw [color={rgb, 255:red, 0; green, 0; blue, 0 }  ,draw opacity=1 ]   (48.18,61.76) -- (51.18,89.35) ;
\draw [color={rgb, 255:red, 0; green, 0; blue, 0 }  ,draw opacity=1 ]   (126.82,97.19) -- (132.64,53.76) ;
\draw [color={rgb, 255:red, 0; green, 0; blue, 0 }  ,draw opacity=1 ]   (48.18,61.76) -- (116.26,103.38) ;
\draw [color={rgb, 255:red, 0; green, 0; blue, 0 }  ,draw opacity=1 ]   (116.26,103.38) -- (132.64,53.76) ;

\draw (45.58,130.29) node [anchor=north west][inner sep=0.75pt]    {$G\ =\ K_{4,3,2,2}$};
\draw (214.14,128.07) node [anchor=north west][inner sep=0.75pt]    {$H\ =\ K_{4} \ $};
\draw (19.6,51.8) node [anchor=north west][inner sep=0.75pt]  [font=\footnotesize]  {$v_{11}$};
\draw (139.6,41.2) node [anchor=north west][inner sep=0.75pt]  [font=\footnotesize]  {$v_{23}$};
\draw (133.6,91.6) node [anchor=north west][inner sep=0.75pt]  [font=\footnotesize]  {$v_{31}$};
\draw (22,79) node [anchor=north west][inner sep=0.75pt]  [font=\footnotesize]  {$v_{42}$};
\draw (37.05,39.95) node [anchor=north west][inner sep=0.75pt]  [font=\footnotesize,color={rgb, 255:red, 208; green, 2; blue, 27 }  ,opacity=1 ]  {$1$};
\draw (130.43,28.55) node [anchor=north west][inner sep=0.75pt]  [font=\footnotesize,color={rgb, 255:red, 126; green, 211; blue, 33 }  ,opacity=1 ]  {$2$};
\draw (130.95,109.25) node [anchor=north west][inner sep=0.75pt]  [font=\footnotesize,color={rgb, 255:red, 74; green, 144; blue, 226 }  ,opacity=1 ]  {$3$};
\draw (34.2,98.08) node [anchor=north west][inner sep=0.75pt]  [font=\footnotesize,color={rgb, 255:red, 245; green, 166; blue, 35 }  ,opacity=1 ]  {$4$};
\draw (208.2,24.2) node [anchor=north west][inner sep=0.75pt]  [font=\footnotesize,color={rgb, 255:red, 208; green, 2; blue, 27 }  ,opacity=1 ]  {$1$};
\draw (271.6,22.8) node [anchor=north west][inner sep=0.75pt]  [font=\footnotesize,color={rgb, 255:red, 126; green, 211; blue, 33 }  ,opacity=1 ]  {$2$};
\draw (272.14,94.02) node [anchor=north west][inner sep=0.75pt]  [font=\footnotesize,color={rgb, 255:red, 74; green, 144; blue, 226 }  ,opacity=1 ]  {$3$};
\draw (209,94.4) node [anchor=north west][inner sep=0.75pt]  [font=\footnotesize,color={rgb, 255:red, 245; green, 166; blue, 35 }  ,opacity=1 ]  {$4$};
\draw (102.6,106.6) node [anchor=north west][inner sep=0.75pt]  [font=\footnotesize]  {$v_{32}$};
\draw (12.6,9.2) node [anchor=north west][inner sep=0.75pt]  [font=\footnotesize]  {$V_{1}$};
\draw (152.2,9.6) node [anchor=north west][inner sep=0.75pt]  [font=\footnotesize]  {$V_{2}$};
\draw (152.4,113.4) node [anchor=north west][inner sep=0.75pt]  [font=\footnotesize]  {$V_{3}$};
\draw (12.4,113.2) node [anchor=north west][inner sep=0.75pt]  [font=\footnotesize]  {$V_{4}$};

\end{tikzpicture}
    \caption{Two Colored $K_4$ subgraphs in $K_{4,3,2,2}$}
    \label{fig:2k4subgraphk4322}
\end{figure}
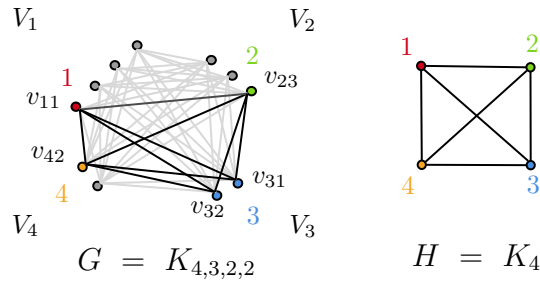

If we repeat this line of reasoning until we have exhausted all vertices in all the maximally independent sets, then we can see the only way to $H$-color $K_{4,3,2,2}$  is by coloring all the vertices in $V_1$ the color 1, all the vertices in $V_2$ the color 2, all the vertices in $V_3$ the color 3, and all the vertices in $V_4$ the color 4.

\begin{figure}[ht]
    \centering
    \begin{tikzpicture}[x=0.75pt,y=0.75pt,yscale=-1,xscale=1]

\draw  [fill={rgb, 255:red, 208; green, 2; blue, 27 }  ,fill opacity=1 ] (42.6,66.12) .. controls (42.6,64.95) and (43.6,64) .. (44.82,64) .. controls (46.05,64) and (47.04,64.95) .. (47.04,66.12) .. controls (47.04,67.29) and (46.05,68.24) .. (44.82,68.24) .. controls (43.6,68.24) and (42.6,67.29) .. (42.6,66.12) -- cycle ;
\draw  [fill={rgb, 255:red, 208; green, 2; blue, 27 }  ,fill opacity=1 ] (62.28,45.2) .. controls (62.28,44.03) and (63.28,43.08) .. (64.5,43.08) .. controls (65.73,43.08) and (66.72,44.03) .. (66.72,45.2) .. controls (66.72,46.37) and (65.73,47.32) .. (64.5,47.32) .. controls (63.28,47.32) and (62.28,46.37) .. (62.28,45.2) -- cycle ;
\draw  [fill={rgb, 255:red, 208; green, 2; blue, 27 }  ,fill opacity=1 ] (52.28,55.6) .. controls (52.28,54.43) and (53.28,53.48) .. (54.5,53.48) .. controls (55.73,53.48) and (56.72,54.43) .. (56.72,55.6) .. controls (56.72,56.77) and (55.73,57.71) .. (54.5,57.71) .. controls (53.28,57.71) and (52.28,56.77) .. (52.28,55.6) -- cycle ;
\draw  [fill={rgb, 255:red, 208; green, 2; blue, 27 }  ,fill opacity=1 ] (73.5,35.19) .. controls (73.5,34.02) and (74.5,33.07) .. (75.72,33.07) .. controls (76.95,33.07) and (77.94,34.02) .. (77.94,35.19) .. controls (77.94,36.36) and (76.95,37.31) .. (75.72,37.31) .. controls (74.5,37.31) and (73.5,36.36) .. (73.5,35.19) -- cycle ;
\draw  [fill={rgb, 255:red, 245; green, 166; blue, 35 }  ,fill opacity=1 ] (46.73,94.81) .. controls (47.62,94.06) and (48.99,94.21) .. (49.78,95.15) .. controls (50.56,96.09) and (50.48,97.46) .. (49.58,98.21) .. controls (48.69,98.96) and (47.32,98.81) .. (46.53,97.87) .. controls (45.74,96.94) and (45.83,95.57) .. (46.73,94.81) -- cycle ;
\draw  [fill={rgb, 255:red, 126; green, 211; blue, 33 }  ,fill opacity=1 ] (121.4,50.42) .. controls (121.4,49.25) and (122.43,48.3) .. (123.7,48.3) .. controls (124.97,48.3) and (126,49.25) .. (126,50.42) .. controls (126,51.59) and (124.97,52.54) .. (123.7,52.54) .. controls (122.43,52.54) and (121.4,51.59) .. (121.4,50.42) -- cycle ;
\draw  [fill={rgb, 255:red, 126; green, 211; blue, 33 }  ,fill opacity=1 ] (110.82,42.6) .. controls (110.82,41.43) and (111.85,40.48) .. (113.12,40.48) .. controls (114.39,40.48) and (115.41,41.43) .. (115.41,42.6) .. controls (115.41,43.77) and (114.39,44.71) .. (113.12,44.71) .. controls (111.85,44.71) and (110.82,43.77) .. (110.82,42.6) -- cycle ;
\draw  [fill={rgb, 255:red, 245; green, 166; blue, 35 }  ,fill opacity=1 ] (54.3,104.43) .. controls (55.19,103.68) and (56.56,103.83) .. (57.35,104.77) .. controls (58.14,105.71) and (58.05,107.08) .. (57.15,107.83) .. controls (56.26,108.58) and (54.89,108.43) .. (54.1,107.49) .. controls (53.32,106.56) and (53.4,105.19) .. (54.3,104.43) -- cycle ;
\draw  [fill={rgb, 255:red, 74; green, 144; blue, 226 }  ,fill opacity=1 ] (114.12,112.16) .. controls (113.49,111.17) and (113.83,109.84) .. (114.86,109.18) .. controls (115.9,108.53) and (117.25,108.8) .. (117.87,109.79) .. controls (118.5,110.78) and (118.16,112.11) .. (117.12,112.76) .. controls (116.09,113.42) and (114.74,113.15) .. (114.12,112.16) -- cycle ;
\draw  [fill={rgb, 255:red, 126; green, 211; blue, 33 }  ,fill opacity=1 ] (131.07,58.26) .. controls (131.07,57.09) and (132.1,56.14) .. (133.37,56.14) .. controls (134.64,56.14) and (135.67,57.09) .. (135.67,58.26) .. controls (135.67,59.43) and (134.64,60.38) .. (133.37,60.38) .. controls (132.1,60.38) and (131.07,59.43) .. (131.07,58.26) -- cycle ;
\draw  [fill={rgb, 255:red, 74; green, 144; blue, 226 }  ,fill opacity=1 ] (124.68,105.96) .. controls (124.05,104.98) and (124.39,103.64) .. (125.42,102.99) .. controls (126.46,102.33) and (127.81,102.61) .. (128.43,103.6) .. controls (129.05,104.58) and (128.72,105.92) .. (127.68,106.57) .. controls (126.65,107.23) and (125.3,106.95) .. (124.68,105.96) -- cycle ;
\draw [color={rgb, 255:red, 0; green, 0; blue, 0 }  ,draw opacity=1 ]   (46.78,67.56) -- (49.78,95.15) ;
\draw [color={rgb, 255:red, 0; green, 0; blue, 0 }  ,draw opacity=1 ]   (46.78,67.56) -- (57.35,104.77) ;
\draw    (46.78,67.56) -- (114.86,109.18) ;
\draw    (46.78,67.56) -- (125.42,102.99) ;
\draw    (46.78,67.56) -- (131.24,59.56) ;
\draw    (46.78,67.56) -- (120.65,51.33) ;
\draw    (46.78,67.56) -- (110.76,43.78) ;
\draw    (57.07,55.6) -- (110.76,43.78) ;
\draw    (57.07,55.6) -- (120.65,51.33) ;
\draw    (57.07,55.6) -- (131.24,59.56) ;
\draw    (56.72,55.6) -- (125.42,102.99) ;
\draw    (57.67,56.67) -- (114.86,109.18) ;
\draw    (56.72,55.6) -- (57.35,104.77) ;
\draw    (57.67,56.67) -- (49.78,95.15) ;
\draw    (65.89,46.67) -- (49.78,95.15) ;
\draw    (65.89,46.67) -- (57.35,104.77) ;
\draw    (65.13,47.32) -- (110.76,43.78) ;
\draw    (66.57,46.67) -- (120.65,51.33) ;
\draw    (66.57,46.67) -- (131.24,59.56) ;
\draw    (65.89,46.67) -- (114.86,109.18) ;
\draw    (65.89,46.67) -- (125.42,102.99) ;
\draw    (75.72,37.31) -- (49.78,95.15) ;
\draw    (75.72,37.31) -- (57.35,104.77) ;
\draw    (75.72,37.31) -- (114.86,109.18) ;
\draw    (75.72,37.31) -- (125.42,102.99) ;
\draw    (76.76,37.31) -- (110.76,43.78) ;
\draw    (77.85,38.22) -- (120.65,51.33) ;
\draw    (75.72,37.31) -- (131.24,59.56) ;
\draw    (57.35,104.77) -- (114.86,109.18) ;
\draw    (57.35,104.77) -- (125.42,102.99) ;
\draw    (57.35,104.77) -- (131.24,59.56) ;
\draw    (57.35,104.77) -- (120.65,51.33) ;
\draw    (57.35,104.77) -- (110.76,43.78) ;
\draw    (49.78,95.15) -- (114.86,109.18) ;
\draw    (49.78,95.15) -- (125.42,102.99) ;
\draw    (49.78,95.15) -- (131.24,59.56) ;
\draw    (49.78,95.15) -- (120.65,51.33) ;
\draw    (49.78,95.15) -- (110.76,43.78) ;
\draw    (110.76,43.78) -- (114.86,109.18) ;
\draw    (114.86,109.18) -- (121.4,50.42) ;
\draw    (114.86,109.18) -- (131.24,59.56) ;
\draw    (125.42,102.99) -- (110.76,43.78) ;
\draw    (125.42,102.99) -- (121.4,50.42) ;
\draw    (125.42,102.99) -- (131.24,59.56) ;
\draw  [fill={rgb, 255:red, 245; green, 166; blue, 35 }  ,fill opacity=1 ] (224.6,85.64) .. controls (224.6,84.46) and (225.53,83.51) .. (226.67,83.51) .. controls (227.81,83.51) and (228.73,84.46) .. (228.73,85.64) .. controls (228.73,86.81) and (227.81,87.77) .. (226.67,87.77) .. controls (225.53,87.77) and (224.6,86.81) .. (224.6,85.64) -- cycle ;
\draw  [fill={rgb, 255:red, 126; green, 211; blue, 33 }  ,fill opacity=1 ] (279.43,36.43) .. controls (279.43,35.26) and (280.36,34.31) .. (281.5,34.31) .. controls (282.64,34.31) and (283.56,35.26) .. (283.56,36.43) .. controls (283.56,37.61) and (282.64,38.56) .. (281.5,38.56) .. controls (280.36,38.56) and (279.43,37.61) .. (279.43,36.43) -- cycle ;
\draw  [fill={rgb, 255:red, 208; green, 2; blue, 27 }  ,fill opacity=1 ] (224.27,35.72) .. controls (224.27,34.54) and (225.2,33.59) .. (226.34,33.59) .. controls (227.48,33.59) and (228.4,34.54) .. (228.4,35.72) .. controls (228.4,36.89) and (227.48,37.85) .. (226.34,37.85) .. controls (225.2,37.85) and (224.27,36.89) .. (224.27,35.72) -- cycle ;
\draw  [fill={rgb, 255:red, 74; green, 144; blue, 226 }  ,fill opacity=1 ] (279.54,85.82) .. controls (279.54,84.64) and (280.46,83.69) .. (281.6,83.69) .. controls (282.74,83.69) and (283.67,84.64) .. (283.67,85.82) .. controls (283.67,86.99) and (282.74,87.94) .. (281.6,87.94) .. controls (280.46,87.94) and (279.54,86.99) .. (279.54,85.82) -- cycle ;
\draw [fill={rgb, 255:red, 0; green, 0; blue, 0 }  ,fill opacity=1 ]   (226.34,37.85) -- (226.67,83.51) ;
\draw    (228.4,35.72) -- (279.43,36.43) ;
\draw    (281.5,38.56) -- (281.6,83.69) ;
\draw    (228.73,85.64) -- (279.54,85.82) ;
\draw    (228.33,84.33) -- (279.67,38.11) ;
\draw    (227.67,37.44) -- (280.33,83.67) ;

\draw (43.78,125.29) node [anchor=north west][inner sep=0.75pt]    {$G\ =\ K_{4,3,2,2}$};
\draw (41.72,32.62) node [anchor=north west][inner sep=0.75pt]  [font=\footnotesize,color={rgb, 255:red, 208; green, 2; blue, 27 }  ,opacity=1 ]  {$1$};
\draw (126.77,30.55) node [anchor=north west][inner sep=0.75pt]  [font=\footnotesize,color={rgb, 255:red, 126; green, 211; blue, 33 }  ,opacity=1 ]  {$2$};
\draw (122.95,111.58) node [anchor=north west][inner sep=0.75pt]  [font=\footnotesize,color={rgb, 255:red, 74; green, 144; blue, 226 }  ,opacity=1 ]  {$3$};
\draw (36.2,102.75) node [anchor=north west][inner sep=0.75pt]  [font=\footnotesize,color={rgb, 255:red, 245; green, 166; blue, 35 }  ,opacity=1 ]  {$4$};
\draw (11.6,13.2) node [anchor=north west][inner sep=0.75pt]  [font=\footnotesize]  {$V_{1}$};
\draw (141.2,13.6) node [anchor=north west][inner sep=0.75pt]  [font=\footnotesize]  {$V_{2}$};
\draw (141.4,104.4) node [anchor=north west][inner sep=0.75pt]  [font=\footnotesize]  {$V_{3}$};
\draw (12.4,104.2) node [anchor=north west][inner sep=0.75pt]  [font=\footnotesize]  {$V_{4}$};
\draw (219.14,123.07) node [anchor=north west][inner sep=0.75pt]    {$H\ =\ K_{4} \ $};
\draw (214.2,20.2) node [anchor=north west][inner sep=0.75pt]  [font=\footnotesize,color={rgb, 255:red, 208; green, 2; blue, 27 }  ,opacity=1 ]  {$1$};
\draw (277.6,18.8) node [anchor=north west][inner sep=0.75pt]  [font=\footnotesize,color={rgb, 255:red, 126; green, 211; blue, 33 }  ,opacity=1 ]  {$2$};
\draw (278.14,90.02) node [anchor=north west][inner sep=0.75pt]  [font=\footnotesize,color={rgb, 255:red, 74; green, 144; blue, 226 }  ,opacity=1 ]  {$3$};
\draw (215,90.4) node [anchor=north west][inner sep=0.75pt]  [font=\footnotesize,color={rgb, 255:red, 245; green, 166; blue, 35 }  ,opacity=1 ]  {$4$};

\end{tikzpicture}
    \caption{Coloring All the Vertices of $K_{4,3,2,2}$}
    \label{fig:k4322colored}
\end{figure}
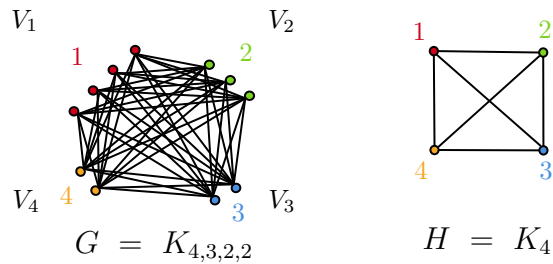

So $m_{(4,3,2,2)}$ is the only $m_{\lambda}$ term that will appear in $X_{G}^{H}$.
\end{exm}

\begin{rem} If $m_{\lambda} \neq m_{1^n}$, then it is not possible to write $m_\lambda$ as a scalar multiple of a single chromatic symmetric function $X_{G}$. Recall that if $G$ has $n$ vertices, the coefficient of $m_{1^n}$ in $X_G$ is positive. From Proposition \ref{prop.mulpartmonomials}, we can write $m_\lambda$ as a scalar multiple of a single $X_{G}^{H}$ but not $X_{G}$. So Proposition \ref{prop.mulpartmonomials} holds for $H$-chromatic symmetric functions but not chromatic symmetric functions.
\end{rem}

Now we use Proposition \ref{prop.mulpartmonomials} to prove that certain sets of $H$-chromatic symmetric functions form a basis for $\Lambda^n$.  We also express the elementary symmetric functions and the power sum symmetric functions in terms of this basis.

\begin{prop}
The following set of $H$-chromatic symmetric functions is a basis for $\Lambda^n$: \begin{equation*}
    \beta = \{ X_{G}^{H} : G = K_{\lambda_{1}, \lambda_{2} \dots \lambda_{k}} \text{ where } \lambda = (\lambda_{1}, \lambda_{2} \dots \lambda_{k}) \vdash n, \lambda \neq (n), \text{ and } H = K_{l(\lambda)} \} \cup \{ X_{\overline{K_{n}}}^{K_1} \}
\end{equation*}

\begin{proof}
From Proposition \ref{prop.mulpartmonomials}, the set $\beta$ corresponds to \begin{equation*}
    \{ c_\lambda m_{\lambda}: \lambda \vdash n, c_\lambda \neq 0\}
\end{equation*} This is a basis for $\Lambda^n$ since $\{  m_{\lambda}: \lambda \vdash n\}$ is a basis for $\Lambda^n$.
\end{proof}
\end{prop}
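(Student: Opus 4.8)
The plan is to show that $\beta$ is, up to nonzero rescaling, precisely the monomial basis $\{m_\lambda : \lambda \vdash n\}$ of $\Lambda^n$; once this identification is in place, the basis property is immediate. The engine of the whole argument is Proposition \ref{prop.mulpartmonomials}, which converts each $X_{K_\lambda}^{K_{l(\lambda)}}$ into a scalar multiple of a single monomial symmetric function.

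First I would apply Proposition \ref{prop.mulpartmonomials} to each element $X_{K_\lambda}^{K_{l(\lambda)}}$ with $\lambda \vdash n$ and $\lambda \neq (n)$. Taking $H = K_{l(\lambda)}$, one checks that $K_{l(\lambda)}$ cannot contain $K_{l(\lambda)+1}^-$ as a subgraph, simply because $K_{l(\lambda)}$ has only $l(\lambda)$ vertices while $K_{l(\lambda)+1}^-$ has $l(\lambda)+1$; moreover the number of subgraphs of $K_{l(\lambda)}$ isomorphic to $K_{l(\lambda)}$ is $m = 1$. Proposition \ref{prop.mulpartmonomials} therefore yields
\[
X_{K_\lambda}^{K_{l(\lambda)}} = l(\lambda)!\, m_\lambda^{l(\lambda)} = l(\lambda)!\left(\prod_{j=1}^{n} r_j(\lambda)!\right)\left(l(\lambda) - \sum_{j=1}^{n} r_j(\lambda)\right)!\, m_\lambda .
\]
Since $\sum_{j} r_j(\lambda) = l(\lambda)$, the trailing factorial is $0! = 1$, so the coefficient reduces to $c_\lambda := l(\lambda)!\prod_{j} r_j(\lambda)!$, a positive integer and in particular nonzero. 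Hence each such $X_{K_\lambda}^{K_{l(\lambda)}}$ is a nonzero scalar multiple of $m_\lambda$. For the one remaining partition $\lambda = (n)$, the adjoined element $X_{\overline{K_n}}^{K_1}$ equals $m_{(n)}$ by the special case recorded in Proposition \ref{prop.monomial}.

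To finish, I would observe that as $\lambda$ runs over $\{\lambda \vdash n : \lambda \neq (n)\} \cup \{(n)\}$—that is, over \emph{every} partition of $n$—the elements of $\beta$ are pairwise-distinct nonzero scalar multiples of the distinct monomial symmetric functions $m_\lambda$. Since $\{m_\lambda : \lambda \vdash n\}$ is a basis for $\Lambda^n$ and rescaling each vector by a nonzero scalar preserves both linear independence and spanning, $\beta$ is a basis for $\Lambda^n$. There is essentially no obstacle here, as all the combinatorial work is absorbed into Proposition \ref{prop.mulpartmonomials}; the only points needing care are verifying the no-$K_{l(\lambda)+1}^-$ hypothesis (automatic from the vertex count) and confirming that the scalar $c_\lambda$ never vanishes, so that no monomial is accidentally dropped from the correspondence.
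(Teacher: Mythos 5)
Your proposal is correct and takes essentially the same approach as the paper: both invoke Proposition \ref{prop.mulpartmonomials} to identify each element of $\beta$ with a nonzero scalar multiple of a distinct monomial symmetric function $m_\lambda$ (with $m_{(n)}$ supplied by $X_{\overline{K_n}}^{K_1}$), and then conclude since rescaling a basis by nonzero scalars yields a basis. Your write-up simply makes explicit the verifications the paper leaves implicit, namely that $K_{l(\lambda)}$ trivially contains no $K_{l(\lambda)+1}^-$, that $m=1$, and that the coefficient $l(\lambda)!\prod_j r_j(\lambda)!$ never vanishes.
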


We also have a basis for $\Lambda$ in terms of $X_{G}^{H}$. In what follows, recall that $Par := \bigcup_{n \geq 0} \{ \lambda | \lambda \vdash n \}$.

\begin{cor}
The following set,
\begin{equation*}
    \beta = \{ X_{G}^{H} : G = K_{\lambda_{1}, \lambda_{2} \dots \lambda_{k}}, \lambda = (\lambda_{1}, \lambda_{2} \dots \lambda_{k}) \in Par, \forall n, \lambda \neq (n) \text{  and } H = K_{l(\lambda)} \} \cup \{ X_{\overline{K_{n}}}^{K_1}:  n \in \mathbb{N} \},
\end{equation*}
is a basis for $\Lambda$.
\end{cor}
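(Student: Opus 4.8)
The plan is to exploit the grading on $\Lambda$. Recall that $\Lambda = \bigoplus_{n \geq 0} \Lambda^n$, and that by Definition \ref{dfn.HCSF} every $H$-chromatic symmetric function $X_G^H$ is homogeneous of degree $\abs{V(G)}$. Consequently each element of $\beta$ carries a well-defined degree: for $G = K_{\lambda_1, \dots, \lambda_k}$ with $\lambda \vdash n$ the function $X_G^{K_{l(\lambda)}}$ lies in $\Lambda^n$, and $X_{\overline{K_n}}^{K_1} = m_{(n)}$ likewise lies in $\Lambda^n$ by Proposition \ref{prop.monomial}. Thus $\beta$ is a graded set: writing $\beta_n := \beta \cap \Lambda^n$, the subset $\beta_n$ is exactly the finite collection $\{X_{K_\lambda}^{K_{l(\lambda)}} : \lambda \vdash n,\ \lambda \neq (n)\} \cup \{X_{\overline{K_n}}^{K_1}\}$ treated in the preceding Proposition, and $\beta = \bigcup_{n} \beta_n$ with the $\beta_n$ pairwise disjoint (their members have distinct degrees).

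First I would record that, by the preceding Proposition, each $\beta_n$ is a basis of $\Lambda^n$. Second I would invoke the standard fact that a disjoint union of bases of the homogeneous components of a graded vector space is a basis of the whole space. Concretely, for spanning: any $f \in \Lambda$ has only finitely many nonzero homogeneous components $f_n \in \Lambda^n$, each of which is a finite linear combination of elements of $\beta_n \subseteq \beta$, so $f \in \spann \beta$. For linear independence: any finite linear dependence among elements of $\beta$ can be grouped by degree into a sum $\sum_n g_n$ with each $g_n \in \spann \beta_n \subseteq \Lambda^n$; since the homogeneous components of the zero function vanish separately, each $g_n = 0$, and the linear independence of $\beta_n$ then forces every coefficient in degree $n$ to be zero.

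The argument is essentially formal once the grading is in place, so I do not expect a serious obstacle. The only points requiring a moment's care are the degree bookkeeping---namely that $K_\lambda$ has precisely $n = \abs{\lambda}$ vertices, so $X_{K_\lambda}^{K_{l(\lambda)}} \in \Lambda^n$, and that the exceptional generators $X_{\overline{K_n}}^{K_1}$ are correctly identified with $m_{(n)} \in \Lambda^n$ via Proposition \ref{prop.monomial}---together with the degenerate degree-zero piece (the empty partition and empty graph), which under the convention $0 \in \N$ contributes the constant spanning $\Lambda^0$. Once these conventions are fixed, the result follows immediately from the per-degree Proposition and the direct-sum structure $\Lambda = \bigoplus_{n} \Lambda^n$.
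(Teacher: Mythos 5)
Your argument is correct and is precisely the intended one: the paper states this corollary without proof because it follows immediately from the preceding proposition by taking the union over $n$ of the degree-$n$ bases, exactly as you do via the grading $\Lambda = \bigoplus_{n} \Lambda^n$. Your extra care about degree bookkeeping and the degree-zero component is sound, though the paper glosses over these points entirely.
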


\begin{cor}
Let $\lambda = (\lambda_1 , \lambda_2 \dots \lambda_{k})$. Then \begin{align*}
    e_{\lambda} 
    &= \frac{1}{(\lambda_{1})^{2}(\lambda_{2})^{2}\dots (\lambda_{k})^{2}}X_{K_{\lambda_{1}}}^{K_{\lambda_{1}}}X_{K_{\lambda_{2}}}^{K_{\lambda_{2}}} \dots X_{K_{\lambda_{k}}}^{K_{\lambda_{k}}}
\end{align*}
\begin{proof} 
First note we can write the $\{m_{\lambda}\}$ basis in terms of $X_{G}^{H}$.
Recall that $e_n = m_{1^n}$ and $e_{\lambda} = e_{\lambda_1}e_{\lambda_2} \dots e_{\lambda_k}$.
Also,
   $ e_{\lambda} = m_{1^{\lambda_{1}}}m_{1^{\lambda_{2}}} \dots m_{1^{\lambda_{k}}}.$
   
We know that there are $\lambda_{i}$ ways of $(K_{\lambda_{i}}, \phi)$-coloring the graph $G = K_{\lambda_{i}}$. There are $\lambda_{i}$ ways of fixing colorings $\phi$ on $H = K_{\lambda_{i}}$. So we know that $X_{K_{\lambda_{i}}}^{K_{\lambda_{i}}} = (\lambda_{i})^{2} m_{1^{\lambda_{i}}}$. Apply Proposition \ref{prop.mulpartmonomials}.
\end{proof}
\end{cor}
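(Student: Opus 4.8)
The plan is to reduce the multi-part identity to a single-part computation and then invoke the multiplicativity of the elementary symmetric functions. Recalling that $e_m = m_{(1^m)}$ and $e_\lambda = e_{\lambda_1} e_{\lambda_2} \cdots e_{\lambda_k}$, I would first rewrite the target as $e_\lambda = \prod_{i=1}^{k} m_{(1^{\lambda_i})}$. Hence everything comes down to establishing the single-part identity $X_{K_{\lambda_i}}^{K_{\lambda_i}} = (\lambda_i)^2\, m_{(1^{\lambda_i})}$ for each $i$: once this is in hand, multiplying over $i$ gives $\prod_i X_{K_{\lambda_i}}^{K_{\lambda_i}} = \bigl(\prod_i (\lambda_i)^2\bigr)\prod_i m_{(1^{\lambda_i})} = \bigl(\prod_i (\lambda_i)^2\bigr) e_\lambda$, and dividing through by $\prod_i (\lambda_i)^2$ yields the claimed expression.

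To establish the single-part identity I would use Proposition \ref{prop.mulpartmonomials}, after noting that the complete graph $K_{\lambda_i}$ coincides with the complete $\lambda_i$-partite graph $K_{(1^{\lambda_i})}$ all of whose parts are singletons. Applying the proposition with $G = K_{(1^{\lambda_i})}$ and $H = K_{\lambda_i}$, one has $n = l((1^{\lambda_i})) = \lambda_i$; the host $H = K_{\lambda_i}$ has only $\lambda_i$ vertices, so it vacuously contains no copy of $K_{n+1}^-$, and the single subgraph of $H$ isomorphic to $K_n = K_{\lambda_i}$ is $H$ itself, giving $m = 1$. This guarantees that $(1^{\lambda_i})$ is the only attainable type, so $X_{K_{\lambda_i}}^{K_{\lambda_i}}$ is forced to be a scalar multiple of $m_{(1^{\lambda_i})}$, and the proposition produces the scalar explicitly. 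Alternatively, one can argue by direct enumeration over a single fixed labeling $\phi$ together with Lemma \ref{lem.singlerep}.

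The main obstacle is pinning down this scalar exactly, since it alone determines the normalizing constant in the denominator. The count factors as the number of labelings $\phi$ of $H = K_{\lambda_i}$ times the number of proper $(K_{\lambda_i}, \phi)$-colorings of $K_{\lambda_i}$ for a fixed such $\phi$; because every vertex of $K_{\lambda_i}$ is adjacent to every other, any proper coloring must assign the vertices to the available colors without repetition, so each factor is a count of vertex-to-color assignments that must be enumerated with care. Everything downstream of this scalar---the passage $e_\lambda = \prod_i m_{(1^{\lambda_i})}$ and the final division---is purely formal, so the entire weight of the argument rests on correctly evaluating the two counts and confirming that their product is the constant appearing in the denominator of the stated identity.
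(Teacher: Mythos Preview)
Your approach mirrors the paper's exactly: reduce $e_\lambda$ to $\prod_i m_{(1^{\lambda_i})}$, establish $X_{K_{\lambda_i}}^{K_{\lambda_i}} = c_i\, m_{(1^{\lambda_i})}$ via Proposition~\ref{prop.mulpartmonomials} (or a direct count), and then multiply.

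Your caution about the scalar is well placed, and in fact this is precisely where both the stated identity and the paper's own argument slip. For a fixed labeling $\phi$ of $K_{\lambda_i}$, the proper $(K_{\lambda_i},\phi)$-colorings of $K_{\lambda_i}$ are exactly the bijections from $V(K_{\lambda_i})$ to $\phi(V(K_{\lambda_i}))$, so there are $\lambda_i!$ of them, not $\lambda_i$; likewise there are $\lambda_i!$ labelings $\phi:V(K_{\lambda_i})\to[\lambda_i]$, not $\lambda_i$. Hence $X_{K_{\lambda_i}}^{K_{\lambda_i}} = (\lambda_i!)^2\, m_{(1^{\lambda_i})}$, which one can also read off directly from Proposition~\ref{prop.mulpartmonomials} (with $m=1$, $n=\lambda_i$, $r_1(\lambda)=\lambda_i$, giving $n!\cdot r_1! \cdot 0! = (\lambda_i!)^2$) or from Proposition~\ref{prop.monomial} together with $m_{(1^{\lambda_i})}^{\lambda_i} = \lambda_i!\, m_{(1^{\lambda_i})}$. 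A sanity check at $\lambda_i=3$ gives $36$, not $9$. The two constants coincide only for $\lambda_i\le 2$. So your outline is sound, but carrying it through shows the denominator in the corollary should be $\prod_i (\lambda_i!)^2$ rather than $\prod_i (\lambda_i)^2$; the paper's claimed constant is in error.
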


\begin{cor}
Let $\lambda = (\lambda_1 , \lambda_2 \dots \lambda_{k})$. Then, \begin{align*}
    p_{\lambda} 
    &= X_{\overline{K_{\lambda_{1}}}}^{K_{1}}X_{\overline{K_{\lambda_{2}}}}^{K_{1}} \dots X_{\overline{K_{\lambda_{k}}}}^{K_{1}}
\end{align*}
\begin{proof}
First note that we can write the $\{m_{\lambda}\}$ basis in terms of $X_{G}^{H}$.
Recall that $p_n = m_{(n)}$ and $p_{\lambda} = p_{\lambda_1}p_{\lambda_2} \dots p_{\lambda_k}$.
Also, 
$p_{\lambda} = m_{(\lambda_{1})}m_{(\lambda_{2})} \dots m_{(\lambda_{k})}$.

Apply Proposition \ref{prop.mulpartmonomials}. Note that if $H$ is only 1 vertex, then there is only one way to color the vertices of $G$, which is by coloring all the vertices of $G$ the same color.
\end{proof}
\end{cor}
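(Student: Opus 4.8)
The plan is to reduce the claim to the single-part identity $X_{\overline{K_n}}^{K_1} = p_n$ and then invoke the multiplicative definition of the power sums. Since $p_n = m_{(n)}$ and $p_\lambda = p_{\lambda_1} p_{\lambda_2} \cdots p_{\lambda_k}$ by definition, it suffices to prove that for each part $\lambda_j$ we have $X_{\overline{K_{\lambda_j}}}^{K_1} = m_{(\lambda_j)} = p_{\lambda_j}$; the desired product identity then follows at once by multiplying these together.

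First I would establish the base case $X_{\overline{K_n}}^{K_1} = m_{(n)}$. This is already recorded as the special case $\lambda = (n)$ of Proposition \ref{prop.monomial}, but one can also derive it from the stronger Proposition \ref{prop.mulpartmonomials}: the graph $\overline{K_n}$ is exactly the edgeless graph $N_n$, which is the complete $1$-partite graph with a single part of size $n$, so here the number of parts is $1$. Taking $H = K_1$, one checks that $K_1$ contains no copy of $K_2^-$ (the latter has two vertices, the former only one), so the hypothesis of Proposition \ref{prop.mulpartmonomials} is satisfied, and the number $m$ of subgraphs of $K_1$ isomorphic to $K_1$ equals $1$. The proposition then yields $X_{\overline{K_n}}^{K_1} = 1 \cdot 1! \cdot m_{(n)}^{1} = m_{(n)}$, where the final equality uses that the $1$-augmented monomial $m_{(n)}^1$ collapses to $m_{(n)}$, since the multinomial coefficient in Definition \ref{dfn.augmonomial} is $1$ when there is a single part and one available color.

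With the base case in hand, I would simply combine the pieces to obtain $p_\lambda = \prod_{j=1}^{k} m_{(\lambda_j)} = \prod_{j=1}^{k} X_{\overline{K_{\lambda_j}}}^{K_1}$. The only subtlety, and the single point that needs care, is the base-case verification: one must confirm both that the hypotheses of Proposition \ref{prop.mulpartmonomials} hold for the degenerate choice $H = K_1$, and that the $1$-augmented monomial reduces to the ordinary monomial. Everything past that is a one-line consequence of the multiplicativity already built into the definitions $p_\lambda = \prod_j p_{\lambda_j}$ and $p_{\lambda_j} = m_{(\lambda_j)}$.
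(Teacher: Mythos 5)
Your proposal is correct and follows essentially the same route as the paper: both reduce $p_\lambda$ to the product $\prod_j m_{(\lambda_j)}$ via $p_n = m_{(n)}$ and then identify each factor $m_{(\lambda_j)}$ with $X_{\overline{K_{\lambda_j}}}^{K_1}$ using Proposition \ref{prop.mulpartmonomials} (the paper phrases the key fact as the observation that a one-vertex $H$ forces all vertices of $G$ to receive the same color). Your explicit verification that $K_1$ contains no $K_2^-$ and that $m_{(n)}^1 = m_{(n)}$ simply fills in details the paper leaves implicit.
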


\begin{cor}\label{cor.hcsflincombo} Given an $H$-chromatic symmetric function of a graph G, $X_{G}^{H}$, it is possible to find sets of graphs $\{H_j | j \in \mathbb{N}\}$ and $\{ G_i | i \in \mathbb{N} \}$ such that $X_{G}^{H}$ can be expressed as a linear combination of  $\{ X_{G_{i}}^{H_{j}} \}$. 
\begin{proof}
Proposition \ref{prop.mulpartmonomials} gives us a way of rewriting $m_{\lambda}$ in terms of some $X_{G}^{H}$. Now since $X_{G}^{H}$ is a symmetric function, it's in the span of $\{m_{\lambda} : \lambda \in Par\}$. So we can rewrite any $X_{G}^{H}$ as a linear combination of other $X_{G_{i}}^{H_{j}}$.
\end{proof}
\end{cor}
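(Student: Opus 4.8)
The plan is to reduce the statement to Proposition~\ref{prop.mulpartmonomials}, which already realizes every monomial symmetric function as a scalar multiple of a single $H$-chromatic symmetric function, and then to exploit the fact that $X_G^H$ itself lives in the monomial basis of $\Lambda^k$.

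First I would record that, by Definition~\ref{dfn.HCSF}, the function $X_G^H$ is homogeneous of degree $k = \abs{V(G)}$ and hence lies in $\Lambda^k$. Since $\{m_\lambda : \lambda \vdash k\}$ is a basis of $\Lambda^k$, I can expand $X_G^H = \sum_{\lambda \vdash k} c_\lambda m_\lambda$ as a finite sum. Next, for each partition $\lambda \vdash k$ I would apply Proposition~\ref{prop.mulpartmonomials} with $H = K_{l(\lambda)}$ and $G = K_\lambda$: the graph $K_{l(\lambda)}$ has exactly one subgraph isomorphic to $K_{l(\lambda)}$ and contains no copy of $K_{l(\lambda)+1}^-$, so the proposition yields $X_{K_\lambda}^{K_{l(\lambda)}} = l(\lambda)!\,\bigl(\prod_j r_j(\lambda)!\bigr)\, m_\lambda$, a positive scalar multiple of $m_\lambda$. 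Inverting this nonzero scalar writes each $m_\lambda$ as a scalar multiple of the $H$-chromatic symmetric function $X_{K_\lambda}^{K_{l(\lambda)}}$.

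Finally I would substitute these expressions into the monomial expansion, obtaining $X_G^H$ as a finite linear combination of the functions $\{X_{K_\lambda}^{K_{l(\lambda)}} : \lambda \vdash k\}$; taking $G_i := K_\lambda$ and $H_j := K_{l(\lambda)}$ as $\lambda$ ranges over partitions of $k$ then supplies the required families (a finite combination trivially fits the $\N$-indexed form of the statement). I do not expect a genuine obstacle here: the only point requiring a moment's care is verifying that the scalar in Proposition~\ref{prop.mulpartmonomials} is nonzero---which holds because the subgraph count is $1$ and every factorial involved is positive---so that the inversion from $m_\lambda$ back to $X_{K_\lambda}^{K_{l(\lambda)}}$ is legitimate.
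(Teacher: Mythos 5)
Your proposal is correct and follows essentially the same route as the paper's own proof: expand $X_G^H$ in the monomial basis of $\Lambda^k$ and invert Proposition~\ref{prop.mulpartmonomials} (with $G_i = K_\lambda$ and $H_j = K_{l(\lambda)}$) to replace each $m_\lambda$ by a scalar multiple of $X_{K_\lambda}^{K_{l(\lambda)}}$. The only difference is that you spell out the details the paper leaves implicit---the explicit scalar $l(\lambda)!\prod_j r_j(\lambda)!$ and its nonvanishing---which is a harmless refinement, not a different argument.
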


\section{$H$-chromatic symmetric functions of complete bipartite graphs}
\label{degreeseq}

In the beginning of Section \ref{completebasis}, we put restrictions on $H$ so that each part in $K_\lambda$ must be colored the same. Similarly, we can also put restrictions on $H$ so that all but except for one part in $K_\lambda$ must be colored the same. Consider $K_{n+2}^{--}$, which is defined to be the graph obtained by removing two non-adjacent edges from $K_{n+2}$. If $H$ does not contain $K_{n+2}^{--}$ as a subgraph, then it follows immediately that at most one part can be colored using different colors. In that case, we can think a proper $H$-coloring as if we first pick a subgraph of $K_{n-1}$ from $H$ to color $(n-1)$ parts of $G$. Then, we count the number of all other vertices that are adjacent to all these $(n-1)$ vertices. These are exactly the vertices that we can use to (freely) color the remaining part of $G$. For a general multi-partite $G$, this procedure is complicated to carry out and may not be fruitful. However, when $G$ is  bipartite, we are able to use this approach to simply characterize the $H$-colorings of $G$ and produce some interesting results.
We note that several of our remarks, lemmas, and corollaries in this section are results on $H$-coloring graphs, rather than specific results on chromatic symmetric functions. To the best of our knowledge these have not appeared in the graph theory literature before.

\begin{rem}
Let $c$ be a $(H,\phi)$-coloring of some graph $G$. Then $c$ corresponds to a subgraph, $C$, of $H$, which is the subgraph induced on the vertices of $H$ used in the coloring of $G$.
\end{rem}

That is, every $(H,\phi)$-coloring has an associated induced subgraph $C \subseteq H$. When we focus on complete bipartite $G$, we can determine exactly the type of subgraph $C$ is. 

\begin{lem}\label{lem.atmost1away}
Let $G$ be a complete bipartite graph, and let $H$ be a simple graph. Then for any $H$-coloring of $G$, any two colors used to color $G$ must be adjacent to one another in $H$ or adjacent to a common vertex in $H$.
\end{lem}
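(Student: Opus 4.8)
The plan is to exploit the complete bipartite structure of $G$ directly: since every vertex in one part is adjacent to every vertex in the other, the adjacency constraints imposed by the $H$-coloring force any two used colors to be ``close'' in $H$. First I would fix a labeling $\phi$ and an $(H,\phi)$-coloring $\kappa$ of $G$, and, as is convenient, identify each color with the vertex $\phi^{-1}(\cdot)$ of $H$ that it names, so that the defining condition of an $H$-coloring reads simply: if $v_1 \sim_G v_2$ then the colors of $v_1$ and $v_2$ are adjacent in $H$. I would write the bipartition of $G$ as $A \sqcup B$.

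Next I would take two colors $c_1, c_2$ appearing in $\kappa$ and choose representative vertices $u_1, u_2 \in V(G)$ with $\kappa(u_1) = c_1$ and $\kappa(u_2) = c_2$. The argument then splits into two cases according to whether $u_1$ and $u_2$ lie in the same part of $G$. If they lie in opposite parts, then $u_1 \sim_G u_2$ because $G$ is complete bipartite, and the $H$-coloring condition gives $c_1 \sim_H c_2$ immediately, i.e.\ the two colors are adjacent in $H$. If instead $u_1, u_2$ lie in the same part, say both in $A$, I would pick any vertex $w$ in the opposite part $B$; then $u_1 \sim_G w$ and $u_2 \sim_G w$, so both $c_1$ and $c_2$ are adjacent in $H$ to the color $\kappa(w)$, exhibiting a common neighbor. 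Either way the conclusion holds, and no appeal to the earlier machinery is needed—the whole argument is a direct unwinding of the definition.

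The only genuine obstacle is the degenerate case: the ``same part'' argument needs a vertex $w$ available in the opposite part, which requires both parts of $G$ to be nonempty (equivalently, $G$ to have at least one edge). I would therefore be explicit that the statement is understood for a genuine complete bipartite graph $K_{a,b}$ with $a,b \geq 1$; when $G$ is edgeless the $H$-coloring is unconstrained and the claim can fail, so this hypothesis is essential rather than cosmetic. A secondary, purely bookkeeping point is that the two representatives may happen to lie in the same part even when one of the colors is also used on the other side; this causes no difficulty, since the common-neighbor conclusion of that case is already one of the two permitted outcomes, so no optimization of the choice of $u_1, u_2$ is required.
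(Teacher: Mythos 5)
Your proof is correct and takes essentially the same route as the paper's: colors used on opposite parts of $G$ are adjacent in $H$ because $G$ is complete bipartite, and two colors used within the same part are both adjacent to the color of any vertex $w$ (the paper's $v_3$) in the opposite part. Your explicit caveat that both parts must be nonempty (i.e.\ $G=K_{a,b}$ with $a,b\geq 1$) is a minor refinement the paper leaves implicit, but it does not change the argument.
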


\begin{proof}
 First note that because $G$ is \textit{complete} bipartite, this induced subgraph must be connected. Denote the maximally independent sets of $G$ by $V_1$ and $V_2$. Trivially, since all vertices in $V_1$ are connected to all vertices in $V_2$, the colors in $H$ used to color $V_1$ must be adjacent to the colors in $H$ used to color $V_2$. So the only potential case where the lemma fails is within $V_1$ or $V_2$. Without loss of generality consider $v_1, v_2 \in V_1$, and suppose we color them by $1$ and $2$ in $H$ respectively. Then since all of $V_1$ is adjacent to all of $V_2$, both $v_1$ and $v_2$ are adjacent to some vertex $v_3$ in $V_2$, which is colored, say, $3$. Then it must be that both $1$ and $2$ are adjacent to $3$ in $H$.
\end{proof}

This observation is not complicated, but has important implications.

\begin{cor}\label{cor.HcolisScol}
Let $H$ be a graph that does not contain $C_4$ as a subgraph, and $G$ be a complete bipartite graph. Then a $H$-coloring of $G$ is an $S$-coloring of $G$ and vice versa, where $S$ is a star-subgraph of $H$.
\end{cor}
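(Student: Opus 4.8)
The plan is to prove both directions by analyzing the set of colors a given $H$-coloring uses, split according to the two parts of $G$. Write $V_1, V_2$ for the two maximal independent sets of the complete bipartite graph $G$, and fix a proper $(H,\phi)$-coloring $\kappa$. Let $A = \phi^{-1}(\kappa(V_1))$ and $B = \phi^{-1}(\kappa(V_2))$ be the sets of vertices of $H$ (colors) actually used on $V_1$ and on $V_2$, respectively; by the Remark preceding the statement these are precisely the vertices of the induced subgraph $C \subseteq H$ associated with $\kappa$.

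First I would record two structural facts. Since $H$ is simple it has no loop, so no color can be used simultaneously on a vertex of $V_1$ and on an adjacent vertex of $V_2$; as $G$ is complete bipartite, every vertex of $V_1$ is adjacent to every vertex of $V_2$, which forces $A \cap B = \emptyset$. That same completeness forces every color of $A$ to be adjacent in $H$ to every color of $B$, so $H$ contains the complete bipartite subgraph on parts $A$ and $B$, i.e.\ a copy of $K_{\abs{A},\abs{B}}$.

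The key step is then to invoke the $C_4$-free hypothesis. If both $\abs{A} \geq 2$ and $\abs{B} \geq 2$, choosing $a_1,a_2 \in A$ and $b_1, b_2 \in B$ yields the $4$-cycle $a_1 \sim_H b_1 \sim_H a_2 \sim_H b_2 \sim_H a_1$, contradicting that $H$ is $C_4$-free. Hence $\min\{\abs{A},\abs{B}\} \leq 1$; without loss of generality $A = \{c\}$. Then the colors used are $\{c\} \cup B$ with $c$ adjacent in $H$ to each element of $B$, which is exactly a star subgraph $S$ of $H$ centered at $c$, so $\kappa$ is an $S$-coloring of $G$. (This is consistent with, and re-derives, Lemma \ref{lem.atmost1away}.)

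For the converse I would observe it is essentially immediate: if $S$ is any star subgraph of $H$, then $S \subseteq H$ means adjacency in $S$ implies adjacency in $H$, so every $S$-coloring of $G$ already satisfies the defining condition of an $H$-coloring. The only points requiring care are the bookkeeping between the labelings of $S$ and of $H$, and the degenerate cases (where $\abs{A}$ or $\abs{B}$ is $0$ or $1$ and $S$ collapses to $S_1$ or $S_2$); these are routine. I expect the single genuinely substantive point to be the $C_4$ extraction in the key step---recognizing that the completeness of $G$ transfers to a complete bipartite pattern among the used colors, a pattern a $C_4$-free $H$ can only accommodate when one side is a single vertex.
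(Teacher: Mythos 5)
Your proof is correct, but it takes a genuinely different route from the paper's. The paper deduces the corollary from Lemma \ref{lem.atmost1away}: the induced subgraph on the used colors has diameter less than $3$, and it then asserts that in a $C_4$-free $H$ the only such subgraphs are stars, possibly with extra edges between leaves, which can be pruned back to stars. You instead bypass the diameter argument entirely: simplicity of $H$ gives $A \cap B = \emptyset$, completeness of $G$ forces the full bipartite adjacency pattern $A \times B \subseteq E(H)$, and then $C_4$-freeness kills $K_{2,2}$, forcing $\min\{\abs{A},\abs{B}\} \leq 1$ and hence a star centered at the lone color. Your version buys two things. First, it is more self-contained (it re-proves Lemma \ref{lem.atmost1away} as a byproduct rather than quoting it). Second, it is actually more watertight at the one delicate point: the paper's intermediate classification, read literally as a statement about all diameter-$2$ subgraphs of a $C_4$-free graph, is false in general (e.g.\ the Petersen graph is $C_4$-free of diameter $2$ and is no star with leaf-edges); what saves the paper is that the subgraphs arising from colorings of complete bipartite $G$ carry the $A \times B$ pattern, which is exactly the structure your argument isolates and uses directly. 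What the paper's route buys in exchange is the reusable Lemma \ref{lem.atmost1away}, which it states for independent interest. Your handling of the converse and of the degenerate cases ($\abs{A}$ or $\abs{B}$ at most $1$, $S$ collapsing to $S_1$ or $S_2$) matches the paper's one-line treatment and is fine.
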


\begin{proof}
For any coloring $\phi$ of $G$, $\phi$ is associated with an induced subgraph on the vertices of $H$ used in the coloring. Further, by Lemma \ref{lem.atmost1away}, we know that these subgraphs must have diameter less than 3. With the restriction that $H$ does not contain $C_4$ as a subgraph, the only possible subgraphs of $H$ with diameter less than $3$ are stars, or stars with additional edges between the leaf vertices. In the latter case, note that by carefully choosing a central vertex and deleting unnecessary edges, these too can be reduced to stars. Conversely, if $\phi$ is an $S$-coloring of $G$ and $S$ is a subgraph of $H$, then $\phi$ is also a $H$-coloring of $G$. 
\end{proof}

In fact, if we restrict our attention from any complete bipartite $G$ to the star, we can remove the condition that $H$ does not contain $C_4$ as a subgraph. 

\begin{prop}
Let $G$ be a star graph. Then $G$ cannot be colored using every single color in $H = C_4$.
\end{prop}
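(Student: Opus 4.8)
The plan is to exploit the degree structure of both the star and of $C_4$. First I would fix an arbitrary labeling $\phi$ of $H = C_4$ and consider any proper $(C_4,\phi)$-coloring $\kappa$ of the star $G = S_{n+1}$ with center $c$ and leaves $\ell_1,\dots,\ell_n$. The central observation is that every leaf is adjacent to $c$ in $G$, so by the definition of a proper $H$-coloring, $\phi^{-1}(\kappa(\ell_i))$ must be adjacent in $C_4$ to $\phi^{-1}(\kappa(c))$ for every $i$.

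Next I would record two elementary facts about $C_4$: it is a simple graph, so no vertex is adjacent to itself, and it is $2$-regular, so every vertex has exactly two neighbours. Writing $a := \phi^{-1}(\kappa(c))$ for the $C_4$-vertex whose label colors the center, simplicity forces $\kappa(\ell_i) \neq \kappa(c)$ (a leaf cannot reuse the center's color, since a loop would be required), while the adjacency requirement forces each $\phi^{-1}(\kappa(\ell_i))$ to lie in the neighbourhood $N_{C_4}(a)$, a set of size exactly two.

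Then I would simply count. The colors appearing in $\kappa$ are $\kappa(c)$ together with the colors used on the leaves, and the latter are confined to $\phi(N_{C_4}(a))$. Hence the image $\kappa(V(G))$ contains at most $1 + \abs{N_{C_4}(a)} = 3$ distinct colors. Since $C_4$ has four vertices, at least one color---precisely the one labeling the vertex antipodal to $a$---can never be used, so $G$ cannot be colored using every color of $C_4$. Because this argument is independent of the chosen labeling $\phi$ and of the number of leaves $n$, it covers every star graph and completes the proof.

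I do not anticipate a genuine obstacle here; the only point requiring care is the correct reading of the $H$-coloring definition, namely that because $C_4$ has no loop the center and a leaf can never share a color, which is exactly what confines the leaves to the two $C_4$-neighbours of the center's color and caps the palette at three.
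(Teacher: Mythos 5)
Your proof is correct and takes essentially the same route as the paper: both rest on the single observation that every leaf's color must be a $C_4$-neighbour of the center's color, so the color on the vertex antipodal to the center's can never appear. The only cosmetic difference is that you argue directly by counting the palette (at most $1+2=3$ colors), and hence get the marginally sharper statement identifying the unusable color, whereas the paper assumes all four colors are used and derives the contradiction that a leaf colored $3$ is not adjacent to the center's color $1$.
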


\begin{proof}
 Label the vertices of $C_4$ by $1,2,3,4$ cyclically. Suppose we are able to color $G$ using all of the colors in $C_4$, and without loss of generality we color the central vertex of $G$ using $1$. Then there must exist a leaf vertex $v_1\in G$ colored $3$ by assumption. But $1$ and $3$ are not adjacent in $C_4$, so this is a contradiction.
\end{proof}

\begin{cor}\label{cor.HcolisScolStar}
Let $H$ be any simple graph and let $G$ be a star graph. Then a $H$-coloring of $G$ is an $S$-coloring of $G$ and vice versa, where $S$ is a star-subgraph of $H$.
\end{cor}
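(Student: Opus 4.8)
The plan is to prove the two directions separately, treating the converse as immediate and locating all the content in the forward direction. For the converse, if $S$ is a star subgraph of $H$ and $\kappa$ is a proper $(S,\psi)$-coloring of $G$ for some labeling $\psi$ of $S$, I would extend $\psi$ to a labeling $\phi$ of all of $H$ by injecting the remaining vertices arbitrarily; since every edge of $S$ is an edge of $H$, every adjacency that $\kappa$ respects through $S$ is respected through $H$, so $\kappa$ is a proper $(H,\phi)$-coloring. This is the same one-line observation used at the end of the proof of Corollary \ref{cor.HcolisScol}.

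For the forward direction, the key feature I would exploit is that the star $G=S_{n+1}=K_{1,n}$ has a bipartition in which one part, the center $c$, is a single vertex. Fix a labeling $\phi$ and a proper $(H,\phi)$-coloring $\kappa$ of $G$, and let $u=\phi^{-1}(\kappa(c))$ be the vertex of $H$ whose label colors the center. Because $c\sim_G \ell$ for every leaf $\ell$, properness forces $\phi^{-1}(\kappa(\ell))\sim_H u$, so every leaf color is the label of a neighbor of $u$ in $H$. Hence all colors used lie in $\{u\}$ together with the neighbors of $u$, and the center color is adjacent in $H$ to every other color used. I would then let $S$ be the star subgraph of $H$ with center $u$ whose leaves are exactly the neighbors of $u$ actually used on the leaves of $G$. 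Since the only edges of $G$ run from $c$ to the leaves, each such edge maps under $\kappa$ to an edge of $S$, and therefore $\kappa$ is a proper $S$-coloring of $G$.

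I would then explain how this reduction is precisely what allows us to drop the hypothesis that $H$ contain no $C_4$, which was needed in Corollary \ref{cor.HcolisScol}. By Lemma \ref{lem.atmost1away} the colors used always have pairwise distance at most $2$, but for a general complete bipartite $G$ one could, as with $K_{2,2}$, use all four vertices of a $C_4\subseteq H$ and obtain a genuinely non-star pattern; it is only when one side of the bipartition is monochromatic that the used colors are forced into a star shape. For a star $G$ this monochromaticity is automatic since the center is a single vertex. The preceding Proposition records the sharp contrast that makes this work: a star cannot be colored using all of $C_4$, so the $C_4$-type obstruction that blocks the general statement never arises for stars.

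The hard part is essentially bookkeeping rather than mathematical depth. The subgraph of $H$ induced on the used colors need not be a bare star: it may carry extra edges among the leaf colors (a ``star with chords''), so I must argue that discarding those chords still yields a genuine star subgraph $S$ of $H$ through which $\kappa$ factors. This is safe precisely because the leaves of $G$ are mutually non-adjacent, so $\kappa$ never uses any chord-edge, and the edges of $G$ map entirely into the retained center-to-leaf edges of $S$.
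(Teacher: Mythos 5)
Your proof is correct, and it is in fact more direct than the route the paper takes. The paper states this corollary without its own proof: the intended argument is the one in Corollary \ref{cor.HcolisScol} (via Lemma \ref{lem.atmost1away}, the induced subgraph on used colors has diameter less than $3$, and in a $C_4$-free $H$ such subgraphs reduce to stars), with the immediately preceding proposition showing that the one configuration requiring $C_4$-freeness---using all four colors of a $C_4$---cannot occur when $G$ is a star. You bypass that machinery entirely by exploiting the structural feature that makes the corollary true: the center $c$ of $G$ is a single vertex, so if $u=\phi^{-1}(\kappa(c))$, properness (together with simplicity of $H$, which rules out a leaf reusing the center's color, since that would require a loop at $u$) forces every leaf color to label a neighbor of $u$, and the star $S$ on $u$ and the used neighbors carries every edge of $G$. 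This is a complete, self-contained argument that never invokes Lemma \ref{lem.atmost1away} or the $C_4$ proposition, and your closing discussion correctly identifies why the monochromatic side of the bipartition is exactly what fails for general complete bipartite $G$ (e.g.\ $K_{2,2}$ colored by all of a $C_4$). One small remark: your final ``star with chords'' paragraph is defensive bookkeeping that your own construction never needs---you build $S$ from center-to-leaf edges only, so no chord is ever present to discard; that step is genuinely needed in the paper's proof of Corollary \ref{cor.HcolisScol}, but in your argument it is vacuous. Your handling of the converse by extending a labeling of $S$ to one of $H$ matches the paper's one-line observation.
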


We now can define the key characteristic of a graph that will allow us to distinguish them: the degree sequence. The degree sequence and star sequence here is defined as  in Martin and Wagner \cite{MarMorWag}. 

\begin{dfn}
Let $H$ be a graph on $k$ vertices. Then we define the degree sequence of $H$ as $D = (d_1,d_2,\dots,d_k)$, where $d_i = |\{ v \in V(T) \ : \ \text{deg}(v) = i\}|$.
\end{dfn}

\begin{dfn}
Let $H$ be a graph on $k$ vertices. Then we define the star sequence of $H$ as $Z = (z_2,..., z_k)$, where $z_i = z_i(T) = $ number of $S_i$-subgraphs of $H$. 
\end{dfn}

\begin{lem}\label{degseq2starseq}
Let $H$ be a graph on $k$ vertices with degree sequence $D = (d_1,...,d_k)$. Then the star sequence of $H$, can be determined by the formula
$$z_{i+1} = \sum\limits_{i\leq j} {j \choose i} d_j$$
for $2\leq i \leq k-1$.

Note that in the case of $z_2$, the formula above double counts the number of $S_2$-subgraphs of $H$, as the argument specifies the central vertex of the star. Of course, in the case of $S_2$, the star is symmetric regardless of choice of central vertex, hence the double counting. To amend this, either use the formula above and divide the result for $z_2$ by 2, or simply set $z_2$ to be the number of edges in $H$ (a similar formula appears in \cite{MarMorWag}).

\end{lem}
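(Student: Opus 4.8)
The plan is to count the $S_{i+1}$-subgraphs of $H$ by organizing them according to their center. First I would fix the combinatorial description of a subgraph isomorphic to $S_{i+1}$: since $S_{i+1} = K_{1,i}$ consists of one center joined to $i$ leaves with no other edges, a subgraph of $H$ isomorphic to $S_{i+1}$ is precisely the data of a vertex $c \in V(H)$ (the image of the center), a set $L$ of $i$ distinct neighbors of $c$ in $H$ (the leaves), together with the $i$ edges $\{c, \ell\}$ for $\ell \in L$. Because we take a (not necessarily induced) subgraph, any edges of $H$ lying among the leaves are simply omitted, so the resulting graph is genuinely isomorphic to $S_{i+1}$.

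Next I would show that for $i \geq 2$ this data is in bijection with the $S_{i+1}$-subgraphs. The only thing to verify is that the center is recoverable from the abstract subgraph: in $S_{i+1}$ with $i \geq 2$, the center is the unique vertex of degree $i \geq 2$, while every leaf has degree $1$, so the vertex playing the role of the center is uniquely determined. Equivalently, a set of $i \geq 2$ edges forming a star has a unique common vertex. Hence no $S_{i+1}$-subgraph is counted more than once as $(c, L)$ ranges over all admissible choices. The count is then routine: for a fixed $c$ with $\deg(c) = j$, the number of admissible leaf-sets $L$ is the number of $i$-element subsets of the neighborhood of $c$, namely ${j \choose i}$ (which vanishes unless $j \geq i$). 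Summing over all vertices and grouping them by degree, using that exactly $d_j$ vertices have degree $j$, gives
$$z_{i+1} = \sum_{c \in V(H)} {\deg(c) \choose i} = \sum_{i \leq j} {j \choose i}\, d_j,$$
as claimed, for all $2 \leq i \leq k-1$.

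Finally I would address the one genuinely delicate point, the case $i = 1$, that is, $z_2$. Here $S_2 = K_2$ is a single edge and the center/leaf distinction collapses: each edge $\{u, v\}$ arises twice in the $(c,L)$ bookkeeping, once with $c = u$ and once with $c = v$, since both endpoints have degree $1$ in the subgraph. Thus the raw sum $\sum_{1 \leq j} {j \choose 1} d_j = \sum_j j\, d_j$ equals twice the number of edges (the handshake identity), so one recovers the correct value either by halving this sum or by directly setting $z_2 = \abs{E(H)}$. The argument is fundamentally a clean double count, so I do not anticipate a substantive obstacle; the only subtlety worth flagging is exactly this center-ambiguity for $i = 1$, which is precisely why the main formula's range begins at $i = 2$ and the degenerate value $z_2$ is handled separately.
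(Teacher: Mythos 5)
Your proof is correct, and it matches the argument the paper intends: the paper states this lemma without a written proof, but its embedded remark about $z_2$ (the formula ``specifies the central vertex of the star'') presupposes exactly your center-based double count, namely $z_{i+1}=\sum_{c\in V(H)}\binom{\deg(c)}{i}$ grouped by degree. You also supply the two details the paper leaves implicit --- that for $i\geq 2$ the center of an $S_{i+1}$-subgraph is the unique vertex of degree at least $2$, so no star is counted twice, and that non-induced subgraphs make every $i$-subset of a neighborhood admissible --- and your handling of the degenerate $i=1$ case via the handshake identity agrees with the paper's note.
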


\begin{lem}\label{lem.Hcharac}
Let $H$ be a graph on $k$ vertices, and let $G = S_n$ for some $n$. Then the $H$-chromatic symmetric function of $G$ is given by

$$\sum\limits_{\lambda} c_\lambda m_\lambda^n,$$
where we sum over all partitions $\lambda \vdash n$ with length no more than $k$, and where the coefficient $c_\lambda$ is given by
$c_\lambda := z_{l(\lambda)}\beta_\lambda,$
with $\beta_\lambda$ being the coefficient of $m_\lambda^n$ in the corresponding $S_{l(\lambda)}$-chromatic symmetric function of $G$. 
\end{lem}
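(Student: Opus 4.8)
The plan is to reduce the computation of $X_{S_n}^H$ to a weighted count of star subgraphs of $H$, the weighting being exactly the star sequence. Fix an arbitrary labeling $\phi$ of $H$; by Lemma \ref{lem.singlerep} the coefficient $c_\lambda$ attached to $m_\lambda^{|V(H)|}$ is precisely the number $d_\lambda(H)$ of $(H,\phi)$-colorings of $S_n$ of type $\lambda$, and likewise $\beta_\lambda$ is the number $d_\lambda(S_{l(\lambda)})$ of $(S_{l(\lambda)},\psi)$-colorings of $S_n$ of type $\lambda$ for any fixed labeling $\psi$ of $S_{l(\lambda)}$. So it suffices to establish the purely enumerative identity
\[
d_\lambda(H) \;=\; z_{l(\lambda)}\, d_\lambda(S_{l(\lambda)}).
\]

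First I would record two elementary facts about an $(H,\phi)$-coloring $\kappa$ of $S_n$ of type $\lambda$. Since $H$ is simple, the center of $S_n$ is adjacent to every leaf, so its color cannot recur on any leaf; hence the center's color class has size exactly $1$, and every leaf color is an $H$-neighbor of the center color. Consequently the set of colors actually used has size $l(\lambda)$ and, together with the edges joining the center color to each used leaf color, forms a subgraph of $H$ isomorphic to $S_{l(\lambda)}$ whose center is the color assigned to the center of $S_n$. This is the concrete form of Corollary \ref{cor.HcolisScolStar}.

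Next I would set up the bijection. Send each type-$\lambda$ coloring $\kappa$ to the pair $(S_\kappa,\kappa)$, where $S_\kappa$ is the star subgraph just described and $\kappa$ is now read as an $(S_\kappa,\phi|_{S_\kappa})$-coloring of $S_n$ (legitimate, since $S_\kappa\subseteq H$ preserves all required adjacencies). The star $S_\kappa$ is recovered unambiguously from $\kappa$: its center is the color of the center of $S_n$, its remaining vertices are the other used colors, and its edges are forced. Conversely, any $S_{l(\lambda)}$-subgraph $S$ of $H$ together with an $(S,\phi|_S)$-coloring of type $\lambda$ yields such a $\kappa$, and a type-$\lambda$ coloring automatically uses all $l(\lambda)$ vertices of $S$ (there are only $l(\lambda)$ colors available), so no extra condition is needed. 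Summing over star subgraphs gives $d_\lambda(H)=\sum_{S} d_\lambda(S)$, where $S$ ranges over the $S_{l(\lambda)}$-subgraphs of $H$, of which there are $z_{l(\lambda)}$ by the definition of the star sequence. Since each such $S$ is isomorphic to $S_{l(\lambda)}$ and the number of colorings of a fixed type does not depend on the labeling (Lemma \ref{lem.singlerep}), every summand equals $d_\lambda(S_{l(\lambda)})$, which proves the displayed identity and hence $c_\lambda = z_{l(\lambda)}\beta_\lambda$.

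The step I expect to be the main obstacle is the bookkeeping of augmented monomials: $c_\lambda$ is the coefficient of $m_\lambda^{|V(H)|}$ while $\beta_\lambda$ is the coefficient of $m_\lambda^{l(\lambda)}$, and these differ by the scalar of Definition \ref{dfn.augmonomial}; the identity comes out clean only because, via Lemma \ref{lem.singlerep}, both coefficients are literally counts of type-$\lambda$ colorings, so those scalars never enter. A secondary point demanding care is the degenerate length $l(\lambda)=2$, where the ``center'' of an abstract edge is ambiguous but is here pinned down by the center of $S_n$; this is exactly where the convention $z_2=|E(H)|$ of Lemma \ref{degseq2starseq} must be reconciled with the count $d_{(n-1,1)}(S_2)=2$, and I would verify this small case explicitly to confirm the normalization.
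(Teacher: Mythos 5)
Your proof is correct and takes essentially the same route as the paper's: both invoke Corollary \ref{cor.HcolisScolStar} to associate to each $H$-coloring of $S_n$ a star subgraph of $H$, and then factor the type-$\lambda$ count as (number of $S_{l(\lambda)}$-subgraphs, i.e.\ $z_{l(\lambda)}$) times (colorings per star, i.e.\ $\beta_\lambda$). The paper's argument is a brief sketch of exactly this factorization; your additional care --- making the coloring-to-star correspondence an explicit bijection, pinning down the center via the center of $S_n$, reconciling the $z_2 = \abs{E(H)}$ convention with $d_{(n-1,1)}(S_2)=2$, and noting that Lemma \ref{lem.singlerep} lets both coefficients be read as raw coloring counts so the augmented-monomial scalars never intervene --- simply fills in details the paper leaves implicit.
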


\begin{proof}
 Every coloring of $G$ by a graph $H$ has an associated star subgraph in $H$, as noted in Corollary \ref{cor.HcolisScolStar}. Hence, for each $n$-augmented monomial $m_\lambda^n$, the coefficient counts the number of ways we can make the following choice. We need to choose an $S_{l(\lambda)}$ subgraph of $H$ to color $G$, and then we need to choose a way of using this $S_{l(\lambda)}$ to color $G$. The number of ways of doing the first is $z_{l(\lambda)}$, and the number of ways of doing the second is the same as the coefficient of $m_\lambda^n$ in the corresponding $S_{l(\lambda)}$-chromatic symmetric function of $G$.
\end{proof}

The next corollaries follow easily:

\begin{cor}
Let $H_1$ and $H_2$ both be graphs on $k$ vertices. Let $G$ be a star. If $H_1$ and $H_2$ share a degree sequence, then $X_G^{H_1} = X_G^{H_2}$.
\label{cor810}
\end{cor}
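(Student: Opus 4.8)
The plan is to read the statement directly off the structural description of $X_{S_n}^H$ obtained in Lemma \ref{lem.Hcharac}, combined with the degree-to-star-sequence conversion in Lemma \ref{degseq2starseq}. First I would apply Lemma \ref{lem.Hcharac} to each of $H_1$ and $H_2$, writing
$$X_G^{H_1} = \sum_\lambda z_{l(\lambda)}(H_1)\,\beta_\lambda\, m_\lambda^n, \qquad X_G^{H_2} = \sum_\lambda z_{l(\lambda)}(H_2)\,\beta_\lambda\, m_\lambda^n,$$
where in both sums $\lambda$ ranges over the partitions of $n$ of length at most $k$ and $\beta_\lambda$ is the coefficient of $m_\lambda^n$ in the $S_{l(\lambda)}$-chromatic symmetric function of $G$. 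The crucial observation is that $\beta_\lambda$ depends only on $G$ and on the length $l(\lambda)$; it is defined with no reference to $H_1$ or $H_2$. Hence the entire dependence of the two symmetric functions on the choice of host graph is concentrated in the star-sequence entries $z_{l(\lambda)}(H_i)$. Since $H_1$ and $H_2$ both have $k$ vertices, the ambient augmented-monomial basis is the same in both expansions, so it suffices to show $z_i(H_1) = z_i(H_2)$ for every $2 \le i \le k$.

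That reduction is exactly what Lemma \ref{degseq2starseq} supplies. By hypothesis $H_1$ and $H_2$ share a degree sequence $D = (d_1, d_2, \dots, d_k)$, and the formula $z_{i+1} = \sum_{i \le j} \binom{j}{i} d_j$ expresses each star-sequence entry (for $i \ge 2$) as a fixed function of $D$ alone. Therefore $z_{i+1}(H_1) = z_{i+1}(H_2)$ for all $2 \le i \le k-1$, and comparing the two expansions coefficient by coefficient yields $z_{l(\lambda)}(H_1)\beta_\lambda = z_{l(\lambda)}(H_2)\beta_\lambda$ for every $\lambda$, that is, $X_G^{H_1} = X_G^{H_2}$.

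The only point requiring a moment's care is the $z_2$ entry, where Lemma \ref{degseq2starseq} warns that the naive formula double-counts $S_2$-subgraphs; but $z_2$ is just the number of edges of $H_i$, which equals $\tfrac{1}{2}\sum_j j\,d_j$ and is hence again determined by $D$, so partitions $\lambda$ of length $2$ cause no trouble. I should also confirm that every length $l(\lambda)$ appearing with nonzero $\beta_\lambda$ lies in the range $2 \le l(\lambda) \le k$ where the star sequence is defined: for $n \ge 2$ a simple $H$ forces the center and each leaf of $G$ to receive distinct colors, so $l(\lambda) \ge 2$, and colorings with $l(\lambda) > k$ cannot occur. Beyond this bookkeeping I do not expect a genuine obstacle — the mathematical content is entirely front-loaded into Lemmas \ref{lem.Hcharac} and \ref{degseq2starseq}, and the corollary is the routine consequence of chaining them together.
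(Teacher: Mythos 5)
Your proof is correct and follows exactly the route the paper intends: the paper gives no explicit argument (the corollary is stated as following ``easily''), and its implicit proof is precisely your chaining of Lemma \ref{lem.Hcharac} (which isolates all $H$-dependence in the star-sequence factors $z_{l(\lambda)}$, with $\beta_\lambda$ depending only on $G$ and $l(\lambda)$) with Lemma \ref{degseq2starseq} (star sequence determined by degree sequence). Your added care about the $z_2$ double-counting and about both expansions living in the same $k$-vertex augmented-monomial normalization is sound bookkeeping, not a deviation.
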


\begin{cor}
Let $H_1$ and $H_2$ both be graphs on $k$ vertices. Suppose $H_1$ and $H_2$ have different degree sequences. Then let $N$ be the first entry in the degree sequence where they differ. Then they also differ at the $N$-th entry of their star sequences ($z_{N+1}$). Let $G$ be a complete bipartite graph on $n\geq N$ vertices. Then $X_G^{H_1} \neq X_G^{H_2}$.
\label{cor811}
\end{cor}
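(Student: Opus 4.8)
The plan is to establish the star-sequence assertion first and then feed it into Lemma~\ref{lem.Hcharac}. For the reduction, I would use Corollary~\ref{cor.HcolisScolStar} (together with Lemma~\ref{lem.atmost1away}, which reduces a general complete bipartite $G$ to its star subgraphs) so that, by Lemma~\ref{lem.Hcharac}, every coefficient of $X_G^{H}$ factors as $c_\lambda = z_{l(\lambda)}\,\beta_\lambda$, where $\beta_\lambda$ is the coefficient of $m_\lambda^n$ in the $S_{l(\lambda)}$-chromatic symmetric function of $G$ and depends only on $G$, not on $H$. It then suffices to produce a partition $\lambda \vdash n$ with $l(\lambda) = N+1$, $\beta_\lambda \neq 0$, and $z_{N+1}^{H_1} \neq z_{N+1}^{H_2}$: the coefficients of $m_\lambda^n$ in $X_G^{H_1}$ and $X_G^{H_2}$ would then disagree. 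The hypothesis $n \geq N$ is exactly what I would use to guarantee that a type of length $N+1$ is attainable, so that such a $\lambda$ with $\beta_\lambda \neq 0$ exists.

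Next I would exploit Lemma~\ref{degseq2starseq}: since $z_{i+1} = \sum_{i \le j}\binom{j}{i}d_j$, the map $(d_1,\dots,d_k)\mapsto(z_2,\dots,z_k)$ is unitriangular, with $d_j$ for $j<i$ never contributing to $z_{i+1}$ and $d_i$ entering with coefficient $\binom{i}{i}=1$. Writing $\delta_j := d_j^{H_1}-d_j^{H_2}$ and letting $N$ be the least index with $\delta_N \neq 0$, this gives
\begin{equation*}
z_{N+1}^{H_1}-z_{N+1}^{H_2} \;=\; \sum_{j \ge N}\binom{j}{N}\delta_j \;=\; \delta_N + \sum_{j > N}\binom{j}{N}\delta_j,
\end{equation*}
whose leading term $\delta_N$ is nonzero by the choice of $N$.

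The crux — and the step I expect to be genuinely hard — is to show that this difference is nonzero, i.e.\ that the higher-degree discrepancies $\sum_{j > N}\binom{j}{N}\delta_j$ cannot cancel $\delta_N$. The first-difference hypothesis controls only $\delta_1=\dots=\delta_{N-1}=0$; the tail entries $\delta_{N+1},\dots,\delta_k$ are entirely free, and the binomial weights $\binom{j}{N}$ can in principle combine them into exactly $-\delta_N$. Unitriangularity supplies unconditional control only at the \emph{top}: for the largest index $M$ with $\delta_M\neq0$ one has $z_{M+1}^{H_1}-z_{M+1}^{H_2}=\binom{M}{M}\delta_M=\delta_M\neq0$ with no surviving tail, whereas at the bottom index $N$ no cancellation-free formula is available. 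Establishing the assertion precisely at $z_{N+1}$ therefore hinges on ruling out this cancellation, and this is where I would concentrate the effort.

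Once a star-sequence entry that genuinely differs is secured, I would conclude in the manner of the converse to Corollary~\ref{cor810}: choose $\lambda$ whose length matches that entry with $\beta_\lambda\neq0$ (available once $n$ is large enough relative to that length), so that the corresponding $m_\lambda^n$-coefficients of $X_G^{H_1}$ and $X_G^{H_2}$ differ, whence $X_G^{H_1}\neq X_G^{H_2}$. As a safeguard making the final inequality less dependent on the delicate $z_{N+1}$ step, I would run a linear-independence argument in the spirit of Lemma~\ref{lem.independenceXk1k2}: because $X_G^{H}$ records all star-sequence entries simultaneously through coefficients indexed by partitions of different lengths, it is enough that $z^{H_1}\neq z^{H_2}$ in \emph{some} coordinate, which the unitriangular bijection guarantees from $d^{H_1}\neq d^{H_2}$.
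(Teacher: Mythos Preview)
Your skepticism about the $z_{N+1}$ step is not merely a technical worry but points to a genuine flaw: the intermediate claim is false as stated, and the paper gives no proof (the corollary is simply asserted to ``follow easily'' from Lemma~\ref{lem.Hcharac}), so there is nothing to compare against. Take $H_1=P_5$ and $H_2=S_5=K_{1,4}$, both trees on $k=5$ vertices. Their degree sequences are $(2,3,0,0,0)$ and $(4,0,0,1,0)$, so the first index of disagreement is $N=1$. But $z_2$ is the number of edges, and both graphs have $4$ edges, so $z_{N+1}=z_2$ agrees. Worse, for $G=K_{1,1}=S_2$ (complete bipartite on $n=2\ge N$ vertices) the only admissible type is $(1,1)$, whose coefficient by Lemma~\ref{lem.Hcharac} is $z_2\beta_{(1,1)}$; hence $X_G^{H_1}=X_G^{H_2}$, and the final assertion fails as well.

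Your diagnosis is exactly right: the transform $z_{i+1}=\sum_{j\ge i}\binom{j}{i}d_j$ is upper-triangular, so it controls differences cleanly only at the \emph{largest} index $M$ with $\delta_M\ne 0$ (there $z_{M+1}^{H_1}-z_{M+1}^{H_2}=\delta_M$ with no tail), not at the smallest. With $N$ reinterpreted as this maximal index, your plan works verbatim: pick $\lambda\vdash n$ of length $M+1$ with $\beta_\lambda\ne 0$ (available once $n\ge M+1$, since the $n-1$ leaves of $S_n$ must cover all $M$ leaf-colors of $S_{M+1}$), and the $m_\lambda^n$-coefficients disagree. Your ``safeguard'' observation, that distinct degree sequences force distinct star sequences \emph{somewhere} and hence $X_G^{H_1}\ne X_G^{H_2}$ for $G$ a sufficiently large star, is also correct and is really the robust content here. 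There is no missing idea on your side; the gap is in the corollary's formulation, which presumably intended ``last'' rather than ``first''.
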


These two results can be combined to yield the following result:

\begin{prop}
Let $\mathcal{H}$ be a set of graphs on $\leq k$ vertices, and let $G = S_k$. Then the $H$-chromatic symmetric functions of $G$ are distinct up to the degree sequences of $H \in \mathcal{H}$. 
\label{prop812}
\end{prop}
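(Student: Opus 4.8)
The statement packages the two preceding corollaries into a single biconditional: for $H_1,H_2\in\mathcal{H}$ and the fixed star $G=S_k=K_{1,k-1}$, one has $X_{S_k}^{H_1}=X_{S_k}^{H_2}$ precisely when $H_1$ and $H_2$ have the same degree sequence. The plan is to prove the two implications separately, each reducing immediately to one of Corollaries~\ref{cor810} and~\ref{cor811}, and then to dispatch the single hypothesis check that lets the separating corollary apply to $G=S_k$.

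First I would treat the ``only if'' direction contrapositively. Assume $H_1$ and $H_2$ have distinct degree sequences and let $N$ be the least index where they differ. The one genuine computation is the bound on $N$: since every vertex of a graph on at most $k$ vertices has degree at most $k-1$, all degree-sequence entries past index $k-1$ vanish, so $N\le k-1< k$. As $S_k$ is complete bipartite on exactly $k$ vertices and $k\ge N$, Corollary~\ref{cor811} applies directly and gives $X_{S_k}^{H_1}\ne X_{S_k}^{H_2}$, with the discrepancy already visible in the star-sequence entry $z_{N+1}$. This is the only place where the specific choice $G=S_k$ (rather than a general complete bipartite graph) is used.

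For the ``if'' direction, assume $H_1$ and $H_2$ share a degree sequence. By Lemma~\ref{degseq2starseq} their star sequences then coincide, and by Lemma~\ref{lem.Hcharac} each coefficient of $X_{S_k}^{H_i}$ in the augmented monomial basis is $z_{l(\lambda)}(H_i)\,\beta_\lambda$ with $\beta_\lambda$ depending only on $\lambda$ and $S_k$; equal star sequences thus force all coefficients to agree, which is exactly Corollary~\ref{cor810}. Together the two implications give the claimed equivalence.

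The step I expect to be the real obstacle is not either implication but the bookkeeping of the order of $H$. Both Corollaries~\ref{cor810} and~\ref{cor811} are stated for graphs on the same number of vertices, while $\mathcal{H}$ is only assumed to consist of graphs on at most $k$ vertices; moreover the normalization $m_\lambda^{\lvert V(H)\rvert}$ is order-sensitive in a way that even isolated vertices (which never participate in coloring $S_k$, yet enlarge $\lvert V(H)\rvert$) can detect. The cleanest way to make the combination airtight is to compare degree sequences as tuples that record $\lvert V(H)\rvert$---so that equality of degree sequences already entails equality of orders, letting Corollary~\ref{cor810} apply---and to supply a short supplementary argument, based precisely on the order-dependence of $m_\lambda^{\lvert V(H)\rvert}$, for any pair $H_1,H_2$ of different orders that falls outside the scope of Corollary~\ref{cor811}.
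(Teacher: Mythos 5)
Your core argument is the paper's own route: the published proof is precisely the combination of Corollaries \ref{cor810} and \ref{cor811} through Lemma \ref{lem.Hcharac} (the paper writes out only the separation direction, observing that star sequences of graphs on at most $k$ vertices first differ at some index $N \leq k$, so that $G = S_k$ is large enough to expose the discrepancy in the coefficients, and leaves the converse to Corollary \ref{cor810}). Your two implications, including the bound $N \leq k-1 < k$, match that proof step for step.

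The genuine problem is your closing repair for graphs of different orders, and it fails concretely. Take $k \geq 4$ and $H = K_2 \uplus N_m$, the disjoint union of an edge with $m$ isolated vertices, so $\abs{V(H)} = 2+m$. Under any fixed labeling there are exactly two proper $H$-colorings of $S_k$ (the center must receive a label whose vertex in $H$ has a neighbor, forcing all leaves onto the unique neighboring label), both of type $(k-1,1)$, so by Lemma \ref{lem.singlerep}
\[
X_{S_k}^{K_2 \uplus N_m} \;=\; \frac{2\,(2+m)!}{\binom{2+m}{1,1}}\, m_{(k-1,1)} \;=\; 2\, m!\; m_{(k-1,1)}.
\]
Hence $X_{S_k}^{K_2} = X_{S_k}^{K_2 \uplus N_1} = 2\,m_{(k-1,1)}$, while $X_{S_k}^{K_2 \uplus N_2} = 4\,m_{(k-1,1)}$. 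The last function confirms your instinct that equal zero-padded degree sequences do not force equal functions across different orders. But the first equality kills your proposed fix: if degree sequences record $\abs{V(H)}$, then $K_2$ and $K_2 \uplus N_1$ have \emph{different} degree sequences yet \emph{identical} $H$-chromatic symmetric functions of $S_k$, so the supplementary argument you hope to base on the order-dependence of $m_\lambda^{\abs{V(H)}}$ cannot exist---that dependence cancels exactly when $l(\lambda) = \abs{V(H)}$, since $m_\lambda^{n+1} = (n+1-l(\lambda))\,m_\lambda^{n}$. No convention for the degree sequence rescues the mixed-order case: the biconditional is simply false for $\mathcal{H}$ containing graphs of different orders, and is correct only when all graphs in $\mathcal{H}$ have the same number of vertices, which is the implicit setting of Corollaries \ref{cor810} and \ref{cor811} and evidently the intended reading of Proposition \ref{prop812}. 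Restricted to that case, your proof is complete and coincides with the paper's.
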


\begin{proof}
For any pair of graphs $H_1$ and $H_2$ in $\mathcal{H}$ with different degree sequences, their star sequences will differ at some $N\leq k$. Comparing $X_G^{H_1}$ and $X_G^{H_2}$ in Lemma \ref{lem.Hcharac}, the coefficients of monomials with corresponding partition length $N$ will be distinct. Since $N \leq k$ for every $H \in \mathcal{H}$, this choice of $G$ works perfectly. 
\end{proof}

Restricting $G$ to be a star instead of any complete bipartite graph on $k$ vertices allows us to classify more graphs, including those that contain $C_4$ subgraphs. However, with the restriction that we only consider $H$s that do not contain $C_4$ as a subgraph, all of the above can be translated naturally into analogues for general complete bipartite $G$. 

\section{Further avenues to explore and some conjectures}
\label{furtheravenues}

In this section, we shall present some open problems and conjectures in $H$-chromatic symmetric functions. Since the subject is still new, and experimentation with large graphs is, in general, impractical, we shall state most of them in the form of problems, as opposed to conjectures.

\subsection{Possibility of constructing a basis for $\Lambda$ using $X_{G}^{H}$ where $H$ is fixed}

It is natural to ask whether Corollary \ref{cor.hcsflincombo} holds when $G_{i}$ or $H_{j}$ is fixed (and if not, for which graphs it's possible).
In fact, it turns out it is impossible to generate a basis for $\Lambda^n$ using the $H$-chromatic symmetric functions of a fixed graph $G$.
\begin{prop}
It is not possible to generate a basis for $\Lambda^n$ using a set of $H$-chromatic symmetric functions, $\{X_{G}^{H_{i}}\}$, where $G$ is a fixed graph and $\{H_i | i \in \mathbb{N} , 1 \leq i \leq n\}$ is finite set of graphs.
\label{prop91}

\begin{proof}
Suppose $n \geq 4$. We will show it is not possible to generate a basis for $\Lambda^n$ using a set of $H$-chromatic symmetric functions $\{X_{G}^{H_{i}}\}$ where $G$ is a fixed graph.

If $G$ has an edge connecting two distinct vertices, then it is not possible to color all the vertices of $G$ the same color, i.e.\ we would not have the $m_{(n)}$ term in $X_{G}^{H}$, and $m_{(n)}$ would not be in the span of the $H$-chromatic symmetric functions $\{X_{G}^{H_{i}}\}$.  Thus  $G$ must be the graph with no edges: $\overline{K_{n}}$.

Now we observe that if $H_1$ and $H_2$ have the same number of vertices, then $X_{\overline{K_{n}}}^{H_{1}} = X_{\overline{K_{n}}}^{H_{2}}$. From the definition of $H$-coloring, since $\overline{K_{n}}$ has no edges, we can color the vertices of $\overline{K_{n}}$ however we want and the edges of $H$ do not affect the $H$-coloring of $\overline{K_{n}}$.

We can choose $X_{\overline{K_{n}}}^{\overline{K_{i}}}$ as a representative for the $H$-chromatic symmetric function of $K_{n}$ where $H$ is a graph on $i$ vertices.

Here we compute $X_{\overline{K_{n}}}^{\overline{K_{i}}}$ for $i<n$.

\begin{equation*}
    X_{\overline{K_{n}}}^{\overline{K_{i}}} = \sum_{\lambda \vdash n, l(\lambda) \leq i} i! \binom{n}{\lambda_1 , \lambda_2 \dots \lambda_k} m_{\lambda} 
\end{equation*}

Here we compute $X_{\overline{K_{n}}}^{\overline{K_{i}}}$ for $i\geq n$.

\begin{equation*}
    X_{\overline{K_{n}}}^{\overline{K_{i}}} = \sum_{\lambda \vdash n} i! \binom{n}{\lambda_1 , \lambda_2 \dots \lambda_k} m_{\lambda} 
\end{equation*}

From the above formulas, we observe a scalar multiple property for $X_{\overline{K_{n}}}^{\overline{K_{i}}}$ where $i \geq n$. If $c > d \geq 0$, then: \begin{equation*}
    X_{\overline{K_{n}}}^{\overline{K_{i+c}}} = \frac{(i+c)!}{(i+d)!}X_{\overline{K_{n}}}^{\overline{K_{i+d}}}
\end{equation*}

Let $p(n)$ denote the number of partitions of $n$.
Recall that $\dim(\Lambda^{n}) = p(n)$.

We can show that if $n \geq 4,$ $n < p(n)$. The following map on $\{1, 2, \dots n\}$ is defined on every $i \in \{1, 2, \dots n\}$: \begin{equation*}
    f: i \mapsto \lambda \text{ such that } l(\lambda) = i
\end{equation*} 
Note that if $n \geq 4, i = 2$ maps to both $(n-1,1)$ and $(n-2,2)$. Comparing the sizes of the domain and range gives us that $n < p(n)$.

If $B = \{ X_{\overline{K_{n}}}^{\overline{K_{i}}} \}$ and $|B| = \dim(\Lambda^{n}) = p(n)$, then $B$ cannot be linearly independent. Since $p(n) > n$, $B$ must contain some $X_{\overline{K_{n}}}^{\overline{K_{i+d}}}$ and $X_{\overline{K_{n}}}^{\overline{K_{i+c}}}$ ($c > d \geq 0$) such that 
\begin{equation*}
    X_{\overline{K_{n}}}^{\overline{K_{i+c}}} = \frac{(i+c)!}{(i+d)!}X_{\overline{K_{n}}}^{\overline{K_{i+d}}}
\end{equation*}

We have shown that any set of $H$-chromatic symmetric functions of $\overline{K_{n}}$ that is of size $\dim(\Lambda^{n}) = p(n)$ contains at least 2 linearly dependent $H$-chromatic symmetric functions of $\overline{K_{n}}$.

It follows that we cannot create a basis for $\Lambda^n$ using $H$-chromatic symmetric functions of a fixed $G$.
\end{proof}

But is it possible to generate a basis for $\Lambda^{n}$ using a set of $H$-chromatic symmetric functions where the $H$ is fixed rather than the $G$? 

It turns out the answer is yes. As discussed in Section 3, when you choose $H = K_n$, then you can recover the chromatic symmetric functions up to a scalar multiple. Cho and van Willigenburg showed that $\{ X_{G_{\lambda}}| \lambda = (\lambda_1 , \lambda_2 \dots \lambda_{k}) \vdash n \}$ forms a basis of $\Lambda$ \cite{cho2015chromatic}. Here, $G_i$ is a connected graph on $i$ vertices and $G_{\lambda} = G_{\lambda_1} \cup G_{\lambda_2} \dots \cup G_{\lambda_k}$.

But it is still unknown if we can create a basis for $\Lambda^n$ using $H$-chromatic symmetric functions involving a fixed $H$ and varying $G$, where $H$ is not a complete graph.

So far, we have primarily considered cases where $G, H$ are simple graphs. The proof of Proposition \ref{prop91} depends on the fact that if $G$ has an edge connecting two distinct vertices, it is not possible to $H$-color all the vertices of $G$ the same color. However, if $H$ could have a loop, this assumption is not true. An interesting question to explore is whether we can create a basis for $\Lambda^n$ using $H$-chromatic symmetric functions of a fixed $G$ where $G, H$ do not necessarily have to be simple graphs. 
\end{prop}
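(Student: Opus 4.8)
The plan is to first determine which fixed $G$ could conceivably work, and then show that for that $G$ the \emph{entire} family of $H$-chromatic symmetric functions spans a proper subspace of $\Lambda^n$. First I would show $G$ must be edgeless. If $G$ contains an edge joining two distinct vertices, then for simple $H$ no proper $H$-coloring can assign a single color to all of $V(G)$, so the coefficient of $m_{(n)}$ vanishes in every $X_G^{H_i}$; since $m_{(n)}$ is a genuine basis vector of $\Lambda^n$, it cannot lie in $\operatorname{span}\{X_G^{H_i}\}$, and such a collection cannot be a basis. Hence the only candidate is $G=\overline{K_n}$.

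Next I would observe that for $G=\overline{K_n}$ the function $X_{\overline{K_n}}^{H}$ depends only on $|V(H)|$: since $\overline{K_n}$ has no edges, the adjacency condition in the definition of a proper $(H,\phi)$-coloring is vacuous, so every assignment of the available $|V(H)|$ colors is allowed regardless of the edges of $H$. It therefore suffices to treat $H=\overline{K_i}$, and counting the colorings of each type $\lambda$ via Lemma \ref{lem.singlerep} gives
\[
X_{\overline{K_n}}^{\overline{K_i}} \;=\; \sum_{\substack{\lambda \vdash n \\ l(\lambda)\le i}} i!\,\binom{n}{\lambda_1,\dots,\lambda_k}\, m_\lambda,
\]
where the multinomial coefficient is independent of $i$. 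The crucial structural point is then immediate: as soon as $i\ge n$ every partition of $n$ already satisfies $l(\lambda)\le i$, so $X_{\overline{K_n}}^{\overline{K_i}}=i!\,F$ for the single fixed function $F:=\sum_{\lambda\vdash n}\binom{n}{\lambda_1,\dots,\lambda_k}m_\lambda$. Thus all members with $i\ge n$ are mutually proportional, and the whole family $\{X_{\overline{K_n}}^{\overline{K_i}}:i\ge1\}$ is spanned by the $n-1$ functions with $1\le i\le n-1$ together with $F$, so its span has dimension at most $n$.

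Finally I would compare this with $\dim\Lambda^n=p(n)$, the number of partitions of $n$. For $n\ge4$ one has $p(n)>n$ — most simply because partition length no longer determines a partition (e.g.\ $(n-1,1)$ and $(n-2,2)$ both have length $2$), so there are strictly more than $n$ partitions. Since the span of \emph{every} $X_{\overline{K_n}}^{H}$ is then a proper subspace of $\Lambda^n$, no collection of such functions (finite or not) can be a basis, which proves the proposition for $n\ge4$. The one step deserving care is the dimension collapse of the preceding paragraph: the entire content is that the $i\ge n$ members are scalar multiples of a single $F$, forcing the bound $n$; everything else is routine counting and the elementary inequality $p(n)>n$. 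I would also note, as the statement's surrounding discussion does, that the reduction to $G=\overline{K_n}$ relies essentially on simplicity of $H$ (through the ``all vertices one color'' obstruction) and can fail once $H$ is permitted a loop, and that the restriction $n\ge4$ is necessary since for $n\le3$ the functions $X_{\overline{K_n}}^{\overline{K_1}},\dots,X_{\overline{K_n}}^{\overline{K_n}}$ are in fact independent and do form a basis.
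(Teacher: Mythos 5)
Your proposal is correct and follows essentially the same route as the paper: reduce to $G=\overline{K_n}$ via the missing $m_{(n)}$ term, compute $X_{\overline{K_n}}^{\overline{K_i}}$ explicitly, observe that all members with $i\ge n$ are mutually proportional, and invoke $p(n)>n$ for $n\ge 4$. Your final packaging is marginally cleaner than the paper's, which pigeonholes a putative $p(n)$-element set into a linearly dependent pair, whereas you bound the span of the \emph{entire} family by dimension $n<p(n)$, so that no collection of these functions, finite or infinite, can even span $\Lambda^n$.
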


\subsection{Self-distinguishability of graphs} Given a graph $G$, the symmetric function $X_G^G$ never vanishes. A natural question to ask is if this ``self"-chromatic symmetric function can be used to distinguish all graphs up to isomorphism. It is known that two non-isomorphic graphs can have the same ``self''-chromatic symmetric functions. For example, if $G_1 = K_1 \uplus K_2$ and $G_2 = P_3$, then $X_{G_1}^{G_1} = X_{G_2}^{G_2} = 12m_{(1,1,1)} + 4m_{(2,1)}$.

Nevertheless, if we restrict our attention to a smaller class of graphs, then there is more we can ask. For example, we have checked that up to $k = 9$, two trees on $k$ vertices are self-distinguishable. Of course, when two trees have a different number of automorphisms, then they must have distinct ``self"-chromatic symmetric functions by Proposition \ref{prop.automorphism}. However, this condition is by no means necessary. We shall propose the following conjecture.

\begin{con}\label{con.selfdistinguish}
Let $T_1, T_2$ be two non-isomorphic trees, then $X_{T_1}^{T_1} \neq X_{T_2}^{T_2}$.
\end{con}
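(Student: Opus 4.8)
The plan is to reinterpret $X_T^T$ as a type-refined count of graph endomorphisms of $T$, and then to argue that this count is a complete isomorphism invariant on trees. Fix a labeling $\phi$ of $T$. For $H = G = T$ with $n = k = \abs{V(T)}$, a proper $(T,\phi)$-coloring $\kappa$ of $T$ is precisely the data of a graph homomorphism $f := \phi^{-1}\circ \kappa : T \to T$, and the type of $\kappa$ is exactly the partition $\lambda$ recording the nonzero fiber sizes $\abs{f^{-1}(u)}$, $u \in V(T)$. By Lemma \ref{lem.singlerep} and the discussion following it, $X_T^T = \sum_{\lambda \vdash k} d_\lambda(T)\, m_\lambda^k$, where $d_\lambda(T)$ is the number of endomorphisms of $T$ of fiber-type $\lambda$. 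Since the passage between $\{m_\lambda\}$ and $\{m_\lambda^k\}$ is an invertible diagonal change of basis (Definition \ref{dfn.augmonomial}), the function $X_T^T$ and the tuple $(d_\lambda(T))_{\lambda \vdash k}$ carry exactly the same information. Thus the conjecture is equivalent to the statement that the fiber-type distribution of self-homomorphisms determines a tree up to isomorphism.

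With this dictionary in hand, I would first harvest the \emph{extreme} coefficients. The coefficient of $m_{(1^k)}$ counts injective endomorphisms; since an injective endomorphism of a finite graph on $k$ vertices with $k-1$ edges must send the $k-1$ edges to $k-1$ distinct edges and hence be an automorphism, Corollary \ref{cor.autcor} gives $d_{(1^k)}(T) = \abs{\aut(T)}$. At the other extreme, endomorphisms whose type has length $2$ are exactly the two proper $2$-colorings of $T$ onto a single edge, so the length-$2$ part of $X_T^T$ recovers the (unique) bipartition $(a,k-a)$ of $T$. These alone do not suffice, so the crux is to extract finer data; the natural next target is the coefficient of $m_{(2,1^{k-2})}$, which counts endomorphisms that identify a single pair of vertices and are injective elsewhere --- the elementary \emph{folds} of $T$. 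I would analyze these folds combinatorially and attempt to read off the degree sequence, or more, of $T$ from the way the count of folds localizes near vertices of each degree.

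The structural heart of the argument is an induction on $k$, peeling leaves off $T$. The difficulty is that here $H$ and $G$ coincide, so that removing a vertex alters the template and the colored graph simultaneously; consequently the disjoint-union tool (Proposition \ref{prop.disjointH}) and the distinguisher machinery of Section \ref{distinguishers}, which fix one of $G,H$ while varying the other, do not apply. I would therefore try to set up the induction intrinsically: relate $d_\lambda(T)$ to the self-homomorphism distributions of the subtrees $T - \ell$ obtained by deleting leaves $\ell$, together with the degrees of the neighbors of the leaves, aiming for a recursion that lets the $(k-1)$-vertex case control the $k$-vertex case. A cleaner intermediate milestone would be to settle the conjecture first for trees with distinct bipartitions (immediate from the previous paragraph) and then for trees with distinct degree sequences, and to test the recursion on structured families such as caterpillars and spiders.

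The main obstacle is genuinely that self-homomorphism counts are a single, highly symmetric invariant, with no external parameter to vary, so that ruling out accidental coincidences between non-isomorphic trees appears to require real structural input rather than the coefficient-comparison arguments used elsewhere in this paper. One reassuring sign, and a useful sanity check for any proposed reduction, is that the invariant cannot collapse to purely spectral data: the smallest cospectral trees occur on $8$ vertices, yet the conjecture has been verified through $k = 9$, so the fiber-type refinement must already separate cospectral pairs. Any successful proof must exploit precisely this refinement; an approach that factors through the adjacency or Laplacian spectrum alone is doomed.
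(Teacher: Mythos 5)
The first thing to say is that the paper itself has no proof of this statement: it is Conjecture \ref{con.selfdistinguish}, offered as an open problem with computational verification only through $k=9$, so there is no ``paper proof'' for your attempt to match. Judged on its own terms, your proposal is a research plan rather than a proof, and the gap is exactly where you locate it. The parts you actually establish are correct: a proper $(T,\phi)$-coloring of $T$ is the same data as an endomorphism $f = \phi^{-1}\circ\kappa$ of $T$, so by Lemma \ref{lem.singlerep} the coefficients $d_\lambda$ count endomorphisms by fiber type; an injective endomorphism of a tree is an automorphism, recovering $d_{(1^k)} = \abs{\aut(T)}$ consistently with Corollary \ref{cor.autcor}; and the length-$2$ terms recover the bipartition (one small slip: the image of a type-length-$2$ endomorphism is an edge of $T$, and every one of the $k-1$ edges admits two such homomorphisms, so the count is $2(k-1)$, not $2$ --- though the \emph{support} of the length-$2$ part, which is all your argument uses, does give the bipartition). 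But these invariants are far too weak --- non-isomorphic trees routinely share automorphism count, bipartition, and even degree sequence --- and the two steps that would carry the entire load are only announced, never executed: (i) extracting the degree sequence ``or more'' from the fold coefficient $d_{(2,1^{k-2})}$, and (ii) the leaf-peeling recursion.

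The recursion is the step I would expect to fail as stated. If $\ell$ is a leaf of $T$, restricting an endomorphism of $T$ to $T-\ell$ yields a homomorphism $T-\ell \to T$, not an endomorphism of $T-\ell$; there is no given mechanism forcing the image to avoid $\ell$, nor any way to reconstitute $d_\lambda(T)$ from the fiber-type data of the smaller trees $T-\ell$ alone, precisely because (as you note) source and target shrink simultaneously and the fixed-$H$ tools of Sections \ref{basics} and \ref{distinguishers} (Proposition \ref{prop.disjointH}, the distinguisher machinery) do not apply. Even the intermediate milestone of separating trees with distinct degree sequences is not reached, and it would not suffice anyway, since the hard cases are non-isomorphic trees agreeing in all such coarse statistics. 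So the proposal contributes a clean and correct reformulation (the fiber-type distribution of self-homomorphisms determines the tree) together with sound boundary computations, but the conjecture remains open after it, exactly as it does in the paper.
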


\subsection{Finite and Uniform distinguishability in a more general setting}

In Section \ref{distinguishers}, we have seen that any finite set of graphs is uniformly distinguishable. We have also seen that not every set of graphs is uniformly, or even finitely, distinguishable. When our $G$'s are connected, we also proved that finite distinguishability implies uniform distinguishability when a uniform bound on the coefficients of the $H$-chromatic symmetric function can be established.

However, we have not shown whether these results are sharp. Therefore, we may wish to ask the following questions.

\begin{prob}
Is finite distinguishability equivalent to uniform distinguishability? If not, can we pose restrictions on $G$'s, such as connectedness, to make it true?
\end{prob}

\begin{prob}
Is there a simple necessary and sufficient condition for a set of graph being finitely (uniformly) distinguishable? For example, we have shown that a set of graphs whose chromatic numbers are not bounded cannot be finitely distinguished. Is the converse true?
\end{prob}

\subsection{Monotonicity with respect to a basis}

One of the big open questions in chromatic symmetric function theory concerns $e$-positivity and Schur-positivity of the chromatic symmetric functions (e.g.\ \cite{Gash}, \cite{GuayPaq}, \cite{SharWa}) Clearly, we can ask similar questions here. Furthermore we say a symmetric function is {\em monotone} with respect to a basis $b$ if and only if its coefficients in the $b$-expansion are all non-negative or all non-positive.

\begin{prob}
Find some necessary or sufficient (or both) conditions on $G$ and $H$ for $X_G^H$ to be $e$-positive (monotone) or Schur-positive (monotone).
\end{prob}

People are less interested about $p$-positivity of chromatic symmetric functions since it is known that $\omega(X_G)$ is always $p$-positive, where $\omega$ is the standard involution on $\Lambda$ \cite{stanley1995symmetric}. This fact, however, does not generalize to $H$-chromatic symmetric functions. For instance, $$\omega(X_{P_4}^{C_7}) = 840p_{(4)} - 672p_{(3,1)} + 1260p_{(2,2)} - 168p_{(2,1,1)} + 84p_{(1,1,1,1)}.$$ Nevertheless, the result  does hold when $H$ is a star.

\begin{con}
Let $n \geq 1$. Suppose $H = S_{n+1}$ is a star. Then, $\omega(X_G^H)$ is $p$-monotone for every graph $G$.
\end{con}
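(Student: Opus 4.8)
The plan is to find an explicit power-sum expansion of $X_G^{S_{n+1}}$ and read off the signs after applying $\omega$. Since $X_G^{S_{n+1}}=0$ (hence trivially $p$-monotone) unless $G$ is bipartite, and since an edgeless $G=N_k$ gives $X_G^{S_{n+1}}=(n+1)!\,p_1^k$ (a single power-sum term), I would assume from the outset that $G$ is bipartite with at least one edge. Write the components of $G$ as $G_1,\dots,G_l$ with part sizes $g_1^i,g_2^i$ as in Proposition~\ref{prop.kmnCSFgen}, and set $k=|V(G)|$.

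First I would unwind the structure of a star-coloring. In any proper $(S_{n+1},\phi)$-coloring, every edge must join the center label to a leaf label, so the vertices receiving the center label and those receiving leaf labels form the (unique up to swap on each component) bipartition of $G$; the center-colored vertices all share one color while each leaf-colored vertex may independently take any of the $n$ leaf labels. Summing over the choice of center label $a\in[n+1]$, over the per-component orientations $p\in\{1,2\}^l$, and over all labelings (of which there are $n!$ for each fixed center label), the sum factorizes over components and yields
\[
w_G^{S_{n+1}}=n!\sum_{a=1}^{n+1}\prod_{i=1}^{l} Q_i(x_a),\qquad Q_i(x_a)=x_a^{g_1^i}(p_1-x_a)^{g_2^i}+x_a^{g_2^i}(p_1-x_a)^{g_1^i},
\]
where $p_1=\sum_b x_b$. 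Setting $q_i(t)=t^{g_1^i}(1-t)^{g_2^i}+t^{g_2^i}(1-t)^{g_1^i}$ and $q(t)=\prod_i q_i(t)=\sum_{d}\kappa_d t^d$, homogeneity forces $\prod_i Q_i(x_a)=\sum_d \kappa_d\,p_1^{k-d}x_a^d$. An edge component has $g_1^i,g_2^i\ge1$ and thus $q_i(0)=0$, so the presence of an edge kills the $d=0$ term; using $\sum_a x_a^d=p_d$ I then obtain the stable identity
\[
X_G^{S_{n+1}}=n!\sum_{d\ge 1}\kappa_d\,p_1^{k-d}p_d=n!\sum_{d\ge 1}\kappa_d\,p_{(d,1^{k-d})}.
\]
Applying $\omega$ with $\omega(p_r)=(-1)^{r-1}p_r$ gives $\omega(X_G^{S_{n+1}})=n!\sum_{d\ge1}(-1)^{d-1}\kappa_d\,p_{(d,1^{k-d})}$. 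Because the hook partitions $(d,1^{k-d})$ are pairwise distinct, these are the actual $p$-basis coefficients, so $p$-monotonicity is equivalent to the single statement that $(-1)^{d-1}\kappa_d$ has constant sign in $d$; equivalently, that $q(-t)=\sum_d(-1)^d\kappa_d t^d$ has all coefficients of one sign. This reduces the whole conjecture to a clean fact about the polynomial $q$.

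The key step (and the main obstacle) is then a per-factor sign lemma: I claim each $q_i(-t)$ equals $(-1)^{\min(g_1^i,g_2^i)}$ times a polynomial with nonnegative coefficients. Assuming $g_1^i\le g_2^i$ and writing $c=g_2^i-g_1^i$, one factors
\[
q_i(-t)=(-1)^{g_1^i}t^{g_1^i}(1+t)^{g_1^i}\bigl[(1+t)^c+(-1)^c t^c\bigr],
\]
and the bracket has nonnegative coefficients in both parities of $c$ (when $c$ is odd the top terms cancel, leaving $\sum_{m<c}\binom{c}{m}t^m$). Multiplying these factors over $i$, and noting that a product of polynomials with nonnegative coefficients again has nonnegative coefficients, shows $q(-t)$ has all coefficients of the single sign $(-1)^{\sum_i\min(g_1^i,g_2^i)}$. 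Hence $(-1)^{d-1}\kappa_d$ is single-signed and $\omega(X_G^{S_{n+1}})$ is $p$-monotone, with common sign $(-1)^{1+\sum_i\min(g_1^i,g_2^i)}$. I expect the delicate point to be exactly this disconnected bookkeeping: working component-by-component the individual signs $(-1)^{\min(g_1^i,g_2^i)}$ genuinely differ, and it is only the multiplicative factorization of $q(-t)$ --- rather than a naive term-by-term pairing of orientations --- that exposes the uniform sign; a secondary care point is isolating the edgeless and non-bipartite cases so that the degenerate $d=0$ term never interferes.
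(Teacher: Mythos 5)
Your factorization of $w_G^{S_{n+1}}$ is correct as a \emph{polynomial identity in the $n+1$ variables} $x_1,\dots,x_{n+1}$, and your sign lemma for $q(-t)$ is also correct. The genuine gap is the step you call the ``stable identity'': passing from $w_G^{S_{n+1}}=n!\sum_d \kappa_d\,p_1^{k-d}p_d$ in $n+1$ variables to $X_G^{S_{n+1}}=n!\sum_{d\ge1}\kappa_d\,p_{(d,1^{k-d})}$ in $\Lambda$. The extension in Definition \ref{dfn.HCSF} works in the monomial basis: $X_G^H$ contains \emph{no} $m_\lambda$ with $l(\lambda)>n+1$, while your right-hand side does whenever $k-d+1>n+1$. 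Since the restriction map $\Lambda^k\to\Lambda^k(x_1,\dots,x_{n+1})$ has a nontrivial kernel once $k>n+1$, a polynomial identity in $n+1$ variables does not determine the $p$-expansion of $X_G^H$ (the $p_\lambda$ become linearly dependent after restriction). Concretely, take $G=P_3$ and $H=S_2$ ($n=1$, $k=3$): here $q(t)=t(1-t)$, so your formula predicts
\begin{equation*}
p_{(1,1,1)}-p_{(2,1)}=2m_{(2,1)}+6m_{(1,1,1)},
\end{equation*}
whereas the true value is $X_{P_3}^{S_2}=2m_{(2,1)}=2p_{(2,1)}-2p_{(3)}$ (still $p$-monotone after $\omega$, but a different function with a different $p$-support). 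Your side remark that $X_{N_k}^{S_{n+1}}=(n+1)!\,p_1^k$ fails for $k>n+1$ for the same reason.

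What survives is exactly the large-$n$ regime: your identity, and hence your proof, is valid precisely when every hook $(d,1^{k-d})$ with $\kappa_d\neq 0$ has length at most $n+1$, i.e.\ when $n\ge k-\sum_i\min(g_1^i,g_2^i)=\sum_i\max(g_1^i,g_2^i)$. Note that the statement you are proving is a \emph{conjecture} in the paper: the authors prove it only for connected bipartite $G$ with $n\ge k_1,k_2$ (via the displayed formula $X_G^H=(X_1)_G^H+(X_2)_G^H$, which agrees with your $q(t)=t^{k_1}(1-t)^{k_2}+t^{k_2}(1-t)^{k_1}$) and, from there, for disconnected $G$ under a parity hypothesis on the parts. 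In that regime your argument is a clean improvement --- the multiplicative factorization of $q(-t)$ handles arbitrary disconnected bipartite $G$ with no parity condition --- but the open content of the conjecture is precisely $n<\sum_i\max(g_1^i,g_2^i)$, where the truncation to partitions of length $\le n+1$ scrambles the $p$-coefficients (as in the $P_3$ example, where $p_{(3)}$ appears even though $\kappa_3=0$), and your method says nothing there.
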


This is trivial when $G$ is not a bipartite graph, in which case the symmetric function vanishes. We have also shown the result when $G$ is a connected bipartite and $n$ is sufficiently large. Indeed, we have the following formula that can be seen with little effort.

\begin{prop}

Suppose $G$ is a connected bipartite graph and $H = S_{n+1}$. Assume $n \geq k_1, k_2$, where $k_1, k_2$ are the sizes of the two parts of $G$. If we set $$(X_1)_G^H := \sum_{k_1 \leq j \leq k_1+k_2} (-1)^{j - k_1}n!{k_2 \choose {j - k_1}} p_{(j, 1^{k_1 + k_2 - j})},$$ and $$(X_2)_G^H := \sum_{k_2 \leq j \leq k_1+k_2} (-1)^{j - k_2}n!{k_1 \choose {j - k_2}} p_{(j, 1^{k_1 + k_2 - j})}.$$ Then, $X_G^H = (X_1)_G^H + (X_2)_G^H$.

\end{prop}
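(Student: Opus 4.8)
The plan is to reduce everything to the ``partial-star'' functions $X_{k_1,k_2}^{S_{n+1}}$ of Definition \ref{dfn.partialstar} and then convert a clean generating-function expression into power sums. First I would record that, since $G$ is connected bipartite, its $S_{n+1}$-chromatic symmetric function depends only on the pair of part sizes: by Proposition \ref{prop.equivconnected} (applied with the complete bipartite graph $S_{n+1}=K_{1,n}$) one has $X_G^{S_{n+1}} = X_{K_{k_1,k_2}}^{S_{n+1}}$, and the remark following Definition \ref{dfn.partialstar} gives $X_{K_{k_1,k_2}}^{S_{n+1}} = X_{k_1,k_2}^{S_{n+1}} + X_{k_2,k_1}^{S_{n+1}}$. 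Because the roles of $k_1$ and $k_2$ are symmetric, it then suffices to prove $X_{k_1,k_2}^{S_{n+1}} = (X_1)_G^H$; the identity $X_{k_2,k_1}^{S_{n+1}} = (X_2)_G^H$ follows by interchanging $k_1$ and $k_2$.

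The heart of the argument is the closed form
$$X_{k_1,k_2}^{S_{n+1}} = n!\sum_a x_a^{k_1}\Big(\sum_{b\neq a}x_b\Big)^{k_2},$$
where $a$ ranges over all variable indices. To establish it I would work with the polynomial $w$ in the $n+1$ variables indexed by $V(S_{n+1})$ obtained by summing over all $(n+1)!$ labelings $\phi$: for each $\phi$, a partial-star coloring assigns the center's label to the $k_1$ vertices and arbitrary leaf labels to the remaining $k_2$ vertices, contributing $x_{\phi(c)}^{k_1}\big(\sum_{b\neq\phi(c)}x_b\big)^{k_2}$. Grouping the labelings by the center color $a=\phi(c)$ --- there are exactly $n!$ of them for each of the $n+1$ choices of $a$, corresponding to the $n!$ ways of distributing the leaf labels --- yields $w = n!\sum_{a}x_a^{k_1}(\sum_{b\neq a}x_b)^{k_2}$, and the displayed symmetric function is its extension to $\Lambda$. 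This is the step where the hypothesis $n\geq k_1,k_2$ is essential: it guarantees that at most $k_2\leq n$ distinct leaf colors (hence at most $k_2+1\leq n+1$ colors total) occur, so every monomial appearing in the infinite-variable expression has length $\leq n+1$ and nothing is lost in passing from $w$ to $X_{k_1,k_2}^{S_{n+1}}$; equivalently, one checks via Lemma \ref{lem.singlerep} that the coefficient of each $m_\lambda$ agrees on both sides.

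Finally I would convert the closed form into power sums. Writing $p_1=\sum_b x_b$ so that $\sum_{b\neq a}x_b = p_1 - x_a$, the binomial theorem gives
\begin{align*}
\sum_a x_a^{k_1}(p_1-x_a)^{k_2}
&= \sum_a x_a^{k_1}\sum_{i=0}^{k_2}\binom{k_2}{i}(-x_a)^i p_1^{k_2-i}\\
&= \sum_{i=0}^{k_2}(-1)^i\binom{k_2}{i}p_1^{k_2-i}\sum_a x_a^{k_1+i}\\
&= \sum_{i=0}^{k_2}(-1)^i\binom{k_2}{i}p_{(k_1+i,\,1^{k_2-i})},
\end{align*}
using $p_1^{k_2-i}p_{k_1+i}=p_{(k_1+i,1^{k_2-i})}$. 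Reindexing by $j=k_1+i$ and multiplying by $n!$ reproduces exactly $(X_1)_G^H$, which completes Step 2 and hence the proof. The only genuinely delicate point is the generating-function identity in the second paragraph --- specifically the bookkeeping of the labeling sum and the verification that the extension to infinitely many variables is faithful under the assumption $n\geq k_1,k_2$ --- whereas the power-sum manipulation is purely formal.
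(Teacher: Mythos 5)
Your proof is correct, and it also plugs a genuine gap: the paper states this proposition with no proof at all (only the remark that the formula ``can be seen with little effort''), so there is no argument of record to compare against. Your route is surely the intended one: the decomposition $X_G^{S_{n+1}} = X_{k_1,k_2}^{S_{n+1}} + X_{k_2,k_1}^{S_{n+1}}$ already appears in the paper right after Definition~\ref{dfn.partialstar}, the closed form $X_{k_1,k_2}^{S_{n+1}} = n!\sum_a x_a^{k_1}\bigl(p_1-x_a\bigr)^{k_2}$ is exactly the ``forced center color'' description the authors give informally, and the binomial expansion into $p_{(j,1^{k_1+k_2-j})}$ is formal; moreover you correctly isolate the one delicate point, namely that $n\geq k_2$ (resp.\ $n\geq k_1$) is what keeps every monomial of the infinite-variable expression at length $\leq n+1$, so that it really is the extension of the $(n+1)$-variable polynomial $w$ and hence agrees with Definition~\ref{dfn.partialstar} via Lemma~\ref{lem.singlerep}.
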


From there, we can prove the result when $G$ is a general bipartite graph and the sizes of the two parts of each connected component of $G$ have the same parity, and that $n$ is sufficiently large. However, a further generalization seems non-trivial.

\section{Acknowledgements}
This work was supported by the Canadian Tri-Council Research Support Fund. The author A.M.F. was supported by an NSERC Discovery Grant.  This research was conducted at the Fields Institute, Toronto, Canada as part of the 2020 Fields Undergraduate Summer Research Program and was funded by that program. The authors thank an anonymous reviewer for a careful reading of the paper and for helpful suggestions which have improved it.

\end{document}